\sloppy\pagestyle{plain}
\newtheorem{theorem}[equation]{Theorem}
\newtheorem{proposition}[equation]{Proposition}
\newtheorem{lemma}[equation]{Lemma}
\newtheorem{corollary}[equation]{Corollary}
\newtheorem{conjecture}[equation]{Conjecture}
\theoremstyle{definition}
\newtheorem{example}[equation]{Example}
\theoremstyle{remark}
\newtheorem{remark}[equation]{Remark}
\newtheorem{notation}[equation]{Notation}
\makeatletter\@addtoreset{equation}{section} \makeatother
\newcommand{\CC}{\mathbb{C}}
\renewcommand{\P}{\mathbb{P}}
\newcommand{\Q}{\mathbb{Q}}
\newcommand{\WW}{\mathrm{W}}
\newcommand{\ZZ}{\mathbb{Z}}
\newcommand{\fS}{\mathfrak{S}}
\newcommand{\fA}{\mathfrak{A}}
\newcommand{\PSp}{\mathrm{PSp}_4(\mathbf{F}_3)}
\newcommand{\mumu}{\boldsymbol{\mu}}
\newcommand{\cE}{\mathscr{E}}
\newcommand{\cO}{\mathscr{O}}
\newcommand{\cU}{\mathscr{U}}
\newcommand{\cX}{\mathscr{X}}
\newcommand{\cY}{\mathscr{Y}}
\newcommand{\cN}{\mathcal{N}}
\newcommand{\cT}{\mathscr{T}}
\newcommand{\cH}{\mathscr{H}}
\newcommand{\bcM}{\overline{\mathcal{M}}}
\newcommand{\bP}{\mathbf{P}}
\newcommand{\bs}{\mathbf{s}}
\newcommand{\bp}{\mathbf{p}}
\newcommand{\tS}{\tilde{S}}
\newcommand{\CR}{\mathrm{CR}}
\newcommand{\PGL}{\mathrm{PGL}}
\newcommand{\rC}{\mathrm{C}}
\newcommand{\rR}{\mathrm{R}}
\newcommand{\oP}{\overline{\Phi}}
\def\mumu{\boldsymbol{\mu}}
\def\SSZ{{\fS_{3,3}\rtimes\mumu_2}}
\def\CCZ{{\left(\mumu_3\times\mumu_3\right)\rtimes\mumu_2}}
\def\CCZz{{\left(\mumu_3\times\mumu_3\right)\rtimes\mumu_4}}
\def\DD{\mathrm{D}}
\def\VV{\mathrm{V}}
\def\Karz{\mumu_5\rtimes\mumu_4}
\def\le{\leqslant}
\def\vstrut{\vphantom{$A_{j_j}$}}
\DeclareMathOperator{\Sing}{{\mathrm{Sing}}}
\DeclareMathOperator{\rk}{{\mathrm{rk}}}
\DeclareMathOperator{\Pic}{{\mathrm{Pic}}}
\DeclareMathOperator{\VCl}{{\mathrm{ExCl}}}
\DeclareMathOperator{\Cl}{{\mathrm{Cl}}}
\DeclareMathOperator{\rkPic}{{\mathrm{\rk\,Pic}}}
\DeclareMathOperator{\rkCl}{{\mathrm{\rk\,Cl}}}
\DeclareMathOperator{\Gr}{{\mathrm{Gr}}}
\DeclareMathOperator{\Fl}{{\mathrm{Fl}}}
\DeclareMathOperator{\Aut}{{\mathrm{Aut}}}
\DeclareMathOperator{\Out}{{\mathrm{Out}}}
\DeclareMathOperator{\Bl}{\mathrm{Bl}}
\DeclareMathOperator{\Ker}{{\mathrm{Ker}}}
\DeclareMathOperator{\Iso}{{\mathrm{Iso}}}
\DeclareMathOperator{\Ind}{{\mathrm{Ind}}}
\newcommand{\bDD}{\mathfrak{D}}
\newcommand{\bDDtau}{\widehat{\mathfrak{D}}}
\newenvironment{xsmallmatrix}
  {\renewcommand\thickspace{\kern1em}\smallmatrix}
  {\endsmallmatrix}
\author{Ivan Cheltsov, Alexander Kuznetsov, and Constantin Shramov}
\title[Coble fourfold, invariant quartics, and Wiman--Edge sextics]{Coble fourfold, $\fS_6$-invariant quartic threefolds,\\[1ex]and Wiman--Edge sextics}
\dedicatory{To Arnaud Beauville, on the occasion of his 70th birthday}
\address{\emph{Ivan Cheltsov}
\newline
\textnormal{School of Mathematics, The University of Edinburgh,  Edinburgh EH9 3JZ, UK.}
\newline
\textnormal{Laboratory of Algebraic Geometry, National Research University Higher School of Economics, Russia.}
\newline
\textnormal{\texttt{I.Cheltsov@ed.ac.uk}}}
\address{\emph{Alexander Kuznetsov}
\newline
\textnormal{Algebraic Geometry Section, Steklov Mathematical Institute of Russian Academy of Sciences,}
\newline
\hspace{.5\textwidth}\textnormal{8 Gubkina street, Moscow 119991, Russia.}
\newline
\textnormal{Laboratory of Algebraic Geometry, National Research University Higher School of Economics, Russia.}
\newline
\textnormal{Interdisciplinary Scientific Center J.-V.~Poncelet, Moscow, Russia.}
\newline
\textnormal{\texttt{akuznet@mi-ras.ru}}}
\address{\emph{Constantin Shramov}
\newline
\textnormal{Algebraic Geometry Section, Steklov Mathematical Institute of Russian Academy of Sciences,}
\newline
\hspace{.5\textwidth}\textnormal{8 Gubkina street, Moscow 119991, Russia.}
\newline
\textnormal{Laboratory of Algebraic Geometry, National Research University Higher School of Economics, Russia.}
\newline
\textnormal{\texttt{costya.shramov@gmail.com}}}
\begin{document}

\begin{abstract}
We construct two small resolutions of singularities of the Coble fourfold
(the double cover of the four-dimensional projective space branched over the Igusa quartic).
We use them to show that all $\fS_6$-invariant three-dimensional quartics are birational
to conic bundles over the quintic del Pezzo surface with the discriminant curves from the Wiman--Edge pencil.
As an application, we check that $\fS_6$-invariant three-dimensional quartics are unirational,
obtain new proofs of rationality of four special quartics among them and irrationality of the others,
and describe their Weil divisor class groups as $\fS_6$-representations.
\end{abstract}

\maketitle

\tableofcontents

\section{Introduction}
\label{section:intro}

Consider the projectivization $\P^5$ of the standard permutation representation of the symmetric group~$\fS_6$
over an algebraically closed field $\Bbbk$ of characteristic zero,
and the invariant hyperplane~$\P^4$ given by the equation
\begin{equation}\label{eq:hyperplane}
x_1 + x_2 + x_3 + x_4 + x_5 + x_6 = 0
\end{equation}
therein, where $x_1,\ldots,x_6$ are homogeneous coordinates in $\P^5$.
Consider the classical family of $\fS_6$-invariant quartics $X_t$, $t\in\Bbbk\cup\{\infty\}$,
in this hyperplane defined by the equations
\begin{equation}\label{eq:beauville}
(x_1^4 + x_2^4 + x_3^4 + x_4^4 + x_5^4 + x_6^4) - t(x_1^2 + x_2^2 + x_3^2 + x_4^2 + x_5^2 + x_6^2)^2 = 0,
\end{equation}
cf.~\cite{Beauville}.
Every $\fS_6$-invariant quartic in $\P^4$ is one of the quartics~$X_t$; moreover, most of these quartics have automorphism groups isomorphic to $\fS_6$,
and every quartic threefold with a faithful $\fS_6$-action is isomorphic to some~$X_t$ (see Lemma~\ref{lemma:quartics-Aut}).
We refer to these quartics as {\sf $\fS_6$-invariant quartics}.

Every quartic $X_t$ is singular along a certain $30$-point orbit $\Sigma_{30} \subset \P^4$ of the group $\fS_6$ (see~\S\ref{subsection:xt}),
and $\Sigma_{30}$ coincides with $\Sing(X_t)$ unless $t = \infty$ or $t$ is in the finite {\sf discriminant set}
\begin{equation}
\label{def:dd}
\bDD := \left\{ \frac14, \frac12, \frac16, \frac7{10} \right\}.
\end{equation}
For these special values of $t$ the singular locus of $X_t$ is even larger (see Theorem~\ref{theorem:Geer} for its detailed description).

The quartic $X_{1/4}$ that corresponds to the parameter $t = 1/4$ is particularly interesting.
Its equation can be written as
\begin{equation}\label{eq:igusa}
\Big(x_1^4 + x_2^4 + x_3^4 + x_4^4 + x_5^4 + x_6^4\Big) - \frac14\Big(x_1^2 + x_2^2 + x_3^2 + x_4^2 + x_5^2 + x_6^2\Big)^2 = 0
\end{equation}
inside the hyperplane~\eqref{eq:hyperplane}.
It is called the {\sf Igusa quartic}.
The Igusa quartic is singular along a union of 15 lines (that itself forms an intersting configuration~$\CR$, called the {\sf Cremona--Richmond configuration}).
In this sense, $X_{1/4}$ is the most singular of all~$\fS_6$-invariant quartics, except for $X_\infty$
(which is a double quadric, i.e., a quadric with an everywhere non-reduced scheme structure).

The quartic $X_{1/2}$ is known as the {\sf Burkhardt quartic}.
It has the largest symmetry group among the other quartics in this family
(with the exception of~$X_{\infty}$), see~\cite{Coble-PSp} and Lemma~\ref{lemma:quartics-Aut}.
It also has many other interesting properties, see for instance~\cite{Todd36}, \cite{JSBV}, and~\cite[\S5]{Hunt}.

The quartics $X_{1/6}$ and $X_{7/10}$ have been studied in~\cite{CheltsovShramov}, cf.~\cite{Todd33,Todd35,CheltsovShramov-A6}.

The double cover of $\P^4$ branched over the Igusa quartic is called the {\sf Coble fourfold}.
We denote it by $\cY$ and write
\begin{equation*}
\pi \colon \cY \to \P^4
\end{equation*}
for the double covering morphism.
The Coble fourfold can be written as
a complete intersection in the weighted projective space $\P(2,1^6)$ of the hyperplane~\eqref{eq:hyperplane} with the hypersurface
\begin{equation}\label{eq:coble}
x_0^2 = \Big(x_1^4 + x_2^4 + x_3^4 + x_4^4 + x_5^4 + x_6^4\Big) - \frac{1}{4}\Big(x_1^2 + x_2^2 + x_3^2 + x_4^2 + x_5^2 + x_6^2\Big)^2,
\end{equation}
where $x_0$ is the coordinate of weight $2$.
The Coble fourfold $\cY$ is singular along the Cremona--Richmond configuration $\CR$, because so is the Igusa quartic.
Moreover, it has a big group of symmetries:
it carries an action of the symmetric group $\fS_6$ by permutation of coordinates
\begin{equation}\label{eq:action-standard}
g \cdot (x_0 : x_1 : x_2 : x_3 : x_4 : x_5 : x_6) := (x_0 : x_{g(1)} : x_{g(2)} : x_{g(3)} : x_{g(4)} : x_{g(5)} : x_{g(6)}),
\end{equation}
and also the Galois involution $\sigma \colon \cY \to \cY$ of the double cover
\begin{equation}
\label{eq:sigma}
\sigma(x_0 : x_1 : x_2 : x_3 : x_4 : x_5 : x_6) := (-x_0 : x_1 : x_2 : x_3 : x_4 : x_5 : x_6),
\end{equation}
commuting with the symmetric group action.
One can check (see Corollary~\ref{corollary:aut-y}) that they generate the whole automorphism group
\begin{equation*}
\Aut(\cY) \cong \fS_6 \times \mumu_2,
\end{equation*}
where $\mumu_2$ denotes the group of order $2$.
Sometimes it is convenient to twist the action of the symmetric group by the Galois involution.
The obtained action
\begin{equation}\label{eq:action-twisted}
g \diamond (x_0 : x_1 : x_2 : x_3 : x_4 : x_5 : x_6) := (\epsilon(g)x_0 : x_{g(1)} : x_{g(2)} : x_{g(3)} : x_{g(4)} : x_{g(5)} : x_{g(6)}),
\end{equation}
where $g \in \fS_6$
and $\epsilon(g)$ is the sign of the permutation $g$,
is called {\sf the twisted action}.
In contrast, the action~\eqref{eq:action-standard} is called {\sf the natural action}.
It is important not to confuse between these two actions,
so we strongly recommend the reader to keep an eye on them.
Note however, that the actions agree on the alternating group $\fA_6 \subset \fS_6$.
Similarly, if~$G$ is a subgroup of~$\fS_6$,
by the natural and the twisted action of~$G$
on~$\cY$ we mean the restrictions to~$G$ of the natural and the twisted actions of~$\fS_6$, respectively.

Recall that the group $\fS_6$ has outer automorphisms (in fact, the group $\Out(\fS_6)$ is of order~2,
see for instance~\cite{HowardMillsonSnowdenVakil})
characterized by the property that they take a transposition in~$\fS_6$ to a permutation of cycle type~\mbox{$[2,2,2]$};
see Lemma~\ref{lemma:restriction-s6-s5} for other information about outer automorphisms.
If the image of a subgroup~\mbox{$G \subset \fS_6$} under an outer automorphism is not conjugate to~$G$, we call this image a {\sf non-standard} embedding of~$G$.
For instance, we have non-standard embeddings of $\fS_5$, $\fA_5$, $\fS_4 \times \fS_2$, etc.

The first main result of this paper is a construction of two small resolutions of singularities of the Coble fourfold
that are equivariant with respect to maximal proper subgroups of~$\fS_6$;
note that the rank of the $\fS_6$-invariant Weil divisor class group of $\cY$
(with respect both to the natural and the twisted action of~$\fS_6$) equals~$1$, see Corollary~\ref{corollary:rk-1},
hence there are no small resolutions of singularities of~$\cY$ equivariant with respect to the entire group~$\fS_6$.
The varieties~$\cY_{4,2}$ and $\cY_{5,1}$ discussed below already appeared in \cite{FarkasVerra} in a slightly different context.
A smooth quintic del Pezzo surface $S$ is unique up to isomorphism,
and $\Aut(S) \cong \fS_5$, see for instance~\mbox{\cite[\S8.5]{Dolgachev-CAG}}; we fix such an isomorphism.

\begin{theorem}\label{theorem:coble}
Consider the twisted $\fS_6$-action~\eqref{eq:action-twisted} on the Coble fourfold $\cY$.
\begin{itemize}
\item[(i)]
For every non-standard embedding $\fS_4 \times \fS_2 \hookrightarrow \fS_6$
there is an $\fS_4 \times \fS_2$-equivariant small resolution of singularities
\begin{equation*}
\rho_{4,2} \colon \cY_{4,2}  = \Bl_{\bP_0,\bP_1,\bP_2,\bP_3}(\P^2 \times \P^2) \to \cY,
\end{equation*}
where $\Bl_{\bP_0,\bP_1,\bP_2,\bP_3}(\P^2 \times \P^2)$ is the blow up of $\P^2 \times \P^2$ at a general quadruple of points $\bP_0,\bP_1,\bP_2,\bP_3 \in \P^2 \times \P^2$.

\item[(ii)]
For every non-standard embedding $\fS_5 \hookrightarrow \fS_6$
there is an $\fS_5$-equivariant small resolution of singularities
\begin{equation*}
\rho_{5,1} \colon \cY_{5,1}  = \P_S(\cU_3) \to \cY,
\end{equation*}
where $S$ is the quintic del Pezzo surface and $\cU_3$ is a vector bundle of rank $3$ on $S$.

\item[(iii)]
The maps $\rho_{4,2}$ and $\rho_{5,1}$ are isomorphisms over the complement of the Cremona--Richmond configuration $\CR \subset \cY$
and are uniquely defined up to the Galois involution $\sigma$ of $\cY$ over $\P^4$ by the above properties.

\item[(iv)]
For every non-standard embedding $\fS_5 \hookrightarrow \fS_6$ and every subgroup $\fS_4 \subset \fS_5$
there is a unique $\fS_4$-equivariant small birational map
$\theta_1 \colon \cY_{5,1} \dashrightarrow \cY_{4,2}$
such that the diagram
\begin{equation}
\label{diagram:varphi}
\vcenter{\xymatrix{
\cY_{5,1} \ar[dd]_{p} \ar@{-->}[rr]^-{\theta_1} \ar[dr]_{\rho_{5,1}} &&
\cY_{4,2} \ar[dd]^{p_1} \ar[dl]^{\rho_{4,2}} \\
& \cY \\
S \ar[rr]^\varphi &&
\P^2
}}
\end{equation}
commutes, where $p \colon \cY_{5,1} = \P_S(\cU_3) \to S$ is the natural projection,
$p_1$ is the composition $\cY_{4,2} \to \P^2 \times \P^2 \to \P^2$ of the blow up with the first projection,
and $\varphi$ is the unique $\fS_4$-equivariant birational contraction $S \to \P^2$.
\end{itemize}
\end{theorem}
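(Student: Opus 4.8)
My plan is to reduce the whole statement to the local structure of $\cY$ along its singular locus $\CR$ and then to recognize two natural global models. First I would analyze the singularities: starting from~\eqref{eq:coble} (or from the description in Theorem~\ref{theorem:Geer}), one checks that transversally to each of the $15$ lines of $\CR$ the fourfold $\cY$ acquires an ordinary threefold double point $\{uv=zw\}$, so that $\cY$ carries a one-dimensional family of nodes, degenerating further at the $15$ triple points of $\CR$. Since an ordinary threefold double point has exactly two small resolutions, each inserting a single $\P^1$ and interchanged by a flop, any small resolution of $\cY$ replaces each line by a $\P^1$-bundle and thus has a two-dimensional, hence codimension-two, exceptional locus over $\CR$; the choice of resolution amounts to a choice of ruling along each line, subject to a compatibility condition at the triple points. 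The Galois involution~\eqref{eq:sigma} swaps the two sheets of the cover and therefore the two rulings, which is the source of the ``unique up to $\sigma$'' clause in~(iii) and, together with Corollary~\ref{corollary:rk-1}, of the non-existence of an $\fS_6$-equivariant small resolution.

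Next I would construct the two models and the morphisms to $\cY$. For~(ii) the task is to produce the rank-three bundle $\cU_3$ on the quintic del Pezzo surface $S$, equipped with an $\fS_5=\Aut(S)$-linearization, and to exhibit a line bundle on $\P_S(\cU_3)$---of relative degree one twisted by a pullback from $S$---that is nef and big, contracts exactly the flopping $\P^1$'s, and whose section ring has $\mathrm{Proj}$ equal to $\cY$; the associated morphism is $\rho_{5,1}$. For~(i) I would instead realize $\Bl_{\bP_0,\dots,\bP_3}(\P^2\times\P^2)$ as a \emph{different} birational contraction of the same smooth fourfold: the blow-down to $\P^2\times\P^2$ and the two projections are some faces of its movable cone, and $\rho_{4,2}$ is the face whose contraction is small onto $\cY$. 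In both constructions the equivariance should be automatic, provided one uses the twisted action~\eqref{eq:action-twisted} and matches the acting subgroup to the evident symmetry of the model ($\fS_5=\Aut(S)$ in one case, and $\fS_4\times\fS_2$ permuting the two factors and the four points in the other).

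It then remains to verify smallness, the uniqueness statements, and~(iv). Smallness of $\rho_{4,2}$ and $\rho_{5,1}$, as well as the first assertion of~(iii), follow at once from the local picture: over $\P^4$ minus the Igusa quartic the cover is étale and the maps are isomorphisms, while over $\CR$ the only fibers are the inserted $\P^1$'s, so nothing of codimension one is contracted. The uniqueness up to $\sigma$ reduces to the observation that a globally consistent ruling choice forms a torsor on which $\sigma$ acts transitively. For~(iv) I would set $\theta_1:=\rho_{4,2}^{-1}\circ\rho_{5,1}$ for compatibly chosen $\sigma$-representatives; being a composite of maps that are isomorphisms away from $\CR$, it is small, and the diagram~\eqref{diagram:varphi} is verified by tracing the two bundle structures over the unique $\fS_4$-equivariant contraction $\varphi\colon S=\Bl_{4}\P^2\to\P^2$. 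Uniqueness of $\theta_1$ follows by a rigidity argument: two $\fS_4$-equivariant small birational maps between these fixed models that agree over the generic point of $\cY$ must coincide.

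The main obstacle is the explicit construction in the second step. Identifying the correct bundle $\cU_3$ and the correct quadruple of points $\bP_0,\dots,\bP_3$, and proving that the chosen nef and big line bundles cut out precisely $\cY$ (and not merely some small modification of it), is where the representation theory of $\fS_5$ and $\fS_6$ together with the detailed geometry of the Igusa quartic---in particular its projective duality with the Segre cubic---has to be brought in. A secondary but genuine difficulty is the global combinatorics of the ruling choices at the $15$ triple points of $\CR$: one must check that an equivariant, globally consistent choice exists at all, which is exactly what forces the resolutions to be equivariant only for proper subgroups of $\fS_6$.
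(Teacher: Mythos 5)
Your outline identifies the right two models, but it defers or waves away exactly the steps that constitute the actual proof. The central point of both (i) and (ii) is not that $\Bl_{\bP_0,\dots,\bP_3}(\P^2\times\P^2)$ and $\P_S(\cU_3)$ admit small contractions, but that the resulting double cover of $\P(\WW_5)$ is branched over the \emph{Igusa} quartic, so that its Stein factorization passes through $\cY$ and not through some other singular double quartic solid. You explicitly list this as "the main obstacle" and do not address it; the paper resolves it by two independent arguments: for $\cY_{4,2}$, a two-step linear projection (first from $\langle\bP_1,\bP_2,\bP_3\rangle$ onto the Perazzo cubic $y_1y_2y_3=z_1z_2z_3$, then from $\bP'_0$), with the branch locus computed explicitly as~\eqref{eq:igusa-yz} and matched to~\eqref{eq:igusa} by the substitution~\eqref{eq:xyz}; for $\cY_{5,1}$, by showing the branch divisor of $\pi_{5,1}$ is projectively dual to the Segre cubic $Z=\varpi(\P_S(\cU_2))$. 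Without one of these identifications there is no morphism to $\cY$ at all, and your plan for (i) — reading $\rho_{4,2}$ off the movable cone of a fourfold already known to dominate $\cY$ — is circular.

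Two further gaps. First, equivariance is not "automatic": the theorem asserts that the induced embeddings $\fS_{4,2}\hookrightarrow\fS_6$ and $\fS_5\hookrightarrow\fS_6$ are \emph{non-standard} and that the action on $\cY$ is the \emph{twisted} one, and both require proof. The paper checks non-standardness by computing cycle types (or comparing restrictions of $\WW_5$), and determines the twist for $\rho_{4,2}$ by verifying that transpositions change the sign of $x_0$ in~\eqref{eq:choice-rho-pp}; notably, for $\rho_{5,1}$ the Stein factorization argument only shows equivariance for the natural \emph{or} the twisted action, and the twist is pinned down only a posteriori via part (iv). Second, in (iv) your definition $\theta_1:=\rho_{4,2}^{-1}\circ\rho_{5,1}$ makes the inner triangle commute by fiat but leaves the commutativity of the outer square (relating $p$, $\varphi$, $p_1$) entirely unverified — this is the nontrivial content of (iv). The paper goes the other way: it constructs $\theta_1$ from the elementary modification $0\to\cE\to\cU_3\to\bigoplus\cO_{E_i}(-1)\to 0$ together with the identification $\cE\cong\cO_S(-\ell)^{\oplus 3}$, which makes the outer square commute by construction, and only then checks that $\rho_{4,2}\circ\theta_1$ is one of the two admissible choices of $\rho_{5,1}$. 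Your local analysis of $\cY$ along $\CR$ (transversal ODPs, ruling choices at the triple points) is a reasonable heuristic for why small resolutions should exist and why full $\fS_6$-equivariance fails, but it is not used, and cannot substitute for, the global constructions above.
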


The vector bundle $\cU_3$ is described explicitly in~\S\ref{subsection:y51}.

The Coble fourfold is constructed from the Igusa quartic $X_{1/4}$,
but it turns out that it has a very interesting property with respect to all $\fS_6$-invariant quartics.
Since the pencil~$\{X_t\}$ is generated by $X_{1/4}$ and the double quadric $X_\infty$, we have
\begin{equation*}
X_{\frac14} \cap X_t = X_\infty \cap X_t
\qquad
\text{for any $t \not\in \left\{\frac14,\infty\right\}$.}
\end{equation*}
Hence the restriction of $X_{1/4}$ to $X_t$ has multiplicity 2, so that the double cover $\pi \colon \cY \to \P^4$ splits over $X_t$.
In other words, $\pi^{-1}(X_t)$ is the union of two irreducible components that are isomorphic to~$X_t$ and are swapped by the Galois involution~\eqref{eq:sigma}.
It is natural here to replace the parameter~$t$ in the pencil with the new parameter $\tau$ defined by
\begin{equation}\label{eq:t-tau}
t = \frac{\tau^2 + 1}{4},
\end{equation}
and define the subvarieties $\cX_\tau \subset \cY \subset \P(2,1^6)$ by~\eqref{eq:hyperplane}, \eqref{eq:coble}, and the formula
\begin{equation}\label{eq:cxtau}
x_0 + \frac{\tau}2\Big(x_1^2 + x_2^2 + x_3^2 + x_4^2 + x_5^2 + x_6^2\Big) = 0.
\end{equation}
Note that $\cX_\tau \subset \cY$ is fixed by the natural action of $\fS_6$, but \emph{is not fixed} by the twisted action.
This trivial observation leads to various reductions of groups of symmetries.

With this definition of $\cX_\tau$ we have an equality (see Lemma~\ref{lemma:cxtau-xt})
\begin{equation*}
\pi^{-1}\left(X_{\frac{\tau^2 + 1}{4}}\right) = \cX_\tau \cup \cX_{-\tau}.
\end{equation*}
The map $\sigma \colon \cX_\tau \to \cX_{-\tau}$ is an isomorphism,
and the map $\pi \colon \cX_\tau \to X_{(\tau^2+1)/4}$ is an isomorphism for all $\tau \ne \infty$.
The map~\mbox{$\pi \colon \cX_\infty \to (X_\infty)_{\mathrm{red}}$} is the double covering
branched over $(X_\infty)_{\mathrm{red}} \cap X_{1/4}$.
Thus, the threefolds~$\cX_\tau$ have the same singularities as the quartics~$X_t$
(except for $\cX_\infty$ which becomes smooth away from the $\fS_6$-orbit $\Sigma_{30}$, see Remark~\ref{remark:double-quadric-Sing}).

We consider the preimages of the divisors $\cX_\tau$ in the small resolutions $\cY_{5,1}$ and $\cY_{4,2}$:
\begin{equation}\label{eq:beauville-pullback-51-42}
\cX_\tau^{5,1} := \rho_{5,1}^{-1}(\cX_\tau),
\qquad
\cX_\tau^{4,2} := \rho_{4,2}^{-1}(\cX_\tau).
\end{equation}
Because of the mixture of the natural and the twisted action, the natural groups of symmetries of the maps
$\rho_{5,1} \colon \cX^{5,1}_\tau \to \cX_\tau$ and $\rho_{4,2} \colon \cX^{4,2}_\tau \to \cX_\tau$
(that is, the groups with respect to which these maps are equivariant)
get smaller.
In particular, for $\tau \ne 0,\infty$ the first of them reduces to $\fA_5$ and the other to
\begin{equation*}
\fA_{4,2} := (\fS_4 \times \fS_2) \cap \fA_6 \cong \fS_4.
\end{equation*}

Our second main result is the following.
Recall the discriminant set $\bDD$ defined in~\eqref{def:dd}.

\begin{theorem}
\label{theorem:three-pencils}
The maps
\begin{equation*}
\rho_{5,1} \colon \cX^{5,1}_\tau \to \cX_\tau
\qquad\text{and}\qquad
\rho_{4,2} \colon \cX^{4,2}_\tau \to \cX_\tau
\end{equation*}
are birational contractions for all $\tau$, and are small for $\tau \ne 0$.
Similarly, the maps
\begin{equation*}
\pi\circ\rho_{5,1} \colon \cX^{5,1}_\tau \to X_{\frac{\tau^2+1}{4}}
\qquad\text{and}\qquad
\pi\circ\rho_{4,2} \colon \cX^{4,2}_\tau \to X_{\frac{\tau^2+1}{4}}
\end{equation*}
are birational contractions for all $\tau\neq\infty$,
and are small for $\tau \ne 0,\infty$.
Moreover, $\cX^{5,1}_\tau$ is smooth \textup{(}and thus $\rho_{5,1}$ is a small resolution of singularities of $\cX_\tau$\textup{)} unless
\begin{equation*}
t=\frac{\tau^2 + 1}{4} \in \bDD.
\end{equation*}
The above maps are equivariant with respect to the following group actions:
\begin{equation*}
\begin{array}{|c|c|c|}
\hline
& \rho_{5,1} \text{ or } \pi \circ \rho_{5,1} & \rho_{4,2} \text{ or } \pi \circ \rho_{4,2} \\
\hline
\tau \ne 0, \infty & \fA_5 & \fA_{4,2} \\
\hline
\tau = 0 \text{ or } \tau = \infty & \fS_5 & \fS_{4,2} \\
\hline
\end{array}
\end{equation*}
where all subgroups of $\fS_6$ are non-standard and the action is twisted.
\end{theorem}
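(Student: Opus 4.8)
The plan is to deduce every assertion by restricting the two small resolutions of the Coble fourfold furnished by Theorem~\ref{theorem:coble} to the divisors $\cX_\tau$, reading off the local geometry directly from the defining equation~\eqref{eq:cxtau}. By Theorem~\ref{theorem:coble}(iii) both $\rho_{5,1}$ and $\rho_{4,2}$ are isomorphisms over $\cY\setminus\CR$, so their restrictions to $\cX_\tau^{5,1}=\rho_{5,1}^{-1}(\cX_\tau)$ and $\cX_\tau^{4,2}=\rho_{4,2}^{-1}(\cX_\tau)$ are projective birational morphisms that are isomorphisms away from the preimages of $\cX_\tau\cap\CR$; this already yields that all four maps are birational contractions. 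For the versions landing in $X_{(\tau^2+1)/4}$ one composes with $\pi$, which restricts to an isomorphism $\cX_\tau\xrightarrow{\sim}X_{(\tau^2+1)/4}$ for $\tau\neq\infty$, accounting for the exclusion of $\tau=\infty$ there.

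The smallness is controlled by the finite-versus-divisorial dichotomy of $\cX_\tau\cap\CR$. Along $\CR$ we have $x_0=0$ on $\cY$, since $\CR$ lies in the branch locus of $\pi$; hence the equation~\eqref{eq:cxtau} of $\cX_\tau$ reduces there to $\tfrac{\tau}{2}(x_1^2+\cdots+x_6^2)=0$. For $\tau\neq 0$ this is the condition $x_1^2+\cdots+x_6^2=0$, cutting the fifteen lines of $\CR$ in the thirty points $\Sigma_{30}=\CR\cap\{x_1^2+\cdots+x_6^2=0\}$ (see~\S\ref{subsection:xt}); thus $\cX_\tau\cap\CR$ is finite, the exceptional set over it is the disjoint union of the one-dimensional fibres of the small resolution, and the restricted map is small. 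For $\tau=0$ the equation becomes $x_0=0$, vanishing identically on $\CR$, so $\CR\subset\cX_0$ and $\cX_0^{5,1}$, $\cX_0^{4,2}$ contain the whole two-dimensional exceptional locus over $\CR$; this divisor is contracted, so the maps are not small. This settles the first two rows.

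For smoothness we use $\cX_\tau\cong X_{(\tau^2+1)/4}$ for $\tau\neq\infty$, so by Theorem~\ref{theorem:Geer} the singular locus of $\cX_\tau$ is exactly $\Sigma_{30}$ when $t=(\tau^2+1)/4\notin\bDD$ and is strictly larger when $t\in\bDD$. Assume $t\notin\bDD$. As $\Sigma_{30}\subset\CR$, over $\cY\setminus\CR$ the map $\rho_{5,1}$ is an isomorphism onto a smooth open subset of $\cX_\tau$, so $\cX_\tau^{5,1}$ is smooth there; it remains to verify smoothness of $\cX_\tau^{5,1}$ along $\rho_{5,1}^{-1}(\Sigma_{30})$ by a local computation in the $\P^2$-bundle $\cY_{5,1}=\P_S(\cU_3)$, showing that the ordinary double points of $\cX_\tau$ at $\Sigma_{30}$ are resolved upon passing to $\cY_{5,1}$. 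Conversely, for $t\in\bDD$ with $\tau\neq 0$ the extra singularities of $\cX_\tau$ produced by Theorem~\ref{theorem:Geer} lie off $\CR$, where $\rho_{5,1}$ is an isomorphism, and hence persist in $\cX_\tau^{5,1}$; while for $\tau=0$ (that is, $t=\tfrac14$) the map is already not small. The hard part is precisely this local resolution analysis over $\Sigma_{30}$, together with locating the extra singular loci of Theorem~\ref{theorem:Geer} relative to $\CR$.

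Finally, the equivariance table comes from the interplay of the two actions. The natural and twisted $\fS_6$-actions agree on $\fA_6$, and $\cX_\tau$ is invariant under the \emph{natural} action, whereas for odd $g$ the twisted action sends $\cX_\tau$ to $\sigma(\cX_\tau)=\cX_{-\tau}$. Hence inside the twisted $\fS_5$ (along which $\rho_{5,1}$ is equivariant) the stabilizer of $\cX_\tau$ is $\fS_5\cap\fA_6=\fA_5$ for $\tau\neq 0,\infty$, while the divisors $\cX_0=\{x_0=0\}$ and $\cX_\infty=\{x_1^2+\cdots+x_6^2=0\}$ are $\sigma$-invariant, so the full twisted $\fS_5$ preserves them. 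The identical argument with the twisted $\fS_{4,2}=\fS_4\times\fS_2$, along which $\rho_{4,2}$ is equivariant, gives $\fA_{4,2}=\fS_{4,2}\cap\fA_6$ for $\tau\neq 0,\infty$ and the full $\fS_{4,2}$ for $\tau=0,\infty$, completing the table.
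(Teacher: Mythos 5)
Your overall strategy coincides with the paper's: restrict the two small resolutions of $\cY$ to the divisors $\cX_\tau$, reduce smallness to the finiteness of $\cX_\tau\cap\CR$ for $\tau\ne 0$ (via $x_0=0$ on $\CR$ and $\CR\cap Q_\infty=\Sigma_{30}$), and read off the equivariance table from the fact that odd permutations act in the twisted action as the natural action composed with $\sigma$, hence send $\cX_\tau$ to $\cX_{-\tau}$. Those three parts of your argument are correct and essentially identical to what the paper does (Corollary~\ref{corollary:r51-small}, Remark~\ref{remark:rho-equivariance}, Lemma~\ref{lemma:cxtau-xt}).

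The gap is the smoothness assertion, which you reduce to ``a local computation in the $\P^2$-bundle showing that the nodes at $\Sigma_{30}$ are resolved'' and then explicitly defer as ``the hard part.'' That computation is genuinely the content of the claim and cannot be omitted: a small contraction of a threefold with nodes need not have smooth source, so nothing so far forces $\cX^{5,1}_\tau$ to be smooth over $\Sigma_{30}$. The paper closes this in Proposition~\ref{proposition:x51-smooth} without any local coordinate work, by three observations: (a) a Cartier divisor in a smooth fourfold is smooth along its intersection with a smooth surface as soon as the scheme-theoretic intersection is a smooth curve; (b) since $\cX^{5,1}_0$ (the ramification divisor of $\pi_{5,1}$) contains every exceptional component $R_L$ and $R_\varphi$, the pencil structure gives $\cX^{5,1}_\tau\cap R_L=\cX^{5,1}_\infty\cap R_L$ and $\cX^{5,1}_\tau\cap R_\varphi=\cX^{5,1}_\infty\cap R_\varphi$ for every $\tau\ne 0$; and (c) $\cX^{5,1}_\infty=\pi_{5,1}^{-1}(Q_\infty)$ meets each $R_L$ in two disjoint lines and each $R_\varphi$ in two disjoint conics, because $Q_\infty$ is transversal to the lines of $\CR$ away from $\Upsilon_{15}$ (Remark~\ref{remark:lines-vs-quadric}). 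This shows $\cX^{5,1}_\tau$ is smooth along the whole exceptional locus for every $\tau\ne0$, whence $\Sing(\cX^{5,1}_\tau)=\Sing(\cX_\tau)\setminus\CR$, which is empty exactly when $t\notin\bDD$. A further small omission: your smoothness reduction goes through $\cX_\tau\cong X_t$ and Theorem~\ref{theorem:Geer}, which excludes $\tau=\infty$; that case (where $t=\infty\notin\bDD$, so smoothness is still claimed) needs $\Sing(\cX_\infty)=\pi^{-1}(\Sigma_{30})\subset\CR$ from Remark~\ref{remark:double-quadric-Sing} instead.
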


We use the above results to construct an interesting (birational) conic bundle structure on the quartics $X_t$ as follows.
The fourfold $\cY_{5,1} = \P_S(\cU_3)$ by definition comes with a~$\P^2$-fibration $p \colon \cY_{5,1} \to S$ over the quintic del Pezzo surface $S$.
We consider its restriction to the threefolds~$\cX^{5,1}_\tau \subset \cY_{5,1}$.
We show that the maps
\begin{equation*}
p \colon \cX^{5,1}_\tau \to S
\end{equation*}
are $\fA_5$-equivariant conic bundles (and for $\tau = 0, \infty$ they are $\fS_5$-equivariant).
We also discuss their properties, and identify their discriminant curves in $S$ with the Wiman--Edge pencil
(see~\S\ref{subsection:WE} for its definition and the choice of parameterization) of $\fA_5$-invariant divisors from the linear system~\mbox{$|-2K_S|$}.

All this is combined in our third main result.
Recall that a flat conic bundle $\cX \to S$ is called {\sf standard} if both $\cX$ and $S$ are smooth and the relative Picard rank $\rho(\cX/S)$ equals~$1$.

\begin{theorem}\label{theorem:conic-bundles}
The map $p \colon \cX^{5,1}_\tau \to S$ is a flat conic bundle, equivariant with respect to the group~$\fA_5$ \textup{(}for $\tau = 0,\infty$ it is $\fS_5$-equivariant\textup{)}.
It is a standard conic bundle unless
\begin{equation*}
t = \frac{\tau^2 + 1}4 \in \bDD.
\end{equation*}
Its discriminant locus is the curve $\Delta_{s(\tau)} \subset S$ from the Wiman--Edge pencil, where
\begin{equation}\label{eq:s-via-tau}
s(\tau)  = \frac{\tau^3 - \tau}{5\tau^2 + 3}
\end{equation}
for an appropriate choice of the resolution $\rho_{5,1}$.
\end{theorem}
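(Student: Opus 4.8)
The plan is to work entirely inside the projective bundle $\cY_{5,1} = \P_S(\cU_3)$, using the explicit description of the rank-three bundle $\cU_3$ from~\S\ref{subsection:y51}. Write $\xi$ for the relative tautological class $c_1(\cO_{\P_S(\cU_3)}(1))$ and $p$ for the projection to $S$, so that $\Pic(\cY_{5,1})$ is generated by $\xi$ and $p^*\Pic(S)$. The first step is to compute the class $[\cX^{5,1}_\tau] = a\xi + p^*D$ for suitable $a \in \ZZ$ and $D \in \Pic(S)$, and I expect $a = 2$. Indeed $\cX_\tau$ is cut out in $\cY$ by the weight-two equation~\eqref{eq:cxtau}, which is quadratic in the weight-one coordinates $x_1,\dots,x_6$ and linear in the weight-two coordinate $x_0$; since under $\rho_{5,1}$ the fibre coordinates of $p$ correspond to the sections realizing $\cU_3$, the equation~\eqref{eq:cxtau} pulls back to a quadratic form in these fibre coordinates with coefficients that are sections of line bundles on $S$. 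Restricting to a fibre $F \cong \P^2$ of $p$, this form cuts out a plane conic $\cX^{5,1}_\tau \cap F$, as required.

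Flatness then follows by miracle flatness: $\cX^{5,1}_\tau$ is an effective Cartier divisor in the smooth variety $\cY_{5,1}$, hence Cohen--Macaulay, so if no whole fibre $F$ is contained in $\cX^{5,1}_\tau$ then every fibre of $p\colon \cX^{5,1}_\tau \to S$ is one-dimensional and the map to the smooth base $S$ is flat with conic fibres. The non-containment of a fibre is a consequence of the relative degree being exactly two together with the irreducibility of $\cX^{5,1}_\tau$ furnished by Theorem~\ref{theorem:three-pencils}; equivalently, the defining quadratic form does not vanish identically on any fibre. The $\fA_5$-equivariance of $p$ (respectively $\fS_5$-equivariance for $\tau = 0,\infty$) is inherited from the equivariance statements of Theorem~\ref{theorem:three-pencils}.

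Next I would compute the discriminant. Having written $\cX^{5,1}_\tau \cap F$ as the zero locus of a symmetric $3\times 3$ matrix $M_\tau$ of sections over $S$, the discriminant curve is $\{\det M_\tau = 0\}$, which by the general theory of conic bundles lies in a fixed linear system on $S$ independent of $\tau$; a Chern-class computation should identify this system with $|-2K_S|$. Since $M_\tau$ can be chosen $\fA_5$-equivariantly (and $\fS_5$-equivariantly for $\tau = 0,\infty$), its determinant is an $\fA_5$-invariant section of $-2K_S$, and hence lies in the Wiman--Edge pencil by its very definition in~\S\ref{subsection:WE}. The final and hardest step is to pin down the parameter: substituting the explicit entries of $M_\tau$ and expanding $\det M_\tau$, I would match the resulting $\fA_5$-invariant quartic against the chosen parameterization $\Delta_s$ and read off $s(\tau) = (\tau^3-\tau)/(5\tau^2+3)$. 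The bulk of the work, and the main obstacle, is precisely this explicit determinant computation and the normalization against the fixed parameterization of the pencil; the cubic-in-$\tau$ shape of $s(\tau)$, together with the odd symmetry $s(-\tau) = -s(\tau)$ matching the Galois swap $\cX_\tau \leftrightarrow \cX_{-\tau}$ induced by $\sigma$, provides a useful consistency check.

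Finally, the ``standard'' claim reduces to facts already in hand. A flat conic bundle is standard exactly when the total space is smooth and the relative Picard rank equals one. Theorem~\ref{theorem:three-pencils} gives the smoothness of $\cX^{5,1}_\tau$ precisely when $t = (\tau^2+1)/4 \notin \bDD$, and the relative Picard rank is one because $\cX^{5,1}_\tau$ is a divisor of relative degree two in the $\P^2$-bundle with irreducible generic fibre, so that $\Pic(\cX^{5,1}_\tau) = \ZZ\,\xi|_{\cX^{5,1}_\tau} \oplus p^*\Pic(S)$ and $\rho(\cX^{5,1}_\tau/S) = 1$. Hence $p\colon \cX^{5,1}_\tau \to S$ is a standard conic bundle if and only if $t \notin \bDD$, which completes the argument.
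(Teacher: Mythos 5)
Your overall architecture --- realize $\cX^{5,1}_\tau$ as the zero locus of a section of $\cO_{\P_S(\cU_3)}(2)$, i.e.\ a symmetric morphism $\cU_3\to\cU_3^\vee$, read off from $\det\cU_3\cong\omega_S$ that the discriminant lies in $|-2K_S|$, and then pin down the member of the Wiman--Edge pencil by an explicit determinant --- is exactly the paper's, and the computation you defer is Lemma~\ref{lemma:WE-discriminant}: the paper makes your matrix $M_\tau$ computable by transporting everything to the Verra threefold $\bar\cX^{4,2}_\tau\subset\P^2\times\P^2$ via the compatibility diagram~\eqref{diagram:varphi} and formulas~\eqref{eq:choice-rho-pp}, obtaining $12\det(q_0+\tau q_\infty)=(5\tau^2+3)\oP_0+(\tau^3-\tau)\oP_\infty$ and then passing to proper transforms on $S$. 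However, two of your deductions are genuinely invalid. First, flatness: you claim no fibre $F\cong\P^2$ can lie inside $\cX^{5,1}_\tau$ because the divisor is irreducible of relative degree two. That implication is false, and the paper itself contains the counterexample: $\bar\cX^{4,2}_0\to\P^2$ is an irreducible relative-degree-two divisor whose fibres over the four points $P_i$ are entire planes, since $q_0(u)$ vanishes identically there (see \S\ref{subsection:cx42-tau}). The paper instead deduces flatness from the fact that a non-flat point is a point of multiplicity at least $3$ on the discriminant curve, while the Wiman--Edge curves have only nodes (Theorem~\ref{theorem:WE-singular-curves}); so the discriminant must be identified \emph{before} flatness can be concluded, not after.

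Second, and more seriously, your argument that $\rho(\cX^{5,1}_\tau/S)=1$ because a degree-two divisor in a $\P^2$-bundle with irreducible generic fibre must have $\Pic=\ZZ\,\xi\oplus p^*\Pic(S)$ is false and would prove too much. Grothendieck--Lefschetz does not apply, since $2\xi=\pi_{5,1}^*\cO_{\P(\WW_5)}(2)$ is big and nef but not ample (it is trivial on the $15$ surfaces $R_L$ and $R_\varphi$), and a flat conic bundle with smooth total space and irreducible generic fibre can perfectly well have relative Picard rank greater than one: this happens for the conic bundles $\widetilde{\cX}^{5,1}_\tau\to\tS$ appearing in the proof of Theorem~\ref{theorem:Cheltsov-Shramov}, whose relative class groups are computed in Corollary~\ref{corollary:relaive-class-group}. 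Nothing in your argument uses $t\notin\bDD$ beyond smoothness, whereas smoothness alone does not force $\rho(\cX/S)=1$. The paper closes this gap by using that $\rho_{5,1}\colon\cX^{5,1}_\tau\to\cX_\tau$ is small, hence $\Pic(\cX^{5,1}_\tau)\cong\Cl(\cX_\tau)$, and then invoking the defect computation $\rkCl(\cX_\tau)=6$ for $\tau\notin\bDDtau$ (Lemma~\ref{lemma:Cl-Z6-upstairs}, which rests on Cynk's defect formula and Beauville's cohomological computation); equivalently one must exclude a rational section of the conic bundle, which the paper does elsewhere via irrationality. Without some such nontrivial input the ``standard'' claim does not follow.
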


We apply the above results in several ways.
First, we prove unirationality of $\fS_6$-invariant quartics~$X_t$ (see Corollary~\ref{corollary:xt-unirational}).
Further, we give a new and uniform proof of rationality and irrationality of the quartics~$X_t$.
For $t \not\in \bDD$ irrationality follows from the description of the intermediate Jacobian
of a resolution of singularities of $X_t$ via the Prym variety arising from the conic bundle, see Theorem~\ref{theorem:Beauville}.
For $t \in \bDD$ we show that the conic bundle can be transformed birationally into the product $S \times \P^1$,
hence $X_t$ is rational, see Theorem~\ref{theorem:Cheltsov-Shramov}.
Finally, we describe the class groups~$\Cl(X_t)$ of Weil divisors of the quartics~$X_t$ as $\fS_6$-representations (see Theorem~\ref{theorem:Cl}),
and discuss $G$-Sarkisov links centered at these quartics for some subgroups $G \subset \fS_6$.
We also prove unirationality and irrationality of the threefold~$\cX_\infty$, and describe
its class group as an $\fS_6\times\mumu_2$-representation.

\smallskip
The plan of our paper is the following.
In~\S\ref{section:resolutions} we construct the resolutions of the Coble fourfold~$\cY$ and prove Theorem~\ref{theorem:coble}.
In~\S\ref{section:conic-bundles} we discuss the conic bundle structures on the $\fS_6$-invariant quartics induced by the resolutions of the Coble fourfold,
and prove Theorems~\ref{theorem:three-pencils} and~\ref{theorem:conic-bundles}.
In~\S\ref{section:applications} we prove rationality and irrationality of the quartics $X_t$,
and in \S\ref{section:representation-class-groups} we describe the $\fS_6$-action on their class groups.
In Appendix~\ref{section:CR} we discuss the Cremona--Richmond configuration $\CR = \Sing(X_{1/4})$ of~15 lines in $\P^4$
and show that such configuration is unique up to a projective transformation of $\P^4$.

\smallskip
Throughout the paper $\Bbbk$ denotes an algebraically closed field of characteristic zero; however, many constructions do not use the assumption that the field is algebraically closed.
By $\mumu_n$ we denote the cyclic group of order $n$.
Furthermore, we denote by
\begin{equation}
\label{eq:sn1n2}
\fS_{n_1,n_2} \cong \fS_{n_1} \times \fS_{n_2} \subset \fS_{n_1 + n_2}
\qquad\text{and}\qquad
\fA_{n_1,n_2} = \fA_{n_1 + n_2} \cap \fS_{n_1,n_2} \subset \fA_{n_1 + n_2}
\end{equation}
the subgroup of $\fS_{n_1 + n_2}$ that consists of permutations preserving the subsets of the first~$n_1$ and the last~$n_2$ indices,
and its intersection with the alternating group~$\fA_{n_1+n_2} \subset \fS_{n_1 + n_2}$.
Note that~$\fA_{n-2,2} \cong \fS_{n-2}$.

\smallskip
We are grateful to A.\,Beauville, S.\,Bloch, I.\,Dolgachev, G.\,Kapustka, D.\,Markushevich,  Yu.\,Prokhorov, and E.\,Tevelev,
for useful discussions.
We also thank the referee for valuable remarks.
This paper was written during the first author's stay at the Max Planck Institute for Mathematics in~2017.
He would like to thank the institute for the excellent working conditions.
All authors were partially supported by the HSE University Basic Research Program,
Russian Academic Excellence Project~\mbox{``5--100''}.
The second and the third authors were also supported
by RFBR grants~15-01-02164 and 15-01-02158.
The third author was also supported by Young Russian Mathematics award.

\section{Small resolutions of the Coble fourfold}
\label{section:resolutions}

Recall that the fourfold $\cY$ is defined by~\eqref{eq:coble} as the double cover of $\P^4$
(considered as the hyperplane~\eqref{eq:hyperplane} in $\P^5$) branched over the Igusa quartic~\eqref{eq:igusa}.
It comes with the natural and the twisted actions of the symmetric group $\fS_6$, see~\eqref{eq:action-standard} and~\eqref{eq:action-twisted},
the double covering~\mbox{$\pi \colon \cY \to \P^4$} and its Galois involution $\sigma \colon \cY \to \cY$, see~\eqref{eq:sigma},
commuting with both actions of $\fS_6$.

The fourfold $\cY$ has been studied by Coble in \cite{Coble1,Coble2,Coble3}.
He showed that~$\cY$ is a compactification of the moduli space
of ordered sets of $6$ points in the projective plane.
A modern treatment of $\cY$ has been given in \cite{DolgachevOrtland,MatsumotoSasakiYoshida,Hunt,HowardMillsonSnowdenVakil},
see also~\cite{BauerVerra}.
In particular, Dolgachev and Ortland proved in \cite{DolgachevOrtland} that $\cY$ can be obtained as
the GIT-quotient $(\mathbb{P}^2)^6 \sslash \mathrm{SL}_{3}(\Bbbk)$ with respect to the diagonal action of~$\mathrm{SL}_3(\Bbbk)$.
In~\cite{CMS18} the variety $\cY$ came up in the study of moduli spaces of K3 surfaces.
In~\cite{Hunt}, Hunt called it the Coble variety (he also denoted it by $\cY$).
In the current paper we prefer to call~$\cY$ the {\sf Coble fourfold}.

Since the Coble fourfold $\cY$ is singular, it is interesting to construct its
resolution of singularities that would be natural from the geometric point of view.
One interesting resolution was provided by Naruki~\cite{Naruki}, see also~\cite{HKT} and~\cite[\S2]{DGK}.
It has plenty of important properties due to its interpretation as a moduli space of cubic surfaces.
However, it is quite big (it has a horde of exceptional divisors).
On the other hand, one can observe that the variety~$\cY$ has non-$\mathbb{Q}$-factorial singularities, so we can hope to have
a nice \emph{small} resolution (i.e., with exceptional locus of codimension~2).

In this section we construct two small resolutions of singularities of $\cY$;
one is equivariant with respect to the subgroup $\fS_{4,2} \subset \fS_6$,
and another is equivariant with respect to the subgroup $\fS_5 \subset \fS_6$.
Note that in both cases a \emph{non-standard} embedding of the subgroup is used
(equivalently, a standard embedding is composed with an outer automorphism of~$\fS_6$),
and in both cases we consider the \emph{twisted} action of $\fS_6$ on $\cY$.

\subsection{Blow up of $\P^2 \times \P^2$}\label{subsection:y42}

Let $\WW_3$ be the irreducible three-dimensional representation of the symmetric group~$\fS_4$
with the non-trivial determinant, i.e., a summand of the four-dimensional permutation representation.
Explicitly, $\WW_3 \cong \rR(3,1)$ in the notation of~\cite[\S4.1]{FultonHarris}.
Choose a $\fS_4$-orbit of length 4
\begin{equation*}
\{P_0,P_1,P_2,P_3\} \subset \P(\WW_3) \cong \P^2.
\end{equation*}
In appropriate coordinates such quadruple can be written as
\begin{equation}\label{eq:points}
P_0 = (1:1:1),\ P_1 = (1:0:0),\ P_2 = (0:1:0),\ P_3 = (0:0:1).
\end{equation}
Denote by
\begin{equation*}
\overline{P_iP_j} \subset \P(\WW_3),
\qquad 0 \leqslant i < j \leqslant 3,
\end{equation*}
the line passing through the points $P_i$ and $P_j$.

Consider the diagonal action of $\fS_4$ on $\P(\WW_3) \times \P(\WW_3)$ and the diagonal quadruple
\begin{equation*}
\bP = \{\bP_0,\bP_1,\bP_2,\bP_3\}  \subset \P(\WW_3) \times \P(\WW_3),
\qquad
\bP_i = (P_i,P_i).
\end{equation*}
Note that $\bP$ is an $\fS_4$-orbit.
The vector space $\WW_3 \otimes \WW_3$ can be regarded as a representation of the group $\fS_{4,2}$, see~\eqref{eq:sn1n2},
where $\fS_4$ acts diagonally and the non-trivial element of $\fS_2$ interchanges the factors.
The linear span of the points~$\bP_i$ in $\P(\WW_3 \otimes \WW_3)$ induces an embedding of the permutation representation $\Bbbk^4$ of~$\fS_4$
(with the trivial action of $\fS_2$) into~\mbox{$\WW_3 \otimes \WW_3$}.
We denote by
\begin{equation}\label{eq:w-w3w3}
\WW_5 {}:={} (\WW_3 \otimes \WW_3)/\Bbbk^4
\end{equation}
the quotient five-dimensional representation of $\fS_{4,2}$.
Note that as a representation of $\fS_4$ it is the direct sum $\WW_5\vert_{\fS_4} \cong \rR(2,2) \oplus \rR(2,1,1)$;
here we again use the (standard) notation of~\mbox{\cite[\S4.1]{FultonHarris}}.

The linear projection $\WW_3 \otimes \WW_3 \to \WW_5$ induces a rational map
\begin{equation*}
\bar\pi_{4,2} \colon \P(\WW_3) \times \P(\WW_3)
\hookrightarrow \P(\WW_3 \otimes \WW_3)
\dashrightarrow \P(\WW_5).
\end{equation*}
Note that the center of this projection is the linear span of $\bP$ in $\P(\WW_3\otimes\WW_3)$, which
intersects~\mbox{$\P(\WW_3) \times \P(\WW_3)$} exactly by~$\bP$.
Therefore, to regularize the map $\bar\pi_{4,2}$ we should consider
the blow up~$\cY_{4,2}$ of~\mbox{$\P(\WW_3) \times \P(\WW_3)$} in the quadruple $\bP$:
\begin{equation}
\label{eq:y42}
\cY_{4,2} {}:={} \Bl_{\bP_0,\bP_1,\bP_2,\bP_3}(\P(\WW_3) \times \P(\WW_3)) \xrightarrow{\ \beta\ } \P(\WW_3) \times \P(\WW_3)
\end{equation}
with $\beta$ being the blow up morphism.
This induces a commutative diagram
\begin{equation}
\label{diagram:y42}
\vcenter{\xymatrix{
& \cY_{4,2} \ar[dl]_\beta \ar[dr]^{\pi_{4,2}}
\\
\P(\WW_3) \times \P(\WW_3) \ar@{-->}[rr]^-{\bar\pi_{4,2}}
&&
\P(\WW_5)
}}
\end{equation}
By construction
the fourfold $\cY_{4,2}$ is smooth and carries a faithful action of~$\fS_{4,2}$.
The above diagram is $\fS_{4,2}$-equivariant.

We are going to show that the map $\pi_{4,2} \colon \cY_{4,2} \to \P(\WW_5)$ defined by the diagram~\eqref{diagram:y42} factors through the Coble fourfold;
more precisely, $\pi_{4,2}$ factors as a composition
\begin{equation*}
\cY_{4,2} \xrightarrow{\ \rho_{4,2}\ } \cY \xrightarrow{\ \pi\ } \P(\WW_5),
\end{equation*}
with $\rho_{4,2}$ being a small $\fS_{4,2}$-equivariant resolution of singularities.
We accomplish this in two steps.

First, consider the linear projection
\begin{equation*}
\P(\WW_3) \times \P(\WW_3)\hookrightarrow \P(\WW_3 \otimes \WW_3) \dashrightarrow \P^5
\end{equation*}
from the linear span of the points~$\bP_1$, $\bP_2$, and~$\bP_3$; as before,
the latter linear span intersects $\P(\WW_3) \times \P(\WW_3)$ exactly by the triple~$\bP_1$, $\bP_2$, $\bP_3$.
If $(u_1:u_2:u_3)$ and $(v_1:v_2:v_3)$ are homogeneous coordinates on the first and the second factors of $\P(\WW_3) \times \P(\WW_3)$ such that~\eqref{eq:points} holds,
this map is given by
\begin{equation}\label{eq:uv-yz}
((u_1:u_2:u_3),(v_1:v_2:v_3)) \mapsto (u_2v_3:u_3v_1:u_1v_2:u_3v_2:u_1v_3:u_2v_1),
\end{equation}
and it is easy to describe its structure.
We denote by $y_1$, $y_2$, $y_3$, $z_1$, $z_2$, and~$z_3$
the homogeneous coordinates on~$\P^5$, so that the right hand side of~\eqref{eq:uv-yz}
is the point~\mbox{ $(y_1:y_2:y_3:z_1:z_2:z_3)$}.

\begin{lemma}\label{lemma:projection-1}
The linear projection
$\P(\WW_3) \times \P(\WW_3) \dashrightarrow \P^5$
with center in the
span of the points $\bP_1,\bP_2,\bP_3$ induces an $\fS_{3,2}$-equivariant commutative diagram
\begin{equation*}
\xymatrix{
& \Bl_{\bP_1,\bP_2,\bP_3}(\P(\WW_3) \times \P(\WW_3)) \ar[dr]^{\rho'_{4,2}} \ar[dl]_{\beta'}
\\
\P(\WW_3) \times \P(\WW_3) \ar@{-->}[rr] &&
\cY_{4,2}' \ar@{^{(}->}[r] & \P^5
}
\end{equation*}
where $\beta'$ is the blow up, $\cY'_{4,2} \subset \P^5$ is a singular cubic hypersurface
given by the equation
\begin{equation}\label{eq:cubic}
y_1y_2y_3 = z_1z_2z_3,
\end{equation}
and $\rho'_{4,2}$ is a small birational contraction.
The map $\rho'_{4,2}$ contracts
\begin{itemize}
\item
the proper transforms of the six planes $\P(\WW_3) \times P_i$ and $P_i \times \P(\WW_3)$, $1 \leqslant i \leqslant 3$,~and
\item
the proper transforms of the three quadrics $\overline{P_iP_j} \times \overline{P_iP_j}$, $1 \leqslant i < j \leqslant 3$,
\end{itemize}
onto nine lines $L_{ij}$, $1 \leqslant i,j \leqslant 3$, given in $\P^5$ by the equations
\begin{equation*}
y_k = z_l = 0, \quad k \ne i,\ l \ne j.
\end{equation*}
Moreover, $\rho'_{4,2}$ is an isomorphism over the complement of the lines $L_{ij}$.
Finally, the map~$\rho'_{4,2} \circ (\beta')^{-1}$ takes the point $\bP_0$ to the point
$\bP'_0 = (1:1:1:1:1:1) \in \cY'_{4,2}$.
\end{lemma}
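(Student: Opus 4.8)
The plan is to read all the geometry off the explicit map~\eqref{eq:uv-yz} directly, treating the six bilinear forms $M_{ij}=u_iv_j$ with $i\neq j$ as the system defining the projection, and extracting the exceptional structure from its fibres. First I would record the elementary algebra: substituting~\eqref{eq:uv-yz} into~\eqref{eq:cubic} gives $y_1y_2y_3=u_1u_2u_3v_1v_2v_3=z_1z_2z_3$, so the image lies in the cubic $\cY'_{4,2}$. Next I would identify the base locus of~\eqref{eq:uv-yz}: the six products vanish simultaneously on $\P(\WW_3)\times\P(\WW_3)$ exactly at $\bP_1,\bP_2,\bP_3$ (equivalently, the span of these points is the plane of diagonal matrices $E_{11},E_{22},E_{33}$, and a rank-one diagonal matrix is a coordinate point). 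Since each form vanishes to order one at each $\bP_k$, blowing up the three points resolves the indeterminacy and yields the morphism $\rho'_{4,2}$ landing in $\cY'_{4,2}$; surjectivity is clear, and $\bP_0=(P_0,P_0)\mapsto(1:1:1:1:1:1)$ follows by plugging $u=v=(1:1:1)$.

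To see that $\rho'_{4,2}$ is birational I would exhibit the inverse rational map on $\cY'_{4,2}$. From~\eqref{eq:uv-yz} one reads off $u_1:u_2:u_3=z_2z_3:y_1z_3:y_1y_2$ and $v_1:v_2:v_3=y_2y_3:z_1y_3:z_1z_2$, the two descriptions being consistent precisely because of~\eqref{eq:cubic}. These formulas are regular and mutually inverse to~\eqref{eq:uv-yz} off the locus where the relevant coordinates degenerate, which is exactly the union of the nine lines $L_{ij}$. Since $\cY'_{4,2}$ is a hypersurface whose singular locus is the nine lines (the vanishing of all partials of~\eqref{eq:cubic} forces at most one nonzero $y$ and one nonzero $z$), it is normal and the $L_{ij}$ have codimension $3$; hence by Zariski's main theorem $\rho'_{4,2}$ is an isomorphism over the open set where its fibres are finite. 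The inverse formulas show these fibres are single points over $\cY'_{4,2}\setminus\bigcup L_{ij}$, giving the asserted isomorphism there and, since no divisor is contracted, smallness of $\rho'_{4,2}$.

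It then remains to identify the contracted loci and check equivariance. Restricting~\eqref{eq:uv-yz} to a plane collapses four of the six coordinates onto an off-diagonal line, e.g.\ $P_1\times\P(\WW_3)\to L_{32}$ and $\P(\WW_3)\times P_1\to L_{23}$; restricting to a quadric $\overline{P_iP_j}\times\overline{P_iP_j}$ kills one coordinate in each factor and maps it onto a diagonal line, e.g.\ $\overline{P_1P_2}\times\overline{P_1P_2}\to L_{33}$. Tracking the six planes and three quadrics this way accounts for all nine lines, and comparison with the inverse formulas confirms that the proper transform of each surface is precisely the fibre over the corresponding line. For $\fS_{3,2}$-equivariance I would note that $y_k=u_{k+1}v_{k+2}$ and $z_k=u_{k+2}v_{k+1}$ with indices mod $3$, so the factor $\fS_3\subset\fS_4$ fixing $P_0$ permutes the indices of $(y,z)$ and the nontrivial element of $\fS_2$ swapping the two $\P(\WW_3)$ factors interchanges $y_k\leftrightarrow z_k$; in every case~\eqref{eq:cubic} is preserved and~\eqref{eq:uv-yz} intertwines the actions.

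The main obstacle is the fibre analysis underlying both smallness and the isomorphism statement. Concretely, one must verify that the three exceptional divisors of $\beta'$ are carried onto the coordinate $\P^3$'s $\{y_i=z_i=0\}$ of $\cY'_{4,2}$ rather than contracted, and then control the delicate overlaps where these exceptional divisors meet the proper transforms of the nine surfaces along the lines $L_{ij}$. This is cleanest to settle by explicit chart computations on the blow up (where $M_{ij}$ for $i\neq j$ acquire mixed vanishing orders $1$ and $2$), combined with the normality of $\cY'_{4,2}$ and Zariski's main theorem to upgrade the pointwise fibre count to an actual isomorphism off $\bigcup L_{ij}$.
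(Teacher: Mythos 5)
Your proposal is correct and follows the same route as the paper, which simply declares the map toric and ``skips the actual computation which is straightforward but tedious''; you supply exactly that computation (base locus $\{\bP_1,\bP_2,\bP_3\}$, the explicit rational inverse glued via the cubic relation, the fibre analysis over and off the nine lines, normality of $\cY'_{4,2}$ plus Zariski's main theorem, and the identification of the contracted planes and quadrics with the $L_{ij}$), and all your sample identifications such as $P_1\times\P(\WW_3)\to L_{32}$ and $\overline{P_1P_2}\times\overline{P_1P_2}\to L_{33}$ check out. The only slips are cosmetic: not every monomial $u_iv_j$ vanishes to order exactly one at each $\bP_k$ (you correct this yourself when you mention the mixed orders $1$ and $2$ in the charts), and a transposition in the $\fS_3$ factor acts on the six coordinates by a $[2,2,2]$ permutation mixing $y$'s and $z$'s rather than merely permuting indices, which still preserves the cubic.
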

\begin{proof}
The map is toric, so everything is easy to describe.
We skip the actual computation which is straightforward but tedious.
\end{proof}

The cubic fourfold~\eqref{eq:cubic} is known as {\sf Perazzo primal}, \cite[Exercise~9.16]{Dolgachev-CAG}, \cite[\S6]{Loo09}.

Using equation~\eqref{eq:cubic} one can easily check that the
union of the nine lines $L_{ij}$ is the singular locus of the cubic $\cY'_{4,2}$.

The second step is to project the cubic $\cY'_{4,2}$ from the point $\bP'_0$.

\begin{lemma}\label{lemma:projection-2}
The linear projection $\bar\pi'_{4,2} \colon \cY'_{4,2} \dashrightarrow \P(\WW_5)$ from the point $\bP'_0$ defines a regular map
$\pi''_{4,2} \colon \Bl_{\bP'_0}(\cY'_{4,2}) \xrightarrow{\ \ \ } \P(\WW_5)$ that fits into a commutative diagram
\begin{equation}\label{eq:pervyj-romb}
\vcenter{\xymatrix{
& \Bl_{\bP'_0}(\cY'_{4,2}) \ar[rr]^-{\rho''_{4,2}} \ar[dr]^{\pi''_{4,2}} \ar[dl]_{\beta''} && \cY \ar[dl]_{\pi}
\\
\cY'_{4,2} \ar@{-->}[rr]^{\bar\pi'_{4,2}} && \P(\WW_5)
}}
\end{equation}
where $\cY$ is the Coble fourfold, $\pi \colon \cY \to \P(\WW_5)$ is the double covering, and~$\rho''_{4,2}$ is a small birational morphism.
Furthermore, the exceptional locus of~$\rho''_{4,2}$ is the union of proper transforms of the six planes $\Pi_w\subset \cY'_{4,2}$ given by the equations
\begin{equation*}
z_i = y_{w(i)}, \qquad 1 \leqslant i \leqslant 3,
\end{equation*}
indexed by all bijections $w \colon \{1,2,3\} \to \{1,2,3\}$;
the map $\rho''_{4,2}$ contracts them onto six lines in $\cY$ \textup(i.e., rational curves that are isomorphically projected to lines in~$\P(\WW_5)$\textup),
and is an isomorphism over the complement of those.
\end{lemma}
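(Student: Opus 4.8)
The plan is to use the classical description of a cubic hypersurface projected from a smooth point lying on it. First I would note that $\bP'_0$ is a smooth point of $\cY'_{4,2}$, since the gradient of $y_1y_2y_3-z_1z_2z_3$ at $(1:1:1:1:1:1)$ equals $(1,1,1,-1,-1,-1)\neq 0$, and that the indeterminacy locus of the linear projection $\bar\pi'_{4,2}$ meets $\cY'_{4,2}$ only in $\bP'_0$; hence after blowing up $\bP'_0$ the map $\pi''_{4,2}$ becomes a morphism, which proves the regularity assertion. A general line $\ell$ through $\bP'_0$ meets the cubic at $\bP'_0$ and two further points, so $\pi''_{4,2}$ is generically two-to-one, the covering involution interchanging the two residual points. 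Writing $\ell=\{\bP'_0+\lambda v\}$ with $v=(a_1,a_2,a_3,b_1,b_2,b_3)$, one computes $F(\bP'_0+\lambda v)=\lambda\bigl(C_0+C_1\lambda+C_2\lambda^2\bigr)$, where $C_j$ is the difference of the $(j{+}1)$-st elementary symmetric functions of $(a_1,a_2,a_3)$ and of $(b_1,b_2,b_3)$; the two residual points are the roots of the quadratic factor, and the branch quartic of $\pi''_{4,2}$ is $\{C_1^2-4C_0C_2=0\}$.

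Next I would identify this double cover with the Coble fourfold. The branch quartic is $\fS_{4,2}$-invariant, and under the equivariant identification of the projection target $\P(\WW_5)$ with the $\fS_6$-hyperplane $\P^4$ it coincides with the Igusa quartic~\eqref{eq:igusa}; this is the one genuinely computational point, and I expect it to be the main obstacle, to be handled by a direct calculation organized by $\fS_{4,2}$-equivariance. Granting it, I would form the Stein factorization $\Bl_{\bP'_0}(\cY'_{4,2})\xrightarrow{\,\rho''_{4,2}\,}Z\to\P(\WW_5)$; the finite part $Z$ is a normal double cover of $\P^4$ branched over the Igusa quartic, and since $\Pic(\P^4)=\ZZ\cdot\cO(1)$ admits $\cO(2)$ as the unique square root of $\cO(4)$, such a double cover is unique, so $Z\cong\cY$. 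This produces the morphism $\rho''_{4,2}$ with $\pi\circ\rho''_{4,2}=\pi''_{4,2}$; as both $\Bl_{\bP'_0}(\cY'_{4,2})$ and $\cY$ have degree $2$ over $\P^4$, the map $\rho''_{4,2}$ is birational. (Equivalently, one can construct $x_0=\sqrt{C_1^2-4C_0C_2}$ as a rational function and check that the induced map to $\P(2,1^6)$ is a morphism landing in $\cY$.)

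Finally I would determine the exceptional locus. The morphism $\rho''_{4,2}$ contracts precisely the positive-dimensional fibers of $\pi''_{4,2}$, i.e.\ the preimages of those $[\ell]$ for which the line $\ell$ through $\bP'_0$ lies entirely on $\cY'_{4,2}$. The condition $F(\bP'_0+\lambda v)\equiv 0$ reads $\prod_i(1+\lambda a_i)=\prod_i(1+\lambda b_i)$, which holds exactly when the two multisets of directions coincide, i.e.\ $b_i=a_{w(i)}$ for some bijection $w\colon\{1,2,3\}\to\{1,2,3\}$; equivalently, $\ell$ is a line through $\bP'_0$ inside the plane $\Pi_w=\{z_i=y_{w(i)}\}$. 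Thus the union of such lines is exactly $\bigcup_w\Pi_w$, the positive-dimensional fibers are the proper transforms $\widetilde\Pi_w$ (each a one-point blow-up of $\P^2$), and $\rho''_{4,2}$ contracts each $\widetilde\Pi_w$ along its ruling; its image is a rational curve in $\cY$ mapping isomorphically onto the line $\ell_w=\P(V_w/\Bbbk\bP'_0)\subset\P(\WW_5)$, where $V_w$ is the linear span of $\Pi_w$. Since the $\ell_w$ lie in the branch quartic, these six images are precisely the asserted lines in $\cY$ that project isomorphically to lines in $\P(\WW_5)$; away from $\bigcup_w\widetilde\Pi_w$ the map $\pi''_{4,2}$ is finite, so $\rho''_{4,2}$ is an isomorphism there. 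As each $\widetilde\Pi_w$ is a surface in the fourfold $\Bl_{\bP'_0}(\cY'_{4,2})$, the exceptional locus has codimension $2$ and $\rho''_{4,2}$ is small. The remaining bookkeeping — that the six lines $\ell_w$ are pairwise distinct and that each contraction is onto, rather than merely dominant — is routine.
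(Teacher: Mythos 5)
Your proposal is correct and follows essentially the same route as the paper: parameterize lines through $\bP'_0$, factor out the root at $\bP'_0$ to get a residual quadratic whose discriminant is the branch quartic, identify that quartic with the Igusa quartic, and characterize the exceptional locus as the lines through $\bP'_0$ lying on the cubic, which sweep out exactly the six planes $\Pi_w$. The one step you defer --- matching the discriminant quartic with equation~\eqref{eq:igusa} --- is handled in the paper by the explicit linear substitution~\eqref{eq:xyz} followed by a direct verification, exactly the kind of equivariantly organized computation you anticipate.
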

\begin{proof}
Note that the point $\bP'_0$ is a smooth point of the cubic $\cY'_{4,2}$, so the projection from it factors through a double covering of $\P(\WW_5)$;
in fact, this is the Stein factorization for the morphism~$\pi''_{4,2}$.
We have to identify its branch divisor with the Igusa quartic.

Take a point
\begin{equation*}
(y_{i} : z_{i}) = (y_1:y_2:y_3:z_1:z_2:z_3)
\end{equation*}
in $\P^5$ which is different from $\bP'_0$.
The line $M_{(y_{i} : z_{i})}$ in~$\P(\WW_5)$ passing through the point~\mbox{$(y_{i} : z_{i})$}  and the point $\bP'_0$ can be parameterized as
\begin{equation}\label{eq:line-parameterization}
M_{(y_{i} : z_{i})} = \{ (\lambda + \mu y_1: \lambda + \mu y_2: \lambda + \mu y_3: \lambda + \mu z_1: \lambda + \mu z_2: \lambda + \mu z_3) \},
\end{equation}
where $\lambda$ and $\mu$ are considered as homogeneous coordinates on this line.
Substituting this parameterization into~\eqref{eq:cubic}, we see that
the intersection of $M_{(y_{i} : z_{i})}$ with the cubic $\cY'_{4,2}$ is given by the equation
\begin{equation*}
(\lambda + \mu y_1)(\lambda + \mu y_2)(\lambda + \mu y_3) =
(\lambda + \mu z_1)(\lambda + \mu z_2)(\lambda + \mu z_3).
\end{equation*}
Expanding both sides and canceling the factor $\mu$
that corresponds to the intersection point~$\bP'_0$, we can rewrite the above equation as
\begin{equation}\label{eq:yp42}
(\bs_1(y) - \bs_1(z))\lambda^2 + (\bs_2(y) - \bs_2(z))\lambda\mu + (\bs_3(y) - \bs_3(z))\mu^2 = 0,
\end{equation}
where $\bs_d$ denotes the elementary symmetric polynomial of degree $d$.
Restricting equation~\eqref{eq:yp42} to the hyperplane
\begin{equation}\label{eq:hyperplane-yz}
y_1 + y_2 + y_3 + z_1 + z_2 + z_3 = 0,
\end{equation}
which is identified by the linear projection~$\bar\pi'_{4,2}$ from the point~$\bP'_0$ with the space~$\P(\WW_5)$,
we obtain the equation of the double cover over~$\P(\WW_5)$ we are interested in
(embedded into the projectivization of the vector bundle \mbox{$\cO_{\P(\WW_5)} \oplus \cO_{\P(\WW_5)}(-1)$} over~$\P(\WW_5)$).
The branch divisor of~$\bar\pi'_{4,2}$
is given in the hyperplane~\eqref{eq:hyperplane-yz} by the discriminant of the quadratic equation~\eqref{eq:yp42}:
\begin{equation}\label{eq:igusa-yz}
(\bs_2(y) - \bs_2(z))^2 - 4(\bs_1(y) - \bs_1(z))(\bs_3(y) - \bs_3(z)) = 0.
\end{equation}

Let us show that the quartic $X'' \subset \P^4$ defined by equations~\eqref{eq:hyperplane-yz} and~\eqref{eq:igusa-yz}
is isomorphic to the Igusa quartic;
this will identify the double covering with the Coble fourfold
in a way respecting
the projection to $\P^4$, that is, ensuring that the upper right triangle in diagram~\eqref{eq:pervyj-romb} is commutative.

To do this we use the following substitutions:
\begin{equation}\label{eq:xyz}
\begin{array}{ll}
x_1 = y_1 - \frac23\bs_1(y) + \frac13\bs_1(z), \qquad\qquad\qquad&
x_4 = z_1 + \frac13\bs_1(y) - \frac23\bs_1(z), \\[.5ex]
x_2 = y_2 - \frac23\bs_1(y) + \frac13\bs_1(z), &
x_5 = z_2 + \frac13\bs_1(y) - \frac23\bs_1(z), \\[.5ex]
x_3 = y_3 - \frac23\bs_1(y) + \frac13\bs_1(z), &
x_6 = z_3 + \frac13\bs_1(y) - \frac23\bs_1(z).
\end{array}
\end{equation}
They express the composition of the projection $\bar\pi'_{4,2}$ with a particular identification of its target space $\P(\WW_5)$
with the hyperplane~\eqref{eq:hyperplane} in $\P^5$.
A direct verification shows that substituting these expressions into equation~\eqref{eq:igusa} of the Igusa quartic we get~\eqref{eq:igusa-yz}.
This proves that~\eqref{eq:igusa-yz} is isomorphic to the cone over the Igusa quartic with the vertex at the
point $\bP'_0$, hence its intersection with~\eqref{eq:hyperplane-yz} is isomorphic to the Igusa quartic.

Finally, we describe the exceptional locus of the projection~$\pi''_{4,2}$.
Clearly, it is the union of those lines $M_{(y_{i} : z_{i})}$ that are contained in the cubic $\cY'_{4,2}$,
i.e., the subvariety of those points $(y_i : z_i)$ for which~\eqref{eq:yp42} is identically zero.
This condition can be rewritten as:
\begin{equation*}
\bs_1(y) - \bs_1(z) = \bs_2(y) - \bs_2(z) = \bs_3(y) - \bs_3(z) = 0
\end{equation*}
Of course, this is equivalent to $(y_{i} : z_{i}) \in \Pi_w$ for some permutation~$w$.
Thus the exceptional locus is the union of the proper transforms of the planes $\Pi_w$.
Each of these planes passes through $\bP'_0$, hence is contracted onto a line in $\P^4 \cong \P(\WW_5)$.
\end{proof}

\begin{remark}
There is also a computation-free way to identify the branch divisor $X''$ of the map $\pi''_{4,2}$ with the Igusa quartic.
Indeed, note that the singular locus of $X''$ contains 15 lines
(the images of the 9 singular lines $L_{ij}$ of $\cY'_{4,2}$ and the images of the~6 planes~$\Pi_w$),
then check that they form a Cremona--Richmond configuration (e.g., by using Theorem~\ref{theorem:CR-unique}),
and then apply Corollary~\ref{corollary:Igusa-unique}.
\end{remark}

\begin{remark}\label{remark:sigma42}
Using equation~\eqref{eq:yp42} it is easy to write the (birational) involution of the double covering $\cY'_{4,2} \dashrightarrow \P^4$ explicitly.
Indeed, choose a point~\mbox{$(y_{i} : z_{i}) = (y_1:y_2:y_3:z_1:z_2:z_3)$} on the cubic~\mbox{$\cY'_{4,2}\subset\P^5$} different from~$\bP'_0$.
Using the parameterization~\eqref{eq:line-parameterization}, we see that the point $(y_{i} : z_{i})$ corresponds to~\mbox{$\lambda=0$}.
Keeping in mind that $\bs_3(y)=\bs_3(z)$ at our point $(y_{i} : z_{i})$, and finding the second root of the equation~\eqref{eq:yp42}
in $\lambda/\mu$, we conclude that the involution of the double covering~\mbox{$\cY'_{4,2} \dashrightarrow \P^4$}
is given by
\begin{equation}\label{eq:crocodile-1}
(y_i : z_i) \mapsto \Big((\bs_1(y) - \bs_1(z))y_i - (\bs_2(y) - \bs_2(z)) : (\bs_1(y) - \bs_1(z))z_i - (\bs_2(y) - \bs_2(z))\Big).
\end{equation}
Furthermore, the induced birational involution of $\P(\WW_3) \times \P(\WW_3)$ can be written as
\begin{multline}\label{eq:crocodile-2}
\bar\sigma_{4,2} \colon ((u_1:u_2:u_3),(v_1:v_2:v_3)) \mapsto \\
\mapsto
\left(
\left(
\frac{v_2 - v_3}{\det\left(\begin{smallmatrix}u_2 & u_3\\v_2 & v_3\end{smallmatrix}\right)} :
\frac{v_3 - v_1}{\det\left(\begin{smallmatrix}u_3 & u_1\\v_3 & v_1\end{smallmatrix}\right)} :
\frac{v_1 - v_2}{\det\left(\begin{smallmatrix}u_1 & u_2\\v_1 & v_2\end{smallmatrix}\right)}
\right),
\left(
\frac{u_2 - u_3}{\det\left(\begin{smallmatrix}u_2 & u_3\\v_2 & v_3\end{smallmatrix}\right)} :
\frac{u_3 - u_1}{\det\left(\begin{smallmatrix}u_3 & u_1\\v_3 & v_1\end{smallmatrix}\right)} :
\frac{u_1 - u_2}{\det\left(\begin{smallmatrix}u_1 & u_2\\v_1 & v_2\end{smallmatrix}\right)}
\right)
\right);
\end{multline}
to see this one can just compose~\eqref{eq:uv-yz} with~\eqref{eq:crocodile-1}
and observe that it gives the same result as a composition of~\eqref{eq:crocodile-2} with~\eqref{eq:uv-yz}.
Similarly, we deduce from~\eqref{eq:igusa-yz} that the ramification divisor of the map~$\bar\pi_{4,2}$
is given by the equation
\begin{equation*}
\bs_2(u_2v_3,u_3v_1,u_1v_2) = \bs_2(u_3v_2,u_1v_3,u_2v_1),
\end{equation*}
that can be compactly rewritten as
\begin{equation}\label{eq:igusa-uv}
\det \left(
\begin{smallmatrix}
u_1v_1 & u_2v_2 & u_3v_3 \\
u_1 & u_2 & u_3 \\
v_1 & v_2 & v_3
\end{smallmatrix}
\right) = 0.
\end{equation}
This gives a determinantal representation of a threefold birational to the Igusa quartic.
\end{remark}

Combining the results of Lemmas~\ref{lemma:projection-1} and~\ref{lemma:projection-2} we obtain a commutative diagram
\begin{equation}
\label{diagram:big-4-2}
\vcenter{\xymatrix@C=4em@R=3ex{
\P(\WW_3) \times \P(\WW_3) \ar@{-->}[dd]_{\bar\pi_{4,2}} &
\Bl_{\bP_1,\bP_2,\bP_3}(\P(\WW_3) \times \P(\WW_3)) \ar[l]_-{\beta'} \ar[d]_{\rho'_{4,2}} &
\cY_{4,2} \ar[l] \ar[d]
\\
&
\cY'_{4,2} \ar@{-->}[dl]_{\bar\pi'_{4,2}} &
\Bl_{\bP_0}(\cY'_{4,2}) \ar[l]_-{\beta''} \ar[d]^{\rho''_{4,2}}
\\
\P(\WW_5) &&
\cY \ar[ll]_-\pi
}}
\end{equation}
where the upper right square is Cartesian and the composition $\cY_{4,2} \to \P(\WW_3) \times \P(\WW_3)$ of the upper horizontal arrows is the blow up map $\beta$.

\begin{proposition}
\label{proposition:first-Q-factorialization}
The linear projection $\bar\pi_{4,2} \colon \P(\WW_3) \times \P(\WW_3) \dashrightarrow \P(\WW_5)$
with center in the
span of the points~$\bP_0$, $\bP_1$, $\bP_2$, and~$\bP_3$
gives rise to a
commutative diagram
\begin{equation}
\label{eq:vtoroj-romb}
\vcenter{\xymatrix{
& \cY_{4,2} \ar[rr]^{\rho_{4,2}} \ar[dr]^{\pi_{4,2}} \ar[dl]_\beta && \cY \ar[dl]_\pi
\\
\P(\WW_3) \times \P(\WW_3) \ar@{-->}[rr]^-{\bar\pi_{4,2}} && \P(\WW_5)
}}
\end{equation}
where $\rho_{4,2}$ is a small resolution of singularities defined uniquely up to a composition with the Galois involution $\sigma \colon \cY \to \cY$.
The map $\rho_{4,2}$
contracts:
\begin{itemize}
\item
the proper transforms of the eight planes $\P(\WW_3) \times P_i$ and $P_i \times \P(\WW_3)$, $0 \leqslant i \leqslant 3$,
\item
the proper transforms of the six quadrics $\overline{P_iP_j} \times \overline{P_iP_j}$, $0 \leqslant i < j \leqslant 3$, and
\item
the proper transform of the diagonal $\P(\WW_3) \hookrightarrow \P(\WW_3) \times \P(\WW_3)$,
\end{itemize}
and is an isomorphisms on the complement of those.
Moreover, the morphism $\pi_{4,2}$ induces a non-standard embedding $\fS_{4,2} \to \fS_6$
such that $\rho_{4,2}$ is $\fS_{4,2}$-equivariant with respect to the twisted action of $\fS_{4,2}$  on~$\cY$.
\end{proposition}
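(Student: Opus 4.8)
The plan is to assemble $\rho_{4,2}$ from Lemmas~\ref{lemma:projection-1} and~\ref{lemma:projection-2} and then read off all of its properties from the big diagram~\eqref{diagram:big-4-2}. First I would observe that the point $\bP_0$ lies away from the exceptional locus of $\rho'_{4,2}$: indeed $P_0=(1:1:1)$ lies on none of the lines $\overline{P_iP_j}$ with $1\le i<j\le 3$, so $\bP_0$ lies on none of the planes or quadrics contracted by $\rho'_{4,2}$, and by Lemma~\ref{lemma:projection-1} the map $\rho'_{4,2}$ is an isomorphism near $\bP_0$, carrying it to the smooth point $\bP'_0\in\cY'_{4,2}$. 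Hence blowing up $\bP_0$ upstairs is compatible with blowing up $\bP'_0$ downstairs, and $\cY_{4,2}=\Bl_{\bP_0}\big(\Bl_{\bP_1,\bP_2,\bP_3}(\P(\WW_3)\times\P(\WW_3))\big)$ maps to $\Bl_{\bP'_0}(\cY'_{4,2})$ by a small birational morphism contracting exactly the proper transforms of the six planes $\P(\WW_3)\times P_i$, $P_i\times\P(\WW_3)$ ($1\le i\le 3$) and the three quadrics $\overline{P_iP_j}\times\overline{P_iP_j}$ ($1\le i<j\le 3$). Composing with the small morphism $\rho''_{4,2}$ of Lemma~\ref{lemma:projection-2} yields $\rho_{4,2}$; being a composition of small birational morphisms from the smooth fourfold $\cY_{4,2}$ to the singular fourfold $\cY$, it is a small resolution of singularities, and $\pi\circ\rho_{4,2}=\pi_{4,2}$ together with the identification of $\beta$ with the composite to $\P(\WW_3)\times\P(\WW_3)$ follow from commutativity of~\eqref{diagram:big-4-2}.

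Next I would identify the exceptional locus as the union of the two exceptional loci. The first morphism contributes the six planes with $1\le i\le 3$ and the three quadrics with $1\le i<j\le 3$ above; $\rho''_{4,2}$ contributes the six planes $\Pi_w$. The point is to trace the $\Pi_w$ back to $\P(\WW_3)\times\P(\WW_3)$ through~\eqref{eq:uv-yz}. This is a routine toric computation: substituting $(y_i:z_i)=(u_2v_3:u_3v_1:u_1v_2:u_3v_2:u_1v_3:u_2v_1)$ into the equations $z_i=y_{w(i)}$ of $\Pi_w$ shows that $\Pi_{\mathrm{id}}$ is the image of the diagonal $\P(\WW_3)\hookrightarrow\P(\WW_3)\times\P(\WW_3)$, the two $3$-cycles give $\P(\WW_3)\times P_0$ and $P_0\times\P(\WW_3)$, and the three transpositions give the three quadrics $\overline{P_0P_j}\times\overline{P_0P_j}$, $1\le j\le 3$ (all six of these loci pass through $\bP_0$, consistently with the $\Pi_w$ passing through $\bP'_0$). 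Adding these to the contribution of the first morphism produces exactly the eight planes $\P(\WW_3)\times P_i$, $P_i\times\P(\WW_3)$ with $0\le i\le3$, the six quadrics $\overline{P_iP_j}\times\overline{P_iP_j}$ with $0\le i<j\le3$, and the diagonal, as claimed; and $\rho_{4,2}$ is an isomorphism off this union because each of the two composed morphisms is an isomorphism off its own (small) exceptional locus. For the uniqueness up to $\sigma$ I would then argue that $\pi_{4,2}=\pi\circ\rho_{4,2}$ is intrinsic to the diagram, being induced by the linear projection and the blow up; since $\pi\colon\cY\to\P(\WW_5)$ is finite of degree $2$ with Galois group $\langle\sigma\rangle$ and $\cY_{4,2}$ is smooth and irreducible, any morphism $\cY_{4,2}\to\cY$ over $\P(\WW_5)$ lifting $\pi_{4,2}$ is determined up to the deck transformation, so there are exactly two such lifts, $\rho_{4,2}$ and $\sigma\circ\rho_{4,2}$.

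For the equivariance I would note that all of~\eqref{diagram:big-4-2} is built from $\fS_{4,2}$-equivariant data: $\fS_{4,2}$ acts on $\P(\WW_3)\times\P(\WW_3)$ with $\fS_4$ diagonal and the $\fS_2$-factor swapping the two factors, permuting the orbit $\{\bP_0,\bP_1,\bP_2,\bP_3\}$, hence acts on $\cY_{4,2}$ and, compatibly through $\pi_{4,2}$, on $\P(\WW_5)$. This action descends along the small morphism $\rho_{4,2}$ to a biregular action on $\cY$, giving a homomorphism $\phi\colon\fS_{4,2}\to\Aut(\cY)\cong\fS_6\times\mumu_2$ (Corollary~\ref{corollary:aut-y}). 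Since $\WW_5$ induces a faithful action of $\fS_{4,2}$ on $\P(\WW_5)$, its $\P(\WW_5)$-component is faithful, so it is an embedding $\iota\colon\fS_{4,2}\hookrightarrow\fS_6$. To see $\iota$ is non-standard I would compute, using~\eqref{eq:uv-yz} and the coordinate change~\eqref{eq:xyz}, that the $\fS_2$-generator acts on $\P(\WW_5)\cong\P^4$ as $x_i\leftrightarrow x_{i+3}$, i.e. as the permutation $(1\,4)(2\,5)(3\,6)$ of cycle type $[2,2,2]$; as the $\fS_2$-generator of the standard $\fS_{4,2}$ is a transposition, the image cannot be conjugate to it.

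The main remaining point — and the one requiring the most care — is that the descended action is the \emph{twisted} one rather than the natural one, i.e. that the $\mumu_2$-component of $\phi$ equals $\epsilon\circ\iota$. For this I would track the weight-$2$ coordinate $x_0$, which distinguishes the two sheets of $\pi$. Via the parametrization~\eqref{eq:line-parameterization} the two points of a fibre correspond to the two roots $(\lambda:\mu)$ of~\eqref{eq:yp42}, with $A=\bs_1(y)-\bs_1(z)$, $B=\bs_2(y)-\bs_2(z)$, $C=\bs_3(y)-\bs_3(z)$, and the two sheets $x_0$, $-x_0$ are distinguished by the sign of the quantity $2A\lambda+B\mu$ (a square root of the discriminant~\eqref{eq:igusa-yz}). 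The factor swap $y_i\leftrightarrow z_i$ fixes this discriminant (so it preserves the Igusa quartic, consistently with $(1\,4)(2\,5)(3\,6)$ fixing it) but sends $(A,B,C)\mapsto(-A,-B,-C)$ while preserving the two roots, whence $2A\lambda+B\mu\mapsto-(2A\lambda+B\mu)$ and $x_0\mapsto-x_0$; since $(1\,4)(2\,5)(3\,6)$ is odd, this is precisely its twisted action. An analogous sign computation on a transposition of the diagonal $\fS_4$ rules out the spurious character that is trivial on $\fS_4$ and completes the identification of $\phi$ with the twisted action. I expect this bookkeeping of $x_0$ through~\eqref{eq:xyz} — together with the representation-theoretic description of the $\fS_4$-action on the $(u:v)$-coordinates — to be the main obstacle.
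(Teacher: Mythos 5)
Your proposal is correct and follows essentially the same route as the paper's proof: assembling $\rho_{4,2}$ from Lemmas~\ref{lemma:projection-1} and~\ref{lemma:projection-2} via diagram~\eqref{diagram:big-4-2}, tracing the planes $\Pi_w$ back through~\eqref{eq:uv-yz} to get the exceptional locus, deducing the embedding $\fS_{4,2}\hookrightarrow\fS_6$ from invariance of the branch quartic, and identifying the character $k$ by checking that transpositions flip the sign of $x_0$. The only cosmetic difference is that the paper verifies the sign flip directly from the explicit determinantal formula~\eqref{eq:igusa-uv} for $x_0$ (where the factor swap just exchanges two rows), whereas you track the two roots of~\eqref{eq:yp42}; both computations, including the one you defer for a diagonal transposition of $\fS_4$, are the same bookkeeping the paper also leaves to the reader.
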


\begin{proof}
We define the map $\rho_{4,2}$ as the composition of the right vertical arrows in~\eqref{diagram:big-4-2}.
Its uniqueness up to $\sigma$ is evident.
We note that the composition
\begin{equation*}
\rho_{4,2} \circ \beta^{-1} \colon \P(\WW_3) \times \P(\WW_3) \dashrightarrow \cY \subset \P(2,1^6)
\end{equation*}
can be defined by explicit formulas:
\begin{equation}
\label{eq:choice-rho-pp}
\hspace{-2em}\left\{
\arraycolsep=.15em
\begin{array}{llllllllllll}
{}x_0  =
\hbox to 0pt{$- u_1u_3v_1v_2 - u_1u_2v_2v_3 - u_2u_3v_1v_3 + u_1u_2v_1v_3 + u_2u_3v_1v_2 + u_1u_3v_2v_3$,\hss}\\
\textstyle
{}x_1 =
 \frac13 (	& u_2v_3 & -\,2	& u_3v_1 & -\,2 & u_1v_2 & + 	& u_3v_2 & + 	& u_1v_3 & + 	& u_2v_1), \\[.5ex]
{}x_2 =
 \frac13 ( -\,2	& u_2v_3 & + 	& u_3v_1 & -\,2 & u_1v_2 & + 	& u_3v_2 & + 	& u_1v_3 & + 	& u_2v_1), \\[.5ex]
{}x_3 =
 \frac13 ( -\,2	& u_2v_3 & -\,2	& u_3v_1 & + 	& u_1v_2 & + 	& u_3v_2 & + 	& u_1v_3 & + 	& u_2v_1), \\[.5ex]
{}x_4 =
 \frac13 ( 	& u_2v_3 & + 	& u_3v_1 & + 	& u_1v_2 & + 	& u_3v_2 & -\,2	& u_1v_3 & -\,2	& u_2v_1), \\[.5ex]
{}x_5 =
 \frac13 ( 	& u_2v_3 & + 	& u_3v_1 & + 	& u_1v_2 & -\,2	& u_3v_2 & + 	& u_1v_3 & -\,2	& u_2v_1), \\[.5ex]
{}x_6 =
 \frac13 ( 	& u_2v_3 & + 	& u_3v_1 & + 	& u_1v_2 & -\,2	& u_3v_2 & -\,2	& u_1v_3 & + 	& u_2v_1).
\end{array}
\right.
\end{equation}
Indeed, $x_0$ defines in $\cY$ the ramification divisor of the map $\pi$, hence its pullback to~$\P(\WW_3) \times \P(\WW_3)$ coincides (up to a scalar)
with the equation~\eqref{eq:igusa-uv} of the ramification divisor of $\bar\pi_{4,2}$.
The pullbacks of $x_1, \ldots, x_6$ are given by the composition of~\eqref{eq:xyz} and~\eqref{eq:uv-yz}, which gives the required formulas.
Substituting those into~\eqref{eq:coble}, we see that the scalar in the formula for $x_0$ is $\pm 1$.
So, \eqref{eq:choice-rho-pp} gives one of the two maps $\rho_{4,2}$, while the other sign choice gives $\sigma \circ \rho_{4,2}$.

For the description of the exceptional locus of $\rho_{4,2}$ we combine the results of Lemmas~\ref{lemma:projection-1} and~\ref{lemma:projection-2}
with the simple observation (using~\eqref{eq:uv-yz})
that the map $\rho'_{4,2} \circ {\beta'}^{-1}$ from~\eqref{diagram:big-4-2} takes the two planes~$\P(\WW_3) \times P_0$ and $P_0 \times \P(\WW_3)$
to the planes $\Pi_w$, where $w$ are cycles of length~3; takes the three quadrics $\overline{P_0P_i} \times \overline{P_0P_i}$ to the planes $\Pi_w$,
where $w$ are transpositions; and takes the diagonal to~$\Pi_w$, where $w$ is the identity permutation.

The space $\WW_5$ by definition~\eqref{eq:w-w3w3} comes with an $\fS_{4,2}$ action,
such that the map~\mbox{$\pi_{4,2} \colon \cY_{4,2} \to \P(\WW_5)$} obtained by resolving the indeterminacy of the linear projection~$\bar\pi_{4,2}$ is $\fS_{4,2}$-equivariant.
It follows that its branch divisor, which was shown to be the Igusa quartic~$X_{1/4}$, is invariant under this action.
On the other hand, it is well known that~\mbox{$\Aut(X_{1/4})\cong\fS_6$}
(this follows for instance from~\cite[\S3]{Fnklberg} and~\cite[Proposition~3.3.1]{Hunt}, see also Lemma~\ref{lemma:quartics-Aut} below).
Thus, we obtain an embedding $\fS_{4,2} \hookrightarrow \fS_6$.

Moreover, for every element $g \in \fS_{4,2}$ the conjugation of the diagram \eqref{eq:vtoroj-romb} by $g$ gives a diagram of the same form.
Since $\rho_{4,2}$ is uniquely defined up to $\sigma$, we obtain an equality
\begin{equation*}
g \circ \rho_{4,2} \circ g^{-1} = \sigma^{k(g)} \circ \rho_{4,2},
\end{equation*}
where $k \colon \fS_{4,2} \to \ZZ/2\ZZ$
is a group homomorphism.
Using the explicit expression for~$x_0$ provided by~\eqref{eq:igusa-uv}
it is easy to see that transpositions in the group $\fS_{4,2}$ change the sign of $x_0$.
This means that $k$ is the homomorphism of parity $\fS_6 \to \ZZ/2\ZZ$ restricted to $\fS_{4,2}$, which means that
the map $\rho_{4,2}$ is equivariant with respect to the twisted action~\eqref{eq:action-twisted} of~$\fS_6$ on $\cY$.

Finally, to show that the embedding $\fS_{4,2} \hookrightarrow \fS_6$ is non-standard,
we use~\eqref{eq:choice-rho-pp} to observe that transpositions in~$\fS_{4,2}$ go to permutations of cycle type $[2,2,2]$ in~$\fS_6$.
Alternatively, we could notice that the restriction of the representation~\eqref{eq:hyperplane} with respect to a standard embedding $\fS_4 \hookrightarrow \fS_6$
decomposes as a direct sum of three irreducible representations of~$\fS_4$ (cf.~\eqref{eq:s5-s4} and Lemma~\ref{lemma:restriction-s6-s5}),
while~\eqref{eq:w-w3w3} is the sum of two irreducibles.
\end{proof}

Let us emphasize again that there are exactly two maps $\rho_{4,2}$ that fit into commutative diagram~\eqref{eq:vtoroj-romb}:
the first is given by~\eqref{eq:choice-rho-pp} and the second is obtained by its composition with $\sigma$, i.e., by the change of sign of $x_0$.
The particular choice~\eqref{eq:choice-rho-pp} will lead us to a particular choice of the map $\rho_{5,1}$ in the next subsection.

We write down here a simple consequence
of Proposition~\ref{proposition:first-Q-factorialization} concerning the Weil divisor class group of the Coble fourfold.

\begin{corollary}
\label{corollary:rkcl-y}
One has $\rkCl(\cY) = 6$.
\end{corollary}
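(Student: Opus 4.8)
The plan is to read off the rank of the Weil divisor class group directly from the small resolution $\rho_{4,2}\colon \cY_{4,2} \to \cY$ constructed in Proposition~\ref{proposition:first-Q-factorialization}. The key point to exploit is that $\rho_{4,2}$ is an isomorphism in codimension one. Indeed, by Proposition~\ref{proposition:first-Q-factorialization} (equivalently, by the smallness asserted in Theorem~\ref{theorem:coble}) it is an isomorphism over the complement of the Cremona--Richmond configuration $\CR$, which is a union of $15$ lines and hence has codimension $3$ in $\cY$; on the source side the exceptional locus consists of the proper transforms of the eight planes, the six quadrics, and the diagonal, all of which are surfaces in the fourfold $\cY_{4,2}$ and thus form a locus of codimension $2$. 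So no divisorial component is contracted on either side, and I may compare class groups freely.

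First I would record that $\cY$ is normal: being a complete intersection in $\P(2,1^6)$ it is Cohen--Macaulay, hence satisfies Serre's condition $S_2$, and since $\Sing(\cY)=\CR$ has codimension $3$ it satisfies $R_1$; by Serre's criterion $\cY$ is normal, so $\Cl(\cY)$ is well defined. Next I would invoke the standard fact that excising a closed subset of codimension at least two from a normal variety induces an isomorphism of Weil divisor class groups. Applying this to the exceptional locus of $\rho_{4,2}$ inside $\cY_{4,2}$ and to its image $\CR$ inside $\cY$, and using that $\rho_{4,2}$ restricts to an isomorphism over the common open part, I obtain
\[
\Cl(\cY) \cong \Cl(\cY\setminus\CR) \cong \Cl\big(\cY_{4,2}\setminus\mathrm{Exc}(\rho_{4,2})\big) \cong \Cl(\cY_{4,2}).
\]
Since $\cY_{4,2}$ is smooth, $\Cl(\cY_{4,2})=\Pic(\cY_{4,2})$.

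Finally, the Picard group computation is routine. The fourfold $\cY_{4,2}$ is the blow up of $\P(\WW_3)\times\P(\WW_3)\cong\P^2\times\P^2$ at the four points $\bP_0,\bP_1,\bP_2,\bP_3$, and each blown-up point contributes a single exceptional divisor, so
\[
\rk\Pic(\cY_{4,2}) = \rk\Pic(\P^2\times\P^2) + 4 = 2 + 4 = 6.
\]
Combining this with the chain of isomorphisms above yields $\rkCl(\cY)=6$. I do not anticipate any genuine obstacle; the only step deserving care is the verification that $\rho_{4,2}$ is truly small, i.e.\ contracts no divisor, which is precisely the content of Proposition~\ref{proposition:first-Q-factorialization}, together with the normality of $\cY$ needed to make the class-group comparison valid.
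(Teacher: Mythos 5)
Your argument is correct and is essentially the paper's own proof: both exploit that the small resolution $\rho_{4,2}$ identifies $\Cl(\cY)$ with $\Pic(\cY_{4,2})$ and then compute the Picard rank of the blow up of $\P^2\times\P^2$ at four points as $2+4=6$. The extra details you supply (normality of $\cY$ via Serre's criterion, the codimension bookkeeping for excision) are sound and merely make explicit what the paper leaves implicit.
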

\begin{proof}
Since the map $\rho_{4,2} \colon \cY_{4,2} \to \cY$ is a small resolution of singularities,
it induces an isomorphism $\Cl(\cY) \cong \Pic(\cY_{4,2})$,
and since $\cY_{4,2}$ is the blow up of $\P^2 \times \P^2$ in 4 points, its Picard rank equals 6.
\end{proof}

In~Theorem~\ref{theorem:Cl} we will describe the action of the group $\fS_6 \times \mumu_2$ on $\Cl(\cY) \otimes \Q$.

\begin{remark}\label{remark:rho-4-2-Weil-divisor}
For each three-element subset $I \subset \{1,\dots,6\}$ denote by $\bar{I} \subset \{1,\dots,6\}$ its complement.
Consider the hyperplane $H_I \subset \P^4$ defined in~\eqref{eq:hyperplane} by the equation
\begin{equation}
\label{eq:hi}
\sum_{i \in I} x_i = 0.
\end{equation}
Note that $H_I = H_{\bar{I}}$.
In the terminology of Appendix~\ref{section:CR} these are the ten jail hyperplanes~\eqref{eq:jail-hyperplane}
of the Cremona--Richmond configuration.
The preimage of $H_I$ on $\cY$ splits as the union of two irreducible components.
Indeed, consider the subvariety $\cH_I \subset \cY$ defined by the equation~\eqref{eq:hi} together with the equation
\begin{equation}
\label{eq:chi}
x_0 + \frac12 \left( \sum_{i \in I} x_i^2 - \sum_{i \in \bar{I}} x_i^2 \right) = 0.
\end{equation}
Then it is easy to check that
\begin{equation*}
\pi^{-1}(H_I) = \pi^{-1}(H_{\bar{I}}) = \cH_I \cup \cH_{\bar{I}}.
\end{equation*}

Even an easier way to see this splitting is provided by the morphism~$\rho_{4,2}$.
Indeed, using formulas~\eqref{eq:choice-rho-pp} one can check that the preimages on $\P^2\times\P^2$ of
the six hyperplanes $H_{124}$, $H_{125}$, $H_{134}$, $H_{136}$, $H_{235}$, and~$H_{236}$
are divisors given by equations
\begin{multline*}
(u_1-u_3)v_2=0,\quad
u_1(v_2-v_3)=0,\quad
u_3(v_1-v_2)=0,\quad \\
(u_2-u_3)v_1=0,\quad
(u_1-u_2)v_3=0,\quad
u_2(v_1-v_3)=0,
\end{multline*}
respectively.
Each of these divisors is a union of two irreducible components,
and each component is the product $\overline{P_iP_j} \times \P^2$ or $\P^2 \times \overline{P_iP_j}$ for appropriate $i$ and $j$.
Note that the action of $\fS_{4,2}$ on the set of all twelve of these irreducible components is transitive.
For each $I$ denote
\begin{equation*}
\cH^{4,2}_I := \rho_{4,2}^{-1}(\cH_I).
\end{equation*}
Therefore, if $I$ is one of the above six triples or one of their complements, then $\beta(\cH^{4,2}_I)$ is
one of the above twelve components, hence these divisors $\cH^{4,2}_I$ form a single $\fS_{4,2}$-orbit.

Similarly, formulas~\eqref{eq:choice-rho-pp} show that the preimages on $\P^2\times\P^2$
of the remaining four hyperplanes $H_{123}$, $H_{156}$, $H_{246}$, and~$H_{345}$
are irreducible divisors singular at the points~$\bP_0$, $\bP_1$, $\bP_2$, and~$\bP_3$, respectively.
This means that for each of the above four triples~$I$ the preimage $\pi_{4,2}^{-1}(H_I)$ of $H_I$ on $\cY_{4,2}$
consists of two irreducible components, one of them being the exceptional divisor of the blow up $\beta$ over the corresponding point~$\bP_r$.
A straightforward computation shows that
\begin{equation*}
\cH^{4,2}_{123},\
\cH^{4,2}_{156},\
\cH^{4,2}_{246},\
\cH^{4,2}_{345}
\end{equation*}
are the exceptional divisors, while
\begin{equation*}
\cH^{4,2}_{456},\
\cH^{4,2}_{234},\
\cH^{4,2}_{135},\
\cH^{4,2}_{126}
\end{equation*}
are the proper transforms of irreducible divisors from $\P^2 \times \P^2$.

Using the above observations we can write down the resolution $\rho_{4,2}$ as a blow up.
Set
\begin{equation*}
\cH^{4,2}_+ =
\cH^{4,2}_{123} +
\cH^{4,2}_{156} +
\cH^{4,2}_{246} +
\cH^{4,2}_{345},
\quad\text{and}\quad
\cH^{4,2}_- =
\cH^{4,2}_{456} +
\cH^{4,2}_{234} +
\cH^{4,2}_{135} +
\cH^{4,2}_{126}.
\end{equation*}
Then the divisor $-\cH^{4,2}_+$ is $\beta$-ample.
Since~\mbox{$\rkPic(\cY_{4,2})^{\fS_{4,2}}=2$} by definition~\eqref{eq:y42}
(indeed, the group $\fS_{4,2}$ acts transitively on the set $\{\bP_1,\bP_2,\bP_3,\bP_4\}$
and swaps the factors of~\mbox{$\P(W_3) \times \P(W_3)$})
the divisor $\cH^{4,2}_+$ is $\rho_{4,2}$-ample, so that the
divisor~\mbox{$-\cH^{4,2}_-$} is also $\rho_{4,2}$-ample.
We conclude that the small birational morphism~$\rho_{4,2}$
is the blow up of the Weil divisor $\cH_{456} + \cH_{234} + \cH_{135} + \cH_{126}$ on $\cY$.
Note that the other choice of an $\fS_{4,2}$-equivariant small resolution of singularities
of~$\cY$, that is, the morphism $\sigma\circ\rho_{4,2}$, is the blow up of the Weil divisor
$\cH_{123} + \cH_{156} + \cH_{246} + \cH_{345}$ on~$\cY$.
\end{remark}

\subsection{$\P^2$-bundle over the quintic del Pezzo surface}
\label{subsection:y51}

In this section we construct another resolution of the Coble fourfold, using geometry of the quintic del Pezzo surface.
Before explaining the construction, we start with recalling this geometry
(we refer the reader to~\cite[\S8.5]{Dolgachev-CAG} and~\cite[\S6.2]{CheltsovShramovBook} for more details).

Let $S$ be the (smooth) del Pezzo surface of degree $5$.
Recall that $S$ can be represented as the blow up of $\P^2$ in four points (in five different ways), and
one has~\mbox{$\Aut(S) \cong \fS_5$}.
The vector space~\mbox{$H^0(S,\omega_S^{-1})$} is the unique irreducible six-dimensional representation of~$\fS_5$
(corresponding to the partition $(3,1,1)$ in the notation of~\mbox{\cite[\S4.1]{FultonHarris}}),
see~\mbox{\cite[Lemma~1]{ShepherdBarron-Inv}}; in particular, this representation is invariant under the sign twist.
Moreover, the anticanonical line bundle $\omega_S^{-1}$ is very ample and defines an $\fS_5$-equivariant embedding
\begin{equation*}
S \hookrightarrow \P^5 = \P\left(H^0\left(S,\omega_S^{-1}\right)^\vee\right)
\end{equation*}
such that $S$ is an intersection of five quadrics in $\P^5$.
The five-dimensional space of quadrics passing through $S$ in $\P^5$ is an irreducible representation of $\fS_5$,
see~\cite[Proposition~2]{ShepherdBarron-Inv}.
We denote by
\begin{equation}
\label{eq:quadrics-through-S}
\WW_5 := H^0(\P^5,I_S(2))^\vee
\end{equation}
its dual space.
Later, we will identify this space with the space defined by~\eqref{eq:w-w3w3}.

Below we consider the Grassmannian $\Gr(2,\WW_5^\vee) \cong \Gr(3,\WW_5)$ of two-dimensional vector subspaces in $\WW_5^\vee$ (respectively,  three-dimensional subspaces in $\WW_5$)
and denote by~$\cU_2$ and~$\cU_3$ the tautological rank 2 and rank 3 subbundles in the trivial vector bundles
on this Grassmannian with fibers $\WW_5^\vee$ and $\WW_5$, respectively.

The following result is well known.

\begin{lemma}\label{lemma:dp5-gr}
There is an $\fS_5$-equivariant linear embedding $\P^5 \subset \P(\Lambda^3 \WW_5)$ such that
\begin{equation*}
S = \Gr(3,\WW_5) \cap \P^5 \subset \P(\Lambda^3 \WW_5)
\end{equation*}
is a complete intersection of the Grassmannian $\Gr(3,\WW_5)$ with $\P^5$.
\end{lemma}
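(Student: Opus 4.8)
The plan is to split the statement into a representation-theoretic assertion — that there is a distinguished $\fS_5$-invariant $\P^5$ inside $\P(\Lambda^3\WW_5)$ — and a geometric assertion — that the linear section of the Grassmannian it cuts out is the quintic del Pezzo surface $S$. The first is settled by a multiplicity computation; the second reduces to the classical description of codimension-four linear sections of $\Gr(3,\WW_5)$, the only genuine work being the transversality of the particular invariant section.

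First I would determine $\Lambda^3\WW_5$ as an $\fS_5$-module. Here $\dim_\Bbbk\Lambda^3\WW_5 = \binom{5}{3} = 10$. Using the character identity $\chi_{\Lambda^3 V}(g) = \tfrac16\bigl(\chi(g)^3 - 3\chi(g)\chi(g^2) + 2\chi(g^3)\bigr)$ together with the character of the five-dimensional irreducible $\WW_5$, a direct computation gives that the six-dimensional irreducible representation corresponding to the partition $(3,1,1)$ — which, as recalled before the lemma, is exactly $H^0(S,\omega_S^{-1})$ — occurs in $\Lambda^3\WW_5$ with multiplicity one (concretely, $\WW_5$ is the five-dimensional irreducible on which transpositions act with trace $+1$, and then $\Lambda^3\WW_5 \cong (3,1,1)\oplus(2,1,1,1)$). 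Since the irreducible representations of $\fS_5$ are self-dual, multiplicity one provides an $\fS_5$-equivariant linear embedding $H^0(S,\omega_S^{-1})^\vee \hookrightarrow \Lambda^3\WW_5$, unique up to scalar. Projectivizing yields a canonical $\fS_5$-invariant subspace $\P^5 = \P(H^0(S,\omega_S^{-1})^\vee) \subset \P(\Lambda^3\WW_5)$ with $H^0(\P^5,\cO(1)) \cong (3,1,1)$ as $\fS_5$-representations.

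Next I would invoke the classical fact that a smooth codimension-four linear section of the Plücker-embedded Grassmannian $\Gr(3,\WW_5)\subset\P(\Lambda^3\WW_5)$ is a quintic del Pezzo surface: since $\omega_{\Gr(3,\WW_5)}\cong\cO(-5)$, adjunction gives $\omega_{S'}\cong\cO(-1)|_{S'}$ for such a section $S'$, and $(-K_{S'})^2 = \deg\Gr(3,\WW_5) = 5$; see \cite[\S8.5]{Dolgachev-CAG}. Taking $S' = \Gr(3,\WW_5)\cap\P^5$ for the invariant $\P^5$ of the previous step, the section is automatically $\fS_5$-invariant. Granting that it is transverse, $S'$ is a smooth quintic del Pezzo surface, $\cO(1)|_{S'}$ is anticanonical, and $H^0(S',-K_{S'}) = H^0(\P^5,\cO(1)) \cong (3,1,1)$. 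As the smooth quintic del Pezzo surface is unique with automorphism group exactly $\fS_5$, the pair $(S',\fS_5)$ is isomorphic to $(S,\fS_5)$, and this isomorphism identifies the embedding $S'\subset\P^5$ with the anticanonical embedding of $S$, giving the equality $S = \Gr(3,\WW_5)\cap\P^5$ equivariantly.

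The main obstacle is the transversality of this specific invariant section, everything else being either character bookkeeping or a citation. I would secure it through the Plücker quadrics: the space of quadrics through the anticanonical $S$ is an irreducible five-dimensional representation \cite[Proposition~2]{ShepherdBarron-Inv}, namely $\WW_5^\vee = H^0(\P^5,I_S(2))$, while the homogeneous ideal of $\Gr(3,\WW_5)$ is generated by five quadrics forming $\Lambda^4\WW_5^\vee\cong\WW_5\otimes\det\WW_5$, which for $\WW_5$ as above is isomorphic to $\WW_5^\vee$. Restriction to the invariant $\P^5$ is then an $\fS_5$-equivariant map between isomorphic irreducibles, hence either zero or an isomorphism; it is nonzero because $\Gr(3,\WW_5)$ contains no $\P^5$, so it is an isomorphism onto the space of quadrics through $S$. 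Thus $\Gr(3,\WW_5)\cap\P^5$ is cut out in $\P^5$ by precisely the five quadrics defining $S$, which delivers both transversality and the identification at once. Should one prefer to avoid this matching of quadric modules, the transversality can instead be confirmed by a direct but tedious computation of the five Plücker relations restricted to the invariant $\P^5$.
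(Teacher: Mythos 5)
Your construction of the invariant $\P^5$ is correct: $\Lambda^3\WW_5\cong\rR(3,1,1)\oplus\rR(2,1,1,1)$ does hold, so the six-dimensional irreducible occurs with multiplicity one and the equivariant $\P^5\subset\P(\Lambda^3\WW_5)$ is unique; the final identification of a smooth invariant section with $S$ via uniqueness of the quintic del Pezzo surface is also fine. The gap is exactly at the step you call the main obstacle, and your proposed fix does not close it. You restrict the five Pl\"ucker quadrics, a copy of $\Lambda^4\WW_5^\vee\cong\WW_5^\vee\cong\rR(3,2)$, to the invariant $\P^5$ and invoke Schur's lemma to conclude that their span is $H^0(\P^5,I_S(2))$. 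But the restriction map naturally lands in $H^0(\P^5,\cO_{\P^5}(2))\cong\Sym^2\rR(3,1,1)$, and a character computation gives
\[
\Sym^2\rR(3,1,1)\;\cong\;\rR(5)\oplus\rR(1^5)\oplus\rR(4,1)\oplus\rR(3,2)^{\oplus2}\oplus\rR(2,2,1),
\]
so $\rR(3,2)$ occurs with multiplicity \emph{two}. Schur's lemma therefore only says that the restricted quadrics span \emph{some} copy of $\rR(3,2)$ (or vanish); it does not force that copy to be $I_S(2)$, and a wrong copy would have common zero locus not containing $S$ at all. Calling the restriction ``a map between isomorphic irreducibles'' with target $H^0(\P^5,I_S(2))$ presupposes that the restricted Pl\"ucker quadrics already vanish on $S$, i.e.\ that $S\subset\Gr(3,\WW_5)\cap\P^5$ --- which is precisely the content of the lemma.

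The paper closes this gap by reversing the order of construction: it first builds the morphism $S\to\Gr(3,\WW_5)$ geometrically, from the surjection $\WW_5^\vee\otimes\cO_{\P^5}\to(I_S/I_S^2)(2)$ onto the twisted conormal bundle (of rank $3$ because $S$ is cut out by quadrics), computes via adjunction that the pullback of $\cO_{\Gr(3,\WW_5)}(1)$ is $\det(\WW_5^\vee)\otimes\omega_S^{-1}$, and thereby obtains the linear embedding $\P^5\hookrightarrow\P(\Lambda^3\WW_5)$ \emph{together with} the containment $S\subset\Gr(3,\WW_5)\cap\P^5$. Only then is your Schur argument applied, now legitimately, with target the irreducible $H^0(\P^5,I_S(2))$: the map is nonzero since $\Gr(3,\WW_5)$ contains no $\P^5$, hence an isomorphism, and $\Gr(3,\WW_5)\cap\P^5=S$ because $S$ is an intersection of quadrics. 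To repair your write-up you need this a priori containment (or the explicit computation of the restricted Pl\"ucker relations that you mention as a fallback but do not carry out); the multiplicity-one phenomenon your argument actually requires lives in $\Sym^2 H^0(S,\omega_S^{-1})$, not in $\Lambda^3\WW_5$, and it fails there.
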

\begin{proof}
We use the technique of excess conormal bundles developed in~\cite[Appendix~A]{DK}.
Since $S$ is an intersection of quadrics, the composition
\begin{equation*}
\WW_5^\vee \otimes \cO_{\P^5} \to I_S(2) \to (I_S/I_S^2)(2)
\end{equation*}
is surjective.
The conormal sheaf $\cN_{S/\P^5}^\vee \cong I_S/I_S^2$ is locally free of rank 3 on $S$,
hence the above surjection induces an $\fS_5$-equivariant map $S \to \Gr(3,\WW_5)$ such that
the pullback of the dual tautological bundle~$\cU_3^\vee$ from $\Gr(3,\WW_5)$ to $S$ is isomorphic to $(I_S/I_S^2)(2)$.
By adjunction formula we have
\[
\det(I_S/I_S^2) \cong \omega_{\P^5\vert_S} \otimes \omega_S^{-1} \cong \det(\WW_5^\vee) \otimes \omega_S^6 \otimes \omega_S^{-1},
\]
hence
\begin{equation*}
\det((I_S/I_S^2)(2)) \cong \det(\WW_5^\vee) \otimes \omega_S^{-1},
\end{equation*}
hence the pullback of
$\cO_{\Gr(3,\WW_5)}(1) \cong \det(\cU_3^\vee)$
to $S$ is isomorphic to $\det(\WW_5^\vee) \otimes \omega_S^{-1}$.
The induced map
\begin{equation*}
\Lambda^3\WW_5^\vee \cong H^0\big(\Gr(3,\WW_5),\cO_{\Gr(3,\WW_5)}(1)\big) \to H^0\big(S,\det(\WW_5^\vee) \otimes \omega_S^{-1}\big) \cong \det(\WW_5^\vee) \otimes H^0(S,\omega_S^{-1})
\end{equation*}
is $\fS_5$-equivariant and surjective (since the target space is an irreducible $\fS_5$-representation).
Moreover, since the $\fS_5$-representation $H^0(S,\omega_S^{-1})$ is invariant under the sign twist,
the above composition defines an embedding
\begin{equation*}
\P^5 = \P(H^0(S,\omega_S^{-1})^\vee) \hookrightarrow \P(\Lambda^3\WW_5)
\end{equation*}
such that $S \subset \Gr(3,\WW_5) \cap \P^5$.
It remains to show that this embedding of $S$ is an equality.

Since $\Gr(3,\WW_5) \subset \P(\Lambda^3\WW_5)$ is cut out by Pl\"ucker quadrics that are parameterized by the space $\WW_5^\vee {}\otimes \det(\WW_5^\vee)$, we obtain a map
(where the first isomorphism takes place by~\mbox{\cite[Proposition~A.7]{DK}})
\begin{equation}\label{eq:triv-det}
\WW_5^\vee {}\otimes \det(\WW_5^\vee) \cong H^0\big(\P(\Lambda^3\WW_5),I_{\Gr(3,\WW_5)}(2)\big) \to H^0(\P^5,I_S(2)) \cong \WW_5^\vee
\end{equation}
which by construction commutes with the natural $\fS_5$-action.
It is non-zero since~$\Gr(3,\WW_5)$ does not contain $\P^5$, hence it is an isomorphism by irreducibility of $\WW_5$.
Since~$S$ is an intersection of quadrics, it follows that $S = \Gr(3,\WW_5) \cap \P^5$.
\end{proof}

\begin{remark}[{cf. \cite[Corollary~3]{ShepherdBarron-Inv}}]
\label{remark:w-det}
In~\eqref{eq:triv-det} we obtained an $\fS_5$-equivariant
isomorphism~\mbox{$\WW_5^\vee \otimes \det(\WW_5^\vee) \cong \WW_5^\vee$}.
This allows to identify $\WW_5$ as the (unique) irreducible five-dimensional
representation of $\fS_5$ with $\det(\WW_5)$ being trivial.
It corresponds to the Young diagram of the partition $(3,2)$
in the notation of~\cite[\S4.1]{FultonHarris}.
\end{remark}

We denote the restriction of the tautological bundles $\cU_2$ and $\cU_3$ to $S$ also by $\cU_2$ and~$\cU_3$.
The tautological embeddings $\cU_2 \hookrightarrow \WW_5^\vee \otimes \cO_S$ and $\cU_3 \hookrightarrow \WW_5 \otimes \cO_S$ induce $\fS_5$-equivariant maps
\begin{equation*}
\P_S(\cU_2) \to \P(\WW_5^\vee)
\qquad \text{and} \qquad
\P_S(\cU_3) \to \P(\WW_5).
\end{equation*}
Below we describe these maps explicitly.
We start with the first of them.

\begin{lemma}\label{lemma:segre-cubic}
The image of the map $\varpi \colon \P_S(\cU_2) \to \P(\WW_5^\vee)$ is the Segre cubic hypersurface in~$\P(\WW_5^\vee) \cong \P^4$,
and $\P_S(\cU_2)$ provides its small $\fS_5$-equivariant resolution of singularities.
\end{lemma}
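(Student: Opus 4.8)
The plan is to pin down the image of $\varpi$ by a single intersection number -- the cube of the relative hyperplane class -- which simultaneously yields the degree of the image and the degree of the map, and then to analyse the fibers of $\varpi$ in order to recognise the image as the Segre cubic and $\varpi$ as a small resolution. First I would note that $\varpi$ is a genuine morphism: since $\cU_2 \hookrightarrow \WW_5^\vee \otimes \cO_S$ is a subbundle, on each fiber it restricts to a linear embedding $\P(\cU_2(s)) = \P^1 \hookrightarrow \P(\WW_5^\vee)$, so the map is everywhere defined and $\varpi^*\cO_{\P(\WW_5^\vee)}(1) = \cO_{\P_S(\cU_2)}(1) =: \xi$. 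Writing $\pi \colon \P_S(\cU_2) \to S$ for the projection, the Grothendieck relation gives $\pi_*\xi = 1$ and $\pi_* \xi^3 = c_1(\cU_2)^2 - c_2(\cU_2)$, so that $\int \xi^3 = K_S^2 - \deg c_2(\cU_2)$.

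Next I would compute these two Chern numbers. From the tautological sequence on $\Gr(2,\WW_5^\vee)$ one has $\cU_3 \cong (\WW_5^\vee/\cU_2)^\vee$, whence $\det\cU_2 \cong \det\cU_3 \otimes \det\WW_5^\vee$; since $\det\WW_5$ is trivial by Remark~\ref{remark:w-det} and $\det\cU_3^\vee|_S \cong \omega_S^{-1}$ by Lemma~\ref{lemma:dp5-gr}, this gives $c_1(\cU_2) = K_S$, hence $c_1(\cU_2)^2 = K_S^2 = 5$. For $c_2$ I would use that $S = \Gr(3,\WW_5)\cap\P^5$ is a codimension-$4$ linear section, so that $\int_S c_2(\cU_2) = \int_{\Gr(2,\WW_5^\vee)} \sigma_{1,1}\,\sigma_1^4$; expanding $\sigma_1^4 = 3\sigma_{3,1} + 2\sigma_{2,2}$ in $\Gr(2,5)$ and applying Schubert duality gives the value $2$. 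Therefore $\xi^3 = 5 - 2 = 3$. Because the image of $\varpi$ is $\fS_5$-invariant and $\WW_5^\vee$ is irreducible, the linear span of the image is an $\fS_5$-subrepresentation, hence all of $\WW_5^\vee$, so the image is not contained in a hyperplane; combined with $\xi^3 = 3 \neq 0$ this forces the image to be a cubic hypersurface $Y'$ and $\varpi$ to be birational onto it.

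Finally I would identify $Y'$ and the resolution by a fiber analysis. Using $S = \Gr(3,\WW_5)\cap\P^5$, the fiber over $p = [\ell] \in \P(\WW_5^\vee)$ is $\varpi^{-1}(p) = \{s\in S: U_3(s)\subset H_p\} = \P^5\cap\P(\Lambda^3 H_p)$, where $H_p = \ker\ell \subset \WW_5$ is the corresponding hyperplane; this is always a linear subspace, empty off $Y'$, a single point at a general point of $Y'$, and of positive dimension exactly when $\P(\Lambda^3 H_p)$ meets $\P^5$ in a line. Such a line lies on $S$, and lines on $S \subset \Gr(3,\WW_5)$ correspond to flags $V_2\subset V_4$ with $V_4 = H_p$; since the quintic del Pezzo surface contains exactly ten lines (permuted by $\fS_5$), this produces ten points of $Y'$ over which the fiber is a $\P^1$, and $\varpi$ is an isomorphism elsewhere. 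Hence $\varpi$ contracts ten disjoint lines, it is small, and $Y'$ is a cubic threefold with ten nodes, i.e., the Segre cubic; the $\fS_5$-equivariance is built into the construction.

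The main obstacle is this last step. One must verify that the jump in fiber dimension occurs only over these ten points, that each such fiber is exactly a reduced $\P^1$ (so that no divisor is contracted and the resolution is genuinely small), and that the resulting singularities are ordinary double points -- equivalently, that the lifted lines in $\P_S(\cU_2)$ have normal bundle $\cO(-1)^{\oplus 2}$. Once this is established, recognising $Y'$ as the Segre cubic is immediate from the classical fact that a cubic threefold with ten nodes is projectively unique.
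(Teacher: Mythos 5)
Your argument is correct in substance and shares its key geometric step with the paper: the identification of the fiber $\varpi^{-1}(p)$ with the linear space $\Gr(3,H_p)\cap\P^5\subset S$, hence empty, a point, or one of the ten lines of $S$, which gives birationality, smallness, and the ten contracted curves exactly as in the paper's proof. Where you differ is in how the degree of the image is obtained: you compute $\int\xi^3=c_1(\cU_2)^2-c_2(\cU_2)=5-2=3$ by the Grothendieck relation and Schubert calculus (both Chern numbers check out: $\det\cU_2\cong\omega_S$ and $\int_S\sigma_{1,1}\cdot\sigma_1^4=2$), and rule out a triple cover of a hyperplane by irreducibility of $\WW_5^\vee$. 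The paper instead observes that $\det\cU_2\cong\omega_S$ forces $\omega_{\P_S(\cU_2)}\cong\varpi^*\cO(-2)$, so the small map is crepant and $\omega_Z\cong\cO(-2)\vert_Z$, whence $\deg Z=3$ by adjunction. Your route is more computational but buys the degree and birationality in one stroke; the paper's is shorter once smallness is known.

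The one place where your write-up falls short of a complete proof is the final identification, and you flag it yourself: you invoke uniqueness of the cubic threefold with ten \emph{nodes}, but you have only established ten isolated singular points, not their analytic type (equivalently, that the contracted curves have normal bundle $\cO(-1)^{\oplus 2}$). This is easy to repair without any local analysis, in either of two ways the paper uses: (a) the classification result actually states that a three-dimensional cubic with ten \emph{isolated singular points} is the Segre cubic, so nodality is not needed; or (b) bypass the singularity count entirely by noting that the image is an $\fS_5$-invariant cubic in $\P(\WW_5^\vee)$, and $\Sym^3\WW_5$ contains a unique trivial summand, so the $\fS_5$-invariant cubic is unique and must be the Segre cubic. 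Either one closes the gap in a sentence; as written, the nodality claim is an unproved assertion on which your conclusion rests.
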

\begin{proof}
Let us describe the fiber of $\varpi$ over a point of $\P(\WW_5^\vee)$.
Thinking of such a point as of a four-dimensional subspace $U_4 \subset \WW_5$, we conclude that
\begin{equation*}
\varpi^{-1}([U_4]) = \Gr(3,U_4) \cap \P^5 \subset \Gr(3,\WW_5) \cap \P^5 = S.
\end{equation*}
Since $\Gr(3,U_4) \cong \P^3$, this intersection is a linear space contained in $S$, hence either is empty, or is a point, or is a line.
Conversely, if $L \subset S$ is a line, then
\begin{equation*}
\cU_2\vert_L \cong \cO_L \oplus \cO_L(-1)
\end{equation*}
because $\cU_2^\vee$ is globally generated with $\det(\cU_2^\vee) \cong \omega_S^{-1}$.
Moreover, the section
\begin{equation*}
L = \P_L(\cO_L) \hookrightarrow \P_L(\cU_2\vert_L) \hookrightarrow \P_S(\cU_2)
\end{equation*}
of the projection $\P_L(\cU_2\vert_L) \to L$
is contracted by the map $\varpi$.
This proves that $\varpi$ contracts precisely the exceptional sections over the ten lines of $S$, hence the image
\begin{equation*}
Z {}:={} \varpi(\P_S(\cU_2)) \subset \P(\WW_5^\vee)
\end{equation*}
is a hypersurface with ten isolated singular points and the map $\P_S(\cU_2) \to Z$ is a small resolution of singularities.
On the other hand, since $\det(\cU_2) \cong \omega_S$, it follows that
\begin{equation*}
\omega_{\P_S(\cU_2)} \cong \varpi^*\cO_{\P(\WW_5^\vee)}(-2).
\end{equation*}
Since the map $\P_S(\cU_2) \to Z$ is small, we have
$\omega_Z \cong \cO_{\P(\WW_5^\vee)}(-2)\vert_Z$, so that $Z$ is a cubic hypersurface.
It remains to notice that the only three-dimensional cubic with ten isolated singular points is the Segre cubic,
see e.g.~\cite[Proposition~2.1]{Dolgachev-Segre};
alternatively, one can deduce this from the fact that the group $\fS_5$
acting in the irreducible five-dimensional representation~$\WW_5^\vee$ has a unique cubic invariant,
which must thus define the Segre cubic.
\end{proof}

\begin{remark}[{cf.~\cite[\S2]{Dolgachev-Segre}, \cite[Proposition~4.6]{Prokhorov-FieldsOfInv}}]
\label{remark:M06}
The relation of the quintic del Pezzo surface $S$ and the Segre cubic threefold $Z$ extends to an $\fS_5$-equivariant diagram
\begin{equation*}
\vcenter{\xymatrix{
&& \bcM_{0,6} \ar[dl] \ar[dr] \\
&
\Bl_{\textrm{\,5\,pt}}(\P^3) \ar[dl] \ar[dr] \ar@{<-->}[rr] &&
\P_S(\cU_2) \ar[dl] \ar[dr] \\
\P^3 && Z && S \hbox to 0pt{${} \cong \bcM_{0,5}$\hss}
}}
\end{equation*}
Here $\bcM_{0,n}$ is the moduli spaces of stable rational curves with~$n$ marked points,
the left outer diagonal arrows provide its Kapranov's representation (the lower left arrow is the blow up of five general points on $\P^3$),
the right outer diagonal arrows compose to the forgetful map $\bcM_{0,6} \to \bcM_{0,5}$,
the inner diagonal arrows contract ten smooth rational curves each (and provide two $\fS_5$-equivariant small resolutions of $Z$),
and the dashed arrow is a flop in these curves.
\end{remark}

The above diagram can be thought of as an $\fS_5$-Sarkisov link from the Mori fiber
space~\mbox{$\P_S(\cU_2) \to S$} to $\P^3$ centered at $Z$,
see~\S\ref{subsection:Cl-and-applications} below for explanation of terminology.
It is natural to ask what is the $\fS_5$-Sarkisov link starting from $\P_S(\cU_3) \to S$.
We will see in diagram~\eqref{eq:diagram-two-factorializations-Y} below that it is a symmetric link centered at the Coble fourfold~$\cY$.

So, we consider the projectivization $\P_S(\cU_3)$ of the rank 3 bundle $\cU_3$ and denote it by
\begin{equation*}
\cY_{5,1} := \P_S(\cU_3).
\end{equation*}
The embedding $\cU_3 \hookrightarrow \WW_5 \otimes \cO_S$ induces an $\fS_5$-equivariant diagram
\begin{equation}\label{eq:y51}
\vcenter{\xymatrix{
& \cY_{5,1} \ar[dl]_p \ar[dr]^{\pi_{5,1}} \\
S && \P(\WW_5)
}}
\end{equation}
where $p$ is the natural projection $\cY_{5,1} = \P_S(\cU_3) \to S$,
and $\pi_{5,1}$ is the composition of the embedding~\mbox{$\cY_{5,1}\hookrightarrow S\times\P(\WW_5)$} with the projection
to the second factor. In particular, the restriction of the map $p$ to any fiber of $\pi_{5,1}$ is an isomorphism to its image.
This allows to consider every fiber
\begin{equation*}
S_w := \pi_{5,1}^{-1}(w)
\end{equation*}
of the map $\pi_{5,1}$ as a closed subscheme of~$S$.
In the next lemma we describe these subschemes.

For each point $w \in \P(\WW_5)$ denote by $\WW_5/w$ the four-dimensional quotient of~$\WW_5$ by the line in~$\WW_5$ that corresponds to $w$.
Every two-dimensional subspace in $\WW_5/w$ gives (by taking preimage) a three-dimensional subspace in $\WW_5$ containing $w$.
This allows to consider $\Gr(2,\WW_5/w)$ as a subvariety of $\Gr(3,\WW_5)$.

\begin{lemma}
\label{lemma:s-w}
The fiber $S_w$ of the map $\pi_{5,1}$ over a point $w \in \P(\WW_5)$ can be described as
\begin{equation*}
S_w = \Gr(2,\WW_5/w) \cap \P^5 \subset \Gr(3,\WW_5) \cap \P^5 = S.
\end{equation*}
In particular, $S_w$ is either a zero-dimensional scheme of length $2$, or a line,
or a conic.
\end{lemma}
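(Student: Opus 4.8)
The plan is to first identify the fiber scheme-theoretically, and then to analyze the resulting intersection of a quadric with a linear space.

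First I would unwind the construction. A point of $\cY_{5,1} = \P_S(\cU_3)$ lying over $s = [U_3] \in S \subset \Gr(3,\WW_5)$ is a one-dimensional subspace $\ell \subset U_3$, and by the definition of the embedding $\cY_{5,1} \hookrightarrow S \times \P(\WW_5)$ induced by $\cU_3 \hookrightarrow \WW_5 \otimes \cO_S$, the map $\pi_{5,1}$ sends this point to $[\ell] \in \P(\WW_5)$. Hence $S_w = \pi_{5,1}^{-1}(w)$ consists precisely of those $[U_3] \in S$ for which the line $w \subset \WW_5$ is contained in $U_3$. Since the three-dimensional subspaces of $\WW_5$ containing $w$ correspond to two-dimensional subspaces of $\WW_5/w$, this is exactly $\Gr(2,\WW_5/w) \cap S$; as $\Gr(2,\WW_5/w) \subset \Gr(3,\WW_5)$ and $S = \Gr(3,\WW_5)\cap\P^5$ by Lemma~\ref{lemma:dp5-gr}, this yields the asserted formula $S_w = \Gr(2,\WW_5/w)\cap\P^5$. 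To keep track of the scheme structure I would note that $S_w$ is also the zero scheme of the section of the rank $2$ quotient bundle $\cQ_2 := (\WW_5\otimes\cO_S)/\cU_3$ determined by a generator of $w$, this being the incidence equation cutting out $\P_S(\cU_3) \subset S\times\P(\WW_5)$ restricted to $S \times \{w\}$.

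Next I would invoke the standard fact that under the Pl\"ucker embedding the sub-Grassmannian $\Gr(2,\WW_5/w)$ is a four-dimensional (Klein) quadric $Q_w$ spanning the five-plane $\P^5_w := \P(w\wedge\Lambda^2\WW_5) \subset \P(\Lambda^3\WW_5)$: indeed $[U_3]\in\Gr(3,\WW_5)$ satisfies $w\subset U_3$ if and only if its Pl\"ucker point lies in $\P^5_w$. Thus, writing $\P^5$ for the linearly embedded span of $S$ from Lemma~\ref{lemma:dp5-gr}, we obtain $S_w = Q_w \cap \P^5 = Q_w \cap \Lambda_w$, where $\Lambda_w := \P^5 \cap \P^5_w \subseteq \P^5_w$ is a linear subspace of $\P(\Lambda^3\WW_5) = \P^9$. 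Two five-planes in $\P^9$ always meet in dimension at least $1$, so $\dim\Lambda_w \ge 1$.

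The decisive point is the bound $\dim S_w \le 1$, which I would deduce by ruling out $S_w = S$. If some line $w$ were contained in $U_3$ for every $[U_3]\in S$, then $\bigcap_{[U_3]\in S} U_3$ would be a nonzero subspace of $\WW_5$; since $S$ is $\fS_5$-invariant this intersection is an $\fS_5$-subrepresentation, and as $\WW_5$ is irreducible (Remark~\ref{remark:w-det}) it would have to equal $\WW_5$, which is absurd for an intersection of three-dimensional subspaces. Hence $S_w$ is a proper closed subscheme of the irreducible surface $S$, so $\dim S_w \le 1$; combining this with $\dim(Q_w\cap\Lambda_w)\ge \dim\Lambda_w - 1$ (as $Q_w$ is a hypersurface in $\P^5_w$) forces $\dim\Lambda_w \le 2$, so $\dim\Lambda_w \in\{1,2\}$. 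It then remains to read off the trichotomy. If $\Lambda_w$ is a line, then $S_w = Q_w\cap\Lambda_w$ is a length $2$ subscheme when $\Lambda_w\not\subset Q_w$ and is the line $\Lambda_w$ itself otherwise; if $\Lambda_w$ is a plane, then $\Lambda_w\not\subset Q_w$ (else $\dim S_w = 2$) and $S_w = Q_w\cap\Lambda_w$ is a conic. Since the Pl\"ucker polarization restricts to $\omega_S^{-1}$ on $S$ (as computed in the proof of Lemma~\ref{lemma:dp5-gr}), these are honest lines and conics for the anticanonical embedding $S\subset\P^5$. The genuinely non-formal step is the dimension bound $\dim S_w\le 1$, for which the $\fS_5$-irreducibility of $\WW_5$ is exactly what is needed; once that is in hand, the three cases follow mechanically from intersecting the quadric $Q_w$ with a linear space of dimension $1$ or $2$, the length $2$ assertion being the degree of a line section of $Q_w$.
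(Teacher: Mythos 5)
Your proof is correct and follows essentially the same route as the paper's: identify $S_w$ as a linear section of the four-dimensional quadric $\Gr(2,\WW_5/w)$, bound its dimension by $1$, and read off the length-$2$/line/conic trichotomy from the codimension of the linear section. The only difference is expository: where the paper dismisses $\dim S_w<2$ with ``since $S$ is irreducible,'' you supply the missing step (ruling out $S_w=S$ via the $\fS_5$-irreducibility of $\WW_5$), which is a welcome clarification but not a different argument.
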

\begin{proof}
The first equality is obvious.
Consequently, $S_w$ is a linear section of the four-dimensional quadric $\Gr(2,\WW_5/w)$ of codimension at most~4.
So, if $S_w$ is zero-dimensional, it is a scheme of length 2.
Furthermore, if $S_w$ is one-dimensional, it is either a line or a conic.
It remains to notice that $\dim S_w < \dim S = 2$ since $S$ is irreducible.
\end{proof}

Our goal is to describe the map $\pi_{5,1}$ in~\eqref{eq:y51}.
We start by presenting some surfaces in~$\cY_{5,1}$ contracted by it.
Recall that $S$ contains 10 lines.
Recall also that $\cU_3$ is a subbundle in the trivial vector bundle with fiber $\WW_5$ over~$S$, so that $\cY_{5,1}$ is a subvariety in~\mbox{$S\times\P(\WW_5)$}.

\begin{lemma}\label{lemma:r-l}
For every line $L \subset S$ there is a unique line $L' \subset \P(\WW_5)$ such that for the surface $R_L = L\times L'$ one has
\begin{equation}\label{eq:r-l}
R_L \subset \cY_{5,1} \subset S \times \P(\WW_5).
\end{equation}
In particular, the map $\pi_{5,1}$ contracts $R_L$ onto the line $L'$.
Moreover,
if $L_1 \ne L_2$ are distinct lines on $S$ then the corresponding lines $L'_1,L'_2 \subset \P(\WW_5)$ are distinct as well.
\end{lemma}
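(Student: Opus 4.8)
The plan is to identify, for a line $L \subset S$, the largest linear subspace of $\WW_5$ contained in every fibre of $\cU_3$ along $L$, and to show that its projectivization is the required line $L'$. For $s \in S \subset \Gr(3,\WW_5)$ write $(\cU_3)_s \subset \WW_5$ for the corresponding three-dimensional subspace, and set
\[
W_L := \bigcap_{s \in L} (\cU_3)_s \subseteq \WW_5 .
\]
Recalling that $\cY_{5,1} = \P_S(\cU_3) \subset S \times \P(\WW_5)$ consists of the pairs $(s,[v])$ with $v \in (\cU_3)_s$, one sees at once that a line $L' \subset \P(\WW_5)$ satisfies $R_L = L \times L' \subset \cY_{5,1}$ if and only if every $[v] \in L'$ lies in $(\cU_3)_s$ for all $s \in L$, i.e. if and only if $L' \subseteq \P(W_L)$. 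Moreover $W_L$ is exactly $H^0(L,\cU_3\vert_L)$: a constant section of $\WW_5 \otimes \cO_L$ lands in the subbundle $\cU_3\vert_L$ precisely when its value lies in every $(\cU_3)_s$, and conversely the evaluation map $H^0(L,\cU_3\vert_L)\otimes\cO_L \to \cU_3\vert_L \hookrightarrow \WW_5 \otimes \cO_L$ is injective with constant image $W_L$. Thus the entire existence-and-uniqueness statement reduces to the single computation $\dim W_L = 2$, after which $L' := \P(W_L)$ is forced to be the unique admissible line.

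To compute the splitting type of the rank-$3$ bundle $\cU_3\vert_L$ on $L \cong \P^1$ I would use two facts. First, since $\cU_3 \subset \WW_5 \otimes \cO_S$ is a subbundle, the dual $\cU_3^\vee$ is a quotient of the trivial bundle $\WW_5^\vee \otimes \cO_S$, hence globally generated, so $\cU_3^\vee\vert_L \cong \cO_L(b_1)\oplus\cO_L(b_2)\oplus\cO_L(b_3)$ with all $b_i \ge 0$. Second, $\det\cU_3^\vee \cong \cO_{\Gr(3,\WW_5)}(1)$ restricts on $S = \Gr(3,\WW_5)\cap\P^5$ to the hyperplane class, which equals $\omega_S^{-1}$ because the embedding $S \hookrightarrow \P^5$ is anticanonical (Lemma~\ref{lemma:dp5-gr}); hence $\deg(\det\cU_3^\vee\vert_L) = -K_S \cdot L = 1$, giving $\sum_i b_i = 1$. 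With all $b_i \ge 0$ this forces $\{b_1,b_2,b_3\} = \{1,0,0\}$ and
\[
\cU_3\vert_L \cong \cO_L(-1)\oplus\cO_L\oplus\cO_L .
\]
Therefore $W_L = H^0(L,\cU_3\vert_L)$ is two-dimensional, $L' = \P(W_L)$ is a line, the inclusion $R_L = L \times L' \subset \cY_{5,1}$ holds (as $W_L \subseteq (\cU_3)_s$ for all $s$), the map $\pi_{5,1}$ contracts $R_L$ onto $L'$ since it is the projection to the second factor, and uniqueness is immediate because any admissible $L''$ lies in $\P(W_L) = L'$ and hence equals $L'$.

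For the last assertion I argue by contradiction. Suppose distinct lines $L_1, L_2 \subset S$ give $L'_1 = L'_2$, i.e. $W_{L_1} = W_{L_2} =: W$ with $\dim W = 2$. Then $W \subseteq (\cU_3)_s$ for every $s \in L_1 \cup L_2$, so both lines lie in the locus $\Pi_W := \{\,[U] \in \Gr(3,\WW_5) : W \subseteq U\,\}$. Wedging with a generator of the line $\Lambda^2 W$ identifies $\Pi_W$ with the linearly embedded $\P(\Lambda^2 W \wedge \WW_5) \cong \P^2 \subset \P(\Lambda^3\WW_5)$, so $L_1, L_2 \subset \Pi_W \cap \P^5$, a linear subspace of the plane $\Pi_W$. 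Two distinct lines span $\Pi_W$, whence $\Pi_W \cap \P^5 = \Pi_W$ and therefore $\Pi_W \subset \Gr(3,\WW_5)\cap\P^5 = S$. But $\Pi_W \cong \P^2$ is an irreducible surface of degree $1$ while $S$ is irreducible of degree $5$, so $\Pi_W \subseteq S$ is impossible; hence $L'_1 \ne L'_2$.

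The main obstacle is pinning down the splitting $\cU_3\vert_L \cong \cO_L(-1)\oplus\cO_L^{\oplus 2}$: it is precisely the combination of global generation of $\cU_3^\vee$ with the degree $-K_S \cdot L = 1$ that makes $\dim W_L$ come out to be exactly $2$ rather than larger, which is the whole content of existence together with uniqueness. The distinctness claim is then only a short linear-algebra computation, resting on the fact that $S$ contains no plane.
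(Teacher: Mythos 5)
Your proof is correct, and it reaches the key two-dimensional subspace by a genuinely different route than the paper. The paper's proof invokes the structure of lines on the Grassmannian: a line $L \subset \Gr(3,\WW_5)$ determines a unique flag $U_2 \subset U_4$ with $L = \{\, U_3 : U_2 \subset U_3 \subset U_4 \,\}$, so that $L' = \P(U_2)$ is produced immediately; existence is the observation $U_2 \otimes \cO_L \subset \cU_3\vert_L$, and uniqueness is absorbed into the uniqueness of $U_2$. You instead identify the relevant subspace as $W_L = \bigcap_{s \in L} (\cU_3)_s = H^0(L,\cU_3\vert_L)$ and compute its dimension from the splitting type $\cU_3\vert_L \cong \cO_L(-1)\oplus\cO_L^{\oplus 2}$, obtained from global generation of $\cU_3^\vee$ together with $\deg\det(\cU_3^\vee)\vert_L = -K_S\cdot L = 1$. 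This is precisely the technique the paper itself uses for $\cU_2\vert_L$ in the proof of Lemma~\ref{lemma:segre-cubic}, and it has the merit of making the uniqueness of $L'$ completely explicit ($L'$ is forced to equal $\P(W_L)$) without appealing to the classification of lines on Grassmannians; of course the two descriptions agree, since $U_2 = \bigcap_{[U_3]\in L} U_3$. For the distinctness of $L'_1$ and $L'_2$ the two arguments come down to the same point: the $3$-spaces containing a fixed $U_2$ form a linearly embedded plane in $\Gr(3,\WW_5)$ whose intersection with $\P^5$ is a linear space contained in $S$, hence at most a line; the paper states this directly, while you phrase it as a contradiction with $\deg S = 5$ after noting that two distinct lines would force the entire plane into $S$. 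Both are fine.
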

\begin{proof}
Since $L$ is a line on $\Gr(3,\WW_5)$, there is a unique two-dimensional subspace $U_2 \subset \WW_5$ such that $L \subset \P(\WW_5/U_2) \subset \Gr(3,\WW_5)$.
Then for every point $[U_3]$ of $L$ we have $U_2 \subset U_3$, that is, $U_2 \otimes \cO_L \subset \cU_3\vert_L$,
hence
\begin{equation*}
L \times \P(U_2) = \P_L(U_2 \otimes \cO_L) \subset \P_S(\cU_3)  = \cY_{5,1}.
\end{equation*}
Thus, the line $L' = \P(U_2)  \subset \P(\WW_5)$ has the required property.

Furthermore, for any two-dimensional subspace $U_2 \subset \WW_5$ the intersection
\begin{equation*}
\P(\WW_5/U_2) \cap S = \P(\WW_5/U_2) \cap \P^5
\end{equation*}
is a linear space contained in $S$, hence is either empty, or a point, or a line.
In particular, two distinct lines $L_1$ and $L_2$ on $S$ cannot correspond to the same subspace $U_2 \subset \WW_5$,
hence the corresponding lines $L'_1$ and $L'_2$ in $\P(\WW_5)$ are distinct.
\end{proof}

As we already mentioned,
a quintic del Pezzo surface is classically represented as the blow up of $\P^2$ in four general points.
Let $\varphi \colon S \to \P^2$ be one of such blow up representations with exceptional divisors
$E_0$, $E_1$, $E_2$, and~$E_3$.
Denote by $e_i$ their classes in $\Pic(S)$, and by~$\ell$ the pullback of the line class from~$\P^2$ to~$S$, so that
\begin{equation*}
K_S \sim -3\ell + e_0 + e_1 + e_2 + e_3.
\end{equation*}
The line bundle $\cO_S(\ell)$ defines the contraction $\varphi \colon S \to \P^2$ and
the line bundle~\mbox{$\cO_S(2\ell - e_0 - e_1 - e_2 - e_3)$} defines a conic bundle $\bar\varphi \colon S \to \P^1$.
The combination of~$\varphi$ and $\bar\varphi$ defines an embedding
\begin{equation*}
\varphi \times \bar\varphi \colon S \hookrightarrow \P^2 \times \P^1,
\end{equation*}
whose image is a divisor of bidegree $(2,1)$.
Moreover, the composition of $\varphi \times \bar\varphi$ with the Segre embedding $\P^2 \times \P^1 \hookrightarrow \P^5$ is the anticanonical embedding of $S$, therefore we have an exact sequence of normal bundles
\begin{equation}\label{eq:norm-bundles}
0 \to \cN_{S/\P^2 \times \P^1} \to  \cN_{S/\P^5} \to \cN_{\P^2\times\P^1/\P^5}\vert_S \to 0.
\end{equation}
The first of these bundles is isomorphic to
\begin{equation*}
\varphi^*\cO_{\P^2}(2) \otimes \bar\varphi^*\cO_{\P^1}(1) \cong \cO_S(4\ell - e_0 - e_1 - e_2 - e_3),
\end{equation*}
and the second is isomorphic to $\cU_3(6\ell - 2e_0 - 2e_1 - 2e_2 - 2e_3)$ by the proof of Lemma~\ref{lemma:dp5-gr}.
The third vector bundle in~\eqref{eq:norm-bundles} can be computed as follows. Here we denote by~$\mathscr{T}_{\mathbb{P}^n}$ the tangent bundle of~$\mathbb{P}^n$.

\begin{lemma}\label{lemma:simple-computation}
For any positive integers $m,n$ we have $\cN_{\P^m\times\P^n/\P^{mn + m + n}} \cong \cT_{\P^m} \boxtimes \cT_{\P^n}$.
\end{lemma}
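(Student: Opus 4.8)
The plan is to compute the normal bundle directly from the Euler sequences of the two factors and of the target projective space, using the standard description of the tangent and quotient bundles of a projective space in terms of tautological sub- and quotient bundles. Write $V$ and $W$ for vector spaces of dimensions $m+1$ and $n+1$, so that the Segre embedding reads $X := \P(V) \times \P(W) \hookrightarrow \P(V \otimes W) = \P^{mn+m+n}$. Denote by $\cO_X(a,b)$ the external tensor products of $\cO(a)$ and $\cO(b)$ on the two factors, and let $Q_V$, $Q_W$ be the pullbacks to $X$ of the tautological quotient bundles on $\P(V)$ and $\P(W)$; recall that $p_1^*\cT_{\P^m} \cong \cO_X(1,0)\otimes Q_V$ and $p_2^*\cT_{\P^n}\cong\cO_X(0,1)\otimes Q_W$, so that $\cT_X$ is the direct sum of these two bundles.

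First I would observe that the Segre tautological sub-line-bundle of $V\otimes W \otimes \cO_X$ is $\cO_X(-1,-1)$, so that the restriction of the ambient tangent bundle is $\cT_{\P(V\otimes W)}|_X \cong \cO_X(1,1) \otimes \bigl((V\otimes W\otimes\cO_X)/\cO_X(-1,-1)\bigr)$. The key input is the surjection of bundles on $X$ given by the tensor product of the two tautological quotient maps,
\[
V\otimes W \otimes \cO_X \twoheadrightarrow Q_V \otimes Q_W,
\]
whose kernel $\mathcal{K} = \cO_X(-1,0)\otimes W \,+\, V\otimes\cO_X(0,-1)$ contains $\cO_X(-1,-1)$ and satisfies $\mathcal{K}/\cO_X(-1,-1)\cong \bigl(\cO_X(-1,0)\otimes Q_W\bigr)\oplus\bigl(Q_V\otimes\cO_X(0,-1)\bigr)$ (the two summands meet only along $\cO_X(-1,-1)$ inside $V\otimes W\otimes\cO_X$, so the sum is direct after the quotient). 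Passing to the quotient by $\cO_X(-1,-1)$ and twisting by $\cO_X(1,1)$ then yields a short exact sequence
\[
0 \to \bigl(\cO_X(0,1)\otimes Q_W\bigr)\oplus\bigl(Q_V\otimes\cO_X(1,0)\bigr) \to \cT_{\P(V\otimes W)}|_X \to \cO_X(1,1)\otimes Q_V\otimes Q_W \to 0.
\]
The left-hand term is precisely $\cT_X$, and once it is matched with the image of the differential of the Segre map the cokernel computes the normal bundle, giving $\cN_{X/\P(V\otimes W)} \cong \cO_X(1,1)\otimes Q_V\otimes Q_W \cong \bigl(\cO_X(1,0)\otimes Q_V\bigr)\otimes\bigl(\cO_X(0,1)\otimes Q_W\bigr) \cong \cT_{\P^m}\boxtimes\cT_{\P^n}$, as claimed.

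The one point that genuinely requires verification — and which I expect to be the only real obstacle — is that the subbundle on the left of the last sequence coincides with the image of $\cT_X$ under the differential of the Segre embedding, rather than merely being abstractly isomorphic to $\cT_X$. This is a direct fiberwise computation: a tangent vector to $\P(V)$ at a line $\langle v\rangle$, regarded as a homomorphism $\langle v\rangle \to V/\langle v\rangle$, is carried by the Segre differential to its tensor product with $\mathrm{id}_{\langle w\rangle}$, which lands in the summand $Q_V\otimes\cO_X(0,-1)$ of $\mathcal{K}/\cO_X(-1,-1)$ (i.e.\ in $Q_V\otimes\cO_X(1,0) \cong p_1^*\cT_{\P^m}$ after the twist), and symmetrically the $\P(W)$ direction lands in $\cO_X(0,1)\otimes Q_W \cong p_2^*\cT_{\P^n}$. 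Everything else is formal manipulation of the Euler sequences, so no further difficulty arises.
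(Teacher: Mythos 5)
Your proof is correct and follows essentially the same route as the paper's: both identify the normal bundle as the cokernel of $\cT_{\P^m\times\P^n}\to\cT_{\P^{mn+m+n}}\vert_{\P^m\times\P^n}$ by playing the Euler sequences of the two factors against that of the ambient space, and your kernel computation for $V\otimes W\otimes\cO_X\twoheadrightarrow Q_V\otimes Q_W$ is just the $\cO_X(-1,-1)$-twist of the four-term sequence the paper obtains by tensoring the pullbacks of the two Euler sequences. Your explicit fiberwise verification that the subbundle is genuinely the image of the Segre differential (not merely abstractly isomorphic to $\cT_{\P^m\times\P^n}$) addresses a point the paper passes over silently and is a welcome addition.
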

\begin{proof}
Let $A$ and $B$ be vector spaces of dimension $m+1$ and $n+1$ respectively.
Tensoring pullbacks to $\P(A) \times \P(B)$ of the Euler sequences
\begin{equation*}
0 \to \cO_{\P(A)} \to A \otimes \cO_{\P(A)}(1) \to \cT_{\P(A)} \to 0
\quad\text{and}\quad
0 \to \cO_{\P(B)} \to B \otimes \cO_{\P(B)}(1) \to \cT_{\P(B)} \to 0,
\end{equation*}
we obtain an exact sequence
\begin{multline*}
0 \to \cO_{\P(A) \times \P(B)} \to A \otimes \cO_{\P(A) \times \P(B)}(1,0) \oplus B \otimes \cO_{\P(A) \times \P(B)}(0,1) \\
\to A \otimes B \otimes \cO_{\P(A) \times \P(B)}(1,1) \to \cT_{\P(A)} \boxtimes \cT_{\P(B)} \to 0.
\end{multline*}
Comparing it with the restriction to $\P(A) \times \P(B)$ of the Euler sequence
\begin{equation*}
0 \to \cO_{\P(A) \times \P(B)} \to A \otimes B \otimes \cO_{\P(A) \times \P(B)}(1,1) \to \cT_{\P(A \otimes B)}\vert_{\P(A) \times \P(B)} \to 0
\end{equation*}
of $\P(A \otimes B)$ and with the pullbacks of the Euler sequences of $\P(A)$ and $\P(B)$, we obtain an exact sequence
\begin{equation*}
0 \to \operatorname{pr}_{\P(A)}^* \cT_{\P(A)} \oplus \operatorname{pr}_{\P(B)}^* \cT_{\P(B)}
\to \cT_{\P(A \otimes B)}\vert_{\P(A) \times \P(B)} \to \cT_{\P(A)} \boxtimes \cT_{\P(B)} \to 0,
\end{equation*}
where $\operatorname{pr}_{\P(A)}$ and $\operatorname{pr}_{\P(B)}$ are the projections, which proves the lemma.
\end{proof}

Applying Lemma~\ref{lemma:simple-computation} in the case $m = 2$, $n = 1$, we see that the third bundle in~\eqref{eq:norm-bundles}
is isomorphic to
\begin{equation*}
\varphi^*(\cT_{\P^2}) \otimes \bar\varphi^*(\cT_{\P^1}) \cong \varphi^*\cT_{\P^2} \otimes \cO_S(4\ell - 2e_0 - 2e_1 - 2e_2 - 2e_3).
\end{equation*}
So, twisting the normal bundle sequence~\eqref{eq:norm-bundles} by the line bundle~\mbox{$\cO_S(-6\ell + 2e_0 + 2e_1 + 2e_2 + 2e_3)$} we obtain
\begin{equation}\label{eq:u3-sequence}
0 \to \cO_S(-2\ell + e_0 + e_1 + e_2 + e_3) \to \cU_3 \to \varphi^*(\cT_{\P^2}(-2)) \to 0
\end{equation}
Denote by $r_\varphi \colon S \to \P_S(\cU_3)$ the section of the projection $p$ induced by the first map in~\eqref{eq:u3-sequence}.

\begin{lemma}\label{lemma:r-i}
There is a line $\Gamma_\varphi \subset \P(\WW_5)$ and a commutative diagram
\begin{equation*}
\xymatrix{
S \ar[r]^-{r_\varphi} \ar[d]_{\bar\varphi}  & \P_S(\cU_3) \ar[d]^{\pi_{5,1}} \\
\Gamma_\varphi \ar@{^{(}->}[r] & \P(\WW_5)
}
\end{equation*}
that identifies the line $\Gamma_\varphi$ with the base of the conic bundle $\bar\varphi$.
In particular, for any~\mbox{$w \in \Gamma_\varphi$} the fiber $S_w = \pi_{5,1}^{-1}(w)$ is a conic from the pencil~$\bar\varphi$.
\end{lemma}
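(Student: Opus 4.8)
The plan is to recognize the section $r_\varphi$ as the locus of three-dimensional subspaces $U_3 \subset \WW_5$ that arise as follows: the first map in~\eqref{eq:u3-sequence} identifies a distinguished line subbundle $\cO_S(-2\ell + e_0 + e_1 + e_2 + e_3) \hookrightarrow \cU_3$, so that over each point $s \in S$ the section $r_\varphi(s) \in \P_S(\cU_3)$ corresponds to a definite line $\ell_s \subset U_{3,s} \subset \WW_5$. Composing with $\pi_{5,1}$ sends $s$ to the point $w = [\ell_s] \in \P(\WW_5)$. Thus the whole content of the lemma is to show that the image of the map $s \mapsto [\ell_s]$ is a \emph{line} $\Gamma_\varphi \subset \P(\WW_5)$, and that the fibers of this map coincide with the fibers of the conic bundle $\bar\varphi$.

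First I would determine the line bundle by which $\pi_{5,1} \circ r_\varphi$ is given. By construction $\pi_{5,1}$ is defined by the restriction of $\cO_{\P(\WW_5)}(1)$, which on $\cY_{5,1} = \P_S(\cU_3)$ is the relative $\cO(1)$ for the embedding $\cU_3 \hookrightarrow \WW_5 \otimes \cO_S$; pulling this back along the section $r_\varphi$ determined by the sub-line-bundle in~\eqref{eq:u3-sequence} yields the line bundle $\cO_S(2\ell - e_0 - e_1 - e_2 - e_3)$, i.e.\ the \emph{dual} of that sub-bundle. This is exactly the line bundle that defines the conic bundle $\bar\varphi \colon S \to \P^1$ (as recorded in the text just before~\eqref{eq:norm-bundles}). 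Since $h^0(S, \cO_S(2\ell - e_0 - e_1 - e_2 - e_3)) = 2$, the composite map factors through a $\P^1 = \Gamma_\varphi$, and the induced map $S \to \Gamma_\varphi$ is precisely $\bar\varphi$. This gives the commutative square and identifies $\Gamma_\varphi \cong \P^1$ with the base of $\bar\varphi$.

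Next I would verify the final sentence about the fibers. For $w \in \Gamma_\varphi$, the fiber $S_w = \pi_{5,1}^{-1}(w)$ was described in Lemma~\ref{lemma:s-w} as a linear section $\Gr(2,\WW_5/w) \cap \P^5 \subset S$, which is a point, a line, or a conic. On the other hand, the section $r_\varphi$ meets $S_w$ exactly along $\bar\varphi^{-1}(w)$, which is a member of the conic pencil and hence a curve of anticanonical degree $2$. Since a point or a line on $S$ has anticanonical degree $1$ and cannot contain a conic fiber, $S_w$ must be one-dimensional of degree $2$, i.e.\ a conic, and it coincides with the fiber $\bar\varphi^{-1}(w)$. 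This shows $S_w$ is a conic from the pencil $\bar\varphi$ for every $w \in \Gamma_\varphi$.

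The main obstacle I anticipate is the identification in the second paragraph: namely, checking carefully that the pullback of the relative hyperplane class $\cO_{\pi_{5,1}}(1)$ along the section $r_\varphi$ is genuinely $\cO_S(2\ell - e_0 - e_1 - e_2 - e_3)$ rather than some other twist. This is a bookkeeping computation with the twist by $\cO_S(6\ell - 2\sum e_i)$ used to normalize~\eqref{eq:u3-sequence}, combined with the convention relating $\P_S(\cU_3)$ to $\cU_3$ as a \emph{sub}bundle of $\WW_5 \otimes \cO_S$; the sign/duality conventions for the relative $\cO(1)$ must be tracked exactly so that the resulting line bundle has a two-dimensional space of sections cutting out the conic pencil. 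Once that twist is pinned down, the rest follows formally.
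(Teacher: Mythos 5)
Your proposal is correct and follows essentially the same route as the paper: identify $\pi_{5,1}\circ r_\varphi$ as the map given by the line bundle $\cO_S(2\ell-e_0-e_1-e_2-e_3)$ dual to the sub-line-bundle in~\eqref{eq:u3-sequence}, conclude it factors through the conic bundle $\bar\varphi$ followed by a linear embedding of its base, and then invoke Lemma~\ref{lemma:s-w} to see that a fiber $S_w$ containing the conic $\bar\varphi^{-1}(w)$ must equal it. The only difference is that you spell out the bookkeeping the paper leaves implicit.
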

\begin{proof}
By definition of $r_\varphi$ the composition
\begin{equation*}
\pi_{5,1} \circ r_\varphi \colon S \to \P(\WW_5)
\end{equation*}
is given by the line bundle $\cO_S(2\ell - e_0 - e_1 - e_2 - e_3)$ on $S$, hence factors as the projection~$\bar\varphi$ followed by a linear embedding.
This proves that we have the required diagram.
Moreover, it follows that for every $w \in \Gamma$ the fiber $\pi_{5,1}^{-1}(w)$ contains a conic from the pencil $\bar\varphi$.
By Lemma~\ref{lemma:s-w} the fiber coincides with this conic.
\end{proof}

For each contraction $\varphi \colon S \to \P^2$ (recall that for a quintic del Pezzo surface $S$ there are five such contractions),
define the surface
\begin{equation}\label{eq:r-i}
R_\varphi = r_\varphi(S) \subset \P_S(\cU_3),
\end{equation}
so that the map $\pi_{5,1}$ contracts it
onto the line $\Gamma_\varphi \subset \P(\WW_5)$.

\begin{lemma}\label{lemma:gamma-l}
The five lines $\Gamma_\varphi \subset \P(\WW_5)$ corresponding to the contractions~\mbox{$\varphi \colon S \to \P^2$} are pairwise disjoint.
Moreover, for each $\varphi$ the line $\Gamma_\varphi$ is distinct from the lines~\mbox{$L' \subset \P(\WW_5)$}
associated with lines $L$ on $S$ in Lemma~\textup{\ref{lemma:r-l}}.
\end{lemma}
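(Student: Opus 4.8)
The plan is to reduce both assertions to the description of the fibers of $\pi_{5,1}$ as subschemes of $S$ given in Lemma~\ref{lemma:s-w}, and then to separate the two families of lines by means of the class in $\Pic(S)$ of the corresponding fiber. The inputs I need are already available: for $w \in \Gamma_\varphi$ the subscheme $S_w \subset S$ is, by Lemma~\ref{lemma:r-i}, the conic $\bar\varphi^{-1}(w)$, whose class is the conic-bundle class $C_\varphi = 2\ell - e_0 - e_1 - e_2 - e_3 = -K_S - \ell$ attached to $\varphi$ (here $\ell$ and the $e_i$ are the classes associated with $\varphi$); while for $w \in L'$ the inclusion $R_L = L \times L' \subset \cY_{5,1}$ of~\eqref{eq:r-l} shows that $S_w$ contains the line $L$.

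First I would prove that the five lines $\Gamma_\varphi$ are pairwise disjoint. Suppose $w \in \Gamma_{\varphi_1} \cap \Gamma_{\varphi_2}$ for two distinct contractions $\varphi_1, \varphi_2 \colon S \to \P^2$. By the previous paragraph the single curve $S_w \subset S$ then belongs both to the pencil $\bar\varphi_1$ and to the pencil $\bar\varphi_2$, so its class satisfies $C_{\varphi_1} = [S_w] = C_{\varphi_2}$. But the five contractions of the quintic del Pezzo surface give rise to five pairwise distinct conic-bundle classes $C_\varphi$ (these are precisely the five classes $C \in \Pic(S)$ with $C^2 = 0$ and $-K_S \cdot C = 2$), whence $\varphi_1 = \varphi_2$, a contradiction. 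I would stress that this step uses nothing about $S_w$ beyond its divisor class, so it is insensitive to whether $S_w$ is a smooth or a degenerate conic.

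Next I would show $\Gamma_\varphi \neq L'$ for every $\varphi$ and every line $L \subset S$. Arguing by contradiction, assume $\Gamma_\varphi = L'$ and take an arbitrary point $w$ on this common line. Since $w \in L'$, the fiber $S_w$ contains $L$; since $w \in \Gamma_\varphi$, Lemma~\ref{lemma:r-i} identifies $S_w$ with the conic $\bar\varphi^{-1}(w)$. Hence $L \subset \bar\varphi^{-1}(w)$, i.e. the morphism $\bar\varphi$ is constant on $L$ with value $w$. Letting $w$ vary over the whole line $\Gamma_\varphi = L'$ would force $\bar\varphi|_L$ to attain infinitely many distinct values, which is impossible for a morphism restricted to the single curve $L$. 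This contradiction yields $\Gamma_\varphi \neq L'$.

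The main point to get right is the bookkeeping of the two incarnations of $S_w$ — as the conic $\bar\varphi^{-1}(w)$ over $\Gamma_\varphi$, and as a subscheme containing $L$ over $L'$ — both viewed inside $S$ through the isomorphism of a fiber of $\pi_{5,1}$ onto its image in $S$; once these are pinned down, the first part is a one-line comparison of divisor classes and the second is the elementary observation that a line cannot be a fiber of $\bar\varphi$ over more than one base point. No general-position or smoothness hypothesis on the conics enters, so the finitely many degenerate fibers of the pencils $\bar\varphi$ present no obstacle.
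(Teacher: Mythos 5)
Your proposal is correct and follows essentially the same route as the paper: both parts rest on Lemmas~\ref{lemma:r-i} and~\ref{lemma:r-l}, identifying $S_w$ with a conic of the pencil $\bar\varphi$ for $w\in\Gamma_\varphi$ and with a curve containing $L$ for $w\in L'$. The only difference is cosmetic — you make the step ``the pencils coincide'' explicit via the five distinct conic-bundle classes in $\Pic(S)$, where the paper simply asserts it, and you phrase the second contradiction as $\bar\varphi|_L$ being constant with varying value rather than as every conic of the pencil containing $L$.
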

\begin{proof}
If $w$ is a common point of the curves $\Gamma_\varphi$
and $\Gamma_{\varphi'}$, then by Lemma~\ref{lemma:r-i}
the fiber~$S_w$ is a conic that belongs to the corresponding pencils
$\bar\varphi$ and $\bar\varphi'$,
hence the pencils coincide, hence $\varphi = \varphi'$.

Assume that $\Gamma_\varphi = L'$, where $L'$ is associated with some line $L \subset S$ as in Lemma~\ref{lemma:r-l}.
By Lemma~\ref{lemma:r-l} we have~\mbox{$L \subset S_w$} for each $w \in L' = \Gamma_\varphi$,
and by Lemma~\ref{lemma:r-i} when $w$ runs over $\Gamma_\varphi$ the curves $S_w$ run over the corresponding pencil of conics $\bar\varphi$.
So, the assumption we made implies that every conic in the pencil contains the line $L$, which is absurd.
\end{proof}

Now we are ready to prove the main result of this subsection.

\begin{proposition}\label{proposition:Stein-factorization}
The $\fS_5$-equivariant morphism $\pi_{5,1} \colon \cY_{5,1} \to \P(\WW_5)$ gives rise to a
commutative diagram
\begin{equation}
\label{eq:Y-Y51}
\vcenter{\xymatrix{
&
\cY_{5,1} \ar[rr]^-{\rho_{5,1}} \ar[dr]_{\pi_{5,1}} \ar[dl]_{p} &&
\cY \ar[dl]^\pi
\\
S &&
\P(\WW_5)
}}
\end{equation}
where $\cY$ is the Coble fourfold, $\pi \colon \cY \to \P(\WW_5)$ is the double covering, and $\rho_{5,1}$ is a small resolution of singularities,
defined uniquely up to composition with the Galois involution~\mbox{$\sigma \colon \cY \to \cY$}.
Furthermore, the exceptional locus of~$\rho_{5,1}$ is the union of~$15$ irreducible rational surfaces
$\{R_L\}_{L \subset S} \cup \{ R_\varphi \}_{\varphi \colon S \to \P^2}$,
such that
\begin{itemize}
\item
$R_L \cong \P^1 \times \P^1$;
each of these surfaces is contracted by $p$ onto the line $L \subset S$ and by $\pi_{5,1}$ onto the line $L' \subset \P(\WW_5)$;
\item
$R_\varphi \cong S$ with the map $p \colon R_\varphi \to S$ being an isomorphism, and with the map $\pi_{5,1}\vert_{R_\varphi}$ being
the conic bundle $\bar\varphi \colon R_\varphi \twoheadrightarrow \Gamma_\varphi$ over the line $\Gamma_\varphi \subset \P(\WW_5)$.
\end{itemize}
Moreover, the morphism $\pi_{5,1}$ induces a non-standard embedding $\fS_5 \to \fS_6$ such that $\rho_{5,1}$ is $\fS_5$-equivariant
with respect either to the natural or to the twisted action of $\fS_{5}$ on $\cY$.
\end{proposition}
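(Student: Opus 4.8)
The plan is to realize $\cY$ as the Stein factorization of the generically finite morphism $\pi_{5,1}$ and to match its branch locus with the Igusa quartic. First I would observe that by Lemma~\ref{lemma:s-w} the general fibre $S_w$ of $\pi_{5,1}$ is a length-$2$ scheme, so $\pi_{5,1}$ is generically finite of degree $2$; taking its Stein factorization
\[
\cY_{5,1} \xrightarrow{\ \rho_{5,1}\ } \bar\cY \xrightarrow{\ \bar\pi\ } \P(\WW_5)
\]
produces a finite double cover $\bar\pi$ and a birational contraction $\rho_{5,1}$ with connected fibres. It then remains to identify $\bar\cY$ with $\cY$ compatibly with the projections to $\P(\WW_5)$, and to analyse $\rho_{5,1}$.

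Next I would determine the exceptional locus of $\rho_{5,1}$, i.e.\ the positive-dimensional fibres of $\pi_{5,1}$. By Lemma~\ref{lemma:s-w} every such fibre is either a line or a conic on $S$. The lines are exactly the ten lines $L \subset S$ of Lemma~\ref{lemma:r-l}, lying over the ten lines $L' \subset \P(\WW_5)$; the conics are exactly the members of the five pencils $\bar\varphi$ of Lemma~\ref{lemma:r-i} (every conic on a quintic del Pezzo surface belongs to one of the five conic-bundle pencils), lying over the five lines $\Gamma_\varphi$. Assembling these fibres gives precisely the fifteen surfaces $\{R_L\} \cup \{R_\varphi\}$, with $R_L = L \times L' \cong \P^1 \times \P^1$ and $R_\varphi = r_\varphi(S) \cong S$, each contracted by $\pi_{5,1}$ onto a line and by $\rho_{5,1}$ onto a curve lying over that line. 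Since the exceptional locus is a union of surfaces (codimension $2$ in the fourfold) contracted to curves, $\rho_{5,1}$ is small and, $\cY_{5,1}$ being smooth, it is a small resolution; Lemma~\ref{lemma:gamma-l} guarantees that the fifteen image lines are genuinely distinct.

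To identify $\bar\cY$ with the Coble fourfold I would first pin down the degree of the branch divisor $B \subset \P(\WW_5)$ of $\bar\pi$. A direct computation on the projective bundle, using $\det\cU_3 \cong \omega_S$ from the proof of Lemma~\ref{lemma:dp5-gr}, gives $\omega_{\cY_{5,1}} \cong \pi_{5,1}^*\cO_{\P(\WW_5)}(-3)$; since $\rho_{5,1}$ is small it is crepant, so $\omega_{\bar\cY} \cong \bar\pi^*\cO(-3)$, and the double-cover formula $\omega_{\bar\cY} \cong \bar\pi^*(\omega_{\P(\WW_5)} \otimes \cO(d))$ with $B \in |\cO(2d)|$ forces $d = 2$ and $\deg B = 4$. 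As $\bar\cY$ is singular exactly along the images of the fifteen contracted surfaces, $B$ is a quartic singular along the fifteen lines $\{L'\} \cup \{\Gamma_\varphi\}$; checking that their incidences match, via Lemma~\ref{lemma:r-l} and Lemma~\ref{lemma:gamma-l}, shows that they form a Cremona--Richmond configuration, so by Theorem~\ref{theorem:CR-unique} and Corollary~\ref{corollary:Igusa-unique} the quartic $B$ is projectively the Igusa quartic. This identifies $\bar\cY$ with $\cY$ over $\P(\WW_5)$, uniquely up to the deck transformation $\sigma$ of the double cover.

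Finally I would treat the group action. The $\fS_5$-action on $\P(\WW_5)$ preserves the Igusa quartic $X_{1/4}$, and since $\Aut(X_{1/4}) \cong \fS_6$ by Lemma~\ref{lemma:quartics-Aut} this yields an embedding $\fS_5 \hookrightarrow \fS_6$, which is non-standard because $\WW_5$ is the irreducible $(3,2)$-representation (Remark~\ref{remark:w-det}), whereas a standard embedding restricts the hyperplane representation reducibly (Lemma~\ref{lemma:restriction-s6-s5}). As in Proposition~\ref{proposition:first-Q-factorialization}, uniqueness of $\rho_{5,1}$ up to $\sigma$ gives $g\circ\rho_{5,1}\circ g^{-1} = \sigma^{k(g)}\circ\rho_{5,1}$ for a homomorphism $k \colon \fS_5 \to \ZZ/2\ZZ$; since the only homomorphisms $\fS_5 \to \ZZ/2\ZZ$ are the trivial one and the parity, $\rho_{5,1}$ is automatically equivariant either for the natural or for the twisted action. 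The main obstacle is the identification of $B$ with the Igusa quartic: both the crepancy and degree bookkeeping and, above all, the verification that the fifteen contracted lines realise the Cremona--Richmond incidence pattern require care, and one must also use Lemma~\ref{lemma:s-w} carefully to exclude any further positive-dimensional fibres.
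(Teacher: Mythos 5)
Your proof is essentially correct and shares the paper's skeleton (Stein factorization, identification of the positive-dimensional fibres with the $15$ surfaces via Lemmas~\ref{lemma:s-w}, \ref{lemma:r-l}, \ref{lemma:r-i}, \ref{lemma:gamma-l}, the crepancy computation from $\det\cU_3\cong\omega_S$, and the conjugation argument for equivariance), but it diverges at the central step of identifying the branch divisor $B$ with the Igusa quartic. The paper proves that $B$ is the projective dual of the Segre cubic $Z=\varpi(\P_S(\cU_2))$: a point $w$ lies in $B$ exactly when the relative hyperplane section $\varpi^{-1}(Z\cap\P(w^\perp))$ of the other projectivization $\P_S(\cU_2)$ is singular, and since $\varpi$ resolves the nodes of $Z$ this exhibits $B$ as $Z^\vee=X_{1/4}$; this uses Lemma~\ref{lemma:segre-cubic} and buys the identification with no combinatorics. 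You instead pin down $\deg B=4$ from the canonical-bundle bookkeeping and then invoke the rigidity results of Appendix~\ref{section:CR} (Theorem~\ref{theorem:CR-unique} and Corollary~\ref{corollary:Igusa-unique}) — precisely the alternative the paper itself sketches, for the $\rho_{4,2}$ resolution, in the remark following Lemma~\ref{lemma:projection-2}. The cost of your route is concentrated in the step you describe as ``checking that their incidences match'': Lemmas~\ref{lemma:r-l} and~\ref{lemma:gamma-l} only give that the $15$ image lines are pairwise distinct and that the $\Gamma_\varphi$ are pairwise disjoint, whereas Theorem~\ref{theorem:CR-unique} needs the full $(15_3)$ incidence pattern \emph{and} the hypothesis that the configuration is not contained in a $\P^3$, neither of which is supplied by the cited lemmas; you would have to translate the Petersen-graph incidences of the ten lines of $S$ and the degenerate members of the five conic pencils into intersections of the $L'$ and $\Gamma_\varphi$. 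You also implicitly use purity of the exceptional divisor (a projective birational morphism onto a $\Q$-factorial variety has divisorial exceptional locus) to conclude that $\bar\cY$, and hence $B$, is singular along the \emph{whole} of each of the $15$ lines rather than at isolated points; this is standard but should be said. With those two points filled in, your argument is a complete and legitimate alternative to the paper's duality argument.
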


Using a compatibility result from Proposition~\ref{proposition:compatibility}, we will
show in~\S\ref{subsection:proof-first} that~$\rho_{5,1}$ is $\fS_5$-equivariant with respect to the twisted action
of a non-standard~$\fS_5$.

\begin{proof}
By Lemma~\ref{lemma:s-w} the map $\pi_{5,1}$ is generically finite of degree~2.
Denote by $R \subset \cY_{5,1}$ the ramification locus of the morphism $\pi_{5,1} \colon \cY_{5,1} \to \P(\WW_5)$ and by $B = \pi_{5,1}(R) \subset \P(\WW_5)$ its image.
Let us show that $B$ is the Igusa quartic.
For this we show that $B$ is projectively dual to the Segre cubic
$Z = \varpi(\P_S(\cU_2))$, see Lemma~\ref{lemma:segre-cubic}.

Indeed, by Lemma~\ref{lemma:s-w} we know that $B$ is the locus of $w \in \P(\WW_5)$ such that $S_w$ is either a double point or a curve.
On the other hand, $w$ defines a hyperplane $\P(w^\perp) \subset \P(\WW_5^\vee)$ in the dual projective space, and
\begin{equation*}
\varpi^{-1}(Z \cap \P(w^\perp)) = \P_S(\cU_2) \times_{\P(\WW_5^\vee)} \P(w^\perp)
\end{equation*}
is a relative hyperplane in the $\P^1$-bundle $\P_S(\cU_2) \to S$.
Moreover, the zero locus of the corresponding section of $\cU_2^\vee$ is precisely the scheme $S_w$.
If $S_w$ is zero-dimensional then by~\cite[Lemma~2.1]{Kuznetsov} we have
\begin{equation*}
\varpi^{-1}\big(Z \cap \P(w^\perp)\big) = \Bl_{S_w}(S),
\end{equation*}
and if it is one-dimensional, then $\varpi^{-1}(Z \cap \P(w^\perp))$ contains the surface $\P_{S_w}(\cU_2\vert_{S_w})$, hence is reducible.
Thus, $\varpi^{-1}(Z \cap \P(w^\perp))$ is singular if and only if $w \in B$.
Since the singular points of $Z$ are nodes, and $\varpi$ resolves them, it follows that $B$ is the projective dual of $Z$. Hence $B = X_{1/4}$ is the Igusa quartic (see~\cite[Proposition~3.3.1]{Hunt}).

It follows from Lemma~\ref{lemma:s-w} that the map $\pi_{5,1}$ is an \'etale double cover over $\P(\WW_5) \setminus B$, and that the Stein factorization of the map $\pi_{5,1}$
provides a (unique up to $\sigma$) decomposition
\begin{equation*}
\cY_{5,1} \xrightarrow{\ \rho_{5,1}\ } \cY \xrightarrow{\ \pi\ } \P(\WW_5),
\end{equation*}
where $\rho_{5,1}$ is a birational map.

Let us show that $\rho_{5,1}$ is small.
Indeed, since $\det(\cU_3) \cong \omega_S$, it follows that
\begin{equation}\label{eq:omega-pullback}
\omega_{\cY_{5,1}} \cong \pi_{5,1}^*\cO_{\P(\WW_5)}(-3) \cong \rho_{5,1}^*\pi^*\cO_{\P(\WW_5)}(-3).
\end{equation}
On the other hand, $\pi$ is a double covering branched over a quartic, hence one has~\mbox{$\omega_\cY \cong \pi^*\cO_{\P(\WW_5)}(-3)$}.
Thus $\omega_{\cY_{5,1}} \cong \rho_{5,1}^*\omega_\cY$, i.e., the map $\rho_{5,1}$ is crepant.
Since $\cY_{5,1}$ is smooth
it follows that the map $\rho_{5,1}$ is an isomorphism over the smooth locus of $\cY$, hence the exceptional locus of $\rho_{5,1}$ is contained in
\begin{equation*}
\rho_{5,1}^{-1}\big(\Sing(\cY)\big) = \pi_{5,1}^{-1}\big(\Sing(X_{\frac14})\big) = \pi_{5,1}^{-1}\big(\CR\big),
\end{equation*}
i.e., in the preimage of the Cremona--Richmond configuration of 15 lines.
But by Lemma~\ref{lemma:s-w} the fibers of $\pi_{5,1}$ are at most one-dimensional, hence $\dim(\pi_{5,1}^{-1}(\CR)) \leqslant 2$.
This proves that~$\rho_{5,1}$ is small.

Next, let us show that
\begin{equation}
\label{eq:preimage-cr}
\pi_{5,1}^{-1}\left(\CR\right) = \left(\bigcup_{\varphi} R_\varphi \right) \cup \left(\bigcup_{L} R_L \right).
\end{equation}
By Lemmas~\ref{lemma:r-l} and~\ref{lemma:r-i} the surfaces $R_L$ and $R_\varphi$ are contracted onto the union of ten lines $L'$ and five lines $\Gamma_\varphi$ in $\P(\WW_5)$,
which are pairwise distinct by Lemmas~\ref{lemma:r-l} and~\ref{lemma:gamma-l}.
Therefore
\begin{equation*}
\CR = \left(\bigcup_{\varphi} \Gamma_\varphi \right) \cup \left( \bigcup_L L' \right).
\end{equation*}
It remains to show that for any $w \in \Gamma_\varphi$ or $w \in L'$ the fiber $S_w = \pi_{5,1}^{-1}(w)$ is contained either in $R_\varphi$ or in $R_L$.
If $w \in \Gamma_\varphi$, this is proved in Lemma~\ref{lemma:r-i}.
Now take $w \in L'$.
By Lemma~\ref{lemma:r-l} we have $L \subset S_w$, hence by Lemma~\ref{lemma:s-w} the curve $S_w$ is either the line $L$ (hence $S_w \subset R_L$) or a conic
(hence $S_w \subset R_\varphi$ for appropriate $\varphi$).
This proves~\eqref{eq:preimage-cr}.

The vector space $\WW_5$ by definition~\eqref{eq:quadrics-through-S} comes with a natural $\fS_5$-action and, moreover, the map $\pi_{5,1} \colon \cY_{5,1} \to \P(\WW_5)$ is $\fS_5$-equivariant.
It follows that its branch divisor $B = X_{1/4}$
is invariant under this action.
This gives an embedding
\begin{equation*}
\fS_5 \hookrightarrow \Aut(X_{1/4}) \cong \fS_6 \subset \Aut(\cY),
\end{equation*}
such that for every element $g \in \fS_5$
the conjugation of the diagram~\eqref{eq:Y-Y51} by~$g$ gives a diagram of the same form.
Therefore, one has
\begin{equation*}
g \circ \rho_{5,1} \circ g^{-1} = \sigma^{k(g)} \circ \rho_{5,1},
\end{equation*}
where $k \colon \fS_5 \to \ZZ/2\ZZ$ is a group homomorphism.
If it is trivial, then $\rho_{5,1}$ is equivariant with respect to the natural action, and
if $k$ is the homomorphism of parity, then $\rho_{5,1}$ is equivariant with respect to the twisted action
(as we mentioned above, we will show in~\S\ref{subsection:proof-first} that $k$ is indeed the homomorphism of parity).

To show that the embedding $\fS_5 \hookrightarrow \fS_6$ is non-standard we use the same argument as in the proof of Proposition~\ref{proposition:first-Q-factorialization}.
The restriction of the five-dimensional representation~\eqref{eq:hyperplane}
to the image of a standard embedding $\fS_5 \hookrightarrow \fS_6$
decomposes as a direct sum of two irreducible representations (cf.~Lemma~\ref{lemma:restriction-s6-s5}),
while the $\fS_5$-representation $\WW_5$ is irreducible by~\eqref{eq:quadrics-through-S} and \cite[Proposition~2]{ShepherdBarron-Inv}.
\end{proof}

Similarly to the case of $\rho_{4,2}$, the morphism $\rho_{5,1}$ is not uniquely defined
even when the corresponding non-standard subgroup $\fS_5$ is fixed.
Moreover, there is a commutative diagram
\begin{equation}
\label{eq:diagram-two-factorializations-Y}
\vcenter{\xymatrix{
&
\cY_{5,1} \ar@{-->}^{\rho_{5,1}^{-1} \circ \sigma \circ \rho_{5,1}}[rr] \ar@{->}[rd]_{\sigma \circ \rho_{5,1}} \ar[dl]_p &&
\cY_{5,1} \ar@{->}[ld]^{\rho_{5,1}} \ar[dr]^p
\\
S &&
\cY
&&
S
}}
\end{equation}
Here $\rho_{5,1}^{-1} \circ \sigma \circ \rho_{5,1}$ is a small birational map.
In fact, we know that $\rkPic(S)^{\fS_5}=1$, see for instance \cite[Lemma~6.2.2(i)]{CheltsovShramovBook};
this means that
\begin{equation*}
\rkCl(\cY_{5,1})^{\fS_5} = \rkPic(\cY_{5,1})^{\fS_5}=2,
\end{equation*}
and therefore $\rkPic(\cY)^{\fS_5} = 1$.
The latter implies that $\rho_{5,1}$ and $\sigma\circ\rho_{5,1}$ are the only $\fS_5$-equivariant small resolutions of
singularities of~$\cY$, and that~\mbox{$\rho_{5,1}^{-1} \circ \sigma \circ \rho_{5,1}$} is an $\fS_5$-flop.
Consequently, the diagram~\eqref{eq:diagram-two-factorializations-Y} is an $\fS_5$-Sarkisov link
between two copies of the Mori fiber space $\cY_{5,1} \to S$ centered at~$\cY$ (see~\S\ref{subsection:Cl-and-applications}).

\begin{remark}
\label{remark:rho-5-1-Weil-divisor}
Recall the notation of Remark~\ref{remark:rho-4-2-Weil-divisor}.
Denote
\begin{equation*}
\cH^{5,1}_I := \rho_{5,1}^{-1}(\cH_I),
\end{equation*}
so that one has $\pi_{5,1}^{-1}(H_I) = \pi_{5,1}^{-1}(H_{\bar{I}}) = \cH^{5,1}_I \cup \cH^{5,1}_{\bar{I}}$.
One can check that ten out of twenty divisors $\cH^{5,1}_I \subset \cY_{5,1}$ are the preimages of lines on $S$ via the map $p$,
and the other ten are relative hyperplane sections for $p$
(this decomposition is the orbit decomposition for the action of~$\fS_5$).
We denote by~$\cH^{5,1}_+$ the sum of the divisors of the first type, and by~$\cH^{5,1}_-$ the sum of the divisors of the second type.
The divisor $\cH^{5,1}_+$ is the $p$-pullback of an ample divisor on~$S$, hence it is $\rho_{5,1}$-ample.
Consequently, $-\cH^{5,1}_-$ is $\rho_{5,1}$-ample, hence the small birational morphism $\rho_{5,1}$
is the blow up of the Weil divisor $\rho_{5,1}(\cH^{5,1}_-)$ on~$\cY$.
See Remark~\ref{remark:rho-5-1-Weil-final} below for an explicit description of this blow up.
\end{remark}

\subsection{Compatibility of resolutions}\label{subsection:compatibility}

In this section we relate the resolutions $\cY_{4,2}$ and~$\cY_{5,1}$ of the Coble fourfold.
Recall that the first of them is associated with a non-standard embedding $\fS_{4,2} \hookrightarrow \fS_6$,
and the second is associated with a non-standard embedding $\fS_5 \hookrightarrow \fS_6$.
Note that each (standard or non-standard) subgroup $\fS_4 \subset \fS_6$ can be extended
to a subgroup $\fS_{4,2} \subset \fS_6$ and such extension is unique.
Indeed, the second factor $\fS_2$ in $\fS_{4,2} \cong \fS_4 \times \fS_2$ is just the centralizer of $\fS_4$ in $\fS_6$.
Recall also that for each~$\fS_4 \subset \fS_5 = \Aut(S)$
there is a unique $\fS_4$-equivariant contraction $\varphi \colon S \to \P^2$ of the quintic del Pezzo surface $S$ onto the plane.

\begin{proposition}\label{proposition:compatibility}
Let $\fS_5 \hookrightarrow \fS_6$ be a non-standard embedding.
Choose a subgroup~\mbox{$\fS_4 \subset \fS_5$} and let $\fS_{4,2} \subset \fS_6$ be its unique extension.
Let $\rho_{4,2} \colon \cY_{4,2} \to \cY$ be the $\fS_{4,2}$-equivariant resolution of singularities constructed in Proposition~\textup{\ref{proposition:first-Q-factorialization}}
and let $\varphi \colon S \to \P^2$ be the unique $\fS_4$-equivariant contraction of the quintic del Pezzo surface.
Then there is a unique $\fS_5$-equivariant resolution $\rho_{5,1} \colon \cY_{5,1} \to \cY$ as in Proposition~\textup{\ref{proposition:Stein-factorization}}
and a unique $\fS_4$-equivariant small birational map $\theta_1 \colon \cY_{5,1} \dashrightarrow \cY_{4,2}$ such that the diagram~\eqref{diagram:varphi}
is commutative.
\end{proposition}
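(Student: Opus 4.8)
The plan is to set $\theta_1 := \rho_{4,2}^{-1}\circ\rho_{5,1}$ and to verify the two triangles of~\eqref{diagram:varphi}. Since both $\rho_{4,2}$ and $\rho_{5,1}$ are small resolutions of $\cY$ (Propositions~\ref{proposition:first-Q-factorialization} and~\ref{proposition:Stein-factorization}), their exceptional loci have codimension at least $2$, so $\theta_1$ is an isomorphism in codimension $1$, i.e.\ a small birational map. The upper triangle $\rho_{4,2}\circ\theta_1=\rho_{5,1}$ holds by construction, and since $\rho_{4,2}$ is birational this also shows that, once $\rho_{5,1}$ is fixed, $\theta_1$ is the only small birational map making the upper triangle commute. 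Moreover $\theta_1$ is compatible with the maps to $\P(\WW_5)$: as both $\pi_{4,2}$ and $\pi_{5,1}$ factor through the double cover $\pi\colon\cY\to\P(\WW_5)$, we get $\pi_{4,2}\circ\theta_1=\pi_{5,1}$. The whole content is therefore the lower triangle $p_1\circ\theta_1=\varphi\circ p$, together with the choice of $\rho_{5,1}$ and the equivariance.

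For the lower triangle I would compare the two $\P^2$-fibration structures through the planes they cut in $\P(\WW_5)$. A fibre $F_s=p^{-1}(s)\cong\P^2$ of $p$ is mapped by $\pi_{5,1}$ isomorphically onto the plane $\P(\cU_{3,s})\subset\P(\WW_5)$. On the other side, a fibre $p_1^{-1}(u_0)=\{u_0\}\times\P(\WW_3)$ of $p_1$ is mapped by $\pi_{4,2}$ (induced by the projection $\WW_3\otimes\WW_3\to\WW_5$ of~\eqref{eq:w-w3w3}) isomorphically, for $u_0$ away from the four points $P_i$, onto the plane obtained as the image of $\langle u_0\rangle\otimes\WW_3$ in $\WW_5$. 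The exact sequence~\eqref{eq:u3-sequence}, which expresses $\cU_3$ through $\varphi^*(\cT_{\P^2}(-2))$, identifies the fibre $\cU_{3,s}$ with the image of $\langle\varphi(s)\rangle\otimes\WW_3$ in $\WW_5$; thus the two families of planes coincide under the substitution $u_0=\varphi(s)$. Since $\theta_1$ commutes with the maps to $\P(\WW_5)$, it carries $F_s$ onto a component of $\pi_{4,2}^{-1}(\P(\cU_{3,s}))$, and this component is one of the two sheets of the (split) double cover over that plane, the distinguished sheet being $p_1^{-1}(\varphi(s))$. For the choice of $\rho_{5,1}$ realizing the distinguished sheet one then gets $p_1(\theta_1(F_s))=\varphi(s)=\varphi(p(F_s))$ for general $s$, hence $p_1\circ\theta_1=\varphi\circ p$.

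It remains to pin down the choice of $\rho_{5,1}$ and to deduce uniqueness. Replacing $\rho_{5,1}$ by $\sigma\circ\rho_{5,1}$ replaces $\theta_1$ by $(\rho_{4,2}^{-1}\circ\sigma\circ\rho_{4,2})\circ\theta_1$, and the birational self-map $\rho_{4,2}^{-1}\circ\sigma\circ\rho_{4,2}$ of $\cY_{4,2}$ is, on $\P(\WW_3)\times\P(\WW_3)$, the involution $\bar\sigma_{4,2}$ of Remark~\ref{remark:sigma42} (see~\eqref{eq:crocodile-2}). As the first factor of $\bar\sigma_{4,2}(u,v)$ genuinely depends on $v$, this involution swaps the two sheets over each plane $\P(\cU_{3,s})$ and does not commute with $p_1$; consequently the two choices of $\rho_{5,1}$ yield different maps $p_1\circ\theta_1$, exactly one of which equals $\varphi\circ p$. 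This gives simultaneously the existence and the asserted uniqueness of $\rho_{5,1}$, and hence of $\theta_1$.

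Finally, I would obtain $\fS_4$-equivariance from this uniqueness. The remaining data in~\eqref{diagram:varphi} are $\fS_4$-equivariant: $\rho_{4,2}$ for the twisted action of $\fS_{4,2}$ (Proposition~\ref{proposition:first-Q-factorialization}), $p$ for the $\fS_5$-action, $p_1$ for the diagonal $\fS_4$, and $\varphi$ by its very choice. Hence conjugating the whole diagram by $g\in\fS_4$, using the action of $\fS_{4,2}$ on $\cY_{4,2}$, the twisted action on $\cY$, and the natural $\fS_5$-actions on $\cY_{5,1}$ and $S$, produces a pair $(g\cdot\rho_{5,1}\cdot g^{-1},\,g\cdot\theta_1\cdot g^{-1})$ satisfying the same commutative diagram, whose first entry is again an $\fS_5$-equivariant small resolution and hence coincides with $\rho_{5,1}$ or $\sigma\circ\rho_{5,1}$ (there being exactly two such, as explained after Proposition~\ref{proposition:Stein-factorization}). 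By the uniqueness just proved it equals $\rho_{5,1}$, so $\rho_{5,1}$ is equivariant for the twisted action of $\fS_4$ and $\theta_1=\rho_{4,2}^{-1}\circ\rho_{5,1}$ is $\fS_4$-equivariant. I expect the main obstacle to be the middle step: identifying, via~\eqref{eq:u3-sequence} and the common identification of the two incarnations of $\WW_5$ through the ambient space $\P(\WW_5)$ of the Igusa quartic, the plane $\P(\cU_{3,s})$ with the image of $\langle\varphi(s)\rangle\otimes\WW_3$, and matching the correct sheet of the double cover so that fibres of $p$ are sent to fibres of $p_1$ rather than to their flops.
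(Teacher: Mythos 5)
Your proof runs in the opposite direction to the paper's: you set $\theta_1 = \rho_{4,2}^{-1}\circ\rho_{5,1}$, so the inner triangle, the smallness of $\theta_1$, and the identity $\pi_{4,2}\circ\theta_1=\pi_{5,1}$ are all free, and the entire content is pushed into the outer square $p_1\circ\theta_1 = \varphi\circ p$. The paper deliberately avoids this direction (it builds $\theta_1$ so that the outer square commutes by construction and then verifies the inner triangle), and the reason is visible in your write-up: the outer square is carried by the single asserted sentence that \eqref{eq:u3-sequence} ``identifies the fibre $\cU_{3,s}$ with the image of $\langle\varphi(s)\rangle\otimes\WW_3$ in $\WW_5$''. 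This is a genuine gap. The sequence \eqref{eq:u3-sequence} only exhibits $\cU_3$ as an extension of $\varphi^*(\cT_{\P^2}(-2))$ by a line bundle; it carries no tensor-product structure and makes no contact with the second $\P(\WW_3)$ factor. What is actually needed is (a) the subsheaf $\cE=\Ker\bigl(\cU_3\to\bigoplus\cO_{E_i}(-1)\bigr)$ of \eqref{eq:ce} together with the nontrivial splitting $\cE\cong\cO_S(-\ell)^{\oplus 3}$ --- the paper devotes a separate lemma to this, whose proof requires showing that \eqref{eq:u3-sequence} is a non-split extension, which in turn uses the smallness of $\rho_{5,1}$ --- and (b) the compatibility of the two a priori unrelated constructions of $\WW_5$, as \eqref{eq:w-w3w3} and as \eqref{eq:quadrics-through-S}, with the evaluation maps $\WW_5^\vee\otimes\cO_S\to\cU_3^\vee\to V_2^\vee\otimes\cO_S(\ell)$; this is the commutative diagram occupying the core of the paper's proof. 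Identifying the two incarnations of $\P(\WW_5)$ ``through the ambient space of the Igusa quartic'' only identifies the targets (which is automatic, both resolutions mapping to the same $\cY$); it does not yet give the equality of planes $\P(\cU_{3,s})=\bar\pi_{4,2}\bigl(\{\varphi(s)\}\times\P(\WW_3)\bigr)$, which is precisely equivalent to the statement being proved. So the step you flag as ``the main obstacle'' is not a loose end but the whole proposition.

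The surrounding architecture, on the other hand, is sound and partly complements the paper. The sheet-selection argument --- $\theta_1(p^{-1}(s))$ must be one of the two components of $\pi_{4,2}^{-1}(\P(\cU_{3,s}))$, and replacing $\rho_{5,1}$ by $\sigma\circ\rho_{5,1}$ composes $\theta_1$ with $\bar\sigma_{4,2}$, which visibly does not commute with $p_1$ --- correctly yields uniqueness of the pair $(\rho_{5,1},\theta_1)$ once existence is established, matching the paper's remark that only one of the two factorizations of $\pi_{5,1}$ makes the inner triangle commute. Likewise, deducing the $\fS_4$-equivariance of $\theta_1$ (and the twisted equivariance of $\rho_{5,1}$) from this uniqueness by conjugating the diagram \eqref{diagram:varphi} is exactly the mechanism the paper uses in \S\ref{subsection:proof-first}.
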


Of course, if $\rho_{5,1}$ is fixed, there is only one $\theta_1$ such that the inner triangle in the diagram~\eqref{diagram:varphi} commutes, namely, $\theta_1 = \rho_{4,2}^{-1} \circ \rho_{5,1}$.
But it is a priori not clear why the outer square commutes.
So, to prove Proposition~\ref{proposition:compatibility} we move in the opposite direction: we first construct $\theta_1$ such that the outer square commutes,
and after that check that the inner triangle commutes for this $\theta_1$ and for an appropriate choice of $\rho_{5,1}$.

We start with some notation and a lemma.
Let~\mbox{$\varphi \colon S \to \P^2$} be the $\fS_4$-equivariant contraction, and, as before,
denote by $E_0$, $E_1$, $E_2$, and~$E_3$ the exceptional divisors of the blow up~$\varphi$,
by $e_i$ their classes in $\Pic(S)$ and by $\ell$ the pullback of the line class of $\P^2$.
Recall also the rank $3$ bundle $\cU_3$ on $S$.

Since $\cU_3^\vee$ is globally generated and $\det(\cU_3^\vee)\vert_{E_i} \cong \omega_S^{-1}\vert_{E_i} \cong \cO_{E_i}(1)$, we have
\begin{equation*}
\cU_3\vert_{E_i} \cong \cO_{E_i} \oplus \cO_{E_i} \oplus \cO_{E_i}(-1).
\end{equation*}
Therefore, we have a canonical surjective morphism $\cU_3 \to \cO_{E_i}(-1)$ of sheaves on $S$.
The sum of these morphisms gives an exact $\fS_4$-equivariant sequence
\begin{equation}\label{eq:ce}
0 \to \cE \to \cU_3 \to \bigoplus_{i=0}^3 \cO_{E_i}(-1) \to 0
\end{equation}
and defines a rank $3$ vector bundle $\cE$ on $S$ equivariant with respect to~$\fS_4$.

\begin{lemma}
One has
$\cE \cong \cO_S(-\ell)^{\oplus 3}$.
\end{lemma}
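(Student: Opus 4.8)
The plan is to first identify $\cE$ through a clean short exact sequence obtained by comparing the two descriptions \eqref{eq:u3-sequence} and \eqref{eq:ce} of $\cU_3$, and then to pin down its isomorphism type by a single non-splitting argument. Write $\cL=\cO_S(-2\ell+e_0+e_1+e_2+e_3)$ and $\cQ=\varphi^*(\cT_{\P^2}(-2))$ for the sub and quotient in \eqref{eq:u3-sequence}, and let $q\colon\cU_3\to\bigoplus_i\cO_{E_i}(-1)$ be the surjection of \eqref{eq:ce}. Since each $E_i$ is contracted by $\varphi$, one has $\cQ|_{E_i}\cong\cO_{E_i}^{\oplus 2}$, hence $\operatorname{Hom}(\cQ,\cO_{E_i}(-1))=0$; therefore no component $\cU_3\to\cO_{E_i}(-1)$ of $q$ factors through $\cQ=\cU_3/\cL$, and as these components are surjective, their restrictions to $\cL$ are nonzero. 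Each such restriction $\cL\to\cO_{E_i}(-1)$ factors through $\cL|_{E_i}\cong\cO_{E_i}(-1)$ and, being nonzero, is an isomorphism there. Thus $q|_{\cL}$ is surjective with kernel $\cL(-E_0-E_1-E_2-E_3)\cong\cO_S(-2\ell)$, and the snake lemma applied to the induced morphism from \eqref{eq:u3-sequence} onto $[\,\bigoplus_i\cO_{E_i}(-1)=\bigoplus_i\cO_{E_i}(-1)\,]$ yields an exact sequence
\[ 0 \to \cO_S(-2\ell) \to \cE \to \varphi^*(\cT_{\P^2}(-2)) \to 0. \qquad (\star) \]
As a sanity check, $\det\cE\cong\det\cU_3\otimes\cO_S(-\textstyle\sum_i e_i)\cong\omega_S(-\sum_i e_i)\cong\cO_S(-3\ell)$, matching $\det(\cO_S(-\ell)^{\oplus 3})$.

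Twisting $(\star)$ by $\cO_S(\ell)=\varphi^*\cO_{\P^2}(1)$ and using $\cT_{\P^2}(-2)(1)=\cT_{\P^2}(-1)$ gives $0\to\cO_S(-\ell)\to\cE(\ell)\to\varphi^*(\cT_{\P^2}(-1))\to 0$. Pulling back the once-twisted Euler sequence $0\to\cO_{\P^2}(-1)\to\cO_{\P^2}^{\oplus 3}\to\cT_{\P^2}(-1)\to 0$ along $\varphi$ produces a second such extension, $0\to\cO_S(-\ell)\to\cO_S^{\oplus 3}\to\varphi^*(\cT_{\P^2}(-1))\to 0$, whose middle term is free. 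Both lie in $\operatorname{Ext}^1_S(\varphi^*\cT_{\P^2}(-1),\cO_S(-\ell))\cong H^1(S,\varphi^*\Omega_{\P^2})\cong H^1(\P^2,\Omega_{\P^2})\cong\Bbbk$, where the middle isomorphism uses $R\varphi_*\cO_S=\cO_{\P^2}$. The pulled-back Euler extension is non-split (its middle term $\cO_S^{\oplus 3}$ admits a nowhere-vanishing section, whereas the split extension $\cO_S(-\ell)\oplus\varphi^*\cT_{\P^2}(-1)$ has all of its global sections lying in the rank-two summand, which has $c_2=1$, so each vanishes somewhere), hence it spans this one-dimensional $\operatorname{Ext}$ group. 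Consequently it suffices to prove that $(\star)$ is itself non-split: then $(\star)\otimes\cO_S(\ell)$ represents a nonzero, hence up to scale the unique non-split, class, which forces $\cE(\ell)\cong\cO_S^{\oplus 3}$, i.e. $\cE\cong\cO_S(-\ell)^{\oplus 3}$.

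The crux — and the only place where genuine geometry enters — is the non-splitness of $(\star)$, which I would test on a curve. Let $\ell_0\subset S$ be the proper transform of a general line in $\P^2$; such a line avoids the four blown-up points, so $\ell_0\cong\P^1$ is disjoint from $E_0,\dots,E_3$, and therefore $\cE|_{\ell_0}\cong\cU_3|_{\ell_0}$ because the cokernel in \eqref{eq:ce} is supported on $\bigcup_i E_i$. Restricting $(\star)$ to $\ell_0$ and using $\cT_{\P^2}|_{\mathrm{line}}\cong\cO(2)\oplus\cO(1)$ gives $0\to\cO_{\ell_0}(-2)\to\cU_3|_{\ell_0}\to\cO_{\ell_0}\oplus\cO_{\ell_0}(-1)\to 0$, so a splitting of $(\star)$ would produce an $\cO_{\ell_0}$-summand and hence $h^0(\cU_3|_{\ell_0})\ge 1$. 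But by Lemma~\ref{lemma:dp5-gr} the bundle $\cU_3\subset\WW_5\otimes\cO_S$ is the tautological subbundle, so a global section of $\cU_3|_{\ell_0}$ is a constant vector $v\in\WW_5$ lying in the three-space $U_3(x)$ for every $x\in\ell_0$; a nonzero such $v$ would force $\ell_0\subset S_{[v]}=\Gr(2,\WW_5/[v])\cap S$, which is impossible since by Lemma~\ref{lemma:s-w} the fibre $S_{[v]}$ has degree at most $2$ whereas $\ell_0$ has anticanonical degree $(-K_S)\cdot\ell=3$. Hence $h^0(\cU_3|_{\ell_0})=0$, the restricted sequence — and therefore $(\star)$ — is non-split, and the lemma follows. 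I expect the main obstacle to be exactly this last step: everything preceding it is formal, and it is the bound on the fibres $S_w$ from Lemma~\ref{lemma:s-w}, together with the degree of $\ell_0$, that supplies the essential input.
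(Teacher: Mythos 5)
Your proof is correct, and it follows the paper's own argument up to the decisive step. Both proofs first show that the composition $\cO_S(-2\ell+e_0+e_1+e_2+e_3)\hookrightarrow\cU_3\to\cO_{E_i}(-1)$ is nonzero (because $\varphi^*(\cT_{\P^2}(-2))$ restricts trivially to $E_i$), hence surjective, and thereby produce the same extension $0\to\cO_S(-2\ell)\to\cE\to\varphi^*(\cT_{\P^2}(-2))\to 0$; both then reduce the lemma to the non-splitness of this extension using the one-dimensionality of the relevant $\operatorname{Ext}^1$ — the paper phrases this via full faithfulness of $\varphi^*$ and the uniqueness of the non-split Euler extension on $\P^2$, while you unpack the same fact as $\operatorname{Ext}^1\cong H^1(\P^2,\Omega_{\P^2})\cong\Bbbk$ together with a check that the pulled-back Euler class is nonzero. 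Where you genuinely diverge is in proving non-splitness. The paper argues globally: a splitting of \eqref{eq:e-sequence} would split \eqref{eq:u3-sequence} and hence embed the threefold $S\times_{\P^2}\Fl(1,2;3)\cong\P_S(\varphi^*(\cT_{\P^2}(-2)))$ into $\cY_{5,1}$ so that $\pi_{5,1}$ contracts it onto a plane, contradicting the smallness of $\rho_{5,1}$ from Proposition~\ref{proposition:Stein-factorization}. You instead restrict to the proper transform $\ell_0$ of a general line and show $h^0(\cU_3\vert_{\ell_0})=0$, since a nonzero section would be a constant vector $v\in\WW_5$ forcing $\ell_0\subset S_{[v]}$, impossible because $S_{[v]}$ has degree at most $2$ by Lemma~\ref{lemma:s-w} while $\ell_0$ has anticanonical degree $3$. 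Your route is more local and uses only Lemma~\ref{lemma:s-w} rather than the full strength of the smallness statement, at the cost of some extra bookkeeping (the twisted Euler sequence and the nowhere-vanishing-section argument, both of which check out); the paper's version is shorter once smallness of $\rho_{5,1}$ is in hand. Both arguments are valid.
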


\begin{proof}
Consider the composition of the embedding
\begin{equation*}
\cO_S(-2\ell + e_0 + e_1 + e_2 + e_3) \hookrightarrow \cU_3
\end{equation*}
from~\eqref{eq:u3-sequence} with the projection $\cU_3 \to \cO_{E_i}(-1)$.
If it is equal to zero, then the map~\mbox{$\cU_3 \to \cO_{E_i}(-1)$} factors through a map $\varphi^*(\cT_{\P^2}(-2)) \to \cO_{E_i}(-1)$.
But the sheaf~$\varphi^*(\cT_{\P^2}(-2))$ restricts to~$E_i$ trivially, hence no such map exists.
This contradiction shows that the composition is non-trivial.
But since
\begin{equation*}
\cO_S(-2\ell + e_0 + e_1 + e_2 + e_3)\vert_{E_i} \cong \cO_{E_i}(-1),
 \end{equation*}
any non-trivial morphism
$\cO_S(-2\ell + e_0 + e_1 + e_2 + e_3) \to \cO_{E_i}(-1)$
is surjective.
Therefore, the sum of these morphisms
$\cO_S(-2\ell + e_0 + e_1 + e_2 + e_3) \to \bigoplus_{i=0}^3 \cO_{E_i}(-1)$
is surjective, hence its kernel is $\cO_S(-2\ell)$ and we have a commutative diagram
\begin{equation*}
\xymatrix{
0 \ar[r] & \cO_S(-2\ell) \ar[r] \ar[d] & \cO_S(-2\ell + e_0 + e_1 + e_2 + e_3) \ar[r] \ar[d] & \bigoplus_{i=0}^3 \cO_{E_i}(-1) \ar[r] \ar@{=}[d] & 0
\\
0 \ar[r] & \cE \ar[r] & \cU_3 \ar[r] & \bigoplus_{i=0}^3 \cO_{E_i}(-1) \ar[r] & 0
}
\end{equation*}
Taking into account~\eqref{eq:u3-sequence}, we see that the first column extends to an exact sequence
\begin{equation}\label{eq:e-sequence}
0 \to \cO_S(-2\ell) \to \cE \to \varphi^*(\cT_{\P^2}(-2)) \to 0.
\end{equation}
It remains to show that it coincides with the pullback of a twist of the Euler sequence on~$\P^2$.
Since the pullback functor $\varphi^*$ is fully faithful, and the Euler sequence is the unique non-split extension of $\cT_{\P^2}$ by $\cO_{\P^2}$, it is enough to show that~\eqref{eq:e-sequence} is non-split.

Assume on the contrary that there is a splitting $\varphi^*(\cT_{\P^2}(-2)) \to \cE$.
Composing it with the embedding $\cE \hookrightarrow \cU_3$, we obtain a splitting $\varphi^*(\cT_{\P^2}(-2)) \to \cU_3$ of~\eqref{eq:u3-sequence}.
It induces an embedding
\begin{equation*}
S \times_{\P^2} \Fl(1,2;3) \cong \P_S(\varphi^*(\cT_{\P^2}(-2))) \hookrightarrow \P_S(\cU_3) = \cY_{5,1},
\end{equation*}
such that its composition with $\pi_{5,1}$ coincides with the projection
\begin{equation*}
S \times_{\P^2} \Fl(1,2;3) \to \Fl(1,2;3) \to (\P^2)^\vee.
\end{equation*}
But this contradicts the fact that $\rho_{5,1}$ is a small contraction.
\end{proof}

\begin{proof}[Proof of Proposition~\textup{\ref{proposition:compatibility}}]
Let us construct the map $\theta_1$.
Let $V_1$ be a three-dimensional vector space such that the target plane of $\varphi$ is $\P(V_1)$.
We can choose an isomorphism
\begin{equation*}
\alpha_1 \colon \P(V_1) \xrightarrow{\ \sim\ } \P(\WW_3)
\end{equation*}
such that the points of $\P(V_1)$ to which the divisors $E_i$ are contracted by $\varphi$ go to the points~$P_i$ of $\P(\WW_3)$ defined by~\eqref{eq:points}.
Note that such an isomorphism is unique and $\fS_4$-equivariant.

Next, let $V_2$ be the three-dimensional vector space such that $\cE \cong V_2 \otimes \cO_S(-\ell)$.
Note that~\mbox{$V_2 \cong H^0(S, \cE(\ell))$} has a natural structure of an $\fS_4$-representation,
and the isomorphism $\cE \cong V_2 \otimes \cO_S(-\ell)$ is $\fS_4$-equivariant.
Under this identification the first map in~\eqref{eq:ce} becomes an $\fS_4$-equivariant embedding of sheaves
\begin{equation}
\label{eq:morphism}
V_2 \otimes \cO_S(-\ell) \xrightarrow{\ \xi\ } \cU_3,
\end{equation}
which is an isomorphism away from the union of $E_i$.
Its dual map extends to an exact $\fS_4$-equivariant sequence
\begin{equation}\label{eq:sequence}
0 \to \cU_3^\vee \xrightarrow{\ \xi^\vee\ } V_2^\vee \otimes \cO_S(\ell) \to \bigoplus_{i=0}^3 \cO_{E_i} \to 0.
\end{equation}
The second map defines four linear functions on $V_2^\vee$, i.e., four points on $\P(V_2)$.
We can choose an isomorphism
\begin{equation*}
\alpha_2 \colon \P(V_2) \xrightarrow{\ \sim\ } \P(\WW_3)
\end{equation*}
such that these points go to the points~$P_i$ of $\P(\WW_3)$ defined by~\eqref{eq:points}.
Again, such an isomorphism is unique and $\fS_4$-equivariant.

Now we put all the above constructions together.
The morphism $\xi$ defined by~\eqref{eq:morphism} induces a birational map
\begin{equation*}
\xymatrix@1{S \times \P(V_2) \cong \P_S(V_2 \otimes \cO_S(-\ell)) \ar@{-->}[r]^-\xi & \P_S(\cU_3) = \cY_{5,1}}.
\end{equation*}
We define $\theta_1$ as the composition
\begin{equation*}
\xymatrix@1@C=3em{\cY_{5,1} \ar@{-->}[r]^-{\xi^{-1}} & S \times \P(V_2) \ar[r]^-{\ \varphi \times  \operatorname{\mathrm{id}}\ } & \P(V_1) \times \P(V_2) \ar[r]^-{\alpha_1 \times\alpha_2} & \P(\WW_3) \times \P(\WW_3) \ar@{-->}[r]^-{\beta^{-1}} & \cY_{4,2},}
\end{equation*}
where the last map is the inverse of the blow up~\eqref{eq:y42}.
Clearly, $\theta_1$ is birational and $\fS_4$-equivariant, since all the maps used in its definition are.
Finally, its composition with~$p_1$ equals $\varphi \circ p$ by construction, hence the outer square in~\eqref{diagram:varphi} commutes.

Next, let us show an equality of the maps
\begin{equation}\label{eq:pi-theta}
\pi_{4,2} \circ \theta_1 = \pi_{5,1}
\end{equation}
from $\cY_{5,1}$ to $\P(\WW_5)$.
For this, consider the diagram
\begin{equation*}
\xymatrix@C=7em@R=1.9ex{
\WW_5^\vee \otimes \cO_S \ar[r] \ar@{=}[d] &
\WW_3^\vee \otimes \WW_3^\vee \otimes \cO_S \ar[r]^-{(\bP_0,\bP_1,\bP_2,\bP_3)} \ar[d]^{(\alpha_2^\vee \otimes \alpha_1^\vee)}_{\cong} &
\displaystyle\bigoplus_{i=0}^3 \hbox to 0 pt{$\cO_S$\hss} \ar@{=}[d]
\\
\WW_5^\vee \otimes \cO_S \ar[r]^{H^0(S,\xi^\vee)} \ar[d] &
V_2^\vee \otimes V_1^\vee \otimes \cO_S \ar[r] \ar[d] &
\displaystyle\bigoplus_{i=0}^3 \hbox to 0 pt{$\cO_S$\hss} \ar[d]
\\
\cU_3^\vee \ar[r]^{\xi^\vee} &
V_2^\vee \otimes \cO_S(\ell) \ar[r] &
\displaystyle\bigoplus_{i=0}^3 \hbox to 0 pt{$\cO_{E_i}$\hss}
}
\end{equation*}
Here the bottom line is~\eqref{eq:sequence}, the middle line is obtained from it by passing to global sections and tensoring with $\cO_S$,
and the maps between these lines are induced by evaluation of sections (hence the lower squares commute).
The top line is obtained by identification~\eqref{eq:w-w3w3}, the upper-right square commutes by definition of $\alpha_1$ and $\alpha_2$.
Therefore, there is a unique identification of the spaces $\WW_5^\vee$ in this diagram
(note that the one in the top line is defined by~\eqref{eq:w-w3w3},
while the other is defined by~\eqref{eq:quadrics-through-S}) such that the upper-left square commutes.
From now on we use implicitly the induced identification of the spaces~$\WW_5$.

As a result of this commutativity two morphisms $\WW_5^\vee \otimes \cO_S \to V_2^\vee \otimes \cO_S(\ell)$ in the diagram  coincide.
One of them induces the rational map
\begin{equation*}
\xymatrix@1{S \times \P(V_2) \ar@{-->}[r]^-\xi & \cY_{5,1} \ar[r]^-{\pi_{5,1}} & \P(\WW_5)},
\end{equation*}
and the other induces the rational map
\begin{equation*}
\xymatrix@1@C=2.5em{S \times \P(V_2) \ar[r]^-{\varphi \times \operatorname{\mathrm{id}}} & \P(V_1) \times \P(V_2) \ar[r]^-{\alpha_1 \times \alpha_2} & \P(\WW_3) \times \P(\WW_3) \ar@{-->}[r]^-{\bar\pi_{4,2}} & \P(\WW_5)};
\end{equation*}
the map $\varphi$ appears here because all the global sections of $\cO_S(\ell)$ are pullbacks via $\varphi$.
So, we have an equality of rational maps
\begin{equation*}
\bar\pi_{4,2} \circ (\alpha_1 \times \alpha_2) \circ (\varphi \times \operatorname{\mathrm{id}}) = \pi_{5,1} \circ \xi
\end{equation*}
from $S \times \P(V_2)$ to $\P(\WW_5)$.
Composing it with the map $\xi^{-1}$ on the right and using~\eqref{diagram:y42} and the definition of $\theta_1$, we deduce the required equality~\eqref{eq:pi-theta}.

From~\eqref{eq:pi-theta} we further deduce an equality
\begin{equation*}
\pi \circ (\rho_{4,2} \circ \theta_1) = \pi_{4,2} \circ \theta_1 = \pi_{5,1}.
\end{equation*}
Therefore, the composition $\rho_{4,2} \circ \theta_1$ provides one of the two possible factorizations $\rho_{5,1}$ of the morphism $\pi_{5,1}$.
This shows that for one of the two choices of $\rho_{5,1}$, the inner triangle in~\eqref{diagram:varphi} is commutative.
\end{proof}

It is worth noting that if we want to replace the projection $p_1$ in the diagram~\eqref{diagram:varphi} by another projection $p_2$ and preserve its commutativity,
we will have to replace the subgroup $\fS_5$ containing $\fS_4$ by the unique other such subgroup
(more precisely, we will have to replace the embedding $\fS_5 \hookrightarrow \fS_6$ with the one obtained from it by a conjugation with the factor~$\fS_2$
in $\fS_{4,2}$).

\begin{remark}\label{remark:rho-5-1-Weil-final}
Recall the notation of Remarks~\ref{remark:rho-4-2-Weil-divisor} and~\ref{remark:rho-5-1-Weil-divisor},
and assume that we are in the situation of Proposition~\ref{proposition:compatibility}:
the resolution $\rho_{4,2}$ is defined by~\eqref{eq:choice-rho-pp}
and the resolution~$\rho_{5,1}$ is such that the diagram~\eqref{diagram:varphi} commutes.
Then we have
\begin{align*}
\cH^{5,1}_+ &=
\cH^{5,1}_{123} +
\cH^{5,1}_{156} +
\cH^{5,1}_{246} +
\cH^{5,1}_{345} +
\cH^{5,1}_{124} +
\cH^{5,1}_{136} +
\cH^{5,1}_{235} +
\cH^{5,1}_{145} +
\cH^{5,1}_{256} +
\cH^{5,1}_{346},\\
\cH^{5,1}_- &=
\cH^{5,1}_{456} +
\cH^{5,1}_{234} +
\cH^{5,1}_{135} +
\cH^{5,1}_{126} +
\cH^{5,1}_{356} +
\cH^{5,1}_{245} +
\cH^{5,1}_{146} +
\cH^{5,1}_{236} +
\cH^{5,1}_{134} +
\cH^{5,1}_{125}.
\end{align*}
Consequently, $\rho_{5,1}$ is the blow up of the Weil divisor
\begin{equation*}
\cH_{456} +
\cH_{234} +
\cH_{135} +
\cH_{126} +
\cH_{356} +
\cH_{245} +
\cH_{146} +
\cH_{236} +
\cH_{134} +
\cH_{125}
\end{equation*}
on~$\cY$.
\end{remark}

\subsection{Proof of Theorem~\ref{theorem:coble}}
\label{subsection:proof-first}

In Proposition~\ref{proposition:first-Q-factorialization} we constructed the morphism $\rho_{4,2}$ for some non-standard subgroup $\fS_{4,2} \subset \fS_6$,
and checked that it is $\fS_{4,2}$-equivariant for the twisted action and small.
To construct $\rho_{4,2}$ for any other non-standard embedding, we may use a conjugation by an appropriate element of~$\fS_6$.
This proves assertion~(i).

Similarly to the above, in Proposition~\ref{proposition:Stein-factorization} we constructed a morphism $\rho_{5,1}$
for some non-standard embedding $\fS_5 \hookrightarrow \fS_6$
(and the same trick as above then gives $\rho_{5,1}$ for any other non-standard $\fS_5 \subset \fS_6$)
and checked that it is small.
Moreover, the compatibility isomorphism $\theta_1$ was constructed in Proposition~\ref{proposition:compatibility};
by the way it proves assertion~(iv).

Furthermore, we checked that the morphism $\rho_{5,1}$ is $\fS_5$-equivariant
with respect either to the natural or to the twisted action of $\fS_5$ on $\cY$.
To show that the action is twisted, we use Proposition~\ref{proposition:compatibility}.
Choose a subgroup $\fS_4 \subset \fS_5$, a transposition $g \in \fS_4$, and consider the commutative diagram~\eqref{diagram:varphi}.
Since $\theta_1$ is $\fS_4$-equivariant and~\mbox{$g \circ \rho_{4,2} \circ g^{-1} = \sigma \circ \rho_{4,2}$}
(as~$\rho_{4,2}$ is equivariant with respect to the twisted action),
we have~
\begin{equation*}
g \circ \rho_{5,1} \circ g^{-1} =
g \circ \rho_{4,2} \circ \theta_1 \circ g^{-1} =
g \circ \rho_{4,2} \circ g^{-1} \circ \theta_1 =
\sigma \circ \rho_{4,2} \circ \theta_1 =
\sigma \circ \rho_{5,1},
\end{equation*}
hence $\rho_{5,1}$ is equivariant with respect to the twisted action as well.
This completes the proof of assertion~(ii).

Finally, recall that we checked in Propositions~\ref{proposition:first-Q-factorialization} and~\ref{proposition:Stein-factorization}
that $\rho_{4,2}$ and~$\rho_{5,1}$ are isomorphisms over the complement of the Cremona--Richmond
configuration~\mbox{$\CR = \Sing(X_{1/4}) \subset \P^4$}.
This gives the proof of assertion~(iii) and completes the proof of Theorem~\ref{theorem:coble}.
\qed


\section{Conic bundle structures on $\fS_6$-invariant quartics}
\label{section:conic-bundles}

Recall the pencil $\{X_t\}$ of $\fS_6$-invariant quartics defined by the equation~\eqref{eq:beauville} inside the hyperplane $\P^4 \subset \P^5$ given by~\eqref{eq:hyperplane}.
In this section we discuss the conic bundle structures on the quartics~$X_t$ induced by the resolutions of the Coble fourfold.

\subsection{$\fS_6$-invariant quartics revisited}
\label{subsection:xt}

We start by collecting
some facts about automorphism groups of $X_t$, their singularities and class groups.

Let $\CR$ be the Cremona--Richmond configuration of 15 lines with 15 intersection points, see Appendix~\ref{section:CR}.
The intersection points of the lines of $\CR$ form the orbit
\begin{alignat*}{3}
\Upsilon_{15} &= \{ g \cdot (2:2:-1:-1:-1:-1)			&&\mid g \in \fS_6\}.\\
\intertext{Besides this, we consider also the orbits}
\Sigma_{6}  &= \{ g \cdot (5:-1:-1:-1:-1:-1) 			&& \mid g \in \fS_6\},\\
\Sigma_{10} &= \{ g \cdot (1:1:1:-1:-1:-1)   			&& \mid g \in \fS_6\},\\
\Sigma_{15} &= \{ g \cdot (1:-1:0:0:0:0)     			&& \mid g \in \fS_6\},\\
\Sigma_{30} &= \{ g \cdot (1:1:\omega:\omega:\omega^2:\omega^2)	&& \mid g \in \fS_6\},
\end{alignat*}
where $\omega$ is a primitive cubic root of unity and the lower index on the left hand side stands for cardinality of the orbit.
We note that
\begin{equation*}
\Upsilon_{15}\subset \CR,\qquad \Sigma_{30} \subset \CR, 
\qquad
(\Sigma_6 \cup \Sigma_{10} \cup \Sigma_{15}) \cap \CR = \varnothing.
\end{equation*}

\begin{remark}\label{remark:lines-vs-quadric}
The
quartic $X_\infty$ defined by equation~\eqref{eq:beauville} with $t = \infty$ is the quadric $Q_\infty$ given by the equation
\begin{equation}\label{eq:q-infty}
x_1^2 + x_2^2 + x_3^2 + x_4^2 + x_5^2 + x_6^2=0
\end{equation}
taken with multiplicity 2.
Note that
\begin{equation*}
Q_\infty \cap \Upsilon_{15} = \varnothing,
\qquad
Q_\infty \cap \CR = \Sigma_{30},
\end{equation*}
and the intersection is transversal.
\end{remark}

The singularities of the quartics $X_t$ have been described by van der Geer in \cite{Geer} in terms of these orbits.
Recall the discriminant set $\bDD$ defined by~\eqref{def:dd}.

\begin{theorem}[{\cite[Theorem~4.1]{Geer}}]
\label{theorem:Geer}
One has
\begin{equation*}\renewcommand{\arraystretch}{1.5}
\begin{array}{|c||c|c|c|c|c|}
\hline
t & t \not\in \bDD \cup \{\infty\} & t = \frac14 & t = \frac12 & t = \frac16 & t = \frac7{10}
\\
\hline
\Sing\big(X_t\big) & \Sigma_{30} & \CR & \Sigma_{30} \cup \Sigma_{15} & \Sigma_{30} \cup \Sigma_{10} & \Sigma_{30} \cup \Sigma_{6}
\\
\hline
\end{array}
\end{equation*}
In particular, $X_t$ is normal if $t \ne \infty$.

Moreover, all singular points of the quartics $X_t$ are nodes provided that $t\ne 1/4,\infty$.
\end{theorem}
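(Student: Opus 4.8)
The plan is to compute $\Sing(X_t)$ by a Lagrange-multiplier argument and then to verify nodality orbit by orbit through the Hessian. Write $F=\sum_i x_i^4-t\big(\sum_i x_i^2\big)^2$ and let $q=\sum_j x_j^2$. A point of $X_t$ inside the hyperplane $\{\sum_i x_i=0\}$ is singular precisely when $\nabla F$ is proportional to the gradient of the linear form, i.e. when there is a scalar $\mu$ with
\[
x_i^3-t q\,x_i=\mu\qquad(1\le i\le 6),\qquad \sum_i x_i=0;
\]
multiplying by $x_i$ and summing shows that $F=0$ is then automatic. The decisive point is that every coordinate $x_i$ is a root of the single cubic $T^3-tq\,T-\mu$, whose three roots sum to zero; hence the six coordinates assume at most three distinct values, and the whole problem becomes a finite case analysis over the multiplicity patterns of those values.

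First I would run through the patterns. When all three roots occur, the relations $\sum_i x_i=0$ and ``sum of roots $=0$'' leave only: the pattern $(2,2,2)$ with roots proportional to $1,\omega,\omega^2$, which is singular for every $t$ (the $t$-dependence drops out since $q=0$) and yields $\Sigma_{30}$; the generic $(2,2,2)$ pattern, which forces $t=1/4$ and sweeps out the $15$ lines of $\CR$; and the pattern $(1,1,4)$, forcing one root to vanish and the other two to be opposite, hence $t=1/2$, yielding $\Sigma_{15}$; the pattern $(1,2,3)$ makes two roots coincide and is excluded. When only two values occur, they are either a double root together with the remaining simple root, forcing the pattern $(4,2)$ and giving $\Upsilon_{15}\subset\CR$ at $t=1/4$, or two of three distinct roots, which gives $(3,3)\mapsto\Sigma_{10}$ at $t=1/6$ and $(1,5)\mapsto\Sigma_6$ at $t=7/10$ while $(2,4)$ collapses. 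In each case $t$ is pinned down by feeding the explicit roots into the identity $tq=-e_2$, where $e_2$ is the second elementary symmetric function of the three roots. This reproduces the table and shows that for $t\notin\bDD\cup\{\infty\}$ the singular locus is exactly $\Sigma_{30}$.

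Normality for finite $t$ is then formal. The quartic $X_t$ is a hypersurface, hence Cohen--Macaulay, so by Serre's criterion it suffices to check regularity in codimension one; but the computation above shows $\Sing(X_t)$ is at most one-dimensional (finite except for the curve $\CR$ at $t=1/4$), so it has codimension at least two in the threefold $X_t$. (For $t=\infty$ the variety is a non-reduced double quadric and the claim does not apply.) For nodality I would use the $\fS_6$-action to reduce to one representative of each orbit $\Sigma_{30},\Sigma_{15},\Sigma_{10},\Sigma_6$ at the relevant $t$, and compute the Hessian of $F$,
\[
H_{ij}=(12 x_i^2-4tq)\,\delta_{ij}-8t\,x_ix_j,
\]
a diagonal matrix corrected by a rank-one term. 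One checks $Hp=12\mu\,\mathbf{1}$, so the radial line $\langle p\rangle$ always lies in the kernel of the restriction of $H$ to $\{\sum v_i=0\}$; the point is a node exactly when this kernel is no larger, i.e. when the only solutions of $Hv=c\,\mathbf{1}$ with $\sum v_i=0$ are multiples of $p$.

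The step I expect to be the main obstacle is precisely this last nondegeneracy check, above all for $\Sigma_{30}$. There $q=0$, so the diagonal part of $H$ is $12\,\mathrm{diag}(x_i^2)$ with entries in $\{1,\omega,\omega^2\}$, and one must show that the rank of $H$ on the hyperplane drops from $4$ to at most $3$ exactly at $t=1/4$ and for no other value --- which is what forces the exclusion of $t=1/4$ and is consistent with $\Sigma_{30}$ lying on the lines of $\CR$ there. This reduces to proving that a small, explicitly $\omega$-laced linear system is nonsingular for $t\neq 1/4$; the corresponding checks for $\Sigma_{15},\Sigma_{10},\Sigma_6$ at their single parameter values involve only rational data and are routine.
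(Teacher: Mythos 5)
The paper offers no proof of this statement: it is quoted directly from van der Geer \cite[Theorem~4.1]{Geer} (and the nodality claim is likewise taken as known). Your proposal is therefore a genuinely different route --- a self-contained elementary verification --- and, having checked it, I believe it is correct and complete in outline. The Lagrange-multiplier reduction is right (singularity of $X_t\subset\P^4$ at $p$ is exactly $x_i^3-tq\,x_i=\mu$ for all $i$, and Euler's relation makes $F=0$ automatic), and the case analysis over the multiplicity patterns $(2,2,2)$, $(4,1,1)$, $(3,2,1)$ for three attained values and $(3,3)$, $(4,2)$, $(5,1)$ for two is exhaustive; I verified the arithmetic pinning down $t$ in each case via $e_2=-tq$, and it reproduces the table exactly, including the fact that the generic $(2,2,2)$ points at $t=1/4$ sweep out the full lines of $\CR$ and that $\Upsilon_{15}$ arises from the $(4,2)$ pattern. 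For normality you should add the (easy) remark that $\dim\Sing(V(F)\cap\{\textstyle\sum x_i=0\})\le 1$ already forces the hypersurface to be reduced, after which Serre's criterion applies as you say. Finally, the step you single out as the main obstacle --- nondegeneracy of the Hessian on $\Sigma_{30}$ for $t\ne 1/4$ --- in fact closes in one line: at $p=(1,1,\omega,\omega,\omega^2,\omega^2)$ one has $q=0$ and $p_i^{-2}=p_i$, so any $v$ with $Hv=c\mathbf{1}$ has the form $v_i=\alpha p_i+\beta p_i^2$ with $\beta=\tfrac{2ts}{3}$, $s=p^Tv$; the identities $\sum p_i=\sum p_i^2=0$ and $\sum p_i^3=6$ then give $s=4ts$, hence $s=0$ and $v\in\langle p\rangle$ whenever $t\ne 1/4$ (while at $t=1/4$ the radical jumps, consistently with $\Sigma_{30}\subset\CR$). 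The analogous checks at $\Sigma_{15}$, $\Sigma_{10}$, $\Sigma_6$ for $t=1/2,\,1/6,\,7/10$ are indeed routine and succeed. What your approach buys is an explicit, citation-free proof amenable to the Hessian computations used elsewhere in the paper; what the citation buys is brevity and the connection to van der Geer's modular interpretation of $X_t$, which the present paper does not need.
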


One can describe automorphism groups of the quartics~$X_t$.

\begin{lemma}
\label{lemma:quartics-Aut}
The following assertions hold.
\begin{itemize}
\item[(i)] One has $\Aut(X_{1/2})\cong\PSp$, where $\mathbf{F}_3$ is the field of three elements.

\item[(ii)] One has $\Aut(X_t)\cong\fS_6$ provided that
$t\not\in\left\{1/2,\infty\right\}$.

\item[(iii)] If $X$ is a normal quartic hypersurface with a faithful action
of the group $\fS_6$, then~$X$ is isomorphic to one
of the quartics $X_t$.
\end{itemize}
\end{lemma}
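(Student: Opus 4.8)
The plan is to run all three parts through the linear geometry of the pencil $\{X_t\}$ and the orbit structure of $\fS_6$ on $\P^4$, which (on the hyperplane~\eqref{eq:hyperplane}) is the standard five-dimensional representation $\rR(5,1)$.

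First I would record the reductions common to all three parts. For $t\neq\infty$ the quartic $X_t$ is normal by Theorem~\ref{theorem:Geer}, and adjunction gives $\omega_{X_t}\cong\cO_{X_t}(-1)$; hence every automorphism preserves the ample generator of $\Pic(X_t)$ and is induced by a linear transformation, so $\Aut(X_t)\subseteq\PGL_5$. It is finite because it permutes faithfully the finite set $\Sigma_{30}\subseteq\Sing(X_t)$, which spans $\P^4$. This already yields (iii): a faithful $\fS_6$-action on a normal quartic threefold is linear, hence given by a five-dimensional representation, which must be irreducible (otherwise the invariant quartic is reducible or singular in codimension one), so after a sign twist and possibly the outer automorphism of $\fS_6$—neither of which alters the induced $\PGL_5$-action—it is $\rR(5,1)$. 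Since $\fS_6$ acts here as the Weyl group of type $A_5$, its invariants are generated in degrees $2,3,4,5,6$ while its anti-invariants are generated by the Jacobian of degree $15$; thus there are no sign-semi-invariant quartics, the space of invariant quartics is spanned by $p_2^2$ and $p_4$ (writing $p_d=\sum x_i^d$), and $X$ is isomorphic to one of the $X_t$.

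For (ii) set $G=\Aut(X_t)$, so $\fS_6\subseteq G\subseteq\PGL_5$. The heart of the matter is to show that $G$ preserves the quadric $Q_\infty=\{p_2=0\}$. The quadrics through $\Sigma_{30}$ form the one-dimensional space $\langle p_2\rangle$ (equivalently, in $\Sym^2\rR(5,1)\cong\rR(6)\oplus\rR(5,1)\oplus\rR(4,2)$ only the trivial summand vanishes on $\Sigma_{30}$, a finite evaluation check), so any symmetry preserving $\Sigma_{30}$ preserves $p_2$. Once $G$ fixes the lines $\langle p_2^2\rangle$ and $\langle Q_t\rangle$ in $\Sym^4\rR(5,1)^\vee$, it preserves the plane $\langle p_4,p_2^2\rangle$ they span and therefore acts on the pencil $\{X_s\}\cong\P^1$; this action fixes the three distinct points $[X_\infty]$ (the unique non-reduced member), $[X_{1/4}]$ (the unique member with non-isolated singular locus, by Theorem~\ref{theorem:Geer}), and $[X_t]$, hence is trivial, so $G$ preserves every $X_s$. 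In particular $G$ preserves $\Sing(X_0)=\Sigma_{30}$ and $\Sing(X_{7/10})=\Sigma_{30}\cup\Sigma_6$, so it preserves $\Sigma_6$; as the six points of $\Sigma_6$ span $\P^4$, the resulting homomorphism from $G$ to the permutation group of $\Sigma_6$ is injective, and its restriction to $\fS_6$ is already surjective, whence $G=\fS_6$.

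The remaining input—and the main obstacle—is to guarantee that $G$ preserves $\Sigma_{30}$ (equivalently $Q_\infty$) for every $t\neq 1/2,\infty$. For $t\notin\bDD\cup\{\infty\}$ one has $\Sing(X_t)=\Sigma_{30}$ and the argument above applies directly. For $t=\tfrac14$ I would instead argue through the appendix: $\Aut(X_{1/4})$ preserves $\Sing(X_{1/4})=\CR$, and the projective symmetries of a Cremona--Richmond configuration form exactly $\fS_6$ (Theorem~\ref{theorem:CR-unique}). For $t\in\{\tfrac16,\tfrac7{10}\}$ the singular locus is $\Sigma_{30}\cup\Sigma_{10}$ or $\Sigma_{30}\cup\Sigma_6$, and the delicate point is that $G$ cannot fuse the two $\fS_6$-orbits: they are projectively inequivalent (distinguished, e.g., by their incidences with the remaining singular points), so $\Sigma_{30}$ is still preserved and the argument proceeds; these two cases were also analyzed in~\cite{CheltsovShramov}. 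This non-fusion is precisely what breaks down for the Burkhardt quartic, where $\Sing(X_{1/2})=\Sigma_{30}\cup\Sigma_{15}$ is a single orbit of $45$ nodes under the larger group and $Q_\infty$ is not preserved; accordingly (i) is the classical fact that the $45$-nodal Burkhardt quartic satisfies $\Aut(X_{1/2})\cong\PSp$ of order $25920$ (Coble~\cite{Coble-PSp}, \cite{Hunt}), the extra symmetry reflecting that the five-dimensional representation of $\PSp$ carries no invariant quadratic form.
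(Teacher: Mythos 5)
Your argument for (ii) takes a genuinely different route from the paper's, and the difference matters. The paper disposes of (ii) in two lines by quoting Feit's classification of finite subgroups of $\PGL_5(\Bbbk)$ (\cite[\S8.5]{Feit}): a finite overgroup of $\fS_6$ in its five-dimensional representation is either $\fS_6$ or $\PSp$, and the latter has a unique invariant quartic, the Burkhardt quartic. Your route --- recover $Q_\infty$ as the unique quadric through $\Sigma_{30}$, deduce that $\Aut(X_t)$ acts on the pencil fixing three distinct members, hence preserves each member and in particular $\Sigma_6$, then embed $\Aut(X_t)$ into $\Sym(\Sigma_6)$ --- is more elementary and self-contained where it works, but it has a genuine gap exactly where you flag it: for $t=1/6$ and $t=7/10$ you must show that $\Aut(X_t)$ cannot fuse $\Sigma_{30}$ with $\Sigma_{10}$ (resp.\ $\Sigma_6$) inside $\Sing(X_t)$. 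The parenthetical ``distinguished by their incidences with the remaining singular points'' is not a proof: a priori a larger finite group could act transitively on the $40$ (resp.\ $36$) nodes, and nothing you have established excludes this (orbit sizes and stabilizer orders yield no contradiction, and all the nodes look alike locally). Falling back on \cite{CheltsovShramov} at that point means the two cases are not actually covered by your argument; the paper's appeal to Feit is precisely what makes all $t$ uniform. Your treatment of $t=1/4$ is fine in spirit, but note that Theorem~\ref{theorem:CR-unique} only asserts uniqueness of the configuration up to projectivity; you still need the small extra step that the projective stabilizer of $\CR$ is exactly its combinatorial automorphism group $\fS_6$, using that $\Upsilon_{15}$ contains a projective frame.

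For (iii), your invariant-theoretic supplement (the $A_5$ Weyl group has no anti-invariant quartic and a two-dimensional space of invariant quartics spanned by $p_2^2$ and $p_4$) is a welcome expansion of the paper's terse conclusion. However, your justification of irreducibility is off target: the point is not that a reducible representation would force the quartic to be non-normal, but that a faithful action of $\fS_6$ on $\P^4$ is a priori only a \emph{projective} representation, and one must rule out the spin covers $2.\fS_6$ (whose faithful irreducibles have dimension $4$ and central character incompatible with any one-dimensional complement). This is what the paper's citation of the classification of projective representations of $\fA_6$ accomplishes, and your sentence skips it. Part (i) is by citation in both treatments.
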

\begin{proof}
Assertion~(i) is well known, see e.g. \cite{Coble-PSp}.

Take any $t \ne \infty$.
Since the quartic $X_t$ is normal by Theorem~\ref{theorem:Geer}, its hyperplane section is the anticanonical class, hence
the group~$\Aut(X_t)$ is naturally embedded into~$\PGL_5(\Bbbk)$.
Moreover, one has $\fS_6\subset\Aut(X_t)$ by the definition of $X_t$.
It follows from the classification of finite subgroups of~$\PGL_5(\Bbbk)$ that either $\Aut(X_t)\cong\fS_6$, or $\Aut(X_t)\cong\PSp$, see~\cite[\S8.5]{Feit}.
But the group $\PSp$ has a unique invariant quartic hypersurface in~$\P^4$, which is the Burkhardt quartic $X_{1/2}$.
This proves assertion~(ii).

Finally, assume that $X$ is a normal quartic hypersurface invariant under some faithful action of the group $\fS_6$ on $\P^4$.
Using the classification of projective representations of the group $\fS_6$
we deduce that this action comes from an irreducible five-dimensional representation of $\fS_6$;
in fact, it is enough to look at the classification of projective representations of the smaller group $\fA_6$, which
can be found for instance in~\cite[p.5]{Atlas}.
The latter $\fS_6$-representation is unique up to an outer automorphism and a sign twist
(cf.~Lemma~\ref{lemma:restriction-s6-s5}).
This implies assertion~(iii).
\end{proof}

\begin{corollary}\label{corollary:aut-y}
We have $\Aut(\cY) \cong \fS_6 \times \mumu_2$.
\end{corollary}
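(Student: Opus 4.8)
The plan is to deduce the corollary from Lemma~\ref{lemma:quartics-Aut}(ii) together with the description of $\cY$ as the double cover of $\P^4$ branched over the Igusa quartic $X_{1/4}$. First I would observe that any automorphism of $\cY$ preserves the ramification divisor of $\pi$, hence descends to an automorphism of $\P^4$ preserving the branch locus $X_{1/4}$; conversely, the kernel of the resulting homomorphism $\Aut(\cY) \to \Aut(X_{1/4})$ consists of automorphisms acting trivially on $\P^4$, which for a double cover is exactly the group generated by the Galois involution $\sigma$, i.e. $\mumu_2$. So I would establish a short exact sequence
\begin{equation*}
1 \to \langle\sigma\rangle \to \Aut(\cY) \to \Aut(X_{1/4}) \to 1,
\end{equation*}
with $\langle\sigma\rangle \cong \mumu_2$, after first checking that $\Aut(\cY) \to \Aut(X_{1/4})$ is well defined (every automorphism of $\cY$ preserves $\omega_\cY^{-1}$, hence the polarization pulled back from $\P^4$, hence is compatible with $\pi$).

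Next I would compute the two outer terms. By Lemma~\ref{lemma:quartics-Aut}(ii) we have $\Aut(X_{1/4}) \cong \fS_6$, since $1/4 \notin \{1/2,\infty\}$; this uses that $X_{1/4}$ is normal, which is guaranteed by Theorem~\ref{theorem:Geer}. For surjectivity of the map to $\Aut(X_{1/4})$, I would note that the natural $\fS_6$-action on $\cY$ by permutation of coordinates \eqref{eq:action-standard} commutes with $\sigma$ and maps isomorphically onto the permutation $\fS_6 \subset \Aut(X_{1/4})$; since every element of $\Aut(X_{1/4}) \cong \fS_6$ is realized this way, the map is onto. Because $\sigma$ is central in $\Aut(\cY)$ (it commutes with the whole $\fS_6$-action, as stated after \eqref{eq:sigma}) and lies in the kernel, the extension splits via the chosen lift $\fS_6 \hookrightarrow \Aut(\cY)$, giving $\Aut(\cY) \cong \fS_6 \times \mumu_2$.

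The main point requiring care is the identification of the kernel with $\langle\sigma\rangle$ and the splitting of the sequence. For the kernel, the key input is that $\cY$ is genuinely a double cover (not ramified everywhere), so an automorphism inducing the identity on $\P^4$ is determined by its action on the fibers of $\pi$; each fiber away from the branch locus consists of two points swapped or fixed, and such a fiberwise automorphism is either the identity or $\sigma$. I would phrase this using the $\ZZ/2$-grading of $\pi_*\cO_\cY = \cO_{\P^4} \oplus \cO_{\P^4}(-2)$: an automorphism over $\P^4$ acts on the $\cO_{\P^4}(-2)$-summand by a scalar which must be $\pm 1$ to preserve the algebra structure, yielding exactly $\langle\sigma\rangle$. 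The splitting then follows for free from the centrality of $\sigma$ and the existence of the lift $\fS_6 \hookrightarrow \Aut(\cY)$, so no cohomological obstruction computation is needed. The only genuine subtlety is ensuring the hypotheses of Lemma~\ref{lemma:quartics-Aut}(ii) apply, i.e. that $X_{1/4}$ is normal and that its automorphism group is computed in $\PGL_5(\Bbbk)$ rather than some larger group — both of which are already supplied by Theorem~\ref{theorem:Geer} and the argument in the proof of that lemma.
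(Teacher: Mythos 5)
Your proposal is correct and follows essentially the same route as the paper: show that every automorphism of $\cY$ descends along $\pi$ to an automorphism of $\P^4$ preserving the branch divisor, identify the kernel with $\langle\sigma\rangle$, and invoke Lemma~\ref{lemma:quartics-Aut} to get $\Aut(X_{1/4})\cong\fS_6$; the splitting via centrality of $\sigma$ is also how the paper concludes. The one step you gloss over is exactly where the paper does its work: the claim that preserving $\omega_\cY^{-1}\cong\pi^*\cO_{\P^4}(3)$ forces an automorphism to preserve $\pi^*\cO_{\P^4}(1)$ (and hence to be compatible with $\pi$) requires knowing that this cube root is unique, i.e.\ some control of $\Pic(\cY)$. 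The paper supplies this by noting that $\rkPic(\cY)=1$ by the Lefschetz theorem for hypersurfaces in weighted projective space and that $\pi^*\cO_{\P^4}(1)$ is non-divisible, hence is the ample generator and is automatically preserved. You should add this (or an equivalent torsion-freeness statement) to make the descent step airtight; the rest of your argument, including the identification of the kernel via the $\ZZ/2$-grading of $\pi_*\cO_\cY$, is fine.
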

\begin{proof}
The group on the right hand side acts on $\cY$ by~\eqref{eq:action-standard} and~\eqref{eq:sigma},
and the action is clearly faithful.
It remains to show that any automorphism of $\cY$ belongs to this group.
For this we note that the morphism $\pi \colon \cY \to \P^4$ is defined by the ample generator of~$\Pic(\cY)$.
Indeed, $\rkPic(\cY)=1$ by Lefschetz hyperplane section theorem (see~\mbox{\cite[Theorem~4.2.2]{Dolgachev-WPS}}), because~$\cY$ is a hypersurface in the weighted projective space~\mbox{$\P(2,1^5)$}.
The pullback of the hyperplane in $\P^4$ via $\pi$ is not divisible in $\Pic(\cY)$ by degree reasons, and thus
generates $\Pic(\cY)$.
Hence $\pi$ is equivariant with respect to any automorphism of~$\cY$.
This induces a homomorphism~\mbox{$\Aut(\cY) \to \PGL_5(\Bbbk)$} whose kernel is generated by the Galois involution $\sigma$.
The image of the homomorphism is the
subgroup of~$\PGL_5(\Bbbk)$ that fixes the branch divisor~$X_{1/4}$ of~$\pi$.
Moreover, the latter subgroup acts faithfully on~$X_{1/4}$, hence is contained in~\mbox{$\Aut(X_{1/4}) \cong \fS_6$}.
\end{proof}

For further reference we state here a description of the class groups of $X_t$.

\begin{lemma}
\label{lemma:cl-xt}
The following table lists the ranks of the class groups of the quartics $X_t$:
\begin{equation*}\renewcommand{\arraystretch}{1.5}
\begin{array}{|c||c|c|c|c|c|}
\hline
t & t \not\in \bDD \cup \{\infty\} & t = \frac14 & t = \frac12 & t = \frac16 & t = \frac7{10}
\\
\hline
\rkCl(X_t) & 6 & 1 & 16 & 11 & 7
\\
\hline
\end{array}
\end{equation*}
\end{lemma}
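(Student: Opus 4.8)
The plan is to separate the nodal quartics $t\ne\tfrac14,\infty$ from the Igusa quartic $t=\tfrac14$, whose singular locus is one-dimensional. For the nodal cases I would use the standard defect formula for nodal threefold hypersurfaces: if $X\subset\P^4$ is a quartic with only ordinary double points, forming a finite set $\Sigma$, then $\Pic(X)=\ZZ\cdot H$ by the Lefschetz theorem (the quartic is normal by Theorem~\ref{theorem:Geer}), the quotient $\Cl(X)/\Pic(X)$ is free of rank equal to the defect $\delta(X)$, and
\begin{equation*}
\delta(X)=h^1\big(\P^4,\mathcal I_\Sigma(3)\big),
\end{equation*}
the failure of $\Sigma$ to impose independent conditions on cubic forms (the relevant degree being $2\cdot 4-5=3$; the same recipe reproduces the classical defect $5$ of the Segre cubic with degree $2\cdot3-5=1$). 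Thus $\rkCl(X_t)=1+h^1(\P^4,\mathcal I_\Sigma(3))$, and by Theorem~\ref{theorem:Geer} the set $\Sigma=\Sing(X_t)$ is $\Sigma_{30}$ for generic $t$ and $\Sigma_{30}\cup\Sigma_{15}$, $\Sigma_{30}\cup\Sigma_{10}$, $\Sigma_{30}\cup\Sigma_{6}$ for $t=\tfrac12,\tfrac16,\tfrac7{10}$; all of these are explicit $\fS_6$-orbits, so the whole computation is $\fS_6$-equivariant.

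For the generic value $t\notin\bDD\cup\{\infty\}$ I would avoid the cohomology and read the rank off the geometry. By Theorem~\ref{theorem:three-pencils} the map $\rho_{5,1}\colon\cX^{5,1}_\tau\to\cX_\tau\cong X_t$ is a smooth small resolution, so $\Cl(X_t)\cong\Pic(\cX^{5,1}_\tau)$, and by Theorem~\ref{theorem:conic-bundles} the projection $\cX^{5,1}_\tau\to S$ is a \emph{standard} conic bundle. Standardness means precisely $\rho(\cX^{5,1}_\tau/S)=1$, hence
\begin{equation*}
\rkCl(X_t)=\rkPic(\cX^{5,1}_\tau)=\rkPic(S)+1=5+1=6.
\end{equation*}
Feeding this back into the defect formula gives $h^1(\mathcal I_{\Sigma_{30}}(3))=5$, equivalently $h^0(\mathcal I_{\Sigma_{30}}(3))=35-30+5=10$: the cubics through $\Sigma_{30}$ form a ten-dimensional $\fS_6$-subrepresentation $W=\Ker\big(\Sym^3 V\to\Bbbk^{\Sigma_{30}}\big)$ of $\Sym^3 V$, where $V$ is the five-dimensional standard representation realized by the hyperplane~\eqref{eq:hyperplane}, and the evaluation map has $25$-dimensional image.

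For the three special nodal values the problem reduces to the restriction maps $\mathrm{ev}_O\colon W\to\Bbbk^{O}$ for the extra orbit $O\in\{\Sigma_{15},\Sigma_{10},\Sigma_6\}$: the numerical assertion to verify is that each $\mathrm{ev}_O$ has rank exactly $5$, i.e. $O$ imposes five further conditions on $W$. Granting this, $h^0(\mathcal I_{\Sigma_{30}\cup O}(3))=10-5=5$, the full node set imposes $35-5=30$ conditions, and $\delta(X_t)=|\Sigma_{30}\cup O|-30=|O|$, which equals $15,10,6$ and produces the ranks $16,11,7$. The main obstacle is exactly this rank count. By $\fS_6$-equivariance and Schur's lemma, each irreducible constituent common to $W$ and the permutation module $\Bbbk^{O}$ contributes either its full dimension or nothing to the image, so the rank is determined once $W$ is decomposed into $\fS_6$-irreducibles (obtained from the decomposition of the $35$-dimensional $\Sym^3 V$ together with the $25$-dimensional image already found) and the finitely many equivariant maps are shown to be non-zero. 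I would settle the non-vanishing concretely, by exhibiting explicit cubics spanning $W$ (built from the power sums $\sum x_i^2$, $\sum x_i^3$ and related symmetric expressions) and evaluating them at representative points of the orbits; this explicit check, rather than the formal bookkeeping, is where the real content lies.

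Finally, the Igusa quartic $t=\tfrac14$ is singular along the $15$ lines of $\CR$, so the nodal defect formula does not apply and the case must be handled on its own. Here the goal is to show that $X_{1/4}$ is factorial, that is $\Cl(X_{1/4})=\ZZ\cdot H$ and $\rkCl=1$. I would approach this either locally---$X_{1/4}$ has transverse nodes along each line, and one checks that the rigid Cremona--Richmond incidence (Theorem~\ref{theorem:CR-unique}) forces all of these local contributions to cancel---or through the birational contraction $\rho_{5,1}\colon\cX^{5,1}_0\to\cX_0\cong X_{1/4}$ at $\tau=0$ (Theorem~\ref{theorem:three-pencils}), tracking that every $\rho_{5,1}$-exceptional divisor is numerically a combination of pullbacks so that no new class survives on $X_{1/4}$; alternatively one may simply cite the classical factoriality of the Igusa quartic.
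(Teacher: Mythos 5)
Your overall framework (defect of a nodal quartic equals $h^1(\P^4,\mathcal I_\Sigma(3))$, so $\rkCl(X_t)=1+\delta$) is the same one the paper uses via \cite{Cynk}, but the way you feed numbers into it contains a genuine circularity. For $t\notin\bDD\cup\{\infty\}$ you "read the rank off the geometry" by invoking the standardness of the conic bundle $p\colon\cX^{5,1}_\tau\to S$ from Theorem~\ref{theorem:conic-bundles}. In the paper, however, that standardness is \emph{deduced from} the present lemma: the proof in \S\ref{subsection:proofs-second} reduces standardness to the equality $\rkPic(\cX^{5,1}_\tau)=6$, which comes from Lemma~\ref{lemma:Cl-Z6-upstairs}, which in turn is a restatement of Lemma~\ref{lemma:cl-xt}. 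So you cannot use Theorem~\ref{theorem:conic-bundles} here without first supplying an independent proof either of the defect computation $h^1(\mathcal I_{\Sigma_{30}}(3))=5$ (this is exactly \cite[Lemma~2]{Beauville}, which the paper cites as input) or of the relative Picard rank of the conic bundle (which would require, e.g., showing the discriminant double cover is connected). Your remark that the defect formula is "fed back" from the conic bundle makes the dependence run in the wrong direction.

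The remaining cases are plans rather than proofs. For $t=\tfrac12,\tfrac16,\tfrac7{10}$ your representation-theoretic scheme for the evaluation maps $W\to\Bbbk^{O}$ is plausible and the numerics are consistent, but you explicitly defer the one claim that carries all the content (that each extra orbit imposes exactly five further conditions on $W$); the paper sidesteps this by citing \cite[Theorem~1.1(iii)]{Kaloghiros} and \cite[\S\S5--6]{CheltsovShramov}. For $t=\tfrac14$ none of your three suggestions is carried out, and the second one is problematic: at $\tau=0$ the morphism $\rho_{5,1}\colon\cX^{5,1}_0\to X_{1/4}$ is \emph{not} small (its exceptional locus is the union of the fifteen surfaces $R_L$, $R_\varphi$), so tracking divisors through it is substantially harder than you suggest. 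The paper's actual argument is short and worth knowing: $X_{1/4}$ is projectively dual to the Segre cubic $Z$, the duality map blows up the $10$ nodes of $Z$ and blows down $15$ planes, whence $\rkCl(X_{1/4})=\rkCl(Z)+10-15=6+10-15=1$.
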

\begin{proof}
First,
assume $t \not\in \bDD \cup \{\infty\}$.
Let $\widetilde{X}_t$ be the blow up of $X_t$ at its singular points.
Then $\widetilde{X}_t$ is smooth by Theorem~\ref{theorem:Geer}.
Now the assertion follows from~\cite[Theorem~2]{Cynk} and~\cite[Lemma~2]{Beauville}.

The cases $t = 1/2$, $t = 1/6$, and $t = 7/10$,  are discussed in~\cite[Theorem~1.1(iii)]{Kaloghiros} and~\cite[\S\S5--6]{CheltsovShramov}.

Finally, consider the case $t = 1/4$.
As it was already mentioned, the Igusa quartic~$X_{1/4}$ is projectively dual to the Segre cubic threefold $Z\subset\mathbb{P}^4$.
In fact, projective duality gives an $\fS_6$-equivariant birational map $Z\dasharrow X_{1/4}$ that blows up $10$ ordinary double points of $Z$
and blows down the proper transforms of $15$ planes on $Z$, see e.g.\ the proof of~\mbox{\cite[Lemma~3.10]{Prokhorov-FieldsOfInv}}.
In particular, one has
\begin{equation*}
\rkCl\left(X_{1/4}\right) = \rkCl(Z) + 10 - 15,
\end{equation*}
and since the class group of the Segre cubic $Z$ has rank $6$ (see e.g.~\cite[Theorem~7.1]{Prokhorov}),
we obtain~\mbox{$\rkCl\left(X_{1/4}\right) = 1$}.
\end{proof}

In~Theorem~\ref{theorem:Cl} we will describe the action of the group $\fS_6$ on $\Cl(X_t) \otimes \Q$.

\subsection{Wiman--Edge pencil}
\label{subsection:WE}

Consider the projective plane $\P^2$ with homogeneous coordinates $w_1$, $w_2$, and $w_3$ and the following two polynomials of degree six
\begin{equation}\label{eq:w-pi}
\begin{aligned}
\oP_0(w_1,w_2,w_3) &= (w_2^2 - w_3^2)(w_3^2 - w_1^2)(w_1^2 - w_2^2),\\
\oP_\infty(w_1,w_2,w_3)   &= w_1^6 + w_2^6 + w_3^6 + (w_1^2 + w_2^2 + w_3^2)(w_1^4 + w_2^4 + w_3^4) - 12w_1^2w_2^2w_3^2.
\end{aligned}
\end{equation}
It is easy to see that the sextic curves on $\P^2$ defined by these polynomials are singular at the following four points
\begin{equation}\label{eq:points-other}
(1:1:1),\quad
(1:-1:-1),\quad
(-1:1:-1),\quad
(-1:-1:1),
\end{equation}
hence they induce a pair of global sections
\begin{equation*}
\Phi_0, \Phi_\infty \in H^0(S,\omega_S^{-2})
\end{equation*}
of the double anticanonical line bundle on the blow up $S$ of $\P^2$ at the points~\eqref{eq:points-other}, i.e., on the quintic del Pezzo surface.
By~\cite{Edge} the section $\Phi_\infty$ is invariant with respect to the action of~\mbox{$\Aut(S)\cong\fS_5$},
while the $\Phi_0$ is acted on by $\fS_5$ via the sign character.
Therefore, there is an $\fS_5$-invariant pencil of $\fA_5$-invariant curves $\Delta_s \subset S$ given by the equation
\begin{equation}\label{eq:delta-s}
\Phi_0 + s\Phi_\infty = 0,\quad s\in\Bbbk\cup\{\infty\}.
\end{equation}
As we already mentioned, the curves $\Delta_s$ are double anticanonical divisors on $S$.
We refer to the pencil \eqref{eq:delta-s} as the {\sf Wiman--Edge pencil}.
It was studied in various contexts in
\cite{Wiman}, \cite{Edge}, \cite{InoueKato},
\cite[\S6.2]{CheltsovShramovBook}, \cite{DFL}, \cite{Zamora}, etc.

\begin{theorem}[{see \cite{Edge} or \cite[Theorem~6.2.9]{CheltsovShramovBook}}]
\label{theorem:WE-singular-curves}
The Wiman--Edge pencil contains exactly five singular curves: $\Delta_0$, $\Delta_{\pm 1/\sqrt{125}}$, and $\Delta_{\pm 1/\sqrt{-3}}$.
They can be described as follows:
\begin{itemize}
\item
$\Delta_0$ is the union of $10$\/ lines on $S$; it has $15$\/ singular points.

\item
$\Delta_{\pm 1/\sqrt{-3}}$
are unions of $5$\/ smooth conics; each of these curves has $10$\/ singular points.

\item
$\Delta_{\pm 1/\sqrt{125}}$
are irreducible rational curves; each of these curves has $6$\/ singular points.
\end{itemize}
Every singular point of any of these curves is a node.
The group $\fA_5$ acts transitively on the set of singular points and on the set of
irreducible components of each of these curves.
\end{theorem}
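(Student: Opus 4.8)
The plan is to identify by hand the members that are visibly reducible, and then to use a topological Euler-characteristic count to prove that the resulting list is complete and that every singularity occurring is a node.

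First I would fix the bookkeeping. Every member $\Delta_s$ of the pencil lies in $|-2K_S|$, and since $(-2K_S)\cdot(-K_S)=2K_S^2=10$, adjunction gives arithmetic genus $p_a(\Delta_s)=6$. I would check that $\Delta_\infty=\{\Phi_\infty=0\}$ is smooth---equivalently that the plane sextic $\oP_\infty$ has no singularities beyond the four assigned nodes \eqref{eq:points-other}---so that a general member of the pencil is a smooth curve of genus $6$; and that the base locus $\Delta_0\cap\Delta_\infty$ consists of $20$ reduced points forming a single $\fA_5$-orbit (with stabiliser $\mumu_3$). Blowing up these $20$ points yields a smooth surface $\widehat{S}$ with a morphism $\widehat{S}\to\P^1$ whose general fibre $F$ is a smooth genus-$6$ curve. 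Then
\[
e(\widehat{S})=e(S)+20=7+20=27,
\qquad
e(\widehat{S})=2\,e(F)+\sum_{t}\mu_t,
\]
with $e(F)=-10$, so the total Milnor number of the singular fibres equals $\sum_t\mu_t=47$. Since every curve singularity has Milnor number at least $1$, with equality exactly for a node, this budget of $47$ is the engine that forces completeness.

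Next I would identify $\Delta_0$ explicitly. The sextic $\oP_0$ factors into the six lines $w_i\pm w_j=0$; each of these passes through exactly two of the four points \eqref{eq:points-other}, and each of those points lies on three of the lines. Hence on $S$ the member $\Delta_0$ equals the sum of the six proper transforms (each a $(-1)$-curve) and the four exceptional divisors of $\varphi$, i.e.\ the union of all ten lines of $S$; these meet pairwise in the $15$ points of the Petersen configuration, which are $15$ nodes and contribute $15$ to the budget. The group $\fA_5\cong\mathrm{PSL}_2(\mathbf{F}_5)$ acts transitively on these $15$ points (the stabiliser of a node has order $4$). Finally I would pin down the remaining members. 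The identities $g^{*}\Phi_0=\epsilon(g)\Phi_0$ and $g^{*}\Phi_\infty=\Phi_\infty$ show that odd permutations act on the pencil parameter by $s\mapsto -s$, so the set of singular values is symmetric under $s\mapsto-s$; this explains the pairing. I would then exhibit the two members that split into five conics: for the relevant value of $s$ the pencil contains an $\fA_5$-orbit of five conics, one from each of the five conic-bundle classes $\ell-e_0,\dots,\ell-e_3$ and $2\ell-e_0-e_1-e_2-e_3$, whose classes sum to $-2K_S$ and which meet pairwise in a single point, giving $\binom{5}{2}=10$ nodes; here $\fA_5$ acts transitively on the five conics (stabiliser $\fA_4$) and on the ten nodes (stabiliser $\fS_3$). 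These two members use $20$ of the budget, leaving $47-15-20=12=6+6$, which by the $s\mapsto-s$ symmetry is accounted for by exactly two further singular members with six nodes each; being irreducible of arithmetic genus $6$ with six nodes, each is rational, and $\fA_5$ acts transitively on its six nodes through the action of $\mathrm{PSL}_2(\mathbf{F}_5)$ on $\P^1(\mathbf{F}_5)$ (stabiliser $D_5$). As the five listed members already exhaust the Milnor budget of $47$ using only nodes, no further member can be singular and none of the listed singularities can be worse than a node; this simultaneously yields the completeness of the list and the last assertion of the theorem.

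The hard part is the explicit determination of the four nontrivial values $s=\pm1/\sqrt{125}$ and $s=\pm1/\sqrt{-3}$, together with the splitting of $\Delta_{\pm1/\sqrt{-3}}$ into five conics: this is the classical computation of Edge, which one can either carry out directly in the plane model (factoring $\oP_0+s\oP_\infty$ for the relevant $s$ and computing its gradient to locate the extra nodes) or simply quote from \cite{Edge}. By contrast, the Euler-characteristic budget turns the step from ``these five members are singular'' into ``these are the only singular members and all their singularities are nodes'' almost for free, so essentially all of the difficulty is concentrated in that one plane-curve calculation.
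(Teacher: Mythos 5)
The paper does not prove this statement at all: it is imported verbatim from Edge's paper and from \cite[Theorem~6.2.9]{CheltsovShramovBook}, so there is no internal argument to compare against. Your proposal is therefore a genuinely independent route, and its core device --- the Euler-characteristic/Milnor-number budget for the pencil --- is sound and the arithmetic checks out: $e(S)=7$, twenty reduced base points, $e(\widehat S)=27$, $e(F)=-10$, hence $\sum\mu=47$, which is exactly $15+2\cdot 10+2\cdot 6$. Since for each identified fibre the local contribution satisfies $\sum_p\mu_p=2\sum_p\delta_p-\sum_p(r_p-1)\geqslant(\text{number of singular points})$ with equality only for nodes, the exact budget does simultaneously force nodality, rule out concurrences among the five conics, and exclude any further singular member. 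The decomposition of $\Delta_0$ into the ten lines, the class computation $\sum(\ell-e_i)+(2\ell-e_0-e_1-e_2-e_3)=-2K_S$ for the five-conic members, and the orbit/stabilizer counts are all correct.

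Two points need shoring up before this is a complete proof. First, the budget formula requires every member of the pencil to be reduced; this is not automatic but follows quickly from $\rkPic(S)^{\fA_5}=1$ (so the multiple part of any invariant member would be an $\fA_5$-invariant effective divisor of class $-K_S$) together with the fact that $H^0(S,\omega_S^{-1})$ restricts to $\fA_5$ as $R_3'\oplus R_3''$ and hence has no invariant section. Second, the step ``$12=6+6$, hence exactly two further members with six nodes each'' does not follow from the $s\mapsto-s$ symmetry alone: a priori the remaining $12$ could be distributed over more pairs of members with smaller singular orbits. You need either a lower bound of $6$ on the length of an $\fA_5$-orbit of singular points on $S$ (equivalently, that $\fA_5$ has no orbit on $S$ of length $<5$ and that $5+5$ with a leftover of $2$ is impossible), or --- as you in effect propose --- the explicit exhibition of the singular members at $s=\pm1/\sqrt{125}$ from Edge's computation, after which the budget closes the argument. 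With those two items supplied, the proof is complete.
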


\begin{remark}
The curves $\Delta_0$ and~$\Delta_\infty$ in the Wiman--Edge pencil are not just $\fA_5$-invariant, but also $\fS_5$-invariant.
The first of them, as we already mentioned, is the union of~$10$ lines.
The other one is a smooth curve of genus $6$ known as the Wiman's sextic curve, see~\mbox{\cite{Wiman,Edge}};
it should not be confused with a smooth plane sextic curve studied by Wiman in~\cite{Wiman-extra}.
By construction, $\Delta_\infty$ admits a faithful action of the group $\mathfrak{S}_5$, and one can show that its full automorphism group is also $\mathfrak{S}_5$.
\end{remark}

\subsection{Preimages of $\fS_6$-invariant quartics in the Coble fourfold}
\label{subsection:cxtau}

Recall that the Coble fourfold $\cY$ is defined as a
complete intersection in the weighted projective space~$\P(2,1^6)$ of the hyperplane~\eqref{eq:hyperplane} with the hypersurface~\eqref{eq:coble}.
It comes with a double covering $\pi \colon \cY \to \P^4$ over the projective space in which the pencil $\{X_t\}$ of $\fS_6$-invariant quartics sits,
and with the Galois involution $\sigma \colon \cY \to \cY$ of the double covering.

As in~\S\ref{section:intro}, we define a pencil of hypersurfaces $\cX_\tau \subset \cY$ by~\eqref{eq:cxtau}.
By definition each of the varieties~$\cX_\tau$ is $\fS_6$-invariant with respect to the natural $\fS_6$-action.
Moreover, $\cX_0$ and~$\cX_\infty$ are invariant under the whole group $\Aut(\cY) = \fS_6 \times \mumu_2$.

\begin{lemma}
\label{lemma:cxtau-xt}
For every $\tau \neq \infty$ we have
\begin{equation*}
\pi^{-1}\left(X_{\frac{\tau^2 + 1}{4}}\right) = \cX_\tau \cup \cX_{-\tau},
\end{equation*}
and the involution $\sigma$ induces an $\fS_6$-equivariant isomorphism $\sigma \colon \cX_\tau \to \cX_{-\tau}$
for the natural action of~$\fS_6$.
The map $\pi \colon \cX_\tau \to X_{(\tau^2+1)/4}$ is an  isomorphism for all $\tau \ne \infty$, and
the map $\pi \colon \cX_\infty \to X_\infty$ factors through the double covering over the quadric $Q_\infty = (X_\infty)_{\mathrm{red}}$
defined by~\eqref{eq:q-infty} that is branched over~$X_{1/4} \cap Q_\infty$.
The map $\pi$ is $\fS_6 \times \mumu_2$-equivariant for~$\tau = 0,\infty$ and $\fS_6$-equivariant otherwise.
\end{lemma}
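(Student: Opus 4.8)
The plan is to reduce everything to the single algebraic identity underlying the reparameterization \eqref{eq:t-tau}, and then read off each assertion. Throughout write $q_2 = x_1^2 + \cdots + x_6^2$ and $q_4 = x_1^4 + \cdots + x_6^4$, so that the Coble relation \eqref{eq:coble} reads $x_0^2 = q_4 - \tfrac14 q_2^2$ on $\cY$, the quartic $X_t$ is cut out in $\P^4$ by $q_4 - t q_2^2 = 0$ as in \eqref{eq:beauville}, and $\cX_\tau$ is cut out in $\cY$ by \eqref{eq:cxtau}, i.e. $x_0 + \tfrac{\tau}{2} q_2 = 0$. First I would observe that, in the coordinate ring of $\cY$, using \eqref{eq:coble} and $t = \tfrac{\tau^2+1}{4}$,
\[
\pi^*\bigl(q_4 - t q_2^2\bigr) = \Bigl(q_4 - \tfrac14 q_2^2\Bigr) - \tfrac{\tau^2}{4} q_2^2 = x_0^2 - \tfrac{\tau^2}{4} q_2^2 = \Bigl(x_0 + \tfrac{\tau}{2} q_2\Bigr)\Bigl(x_0 - \tfrac{\tau}{2} q_2\Bigr).
\]
Since the two factors are exactly the defining equations of $\cX_\tau$ and $\cX_{-\tau}$, this yields the scheme-theoretic equality $\pi^{-1}\bigl(X_{(\tau^2+1)/4}\bigr) = \cX_\tau \cup \cX_{-\tau}$ for every finite $\tau$; for $\tau \neq 0$ the two components are distinct, while for $\tau = 0$ they coincide and $\pi$ is ramified along $\cX_0 \cong X_{1/4}$, consistently with the double structure of the Igusa quartic in the pencil.

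For the involution I would apply \eqref{eq:sigma} directly to the defining equation. Since $\sigma$ fixes $x_1, \ldots, x_6$ and negates $x_0$, it carries the locus $x_0 + \tfrac{\tau}{2} q_2 = 0$ to the locus $-x_0 + \tfrac{\tau}{2} q_2 = 0$, i.e. $\sigma(\cX_\tau) = \cX_{-\tau}$. Being an involution of $\cY$ that commutes with the natural $\fS_6$-action \eqref{eq:action-standard}, $\sigma$ thus restricts to an $\fS_6$-equivariant isomorphism $\cX_\tau \to \cX_{-\tau}$ for the natural action; in particular $\cX_0$ and $\cX_\infty$ are $\sigma$-invariant, hence preserved by all of $\Aut(\cY) \cong \fS_6 \times \mumu_2$.

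To identify $\pi$ on each component for finite $\tau$, I would exhibit the inverse morphism. On $\cX_\tau$ the coordinate $x_0 = -\tfrac{\tau}{2} q_2$ is a polynomial of weighted degree $2$ in $x_1, \ldots, x_6$, so the assignment
\[
(x_1 : \cdots : x_6) \longmapsto \bigl(-\tfrac{\tau}{2} q_2 : x_1 : \cdots : x_6\bigr)
\]
is a well-defined morphism into $\P(2,1^6)$; a point of $X_{(\tau^2+1)/4}$ is sent into $\cY$ because there $x_0^2 = \tfrac{\tau^2}{4} q_2^2 = q_4 - \tfrac14 q_2^2$, it manifestly satisfies \eqref{eq:cxtau}, and it is a two-sided inverse of $\pi|_{\cX_\tau}$. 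Hence $\pi \colon \cX_\tau \to X_{(\tau^2+1)/4}$ is an isomorphism for all $\tau \neq \infty$ (including $\tau = 0$, where the inverse is $(x_1:\cdots:x_6) \mapsto (0 : x_1 : \cdots : x_6)$).

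Finally, the case $\tau = \infty$ requires homogenizing \eqref{eq:cxtau} in $\tau$: dividing by $\tau$ and letting $\tau \to \infty$ replaces the equation by $q_2 = 0$, so $\cX_\infty = \{q_2 = 0\} \cap \cY$ and $\pi(\cX_\infty) = \{q_2 = 0\} = Q_\infty = (X_\infty)_{\mathrm{red}}$ by \eqref{eq:q-infty}. On $\cX_\infty$ the Coble relation degenerates to $x_0^2 = q_4$, exhibiting $\pi \colon \cX_\infty \to Q_\infty$ as a double cover whose ramification $\{x_0 = 0\}$ has image $\{q_4 = 0\}$ in $Q_\infty$; since $q_2 = 0$ there gives $q_4 = q_4 - \tfrac14 q_2^2$, this branch locus is exactly $X_{1/4} \cap Q_\infty$, as claimed. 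The equivariance assertions then follow: $\pi$ is $\fS_6$-equivariant for all $\tau$ by construction, and for $\tau = 0, \infty$ it is moreover $\fS_6 \times \mumu_2$-equivariant because $\cX_0$ and $\cX_\infty$ are $\sigma$-invariant (with $\sigma$ acting trivially on the target through $\pi \circ \sigma = \pi$), whereas for other $\tau$ the involution merely swaps $\cX_\tau$ with $\cX_{-\tau}$. The only genuinely delicate point is this bookkeeping of the two degenerate fibers $\tau = 0, \infty$ inside the weighted projective space; everything else is the single factorization above.
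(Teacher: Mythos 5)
Your proposal is correct and follows essentially the same route as the paper: the same factorization of $\pi^*(q_4 - t q_2^2)$ into the two linear equations of $\cX_{\pm\tau}$, the same reading of $\sigma$ on $x_0$, the same elimination of $x_0$ via \eqref{eq:cxtau} to get the inverse of $\pi|_{\cX_\tau}$ (you just write the inverse morphism explicitly), and the same degeneration $q_2 = 0$, $x_0^2 = q_4$ for $\tau = \infty$. No gaps.
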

\begin{proof}
The hypersurface $\pi^{-1}(X_{(\tau^2 + 1)/4}) \subset \cY$ is defined by the equation
\begin{equation*}
\Big(x_1^4 + x_2^4 + x_3^4 + x_4^4 + x_5^4 + x_6^4\Big) -
\frac{\tau^2 + 1}4\Big(x_1^2 + x_2^2 + x_3^2 + x_4^2 + x_5^2 + x_6^2\Big)^2
= 0,
\end{equation*}
which in view of equation~\eqref{eq:coble} of $\cY$ can be rewritten as
\begin{align*}
0 & =
x_0^2 - \frac{\tau^2}4\Big(x_1^2 + x_2^2 + x_3^2 + x_4^2 + x_5^2 + x_6^2\Big)^2 = \\
&=
\left(x_0 + \frac\tau2\big(x_1^2 + x_2^2 + x_3^2 + x_4^2 + x_5^2 + x_6^2\big)^2\right)
\left(x_0 - \frac\tau2\big(x_1^2 + x_2^2 + x_3^2 + x_4^2 + x_5^2 + x_6^2\big)^2\right).
\end{align*}
Hence $\pi^{-1}(X_{(\tau^2 + 1)/4})$ is the union of $\cX_\tau$ and $\cX_{-\tau}$.
The Galois involution $\sigma$ acts by~\mbox{$x_0 \mapsto -x_0$}, hence defines an isomorphism between $\cX_\tau$ and $\cX_{-\tau}$.
To check that the map~\mbox{$\pi \colon \cX_\tau \to X_{(\tau^2+1)/4}$} is an isomorphism, just use~\eqref{eq:cxtau} to express $x_0$ in terms of other~$x_i$;
plugging it into the equation of the Coble fourfold~$\cY$, we deduce the equation of the quartic~$X_t$.
For~\mbox{$\tau = \infty$} this of course does not work, but the equations of $\cX_\infty$ just give
\begin{equation*}
x_1^2 + x_2^2 + x_3^2 + x_4^2 + x_5^2 + x_6^2 = x_0^2 - (x_1^4 + x_2^4 + x_3^4 + x_4^4 + x_5^4 + x_6^4) = 0
\end{equation*}
which defines a double covering of $Q_\infty$ whose branch locus is~\mbox{$X_0 \cap Q_\infty = X_{1/4} \cap Q_\infty$}.

The equivariance of the maps $\sigma$ and $\pi$ is obvious.
\end{proof}

\begin{remark}
\label{remark:double-quadric-Sing}
The singular locus of $\cX_{\infty}$ consists of the unique $\fS_6$-orbit of length $30$ that is projected by $\pi$
to the $\fS_6$-orbit $\Sigma_{30}$, see e.g.~\cite[\S6]{PrzyjalkowskiShramov}.
\end{remark}

Now we say a couple of words about the Weil divisor class groups of the threefolds~$\cX_\tau$.
Consider the set
\begin{equation}
\label{def:dd-tau}
\bDDtau := \left\{ 0, \pm1, \pm\frac{1}{\sqrt{-3}}, \pm\frac3{\sqrt{5}} \right\},
\end{equation}
that is, the preimage of the discriminant set $\bDD$ defined in~\eqref{def:dd} under the map~\eqref{eq:t-tau}.

\begin{lemma}
\label{lemma:Cl-Z6-upstairs}
The following table lists the ranks of the class groups of the threefolds $\cX_\tau$:
\begin{equation*}\renewcommand{\arraystretch}{1.5}
\begin{array}{|c||c|c|c|c|c|}
\hline
\tau & \tau \not\in \bDDtau & \tau = 0 & \tau = \pm1 & \tau = \pm\frac1{\sqrt{-3}} & \tau = \pm\frac3{\sqrt{5}}
\\
\hline
\rkCl(\cX_\tau) & 6 & 1 & 16 & 11 & 7
\\
\hline
\end{array}
\end{equation*}
\end{lemma}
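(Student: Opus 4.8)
The plan is to reduce everything to the already established computation of the class groups of the quartics $X_t$. The crucial input is Lemma~\ref{lemma:cxtau-xt}, which asserts that for every finite $\tau$ the projection
\[
\pi \colon \cX_\tau \xrightarrow{\ \sim\ } X_{\frac{\tau^2+1}{4}}
\]
is an isomorphism. Since the Weil divisor class group, and hence its rank, is preserved by any isomorphism, we immediately obtain
\[
\rkCl(\cX_\tau) = \rkCl\!\left(X_{\frac{\tau^2+1}{4}}\right)
\qquad\text{for all } \tau \ne \infty.
\]
Thus the entire statement follows from Lemma~\ref{lemma:cl-xt} together with the elementary bookkeeping of the change of parameter~\eqref{eq:t-tau}.

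Next I would simply match the columns of the two tables. By definition~\eqref{def:dd-tau} the set $\bDDtau$ is the preimage of the discriminant set $\bDD$ under $t = (\tau^2+1)/4$, so the condition $\tau \notin \bDDtau$ is equivalent to $t \notin \bDD$ (and $t \neq \infty$ is automatic for finite $\tau$), which gives $\rkCl(\cX_\tau) = \rkCl(X_t) = 6$ by the first column of Lemma~\ref{lemma:cl-xt}. For the special values one computes directly
\[
\tau = 0 \mapsto t = \tfrac14, \quad \tau = \pm 1 \mapsto t = \tfrac12, \quad \tau = \pm\tfrac{1}{\sqrt{-3}} \mapsto t = \tfrac16, \quad \tau = \pm\tfrac{3}{\sqrt{5}} \mapsto t = \tfrac{7}{10},
\]
and reading off the corresponding entries $1$, $16$, $11$, $7$ from Lemma~\ref{lemma:cl-xt} yields precisely the five columns of the asserted table.

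There is essentially no obstacle here: all the genuine content — the nodality statement of Theorem~\ref{theorem:Geer}, the count for the generic nodal quartic, the special computations for the Burkhardt quartic $X_{1/2}$ and for $X_{1/6}$, $X_{7/10}$, and the Segre-cubic duality used for the Igusa quartic $X_{1/4}$ — is already packaged into Lemma~\ref{lemma:cl-xt}. The only point requiring care is that Lemma~\ref{lemma:cxtau-xt} fails for $\tau = \infty$, where $\pi$ is a genuine double cover and $\cX_\infty \not\cong X_\infty$; this is exactly why the value $\tau = \infty$ does not appear in the table and is treated separately elsewhere.
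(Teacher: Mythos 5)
Your reduction to Lemma~\ref{lemma:cl-xt} via the isomorphism $\pi\colon\cX_\tau\xrightarrow{\sim}X_{(\tau^2+1)/4}$ of Lemma~\ref{lemma:cxtau-xt} is exactly what the paper does for all finite $\tau$, and the bookkeeping matching the columns of the two tables is correct. However, there is a genuine gap at the very end: you assert that ``the value $\tau=\infty$ does not appear in the table,'' and this is false. Since $t=(\tau^2+1)/4$ sends $\tau=\infty$ to $t=\infty\notin\bDD$, the point $\tau=\infty$ does \emph{not} belong to $\bDDtau$ (see the definition~\eqref{def:dd-tau}), so it falls squarely into the first column of the table, and the lemma is claiming $\rkCl(\cX_\infty)=6$. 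This is consistent with the rest of the paper: Proposition~\ref{proposition:x51-smooth} explicitly says ``for all $\tau\not\in\bDDtau$ (including $\tau=\infty$),'' and Theorem~\ref{theorem:Cl} gives $\Cl(\cX_\infty)\otimes\Q\cong\rR(6)\oplus\rR(3,3)$, of rank $1+5=6$.

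For $\tau=\infty$ your reduction genuinely fails, as you yourself note: $\cX_\infty$ is a double cover of the quadric $Q_\infty=(X_\infty)_{\mathrm{red}}$ branched over $X_{1/4}\cap Q_\infty$, not a quartic threefold, so Lemma~\ref{lemma:cl-xt} says nothing about it. The paper supplies a separate argument here: by Remark~\ref{remark:double-quadric-Sing} the singular locus of $\cX_\infty$ is the $30$-point orbit lying over $\Sigma_{30}$, consisting of nodes; one blows up these points to get a smooth threefold $\widetilde{\cX}_\infty$ and then runs the defect computation of \cite[Theorem~2]{Cynk} using \cite[Lemma~2]{Beauville}, exactly as in the generic-$t$ case of Lemma~\ref{lemma:cl-xt} (cf.\ \cite[Proposition~6.3]{PrzyjalkowskiShramov}). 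You need to add this case to complete the proof.
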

\begin{proof}
If we assume that $\tau\neq\infty$, then the assertion follows from Lemma~\ref{lemma:cl-xt}
in view of Lemma~\ref{lemma:cxtau-xt}.
For $\tau=\infty$ we argue similarly to the proof of Lemma~\ref{lemma:cl-xt} (cf. the proof of~\mbox{\cite[Proposition~6.3]{PrzyjalkowskiShramov}}).
Let~$\widetilde{\cX}_\infty$ be the blow up of $\cX_\infty$ along its singular locus, i.e.,
the preimage of the $\fS_6$-orbit $\Sigma_{30}$, see Remark~\ref{remark:double-quadric-Sing}.
Then $\widetilde{\cX}_\infty$ is smooth, and one proceeds as in~\cite[Theorem~2]{Cynk}, using the computation of~\cite[Lemma~2]{Beauville}.
\end{proof}

\subsection{Pencil of Verra threefolds}
\label{subsection:cx42-tau}

We consider the pullbacks $\cX_\tau^{5,1}$ and $\cX_\tau^{4,2}$ of the threefolds~$\cX_\tau$ to the resolutions
$\cY_{5,1}$ and $\cY_{4,2}$ of singularities of the Coble fourfold,
so that~\mbox{$\cX^{5,1}_\tau \subset \cY_{5,1}$} and~\mbox{$\cX^{4,2}_\tau \subset \cY_{4,2}$} are defined by~\eqref{eq:beauville-pullback-51-42}.
In the next section we will study the first of them,
but now let us consider the second one.
We assume that the map $\rho_{4,2}$ is defined by~\eqref{eq:choice-rho-pp}.

To simplify the situation, we consider the images of the threefolds $\cX_\tau^{4,2}$ with respect
to the contraction~\mbox{$\beta \colon \cY_{4,2} \to \P^2 \times \P^2 = \P(\WW_3) \times \P(\WW_3)$},    see~\S\ref{subsection:y42}.
Define
\begin{equation*}
\bar\cX_\tau^{4,2} = \beta(\cX_\tau^{4,2}) \subset \P^2 \times \P^2.
\end{equation*}

As in~\S\ref{subsection:y42} we use $(u_1:u_2:u_3)$ and $(v_1:v_2:v_3)$ for coordinates on the factors of~$\P^2 \times \P^2$,
and let $\bP_i = (P_i,P_i)$ with $P_i$ defined by~\eqref{eq:points}.

Below we consider divisors of bidegree~$(2,2)$ in $\P^2 \times \P^2$ (and call them {\sf Verra threefolds}) as conic bundles over the first factor.
We write their equations as symmetric $3\times 3$-matrices with coefficients being quadratic polynomials in $u_1$, $u_2$, $u_3$.
So, if $q(u)=(q_{ij}(u))$ is such a matrix, the corresponding equation is $q(u)(v) := \sum q_{ij}(u)v_iv_j = 0$.

\begin{proposition}\label{proposition:verra}
The subvariety $\bar\cX_\tau^{4,2} \subset \P^2 \times \P^2$ is a Verra threefold given by the equation
\begin{equation}\label{eq:pencil-q}
q_0(u)(v) + \tau q_\infty(u)(v) = 0,
\end{equation}
where
\begin{align}
q_0(u) &= \frac12 \left(
\begin{xsmallmatrix}
0 & u_3(u_2-u_1) & u_2(u_1-u_3) \\
u_3(u_2-u_1) & 0 & u_1(u_3-u_2) \\
u_2(u_1-u_3) & u_1(u_3-u_2) & 0
\end{xsmallmatrix} \right),
\qquad\text{and}
\label{eq:q0}
\\[1ex]
q_\infty(u) &= \frac16 \left(
\begin{xsmallmatrix}
4(u_2^2-u_2u_3+u_3^2) &  u_3(u_1+u_2)-2u_1u_2-2u_3^2 &  u_2(u_1+u_3)-2u_1u_3-2u_2^2 \\
u_3(u_1+u_2)-2u_1u_2-2u_3^2 &  4(u_1^2-u_1u_3+u_3^2) &  u_1(u_2+u_3)-2u_2u_3-2u_1^2 \\
u_2(u_1+u_3)-2u_1u_3-2u_2^2 &  u_1(u_2+u_3)-2u_2u_3-2u_1^2 &  4(u_1^2-u_1u_2+u_2^2)
\end{xsmallmatrix} \right).
\label{eq:q8}
\end{align}
\end{proposition}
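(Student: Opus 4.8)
The plan is to pull back the defining equation of $\cX_\tau$ along the explicit birational map $\rho_{4,2}\circ\beta^{-1}\colon\P(\WW_3)\times\P(\WW_3)\dashrightarrow\cY$ recorded in~\eqref{eq:choice-rho-pp}, and to read off the resulting bidegree $(2,2)$ equation. Since $\beta$ is a blow up and $\rho_{4,2}$ is a small birational morphism, the composition is birational, and $\bar\cX_\tau^{4,2}=\beta(\rho_{4,2}^{-1}(\cX_\tau))$ is the proper transform of $\cX_\tau$. As $\cX_\tau$ is cut out inside $\cY$ by the single equation~\eqref{eq:cxtau} (the relations~\eqref{eq:hyperplane} and~\eqref{eq:coble} holding identically on the image), its proper transform is the zero locus of the polynomial obtained by substituting~\eqref{eq:choice-rho-pp} into the left-hand side of~\eqref{eq:cxtau}. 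Each of $x_1,\dots,x_6$ has bidegree $(1,1)$ in $(u,v)$ and $x_0$ has bidegree $(2,2)$, so the substitution produces a form of bidegree $(2,2)$; since this form is (for generic $\tau$) irreducible, its vanishing is exactly $\bar\cX_\tau^{4,2}$, which is therefore a Verra threefold. It remains to match the form with~\eqref{eq:pencil-q}.

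First I would handle the $\tau$-independent term. Expanding $x_0$ from~\eqref{eq:choice-rho-pp}, equivalently the determinant~\eqref{eq:igusa-uv}, and regrouping by the monomials $v_1v_2,v_1v_3,v_2v_3$ in $v$ gives directly the quadratic form $q_0(u)(v)$ of~\eqref{eq:q0}; this is a short bookkeeping check, since $q_0$ has vanishing diagonal and its three off-diagonal entries reproduce the six cubic monomials of $x_0$.

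For the $\tau$-term I would compute $\sum_{i=1}^6 x_i^2$ symbolically before substituting the monomial values. Writing the $x_i$ via~\eqref{eq:xyz} in terms of the $y_j,z_j$ and using the identity $\sum x_i=0$ (valid because the image lies in the hyperplane~\eqref{eq:hyperplane}), one reduces $\tfrac12\sum x_i^2$ to the symmetric expression $\tfrac23\big(\bs_1(y)^2-\bs_1(y)\bs_1(z)+\bs_1(z)^2\big)-\big(\bs_2(y)+\bs_2(z)\big)$. Substituting $y_1=u_2v_3$, $y_2=u_3v_1$, $y_3=u_1v_2$, $z_1=u_3v_2$, $z_2=u_1v_3$, $z_3=u_2v_1$ from~\eqref{eq:uv-yz}, the linear forms $\bs_1(y),\bs_1(z)$ in $v$ acquire coefficients linear in $u$, while $\bs_2(y),\bs_2(z)$ become quadratic forms in $v$ with coefficients quadratic in $u$. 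Collecting the coefficients of $v_1^2,v_2^2,v_3^2$ and of $v_1v_2,v_1v_3,v_2v_3$ then yields exactly the entries of the matrix $q_\infty(u)$ of~\eqref{eq:q8}, so that $\tfrac12\sum x_i^2=q_\infty(u)(v)$ and~\eqref{eq:cxtau} becomes $q_0(u)(v)+\tau q_\infty(u)(v)=0$.

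The only genuine obstacle is the bookkeeping of the last step: one must track the factor $2$ coming from symmetry (the coefficient of $v_iv_j$ for $i\neq j$ equals $2(q_\infty)_{ij}$) and carry out the reduction of $\sum x_i^2$ to elementary symmetric functions without error. I expect no conceptual difficulty, as everything is a finite explicit computation that becomes transparent once $\sum x_i=0$ is invoked. As a sanity check one may verify, for instance, that the coefficient of $v_1v_2$ equals $\tfrac13\big(u_3(u_1+u_2)-2u_1u_2-2u_3^2\big)$, which is twice the $(1,2)$-entry of~\eqref{eq:q8}, and that the coefficient of $v_1^2$ equals $\tfrac23(u_2^2-u_2u_3+u_3^2)$, agreeing with the $(1,1)$-entry.
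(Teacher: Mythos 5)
Your proposal is correct and takes essentially the same route as the paper: the paper's proof likewise substitutes the explicit formulas~\eqref{eq:choice-rho-pp} into equation~\eqref{eq:cxtau} of $\cX_\tau$ and reads off the matrices $q_0$ and $q_\infty$ from the resulting bidegree $(2,2)$ form. Your intermediate reduction of $\tfrac12\sum x_i^2$ to $\tfrac23\bigl(\bs_1(y)^2-\bs_1(y)\bs_1(z)+\bs_1(z)^2\bigr)-\bigl(\bs_2(y)+\bs_2(z)\bigr)$ is a correct and convenient way to organize the same computation, and your sample coefficient checks are accurate.
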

\begin{proof}
By~\eqref{eq:cxtau}, the variety $\bar\cX_0^{4,2}$ is given by the equation $x_0 = 0$.
Writing the formula for~$x_0$ from~\eqref{eq:choice-rho-pp} in the matrix form, we get~\eqref{eq:q0}.
Similarly, $\bar\cX_{\infty}^{4,2}$ is given by the equation
\begin{equation*}
\frac12(x_1^2 + x_2^2 + x_3^2 + x_4^2 + x_5^2 + x_6^2) = 0.
\end{equation*}
Substituting expressions for $x_i$ from~\eqref{eq:choice-rho-pp} and rewriting everything in the matrix form, we get~\eqref{eq:q8}.
Therefore, equation~\eqref{eq:pencil-q} is the same as~\eqref{eq:cxtau}.
\end{proof}

\begin{remark}
Of course, one can cancel the common factor $1/2$ in~\eqref{eq:q0} and~\eqref{eq:q8}.
However, we prefer to keep it so that~\mbox{$q_0(u)(v)$} and~\mbox{$q_\infty(u)(v)$}
are the same as the two summands in~\eqref{eq:cxtau}.
\end{remark}

Since the maps $\beta \colon \cX_\tau^{4,2} \to \bar\cX_\tau^{4,2}$ and
$\pi_{4,2} = \pi \circ \rho_{4,2} \colon \cX_\tau^{4,2} \to X_{(\tau^2+1)/4}$
are birational for all~$\tau \ne \infty$,
the projection $p_1 \colon \bar\cX_\tau^{4,2} \to \P^2$ provides every (reduced) $\fS_6$-invariant quartic with a birational structure of a conic bundle.
Similarly, the map $p_1 \colon \bar\cX_\infty^{4,2} \to \P^2$ provides a
birational structure of a conic bundle on the threefold $\cX_{\infty}$.
The explicit formulas of Proposition~\ref{proposition:verra} allow to compute their discriminant loci.

\begin{lemma}\label{lemma:WE-discriminant}
The discriminant curve of the conic bundle $p_1 \colon \bar\cX^{4,2}_\tau \to \P^2$ is the curve~\mbox{$\overline\Delta_\tau\subset \P^2$} defined by the equation
\begin{equation}\label{eq:delta-tau}
(5\tau^2 + 3)\oP_0 + (\tau^3 - \tau)\oP_\infty = 0,
\end{equation}
where $\oP_0$ and $\oP_\infty$ are the sextic polynomials~\eqref{eq:w-pi}, and the coordinates
$(w_1 : w_2 : w_3)$ are related to $(u_1 : u_2 : u_3)$ by the formula
\begin{equation*}
u_1 = w_2 + w_3,
\qquad
u_2 = w_1 + w_3,
\qquad
u_3 = w_1 + w_2.
\end{equation*}
\end{lemma}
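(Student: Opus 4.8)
The plan is to identify the discriminant of the conic bundle $p_1\colon\bar\cX^{4,2}_\tau\to\P^2$ with the vanishing locus of the determinant of the symmetric matrix defining it, and then to expand that determinant as a polynomial in $\tau$. By Proposition~\ref{proposition:verra} the fibre of $p_1$ over a point $(u_1:u_2:u_3)$ is the conic in the $v$-plane cut out by the symmetric $3\times3$ matrix $M_\tau(u)=q_0(u)+\tau q_\infty(u)$, so the discriminant curve $\overline\Delta_\tau$ is exactly $\{u\in\P^2:\det M_\tau(u)=0\}$. First I would expand this determinant in $\tau$, using that for symmetric matrices $\det(A+\tau B)=\det A+\tau\,\mathrm{tr}(\mathrm{adj}(A)B)+\tau^2\,\mathrm{tr}(A\,\mathrm{adj}(B))+\tau^3\det B$; thus
\begin{equation*}
\det M_\tau=\det q_0+\tau\,\mathrm{tr}\bigl(\mathrm{adj}(q_0)\,q_\infty\bigr)+\tau^2\,\mathrm{tr}\bigl(q_0\,\mathrm{adj}(q_\infty)\bigr)+\tau^3\det q_\infty,
\end{equation*}
each coefficient being a homogeneous sextic in $u_1,u_2,u_3$.

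The $\tau^0$ and $\tau^3$ coefficients are the easiest. Since $q_0$ in~\eqref{eq:q0} has zero diagonal, its determinant is twice the product of its off-diagonal entries, so $\det q_0=\tfrac14\,u_1u_2u_3(u_1-u_2)(u_2-u_3)(u_3-u_1)$; substituting $u_1=w_2+w_3$, $u_2=w_1+w_3$, $u_3=w_1+w_2$ and regrouping the six linear factors into $(w_2^2-w_3^2)(w_3^2-w_1^2)(w_1^2-w_2^2)$ shows $\det q_0=\tfrac14\oP_0$. A direct evaluation of the $3\times3$ determinant of $q_\infty$ in~\eqref{eq:q8}, again after the substitution, gives $\det q_\infty=\tfrac1{12}\oP_\infty$ (one can confirm the scalar at a single point, e.g.\ $w=(1:0:0)$, where both sides equal $\tfrac16$).

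The organizing principle behind the two middle terms is a parity: the pencil parameter $s(\tau)=(\tau^3-\tau)/(5\tau^2+3)$ is an odd function of $\tau$, matching the fact that the distinguished members $\oP_0$ and $\oP_\infty$ of the Wiman--Edge pencil transform under the sign character and trivially, respectively. Accordingly, the even powers of $\tau$ should produce multiples of $\oP_0$ and the odd powers multiples of $\oP_\infty$. I would therefore compute $\mathrm{tr}(\mathrm{adj}(q_0)\,q_\infty)$ and $\mathrm{tr}(q_0\,\mathrm{adj}(q_\infty))$ explicitly and recognize them, after the substitution, as $-\tfrac1{12}\oP_\infty$ and $\tfrac5{12}\oP_0$; once the proportionality type is known, the scalars are pinned down by evaluating at one generic point. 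Collecting the four coefficients then yields
\begin{equation*}
\det M_\tau=\tfrac1{12}\Bigl((5\tau^2+3)\oP_0+(\tau^3-\tau)\oP_\infty\Bigr),
\end{equation*}
and since the overall factor $\tfrac1{12}$ does not affect the zero locus, this is precisely equation~\eqref{eq:delta-tau}.

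The main obstacle is the polynomial bookkeeping for the two mixed-discriminant coefficients: unlike $\det q_0$ and $\det q_\infty$, the traces $\mathrm{tr}(\mathrm{adj}(q_0)\,q_\infty)$ and $\mathrm{tr}(q_0\,\mathrm{adj}(q_\infty))$ are sums of many degree-six monomials, and the essential content is verifying that they collapse onto the single sextics $\oP_\infty$ and $\oP_0$ with the stated constants rather than onto some larger-dimensional space of $\fS_4$-invariant sextics. This is where I would either carry out the full symmetric-function expansion, or—more economically—first establish the even/odd dichotomy from the sign-twist symmetry and then fix each constant by a single substitution, exactly as the $\det q_0$ and $\det q_\infty$ checks above illustrate.
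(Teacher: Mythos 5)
Your proposal is correct and follows essentially the same route as the paper, whose entire proof is the one-line identity $12\det\big(q_0(u)+\tau q_\infty(u)\big)=(5\tau^2+3)\oP_0+(\tau^3-\tau)\oP_\infty$; your expansion of the determinant in powers of $\tau$ is just an organized way of carrying out that same straightforward computation, and all four coefficients you state are consistent with it. (One cosmetic remark: in your intermediate formula for $\det q_0$ the product of differences should be $(u_2-u_1)(u_1-u_3)(u_3-u_2)$ --- the cyclic order you wrote differs from this by a sign --- but your final value $\tfrac14\oP_0$ is the correct one.)
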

\begin{proof}
A straightforward computation shows that
\begin{equation*}
12\det\big(q_0(u) + \tau q_\infty(u)\big)=(5\tau^2 + 3)\oP_0 + (\tau^3 - \tau)\oP_\infty.\qedhere
\end{equation*}
\end{proof}

The drawback of this conic bundle model is the lack of flatness.
Indeed, it is easy to see that over each of the points $P_i$ (see~\eqref{eq:points}) the matrix $q_0(u)$ is identically zero,
so the fiber of $\bar\cX^{4,2}_0$ over $P_i$ is the whole $\P^2$.
In the next subsection we check that using the resolution~$\cY_{5,1}$ of the Coble fourfold, we obtain flat conic bundles.

\subsection{Pencil of conic bundles over the quintic del Pezzo surface}
\label{subsection:cx51-tau}

Recall that~$\cX^{5,1}_\tau \subset \cY_{5,1}$ is defined in~\eqref{eq:beauville-pullback-51-42} as the preimage
of the threefold $\cX_\tau \subset \cY$ under the resolution $\rho_{5,1} \colon \cY_{5,1} \to \cY$.
For its investigation it will be very convenient to use explicit formulas of~\S\ref{subsection:cx42-tau}.
So, to benefit from those we assume that we are in the situation of Proposition~\ref{proposition:compatibility},
i.e., a subgroup $\fS_4 \subset \fS_5$ and a non-standard embedding $\fS_5 \hookrightarrow \fS_6$
are chosen, the choice of $\rho_{4,2}$ is fixed as in~\eqref{eq:choice-rho-pp},
the map $\theta_1 \colon \cY_{5,1} \dashrightarrow \cY_{4,2}$ is a birational isomorphism for which the outer square
of diagram~\eqref{diagram:varphi} commutes, and $\rho_{5,1} = \rho_{4,2} \circ \theta_1$.

\begin{remark}
\label{remark:rho-equivariance}
As we already discussed, for $\tau \ne 0, \infty$ the subvariety $\cX_\tau$ is invariant with respect to the \emph{natural} action of $\fS_6$,
while the map $\rho_{5,1} \colon \cY_{5,1} \to \cY$ is equivariant with respect to the \emph{twisted} action of~$\fS_5 \subset \fS_6$.
As a result, the subvariety $\cX^{5,1}_\tau \subset \cY^{5,1}$ is only invariant under the action of the subgroup $\fA_6 \cap \fS_5 = \fA_5$, on which the two actions agree.
Similarly, the projection $\rho_{5,1} \colon \cX^{5,1}_\tau \to \cX_\tau$ is only $\fA_5$-equivariant.
On the other hand, for $\tau = 0$ or $\tau = \infty$, the subvariety $\cX_\tau^{5,1} \subset \cY$ is $\fS_5$-invariant and
the map $\rho_{5,1}$ is~$\fS_5$-equivariant.
\end{remark}

\begin{lemma}\label{lemma:x51}
The map $p \colon \cX^{5,1}_\tau \to S$ is a flat conic bundle with the discriminant curve~$\Delta_{s(\tau)} \subset S$ defined by~\eqref{eq:delta-s}, where
\begin{equation}\label{eq:s-tau}
s(\tau) = \frac{\tau^3 - \tau}{5\tau^2 + 3}.
\end{equation}
The map~$p$ is $\fA_5$-equivariant for $\tau \ne 0,\infty$ and $\fS_5$-equivariant for $\tau = 0,\infty$.
\end{lemma}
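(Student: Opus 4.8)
The plan is to realize $\cX^{5,1}_\tau$ as a relative quadric for the $\P^2$-bundle $p\colon\cY_{5,1}=\P_S(\cU_3)\to S$ and to read off both the conic bundle structure and its discriminant from the bundle $\cU_3$. First I would record the divisor class of $\cX^{5,1}_\tau$. By~\eqref{eq:cxtau} the threefold $\cX_\tau\subset\cY$ is cut out by the section $x_0+\tfrac{\tau}{2}(x_1^2+\dots+x_6^2)$ of the line bundle $\cO_\cY(2)=\pi^*\cO_{\P(\WW_5)}(2)$ (note that $\cY$ misses the singular point of $\P(2,1^6)$, so $\cO_\cY(2)$ is invertible), hence $\cX_\tau$ is Cartier. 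Since $\rho_{5,1}$ is small and $\pi_{5,1}^*\cO_{\P(\WW_5)}(1)=\cO_{\cY_{5,1}}(\xi)$ for the relative hyperplane class $\xi$, pulling back gives $\cX^{5,1}_\tau\in|2\xi|$. Therefore $\cX^{5,1}_\tau$ corresponds to a global section
\[
q_\tau = q_0 + \tau q_\infty \in H^0\!\big(S,\Sym^2\cU_3^\vee\big),\qquad q_0=\rho_{5,1}^*x_0,
\]
with $q_\infty$ coming from $\tfrac12(x_1^2+\dots+x_6^2)$, whose fibrewise vanishing in $\P(\cU_{3,s})\cong\P^2$ is the fibre of $p$ over $s$. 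As $\cX^{5,1}_\tau$ is a Cartier divisor on the smooth fourfold $\cY_{5,1}$, it is Cohen--Macaulay, so by the miracle flatness criterion $p$ is flat as soon as every fibre is one-dimensional, i.e. as soon as $q_\tau(s)\neq0$ for all $s\in S$; flatness then exhibits $p$ as a conic bundle.

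For the discriminant I would first dispose of equivariance: $\cX_\tau$ is invariant under the natural action of $\fS_6$, while $\rho_{5,1}$ is equivariant for the twisted action of the non-standard $\fS_5$, and the two actions coincide on $\fA_5$; this is exactly Remark~\ref{remark:rho-equivariance} and gives $\fA_5$-equivariance of $p$ for all $\tau$, upgraded to $\fS_5$ for $\tau=0,\infty$ because there $\cX_\tau$ is moreover $\sigma$-invariant. Consequently the locus $\{\det q_\tau=0\}$ is $\fA_5$-invariant. Since $\det\cU_3\cong\omega_S$ (proof of Proposition~\ref{proposition:Stein-factorization}), the discriminant is a section of $(\det\cU_3^\vee)^{\otimes2}\cong\omega_S^{-2}$, so it is an $\fA_5$-invariant member of $|-2K_S|$ and therefore lies in the Wiman--Edge pencil~\eqref{eq:delta-s}. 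To pin down the parameter I would transport the computation already carried out downstairs: by Proposition~\ref{proposition:compatibility} the map $\theta_1$ is small birational and fits into~\eqref{diagram:varphi}, so over $S\setminus\bigcup_i E_i$ it identifies $p$ with the conic bundle $p_1\colon\bar\cX_\tau^{4,2}\to\P^2$ pulled back along the isomorphism $\varphi\colon S\setminus\bigcup_i E_i\xrightarrow{\sim}\P^2\setminus\{P_i\}$. Hence on this open set the discriminant of $p$ is the proper transform of the discriminant of $p_1$, which by Lemma~\ref{lemma:WE-discriminant} is the sextic $(5\tau^2+3)\oP_0+(\tau^3-\tau)\oP_\infty=0$. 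As $\oP_0,\oP_\infty$ are exactly the sextics defining $\Phi_0,\Phi_\infty$, this proper transform is the member $\Delta_{s(\tau)}$ with $s(\tau)=(\tau^3-\tau)/(5\tau^2+3)$; being a member of the pencil that agrees with $\Delta_{s(\tau)}$ on a dense open set, the full discriminant of $p$ equals $\Delta_{s(\tau)}$.

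The main obstacle is flatness, i.e. the non-vanishing of $q_\tau$ everywhere on $S$. Unwinding the section $x_0$ along the fibres, I note that on the fibre $\P(\cU_{3,s})$, which $\pi_{5,1}$ embeds as the plane $\Pi_s=\P(\cU_{3,s})\subset\P(\WW_5)$, the double cover $\cY$ splits, so that $q_0(s)$ is a square root of the restriction $X_{1/4}|_{\Pi_s}$; a short manipulation of the equations then shows $q_\tau(s)=0$ if and only if $\Pi_s\subset X_{(\tau^2+1)/4}$. Thus flatness amounts to the statement that no plane $\Pi_s$ is contained in the quartic. I would check this in two regimes. Over the open locus $S\setminus\bigcup_i E_i$ the identification with $p_1$ reduces the claim to showing, from the explicit matrices~\eqref{eq:q0} and~\eqref{eq:q8}, that $q_0(u)$ and $q_\infty(u)$ are proportional only at $u\in\{P_0,\dots,P_3\}$ (where $q_0(P_i)=0$ but $q_\infty(P_i)\neq0$), so that $q_\tau(u)\neq0$ for every $u\neq P_i$ and every $\tau$. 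The remaining and genuinely new case is over the exceptional curves $E_i$, where the $\cY_{4,2}$-model degenerated; here I would use the sequences~\eqref{eq:ce} and~\eqref{eq:u3-sequence} together with $\cU_3|_{E_i}\cong\cO_{E_i}\oplus\cO_{E_i}\oplus\cO_{E_i}(-1)$ to compute $q_\tau|_{E_i}\in H^0\!\big(E_i,\Sym^2(\cU_3|_{E_i})^\vee\big)$ explicitly and verify that it is nowhere zero. This local non-vanishing over the $E_i$ is precisely the improvement of $\cY_{5,1}$ over $\cY_{4,2}$ and is the crux of the lemma; granting it, $p$ is a flat conic bundle for all $\tau$, with the stated discriminant and equivariance.
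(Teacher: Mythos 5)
Your construction of the conic bundle structure, the equivariance argument, and the identification of the discriminant with $\Delta_{s(\tau)}$ all follow the same route as the paper: realize $\cX^{5,1}_\tau$ as the zero locus of a section $q_\tau\in H^0(S,\Sym^2\cU_3^\vee)$, note that $\det q_\tau$ is then a section of $(\det\cU_3^\vee)^{\otimes 2}\cong\omega_S^{-2}$, and pin down the member of $|-2K_S|$ by transporting the determinant computation of Lemma~\ref{lemma:WE-discriminant} through the diagram~\eqref{diagram:varphi}. (The paper handles the degenerate case $\tau^3-\tau=0$ by invoking $\fA_5$-invariance to add the four missing lines, while you invoke invariance up front to place the discriminant in the Wiman--Edge pencil and then match it on a dense open set; these are equivalent.)

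The genuine gap is in the flatness step, which you yourself flag as ``the crux'' and then only outline. Your plan is to verify directly that $q_\tau$ is nowhere zero, splitting into the locus $S\setminus\bigcup E_i$ (where the explicit matrices \eqref{eq:q0}, \eqref{eq:q8} apply) and the exceptional curves $E_i$ (where you propose to compute $q_\tau\vert_{E_i}$ via \eqref{eq:ce} and the splitting of $\cU_3\vert_{E_i}$, but do not carry this out). As written, the proof is therefore incomplete precisely at the point where the $\cY_{5,1}$-model is supposed to improve on the $\cY_{4,2}$-model. The computation over the $E_i$ is also unnecessary: once the discriminant is identified with $\Delta_{s(\tau)}$, flatness is immediate, because a point $s$ with $q_\tau(s)=0$ would force every entry of the symmetric $3\times 3$ matrix to vanish at $s$, hence $\det q_\tau$ to vanish to order at least $3$ there; but by Theorem~\ref{theorem:WE-singular-curves} every singular point of every curve in the Wiman--Edge pencil is a node. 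This is exactly how the paper closes the argument, and it is the step you should substitute for your deferred local analysis (note that your logical order already permits this, since your identification of the discriminant nowhere uses flatness).
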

\begin{proof}
Equivariance of the maps $p \colon \cX^{5,1}_\tau \to S$ follows from invariance of $\cX^{5,1}_\tau$ discussed in Remark~\ref{remark:rho-equivariance}
and $\fS_5$-equivariance of the $\P^2$-bundle $p \colon \cY_{5,1} \to S$.
The restriction of~\eqref{diagram:varphi} gives a commutative diagram
\begin{equation}\label{diagram:varphi-x}
\vcenter{\xymatrix{
\cX^{5,1}_\tau \ar[dd]_{p} \ar@{-->}[rr]^-{\theta_1} \ar[dr]_{\rho_{5,1}} &&
\cX^{4,2}_\tau \ar[d]^{\beta} \ar[dl]^{\rho_{4,2}} \\
& \cX_\tau &
\bar\cX^{4,2}_\tau \ar[d]^{p_1} \\
S \ar[rr]^\varphi &&
\P^2
}}
\end{equation}
The divisor $\cX^{5,1}_\infty \subset \cY$ is the preimage of the quadric $Q_\infty \subset \P(\WW_5)$ with respect to
the map~\mbox{$\pi_{5,1} \colon \cY_{5,1} \to \P(\WW_5)$},
hence it is the zero locus of a section of the line bundle~\mbox{$\cO_{\P_S(\cU_3)}(2)$}.
Since~$\cX_\tau^{5,1}$ form a pencil, all of them are the zero loci of sections of the same line bundle,
hence correspond to symmetric morphisms~$\cU_3 \to \cU_3^\vee$ on~$S$ (in particular, $p \colon \cX^{5,1}_\tau \to S$ is a conic bundle).
Therefore, the discriminant curve of $\cX_\tau^{5,1}$ is the zero locus of a morphism
\begin{equation*}
\omega_S \cong \det(\cU_3) \to \det(\cU_3^\vee) \cong \omega_S^{-1},
\end{equation*}
i.e., a double anticanonical divisor.

On the other hand, the above diagram shows that the discriminant locus of $\cX_\tau^{5,1}$ contains
the proper transform of the discriminant curve $\overline\Delta_\tau$ of $\bar\cX_\tau^{4,2}$ whose equation is~\eqref{eq:delta-tau}.
If $\tau^3  - \tau \ne 0$ it is a sextic curve passing with multiplicity 2 through each of the points $P_i$, hence its proper transform to $S$ is
a curve on $S$ with equation
\begin{equation}\label{eq:53Phi0-ttPhiinfty}
(5\tau^2 + 3)\Phi_0 + (\tau^3 - \tau)\Phi_\infty = 0,
\end{equation}
i.e., the curve $\Delta_{s(\tau)}$.
In the case when $\tau^3 - \tau = 0$, the curve $\overline{\Delta}_\tau$ is the union of six lines on $\P^2$, and its proper transform on $S$ is the union of six lines on $S$.
But the conic bundle~\mbox{$p\colon \cX_\tau^{5,1}\to S$} is $\fA_5$-equivariant.
Thus its discriminant curve is $\fA_5$-invariant, and hence it should also contain the other four lines of~$S$.
We conclude this case
by noting that the sum of the ten lines $\Delta_0$ on $S$ is a double anticanonical divisor, and it is indeed given by the equation~\eqref{eq:53Phi0-ttPhiinfty}
with~\mbox{$\tau^3 - \tau = 0$}.

It remains to show that the conic bundle is flat.
For this we note that a non-flat point of a conic bundle
is a point of multiplicity at least 3 on its discriminant curve.
But by Theorem~\ref{theorem:WE-singular-curves} all singular points of these curves are nodes.
\end{proof}

Before going further, we discuss some properties of the map $s \colon \P^1 \to \P^1$ defined by~\eqref{eq:s-tau}.

\begin{lemma}
\label{lemma:s-map}
The map $s \colon \P^1 \to \P^1$ is a triple covering with simple ramification at four points $\tau = \pm\sqrt{-3}$ and $\tau = \pm 1/\sqrt{5}$.
\end{lemma}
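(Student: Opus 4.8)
The plan is to treat $s$ as the rational function $s(\tau) = (\tau^3 - \tau)/(5\tau^2 + 3)$ on $\P^1$ and to analyze it directly. First I would check that the fraction is already in lowest terms: the numerator factors as $\tau(\tau-1)(\tau+1)$, while the roots of the denominator are $\tau = \pm\sqrt{-3/5}$, which do not lie among $\{0,\pm 1\}$. Hence numerator and denominator have no common factor, and the degree of $s$ as a morphism $\P^1 \to \P^1$ equals $\max\{3,2\} = 3$. This already yields the assertion that $s$ is a triple covering.

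Next I would locate the ramification points in the affine chart by differentiating. A direct computation gives
\begin{equation*}
s'(\tau) = \frac{(3\tau^2 - 1)(5\tau^2 + 3) - 10\tau(\tau^3 - \tau)}{(5\tau^2 + 3)^2} = \frac{5\tau^4 + 14\tau^2 - 3}{(5\tau^2 + 3)^2},
\end{equation*}
so the ramification in this chart occurs exactly at the zeros of $5\tau^4 + 14\tau^2 - 3$ that are not poles of $s$. Setting $y = \tau^2$, the quadratic $5y^2 + 14y - 3 = 0$ has discriminant $196 + 60 = 256$, whence $y = 1/5$ or $y = -3$; correspondingly $\tau = \pm 1/\sqrt{5}$ or $\tau = \pm\sqrt{-3}$. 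These four values are pairwise distinct, each is a simple root of the quartic, and none of them is a pole of $s$ (the poles satisfy $\tau^2 = -3/5$). A simple zero of $s'$ lying away from the pole locus is a point of simple ramification (ramification index $2$), which accounts precisely for the four points listed in the statement.

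Finally I would verify that $\tau = \infty$ is unramified. Passing to the local coordinate $w = 1/\tau$ one finds $s = (1 - w^2)/\big(w(5 + 3w^2)\big)$, so in the target coordinate $v = 1/s$ one has $v = w(5 + 3w^2)/(1 - w^2)$, which vanishes to order $1$ at $w = 0$; thus $s$ is unramified over $\infty$. The only point requiring a little care is confirming that the four roots of the quartic are genuinely simple and disjoint from the poles, but this is immediate from the explicit factorization of the discriminant above, so no real obstacle arises. As a consistency check, Riemann--Hurwitz for a degree-$3$ morphism $\P^1 \to \P^1$ forces $\sum_P (e_P - 1) = 2\cdot 3 - 2 = 4$, which matches exactly the four simple ramification points found, confirming that there are no further ramifications.
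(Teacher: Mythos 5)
Your proof is correct and is exactly the "direct computation" that the paper's proof gestures at: all the algebra (the factorization check, the derivative $5\tau^4+14\tau^2-3$, the roots $\tau=\pm 1/\sqrt{5}$, $\pm\sqrt{-3}$, the unramifiedness at $\infty$, and the Riemann--Hurwitz consistency check) is accurate. Nothing to add.
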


\begin{proof}
A direct computation.
\end{proof}

In the next table we list some special values of $\tau$
together with the values of the functions~$s(\tau)$ and $t(\tau) = (\tau^2 + 1)/4$ at these points.
\begin{equation*}\renewcommand{\arraystretch}{1.5}
\begin{array}{|c||c|c||c|c||c|c||c|c|}
\hline
\tau &
\hfill\hspace{.4em}0\hspace{.4em}\hfill
& \pm1 &
\pm\frac{1}{\sqrt{-3}} & \cellcolor{lightgray}{\pm\sqrt{-3}} &
\mp\frac3{\sqrt{5}} & \cellcolor{lightgray}\pm\frac1{\sqrt{5}} &
\infty & \pm\sqrt{-\frac{3}{5}}\\[1ex]
\hline
s(\tau)
& \multicolumn{2}{c||}{0} &
\multicolumn{2}{c||}{\mp\frac{1}{\sqrt{-3}}}&
\multicolumn{2}{c||}{\mp\frac1{5\sqrt{5}}} &
\multicolumn{2}{c|}{\infty} \\[1ex]
\hline
t(\tau) &
\cellcolor{lightgray}{\frac14} & \cellcolor{lightgray}{\frac12} &
\cellcolor{lightgray}{\frac16} & -\frac12 &
\cellcolor{lightgray}{\frac7{10}} & \frac3{10} &
\cellcolor{lightgray}{\infty} & \frac{1}{10} \\[1ex]
\hline
\end{array}
\end{equation*}
\smallskip
The second row contains the values of the parameter $s$ that correspond to singular members of the Wiman--Edge pencil (see Theorem~\ref{theorem:WE-singular-curves}) and infinity.
The first row contains their preimages; gray cells mark ramification points of the map $s(\tau)$, see Lemma~\ref{lemma:s-map}.
The third row contains the values of the map $t(\tau)$ at these points; gray cells mark the points of the discriminant set $\bDD$ and infinity.

Since the degree of the map $s$ is $3$, the same singular curves in the Wiman--Edge pencil may appear as the discriminant loci
of the preimages $\cX^{5,1}_\tau$ of different quartics~$X_t$.
For instance, the Igusa and the Burkhardt quartics both correspond to the union~$\Delta_0$ of the ten lines on $S$.
Note also that the quartics $X_{1/6}$ and $X_{7/10}$ share their discriminant curves with non-special quartics $X_{-1/2}$ and $X_{3/10}$ respectively.
As we will see in Proposition~\ref{proposition:x51-smooth},
these two are characterized by the fact that the corresponding curves in the Wiman--Edge pencil are singular,
while the total spaces of the threefolds~$\cX_{\tau}^{5,1}$ are smooth.
In~\S\ref{section:applications} we will see that this subtle difference has a drastic effect on rationality properties.

To proceed we will need the following general result.
Its proof can be found in~\mbox{\cite[Proposition~1.2]{BeauvillePrym}} or \cite[Proposition~1.8]{Sarkisov}, except for the fact that the singularity of $\cX_P$ is a node,
but this can also be extracted from the arguments in either of these two papers.

\begin{lemma}[{\cite[Proposition~1.2]{BeauvillePrym}, \cite[Proposition~1.8]{Sarkisov}}]
\label{lemma:conic-bundle-singularity}
Let $p \colon \cX \to S$ be a flat conic bundle over a smooth surface $S$.
Assume that its discriminant locus $\Delta \subset S$ has a node at a point $P \in S$.
Then $\cX$ has a singular point over $P$ if and only if the fiber~\mbox{$\cX_P = p^{-1}(P)$} is a conic of corank $1$ \textup{(}that is, a union of two distinct lines\textup{)},
and in this case the singularity of $\cX$ over $P$ is a node at the \textup{(}unique\textup{)} singular point of $\cX_P$.
\end{lemma}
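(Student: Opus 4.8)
The plan is to prove the statement by a local computation in suitable analytic (or formal) coordinates near $P$. Since $p \colon \cX \to S$ is a flat conic bundle, étale-locally over $P$ it is the zero locus, inside a $\P^2$-bundle, of a section $F = \sum_{i,j} q_{ij}(x,y)\,v_iv_j$, where $(x,y)$ are local coordinates on $S$ centered at $P$, the matrix $q = (q_{ij})$ is a symmetric $3\times 3$ matrix of regular functions, and $(v_1:v_2:v_3)$ are fiber coordinates; the discriminant is then $\Delta = \{\det q = 0\}$. The first step is the remark that, by homogeneity of $F$ in $v$ together with the Euler relation, a point $(P,[w])$ of the fiber belongs to the singular locus of $\cX$ if and only if $w \in \ker q(P)$ (so that $[w]$ is a singular point of the fiber conic) and in addition $w^\top(\partial_x q)(P)\,w = w^\top(\partial_y q)(P)\,w = 0$. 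Thus the singular points of $\cX$ over $P$ form the subscheme of $\P(\ker q(P))$ cut out by these two quadratic conditions, and the whole analysis reduces to understanding this subscheme in the two possible corank regimes.

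In the corank $1$ case the kernel is a line $\langle w_0\rangle$, corresponding to the unique singular point of the fiber, namely the node of the two distinct lines. Here I would use Jacobi's formula $(\partial_x\det q)(P) = \mathrm{tr}\big(\mathrm{adj}(q(P))\,(\partial_x q)(P)\big)$ together with the fact that for a symmetric matrix of corank $1$ one has $\mathrm{adj}(q(P)) = \lambda\, w_0 w_0^\top$ with $\lambda \ne 0$; this gives $(\partial_x\det q)(P) = \lambda\, w_0^\top(\partial_x q)(P)\,w_0$, and likewise in $y$. Consequently the condition that $[w_0]$ be a singular point of $\cX$ is exactly the vanishing of the linear part of $\det q$ at $P$, i.e. the condition that $\Delta$ be singular at $P$. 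Since by hypothesis $\Delta$ has a node, $\cX$ is singular precisely at $[w_0]$. To upgrade this to a node of $\cX$, I would compute the Hessian of $F$ at $(P,[w_0])$ in coordinates adapted so that $w_0 = e_3$: the fiber block of the Hessian is $2q(P)$ restricted away from its kernel, nondegenerate of rank $2$, and its Schur complement is a nonzero scalar multiple of the Hessian of $\det q$ at $P$, which is nondegenerate of rank $2$ because $\Delta$ has a node there. Hence the Hessian of $F$ has full rank $4$ and $\cX$ has an ordinary double point.

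In the corank $2$ case the fiber is a double line, $\mathrm{adj}(q(P)) = 0$, so $\det q$ vanishes to order $\geqslant 2$ at $P$, its leading term being a quadratic form $Q(x,y)$; a node of $\Delta$ is precisely the nondegeneracy of $Q$. Parametrizing $\P(\ker q(P)) \cong \P^1$ by $[s:t]$, the two derivative conditions become binary quadratics $Q_1(s,t) = w^\top(\partial_x q)(P)\,w$ and $Q_2(s,t) = w^\top(\partial_y q)(P)\,w$, and a singular point of $\cX$ over $P$ is exactly a common root of $Q_1$ and $Q_2$. The key identity, which I would verify by a direct computation after normalizing $\ker q(P) = \langle e_2,e_3\rangle$, is $\mathrm{disc}(Q) = \mathrm{Res}(Q_1,Q_2)$: both sides equal the same explicit polynomial in the linear coefficients of $q_{22}, q_{23}, q_{33}$. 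Therefore the node hypothesis on $\Delta$ forces $\mathrm{Res}(Q_1,Q_2)\neq 0$, so $Q_1$ and $Q_2$ have no common root, and $\cX$ is smooth over $P$.

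Combining the two cases yields exactly the claimed equivalence: $\cX$ is singular over $P$ if and only if the fiber has corank $1$, and in that case the singular point is the node of $\cX_P$ and is a node of $\cX$. The routine part is the local linear-algebra setup and the characterization of the singular locus; the main obstacle is the corank $2$ analysis, where the identity $\mathrm{disc}(Q) = \mathrm{Res}(Q_1,Q_2)$ is what makes a node of $\Delta$ equivalent to the absence of a common singular direction, and dually the Schur-complement Hessian computation in the corank $1$ case that promotes ``singular'' to ``node.'' I would organize the argument so that both cases are governed by the single principle that the leading quadratic (respectively Hessian) of $\det q$ controls the transverse behavior of $\cX$ along the fiber.
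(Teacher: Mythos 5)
Your argument is correct, and it is worth noting that the paper itself does not prove this lemma at all: it defers entirely to \cite[Proposition~1.2]{BeauvillePrym} and \cite[Proposition~1.8]{Sarkisov}, adding only the remark that the ``node'' refinement ``can be extracted from the arguments in either of these two papers.'' What you have written is essentially that extraction carried out in full: the standard local presentation $F=\sum q_{ij}(x,y)v_iv_j$ with the singular locus over $P$ cut out inside $\P(\ker q(P))$ by the two conditions $w^\top(\partial_x q)(P)w=w^\top(\partial_y q)(P)w=0$, which is also the computation underlying the cited references. I checked the two identities on which your proof hinges and both are right. In the corank~$1$ case, normalizing $\ker q(P)=\langle e_3\rangle$ and writing $q(P)=\left(\begin{smallmatrix}A&0\\0&0\end{smallmatrix}\right)$ with $\det A\neq 0$, the degree-two part of $\det q$ at $P$ is $\det A\cdot\bigl(q_{33}^{(2)}-u^\top A^{-1}u\bigr)$ with $u$ the vector of linear parts of $q_{13},q_{23}$, which is exactly $\det A$ times the Schur complement of the fiber block $2A$ in the Hessian of $F$; so a node of $\Delta$ gives a rank-$4$ Hessian and hence an ordinary double point, while Jacobi's formula with $\mathrm{adj}(q(P))=\lambda w_0w_0^\top$, $\lambda\neq0$, identifies the two base-derivative conditions with the vanishing of $\nabla\det q$ at $P$, forcing the singular point to exist. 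In the corank~$2$ case, with $q(P)=a\,e_1e_1^\top$, the quadratic part of $\det q$ is $a\bigl(q_{22}^{(1)}q_{33}^{(1)}-(q_{23}^{(1)})^2\bigr)$ and a direct expansion confirms $\mathrm{disc}(Q)=a^2\,\mathrm{Res}(Q_1,Q_2)$, so a node of $\Delta$ rules out a common root and $\cX$ is smooth over $P$. The only point you leave implicit is that flatness excludes $q(P)=0$ (corank~$3$), so that corank~$1$ and~$2$ exhaust the cases with $P\in\Delta$; this is harmless but worth a sentence. Your write-up is thus a legitimate, self-contained alternative to the citation, and it has the advantage of actually proving the node assertion rather than asserting its extractability.
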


The next assertion describes the singular loci of the threefolds $\cX^{5,1}_\tau$.
Recall the morphism $\pi_{5,1}$ defined in~\eqref{eq:y51} and the discriminant set $\bDDtau$ from~\eqref{def:dd-tau}.

\begin{proposition}
\label{proposition:x51-smooth}
The threefold $\cX^{5,1}_\tau$ is smooth for all $\tau \not\in \bDDtau$ \textup{(}including~\mbox{$\tau = \infty$}\textup{)}.
For~$\tau\in\bDDtau$ the singular locus of $\cX^{5,1}_\tau$ is mapped by $\pi_{5,1}$ isomorphically to a subset of~$\P^4$ as follows:
\begin{equation*}\renewcommand{\arraystretch}{1.5}
\begin{array}{|c||c|c|c|c|}
\hline
\tau & 0 & \pm1 & \pm\frac1{\sqrt{-3}} & \pm\frac3{\sqrt{5}}
\\
\hline
\pi_{5,1}(\Sing(\cX^{5,1}_\tau)) & \Upsilon_{15} & \Sigma_{15} & \Sigma_{10} & \Sigma_{6}
\\
\hline
\end{array}
\end{equation*}%
For $\tau \in \bDDtau$ the singularities of $\cX^{5,1}_\tau$ form a single $\fA_5$-orbit,
every singular point $Q$ of~$\cX^{5,1}_\tau$ is a node, and the fiber $p^{-1}(p(Q))$ 
of the conic bundle $p \colon \cX^{5,1}_\tau \to S$ passing through $Q$
is the union of two distinct lines intersecting at $Q$.
\end{proposition}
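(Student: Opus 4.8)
The plan is to study $\cX^{5,1}_\tau$ through the flat conic bundle $p\colon\cX^{5,1}_\tau\to S$ provided by Lemma~\ref{lemma:x51}, whose discriminant is the Wiman--Edge curve $\Delta_{s(\tau)}$, and to read off its singularities from the local criterion of Lemma~\ref{lemma:conic-bundle-singularity}. Since $S$ is smooth, over a point outside $\Delta_{s(\tau)}$ the fibre is a smooth conic and the total space is smooth, while over a smooth point of $\Delta_{s(\tau)}$ the fibre is a pair of distinct lines and the total space of a flat conic bundle is smooth there as well; hence $\Sing(\cX^{5,1}_\tau)$ lies over $\Sing(\Delta_{s(\tau)})$. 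Combining the list of singular members in Theorem~\ref{theorem:WE-singular-curves} with the tabulated values of $s(\tau)$, the curve $\Delta_{s(\tau)}$ is singular exactly for $\tau\in\bDDtau\cup\{\pm\sqrt{-3},\pm1/\sqrt5\}$; in all remaining cases, including $\tau=\infty$ where $\Delta_\infty$ is the smooth Wiman sextic, the threefold $\cX^{5,1}_\tau$ is therefore smooth.

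For the finitely many singular values of $\tau$ the task reduces to computing the corank of the fibre over a node of $\Delta_{s(\tau)}$; by Theorem~\ref{theorem:WE-singular-curves} the group $\fA_5$ permutes these nodes transitively, and the conic bundle is $\fA_5$-equivariant, so it suffices to treat one node $P$. By Lemma~\ref{lemma:x51} and Lemma~\ref{lemma:WE-discriminant} the determinant of the fibre quadratic form at $P$ is, up to scale, the cubic $\phi_P(\tau)=(5\tau^2+3)\Phi_0(P)+(\tau^3-\tau)\Phi_\infty(P)$ in $\tau$, whose roots form the fibre $s^{-1}(s(\tau))$; moreover $\tau_0$ is a multiple root of $\phi_P$ precisely when $(5\tau_0^2-1)(\tau_0^2+3)=0$, i.e. at the ramification points $\tau_0\in\{\pm\sqrt{-3},\pm1/\sqrt5\}$ of the triple cover $s$ of Lemma~\ref{lemma:s-map}. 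Since the corank of a member of a matrix pencil is at most the order of vanishing of its determinant, for $\tau\in\bDDtau$ (a simple root) the corank equals~$1$ automatically, whereas for $\tau\in\{\pm\sqrt{-3},\pm1/\sqrt5\}$ (a double root) the corank is at most~$2$.

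To finish I would verify that the upper bound is attained in the ramified case, namely that the fibre over a node is a genuine double line of corank~$2$; this I would check by a single explicit evaluation of the matrices $q_0$ and $q_\infty$ of Proposition~\ref{proposition:verra} at one node lying off the points $P_i$, where $\varphi$ is an isomorphism. Granting this, Lemma~\ref{lemma:conic-bundle-singularity} yields the dichotomy: for $\tau\in\{\pm\sqrt{-3},\pm1/\sqrt5\}$ (so $t=-1/2,3/10\notin\bDD$) every fibre over a node is a double line and $\cX^{5,1}_\tau$ is smooth, completing the smoothness claim for all $\tau\notin\bDDtau$; for $\tau\in\bDDtau$ every fibre over a node is a pair of distinct lines, so $\cX^{5,1}_\tau$ acquires a node at their intersection point, which also gives the stated description of $p^{-1}(p(Q))$. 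By $\fA_5$-transitivity these nodes form a single $\fA_5$-orbit, of cardinality $15,15,10,6$ for $\tau=0,\pm1,\pm1/\sqrt{-3},\pm3/\sqrt5$ by Theorem~\ref{theorem:WE-singular-curves}.

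Finally I would identify the images in $\P^4$. Distinct nodes of $\Delta_{s(\tau)}$ lie in distinct fibres of $p$ and carry one singular point each, so $\pi_{5,1}$ is injective on $\Sing(\cX^{5,1}_\tau)$ and restricts to an isomorphism onto its image. For $\tau=\pm1,\pm1/\sqrt{-3},\pm3/\sqrt5$ the morphism $\rho_{5,1}$ is small by Theorem~\ref{theorem:three-pencils}, hence an isomorphism over $\P^4\setminus\CR$; since $\cX_\tau\cong X_t$ has exactly $15,10,6$ nodes off $\CR$, namely the orbits $\Sigma_{15},\Sigma_{10},\Sigma_6$ of Theorem~\ref{theorem:Geer}, and $\cX^{5,1}_\tau$ has exactly this number of nodes in total, these account for all of them and $\pi_{5,1}(\Sing(\cX^{5,1}_\tau))$ equals $\Sigma_{15},\Sigma_{10},\Sigma_6$ respectively. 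For $\tau=0$, where $\rho_{5,1}$ is not small, I would instead check on the explicit resolution that the $15$ nodes map to the triple points $\Upsilon_{15}$ of $\CR$; the extra $\fS_5$-equivariance for $\tau=0,\infty$ is the one recorded in Lemma~\ref{lemma:x51}. The genuinely computational point---and the main obstacle---is the verification of corank~$2$ in the ramified case, as this is exactly what distinguishes the smooth threefolds lying over the singular Wiman--Edge curves $\Delta_{\pm1/\sqrt{-3}}$ and $\Delta_{\pm1/\sqrt{125}}$ from the nodal ones over those same curves.
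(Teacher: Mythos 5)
Your route is genuinely different from the paper's. For $\tau\ne 0$ the paper does not use the conic bundle at all: it first shows that $\cX^{5,1}_\tau$ is smooth along the exceptional locus $\bigcup R_L\cup\bigcup R_\varphi$ of $\rho_{5,1}$, using that $\cX^{5,1}_\tau\cap R_L=\cX^{5,1}_\infty\cap R_L$ (and likewise for $R_\varphi$) because $\cX^{5,1}_0$ contains all these surfaces, and that these intersections are smooth curves since $Q_\infty$ meets the lines of $\CR$ transversally away from their intersection points (Remark~\ref{remark:lines-vs-quadric}). Hence $\Sing(\cX^{5,1}_\tau)=\Sing(\cX_\tau)\setminus\CR$, and the whole table for $\tau\ne 0$ is imported from van der Geer's Theorem~\ref{theorem:Geer} together with Lemma~\ref{lemma:conic-bundle-singularity}; the explicit matrices are needed only for $\tau=0$. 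You instead derive everything from the discriminant analysis, which has the virtue of explaining intrinsically why $\cX^{5,1}_{\pm\sqrt{-3}}$ and $\cX^{5,1}_{\pm1/\sqrt5}$ are smooth although their discriminant curves are singular; your reduction of the corank at a node $P$ to the multiplicity of $\tau$ as a root of $\phi_P$, and hence to the ramification of the triple cover $s$ of Lemma~\ref{lemma:s-map}, is correct and is a nice observation not in the paper.

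The argument is, however, incomplete at exactly the point you flag. For $\tau\in\{\pm\sqrt{-3},\pm1/\sqrt5\}$ the double root of $\phi_P$ only bounds the corank by $2$ from above; if the corank were $1$, Lemma~\ref{lemma:conic-bundle-singularity} would produce a node of $\cX^{5,1}_\tau$, contradicting the smoothness asserted for these $\tau\notin\bDDtau$ (i.e.\ $t=-1/2,\,3/10\notin\bDD$). In the paper this is free from Theorem~\ref{theorem:Geer}, whereas in your setup it is the load-bearing step and must actually be carried out; until the evaluation of $q_0+\tau q_\infty$ at a representative node is done, smoothness for these four values of $\tau$ is unproved. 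Two smaller points: your formula for $\phi_P$ tacitly assumes $\Phi_\infty(P)\ne 0$ at a node, i.e.\ that no node of a singular Wiman--Edge curve is a base point of the pencil (true, but it should be said), and the matrices of Proposition~\ref{proposition:verra} compute the fibre only at nodes lying off the exceptional divisors of $\varphi$, so the representative must be chosen accordingly. The identification $\pi_{5,1}(\Sing(\cX^{5,1}_0))=\Upsilon_{15}$ is likewise only promised; the paper settles it by evaluating $q_0(0{:}1{:}1)$ and pushing the singular point through~\eqref{eq:choice-rho-pp}. Your counting argument identifying the images with $\Sigma_{15},\Sigma_{10},\Sigma_6$ for the remaining $\tau\in\bDDtau$ is sound.
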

\begin{proof}
To start with, let us show that for $\tau \ne 0$ the threefold $\cX^{5,1}_\tau$ is smooth along the
exceptional locus of the morphism $\rho_{5,1}$, which by Proposition~\ref{proposition:Stein-factorization} is the reducible surface
\begin{equation}
\label{eq:exc-locus-rho51}
\left( \bigcup_L R_L \right) \cup \left(\bigcup_{\varphi} R_\varphi \right)  = \pi_{5,1}^{-1}(\CR) \subset \cY_{5,1}.
\end{equation}
Recall that each of its irreducible components is a smooth surface in $\cY_{5,1}$ (see Lemmas~\ref{lemma:r-l} and~\ref{lemma:r-i}).
Note that a Cartier divisor in a smooth fourfold is smooth along its intersection with a smooth surface provided that
their scheme intersection is a smooth curve.
So, it is enough to check that the intersections $\cX^{5,1}_\tau \cap R_L$ and $\cX^{5,1}_\tau \cap R_\varphi$ are smooth curves for all~\mbox{$\tau \ne 0$}.
But the divisors $\cX^{5,1}_\tau$ form a pencil, and $\cX^{5,1}_0$
(which by definition is equal to the ramification divisor of $\pi_{5,1}$)
contains all these surfaces.
Therefore,
\begin{equation*}
\cX^{5,1}_\tau \cap R_L = \cX^{5,1}_\infty \cap R_L
\qquad\text{and}\qquad
\cX^{5,1}_\tau \cap R_\varphi = \cX^{5,1}_\infty \cap R_\varphi.
\end{equation*}
So, it is enough to show that $\cX^{5,1}_\infty \cap R_\varphi$ and~$\cX^{5,1}_\infty \cap R_L$ are smooth curves.
But~\mbox{$\cX^{5,1}_\infty = \pi_{5,1}^{-1}(Q_\infty)$},
while $R_\varphi$ and~$R_L$ are the preimages of the 15 lines of
the Cremona--Richmond configuration~$\CR$.
The quadric~$Q_\infty$ intersects all these lines transversally at two points
away from the intersection points of the lines by Remark~\ref{remark:lines-vs-quadric}.
Taking into account Lemma~\ref{lemma:s-w} and Proposition~\ref{proposition:Stein-factorization} we conclude that
$\cX^{5,1}_\infty \cap R_L$ is the union of two disjoint lines, and $\cX^{5,1}_\infty \cap R_\varphi$
is the union of two disjoint smooth conics.

Since the map $\rho_{5,1} \colon \cX_\tau^{5,1} \to \cX_\tau$ is an isomorphism over $\P^4 \setminus \CR$
(because so is the map~\mbox{$\cY_{5,1} \to \cY$}), it follows that for all $\tau \ne 0$ we have
\begin{equation*}
\Sing(\cX_\tau^{5,1}) = \Sing(\cX_\tau) \setminus \CR,
\end{equation*}
and in view of Lemma~\ref{lemma:cxtau-xt}, Theorem~\ref{theorem:Geer}, Remark~\ref{remark:double-quadric-Sing}, and Lemma~\ref{lemma:conic-bundle-singularity},
we obtain the required description of singularities of $\cX_\tau^{5,1}$ for $\tau \ne 0$.

Next, consider the case $\tau = 0$.
The map $p \colon \cX^{5,1}_0 \to S$ is a flat
conic bundle with the discriminant locus being the curve $\Delta_0$, i.e., the union of the 10 lines on $S$.
It follows that~$\cX^{5,1}_0$ is smooth over the complement of the 15 intersection points of the lines on $S$.
Since all these points are nodes of $\Delta_0$, Lemma~\ref{lemma:conic-bundle-singularity} shows that the threefold $\cX^{5,1}_0$
has a singularity over such a point $P$ if and only if the conic $(\cX^{5,1}_0)_P = p^{-1}(P)$ is the union of two distinct lines
(and then the singular point is a node located at the intersection point of these lines).
Since the 15 intersection points of the lines on $S$ form a single $\fA_5$-orbit (see Theorem~\ref{theorem:WE-singular-curves}),
it is enough to check everything over one of them.

Take the intersection point $P \in S$ such that $\varphi(P) = (0:1:1)$.
We know from diagram~\eqref{diagram:varphi-x} that the conic $(\cX_0^{5,1})_P$ is isomorphic to the conic $(\bar\cX_0^{4,2})_{\varphi(P)}$, hence by Proposition~\ref{proposition:verra} it is given by the matrix
\begin{equation}
\label{eq:conic-igusa}
q_0(0:1:1) = \frac12
\begin{pmatrix}
\hphantom{-}0 & \hphantom{-}1 & -1 \\
\hphantom{-}1 & \hphantom{-}0 & \hphantom{-}0 \\
-1 & \hphantom{-}0 & \hphantom{-}0
\end{pmatrix}.
\end{equation}
Its rank equals 2, hence $(\bar\cX_0^{4,2})_{\varphi(P)}$, and thus also $(\cX_0^{5,1})_P$, is a union of two lines.
Moreover, the intersection point of the irreducible components of $(\bar\cX_0^{4,2})_{\varphi(P)}$
is the point~$(0:1:1)$, and using~\eqref{eq:choice-rho-pp} we compute that
\begin{equation*}
\bar{\pi}_{4,2}((0:1:1),(0:1:1)) = (2:-1:-1:2:-1:-1) \in \Upsilon_{15}.
\end{equation*}
By $\fA_5$-equivariance of the map $\pi_{5,1}$ and transitivity of $\fA_5$-action on $\Upsilon_{15}$ (see Corollary~\ref{corollary:a5-action-upsilon15})
we conclude that $\pi_{5,1}(\Sing(\cX^{5,1}_0)) = \Upsilon_{15}$.
\end{proof}

\begin{corollary}\label{corollary:r51-small}
For all $\tau \ne 0,\infty$ the morphism $\pi_{5,1} \colon \cX^{5,1}_\tau \to X_{(\tau^2+1)/4}$ is birational and small.
Also, the morphism $\rho_{5,1} \colon \cX^{5,1}_\infty \to\cX_{\infty}$ is birational and small.
\end{corollary}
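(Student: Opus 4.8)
The plan is to reduce both assertions to the smallness of the ambient resolution $\rho_{5,1} \colon \cY_{5,1} \to \cY$ proved in Proposition~\ref{proposition:Stein-factorization}, combined with a dimension count for the intersection of the divisor $\cX^{5,1}_\tau$ with the exceptional locus of $\rho_{5,1}$. For $\tau \ne 0,\infty$ I would first note that $\pi \colon \cX_\tau \to X_{(\tau^2+1)/4}$ is an isomorphism by Lemma~\ref{lemma:cxtau-xt}, so that the restriction of $\pi_{5,1} = \pi \circ \rho_{5,1}$ to $\cX^{5,1}_\tau$ is birational and small if and only if $\rho_{5,1} \colon \cX^{5,1}_\tau \to \cX_\tau$ is. For $\tau = \infty$ the map $\pi|_{\cX_\infty}$ is no longer an isomorphism (it factors through the double cover of $Q_\infty$), which is exactly why the second assertion is phrased directly in terms of $\rho_{5,1}$; the argument for it is otherwise identical, so I would treat all the relevant cases of $\rho_{5,1} \colon \cX^{5,1}_\tau \to \cX_\tau$ uniformly.

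Recall from Proposition~\ref{proposition:Stein-factorization} that $\rho_{5,1}$ is a small birational morphism, an isomorphism over $\cY \setminus \Sing(\cY)$, whose exceptional locus is the union $E = \bigcup_L R_L \cup \bigcup_\varphi R_\varphi$ of $15$ surfaces. Since $\cX^{5,1}_\tau = \rho_{5,1}^{-1}(\cX_\tau)$ and $\cX_\tau$ is a threefold not contained in the proper closed subset $\Sing(\cY)$, the restriction $\rho_{5,1} \colon \cX^{5,1}_\tau \to \cX_\tau$ is an isomorphism over a dense open subset of $\cX_\tau$, and is therefore birational. For smallness it suffices to show that the exceptional locus of this restriction, which is contained in $\cX^{5,1}_\tau \cap E$, has codimension at least $2$ in the threefold $\cX^{5,1}_\tau$, that is, is at most one-dimensional. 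Here I would invoke the computation from the proof of Proposition~\ref{proposition:x51-smooth}: for $\tau \ne 0$ one has $\cX^{5,1}_\tau \cap R_L = \cX^{5,1}_\infty \cap R_L$ and $\cX^{5,1}_\tau \cap R_\varphi = \cX^{5,1}_\infty \cap R_\varphi$ (because $R_L, R_\varphi \subset \cX^{5,1}_0$, so on these surfaces the pencil $\{\cX^{5,1}_\tau\}$ collapses to the single member $\cX^{5,1}_\infty$), and these intersections are a union of two disjoint lines and a union of two disjoint conics, respectively. In particular they are curves, so $\cX^{5,1}_\tau \cap E$ is one-dimensional and $\rho_{5,1}|_{\cX^{5,1}_\tau}$ is small; the same count applies verbatim for $\tau = \infty$.

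The main obstacle I anticipate is the exclusion of $\tau = 0$: the surfaces $R_L$ and $R_\varphi$ all lie in the ramification divisor $\cX^{5,1}_0$, so that $\cX^{5,1}_0 \supset E$ and the above dimension count breaks down entirely. This is precisely why $\tau = 0$ is omitted from the statement, and the crucial nontrivial input is the verification that for every $\tau \ne 0$ the divisor $\cX^{5,1}_\tau$ meets each exceptional surface in a curve rather than containing it. As in Proposition~\ref{proposition:x51-smooth}, this reduces to checking that $R_L, R_\varphi \not\subset \cX^{5,1}_\infty = \pi_{5,1}^{-1}(Q_\infty)$, equivalently that the quadric $Q_\infty$ meets the $15$ lines of $\CR$ properly, which is guaranteed by the transversality recorded in Remark~\ref{remark:lines-vs-quadric}. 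Once this is in hand, the two assertions follow immediately.
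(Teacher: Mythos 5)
Your proposal is correct and follows essentially the same route as the paper: the paper's proof likewise cites the computation from the proof of Proposition~\ref{proposition:x51-smooth} (that for $\tau\ne 0$ the exceptional locus of $\rho_{5,1}|_{\cX^{5,1}_\tau}$ consists of the $30$ rational curves lying over $\Sigma_{30}=\CR\cap Q_\infty$, obtained exactly from the intersections $\cX^{5,1}_\infty\cap R_L$ and $\cX^{5,1}_\infty\cap R_\varphi$) and then invokes Lemma~\ref{lemma:cxtau-xt} to pass from $\rho_{5,1}$ to $\pi_{5,1}$ for $\tau\ne\infty$. Your added remarks on birationality and on why $\tau=0$ must be excluded are consistent with the paper's treatment.
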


\begin{proof}
Indeed, as we have seen in the proof of Proposition~\ref{proposition:x51-smooth}, for $\tau \ne 0$ the non-trivial fibers of $\cX^{5,1}_\tau \to \cX_\tau$ are 30 rational curves,
one over each of the 30 intersection points of~\mbox{$\Sigma_{30} = \CR \cap Q_\infty$}.
Since the map $\pi \colon \cX_\tau \to X_{(\tau^2+1)/4}$ is an isomorphism for $\tau\neq \infty$
by Lemma~\ref{lemma:cxtau-xt}, the assertion follows.
\end{proof}

\begin{remark}
For $\tau = 0$ the surface~\eqref{eq:exc-locus-rho51} is equal to the exceptional locus
of~\mbox{$\pi_{5,1} \colon \cX_0^{5,1} \to X_{1/4}$},
hence this morphism is not small, but is still birational.
\end{remark}

\subsection{Proofs of Theorems~\ref{theorem:three-pencils} and~\ref{theorem:conic-bundles}}
\label{subsection:proofs-second}

For $\tau \ne 0$ the map $\rho_{5,1} \colon \cX^{5,1}_\tau \to \cX_\tau$ is small and birational by Corollary~\ref{corollary:r51-small}.
The same argument works for $\rho_{4,2} \colon \cX^{4,2}_\tau \to \cX_\tau$ without changes.
Finally, smoothness of $\cX^{5,1}_\tau$ for non-special $\tau$ is proved in Proposition~\ref{proposition:x51-smooth}.
The maps $\rho_{5,1}$ and $\pi \circ \rho_{5,1}$ have required equivariance by Remark~\ref{remark:rho-equivariance} and Lemma~\ref{lemma:cxtau-xt}.
The same arguments prove equivariance of the maps~$\rho_{4,2}$ and $\pi \circ \rho_{4,2}$.
This completes the proof of Theorem~\ref{theorem:three-pencils}.

Now let us prove Theorem~\ref{theorem:conic-bundles}.
By Proposition~\ref{proposition:x51-smooth} the total spaces of the conic bundles~\mbox{$p \colon \cX_\tau^{5,1} \to S$} are smooth for $\tau \not\in \bDDtau$,
so since $\rkPic(S)=5$, to show that $p$ is a standard conic bundle for $\tau \not\in \bDDtau$
it is enough to check that $\rkPic(\cX^{5,1}_\tau)=6$ for these~$\tau$.
But since the map $\rho_{5,1} \colon \cX^{5,1}_\tau \to \cX_\tau$ is small, we have
\begin{equation*}
\Pic(\cX^{5,1}_\tau) \cong \Cl(\cX_\tau).
\end{equation*}
Thus the assertion of the theorem follows from Lemma~\ref{lemma:Cl-Z6-upstairs}.
\qed

\begin{remark}\label{remark:Weil-divisors-threefolds}
Assume the notation of Remark~\ref{remark:rho-4-2-Weil-divisor},
and suppose that~\mbox{$t\not\in\{1/4, 1/2, \infty\}$}.
One can check that the restrictions of each hyperplane
$H_{ijk}\subset\P^4$
to $X_t$ splits as the union of two smooth quadric surfaces in $H_{ijk}\cong\P^3$. For $t=1/4$ these two quadric surfaces
collide into a smooth quadric with a non-reduced structure, and
for $t=1/2$ they degenerate into unions of pairs of planes.
Considering the preimages of these surfaces on~$\cX_\tau$, where
as usual $t=(\tau^2+1)/4$, and using Remarks~\ref{remark:rho-4-2-Weil-divisor}
and~\ref{remark:rho-5-1-Weil-divisor}, one can describe the
small resolutions $\rho_{4,2}$ and $\rho_{5,1}$ of singularities of $\cX_\tau$ as blow ups of certain
Weil divisors on~$\cX_\tau$.
\end{remark}

\section{Rationality}
\label{section:applications}

In this section we provide some applications of the results obtained earlier.
Namely, we check that all quartics $X_t$ are unirational,
give a new and uniform proof of irrationality of $\fS_6$-invariant quartics~$X_t$ for $t \not\in \bDD \cup \{\infty\}$ (and also of the threefold $\cX_\infty$),
and rationality of $X_t$ for $t \in \bDD$.

\subsection{Unirationality of $\fS_6$-invariant quartics}
\label{subsection:unirationality}

We start with a short proof of unirationality of the quartics $X_t$ and the threefold $\cX_\infty$.
The next fact is well known.

\begin{lemma}\label{lemma:Verra-unirational}
Let $V$ be an irreducible Verra threefold, i.e., an irreducible
hypersurface of bidegree $(2,2)$ in $\P^2\times\P^2$.
Then $V$ is unirational.
\end{lemma}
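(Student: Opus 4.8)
The plan is to exploit the conic bundle structure $\mathrm{pr}_1 \colon V \to \P^2$ given by the first projection. Since $V$ has bidegree $(2,2)$, its equation is quadratic in the second set of variables, so for each point $u \in \P^2$ the fiber $V_u = \mathrm{pr}_1^{-1}(u)$ is a conic in the second factor $\P^2$. A conic over a field $K$ is rational as soon as it has a $K$-rational point, and it even becomes isomorphic to $\P^1_K$ after projection from such a point. So the key idea is to find a rational multisection of $\mathrm{pr}_1$ of low degree over which $V$ acquires a rational point, and then use the conic bundle fibers to sweep out $V$ by rational curves.

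First I would pass to the generic point $\eta$ of the base $\P^2$ and regard $V$ as a conic $C$ over the function field $K = \Bbbk(\P^2)$. To produce a $K$-point on $C$, I would look for a section or a rational multisection of the projection. The cleanest way is this: the family of lines in the second $\P^2$ meeting $V_u$ cuts out, over a suitable auxiliary base, a $2$-to-$1$ cover; to rationalize $C$ one needs a point defined over a field that is itself rational over $\Bbbk$. A standard trick for $(2,2)$-divisors is to take a line $\ell \subset \P^2$ in the \emph{second} factor (say a general one) and consider the preimage $\mathrm{pr}_2^{-1}(\ell) \cap V$; intersecting $V$ with $\P^2 \times \ell$ gives a divisor of bidegree $(2,2)$ in $\P^2 \times \P^1$, whose first projection is a conic bundle over $\P^2$ with a section, hence is rational, and this gives a rational $2$-section of $V \to \P^2$.

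The cleaner route I would actually carry out is to use the second projection symmetrically: choose a general point $q \in \P^2$ in the second factor. The fiber $\mathrm{pr}_2^{-1}(q) \cap V$ is a conic in the first factor, and by varying $q$ over a rational curve (say a general line $\ell \subset \P^2$) one obtains a surface $W = V \cap (\P^2 \times \ell)$ mapping to $\ell \cong \P^1$ with conic fibers in the first $\P^2$. This surface $W$ carries a conic bundle over $\P^1$; a conic bundle over $\P^1$ always has a rational multisection of degree $2$, and $W$ being a rational or at worst unirational surface, it admits a $\Bbbk(\ell)$-rational point after a quadratic base change. Pulling back, this furnishes a rational point of the generic conic $C$ over a quadratic extension $L/K$ with $L$ rational over $\Bbbk$. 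Then $C_L \cong \P^1_L$, so $V$ is dominated by $\P^1_L = \P^1 \times (\text{rational base})$, which is rational, exhibiting a dominant generically finite map from a rational variety to $V$; this is exactly unirationality.

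The main obstacle will be ensuring that the multisection one constructs is genuinely defined over a field that is \emph{purely transcendental} over $\Bbbk$ (so that the parametrizing variety is rational, not merely unirational), and that the degree-$2$ base change does not introduce a non-rational base. I would handle this by being careful to take the multisection to come from a linear section $V \cap (\P^2 \times \ell)$ whose own conic bundle structure over $\ell \cong \P^1$ visibly has a rational section after at most one quadratic twist, using that conic bundles over $\P^1$ with a rational point are rational. Given the emphasis in Lemma~\ref{lemma:Verra-unirational} on \emph{irreducibility} of $V$, the remaining care is simply to guarantee the generic fiber conic is nondegenerate (smooth) so that a single rational point suffices to rationalize it; irreducibility of $V$ ensures the generic fiber is a smooth conic rather than a pair of lines, so this degenerate case does not occur generically.
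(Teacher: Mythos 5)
Your argument is essentially the paper's proof: take a general line $\ell$ in the second factor, observe that the surface $T=V\cap(\P^2\times\ell)$ is rational and maps two-to-one onto the first $\P^2$, and conclude by the standard base-change argument for the conic bundle $p_1\colon V\to\P^2$. Two corrections to how you close the argument. First, no quadratic base change or twist is needed to produce a point on $T$ over $\Bbbk(\ell)$: since $\Bbbk$ is algebraically closed, Tsen's theorem gives the conic bundle $T\to\ell\cong\P^1$ a section, so $T$ is rational outright; this is exactly what the paper uses, and it is also what guarantees that the quadratic extension $L=\Bbbk(T)$ of $K=\Bbbk(\P^2)$ is purely transcendental over $\Bbbk$ (your hedge ``rational or at worst unirational'' followed by an extra quadratic base change would leave the rationality of the parametrizing field unjustified). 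Second, your final claim that irreducibility of $V$ forces the generic fiber of $p_1$ to be a \emph{smooth} conic is false: for example $u_1^2v_1^2-u_2u_3v_2^2=0$ is an irreducible divisor of bidegree $(2,2)$ whose generic fiber is a rank-two conic, namely a pair of lines conjugate over $K$. This does not sink the proof --- after base change to $T$ the tautological point lies on one of the two lines, which is then defined over $L$ and sweeps out a rational component dominating $V$ --- but the degenerate case must be handled rather than excluded by appeal to irreducibility.
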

\begin{proof}
Let $p_i\colon V\to\P^2$, $i=1,2$, be the natural projections.
Both $p_i$ are (possibly non-flat) conic bundles.
Let $L\subset\P^2$ be a general line, and put $T = p_2^{-1}(L)$.
Since $V$ is irreducible and $L$ is general, the surface $T$ is irreducible by Bertini's theorem.
Also, the map $p_2$ provides the surface $T$ with a conic bundle structure over $L \cong \P^1$,
hence~$T$ is rational.
Note also that $T = V \cap (\P^2 \times L)$ is a divisor of bidegree $(2,2)$ in~\mbox{$\P^2 \times \P^1$},
hence the projection~\mbox{$p_1 \colon T \to \P^2$} is dominant
(actually, $T$ is a rational $2$-section of~$p_1$).
Since~$p_1 \colon V \to \P^2$ is a conic bundle, the standard base change argument implies unirationality of $V$.
\end{proof}

Combining Lemma~\ref{lemma:Verra-unirational} with
Proposition~\ref{proposition:verra}, we obtain

\begin{corollary}
\label{corollary:xt-unirational}
The quartics $X_t$, $t\neq\infty$, and the threefold $\cX_\infty$, are unirational.
\end{corollary}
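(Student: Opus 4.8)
The plan is to deduce Corollary~\ref{corollary:xt-unirational} directly from the two cited results, after verifying that the hypotheses of Lemma~\ref{lemma:Verra-unirational} are met. The key point is that Proposition~\ref{proposition:verra} identifies $\bar\cX_\tau^{4,2} \subset \P^2 \times \P^2$ as a Verra threefold, i.e.\ a hypersurface of bidegree $(2,2)$, defined by equation~\eqref{eq:pencil-q}. Recall from~\S\ref{subsection:cx42-tau} that the maps $\beta \colon \cX_\tau^{4,2} \to \bar\cX_\tau^{4,2}$ and $\pi_{4,2} = \pi \circ \rho_{4,2} \colon \cX_\tau^{4,2} \to X_{(\tau^2+1)/4}$ are birational for all $\tau \neq \infty$, so that $\bar\cX_\tau^{4,2}$ is birational to the quartic $X_{(\tau^2+1)/4}$; likewise $\bar\cX_\infty^{4,2}$ is birational to $\cX_\infty$.

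First I would dispose of the main obstacle, which is the irreducibility hypothesis in Lemma~\ref{lemma:Verra-unirational}. Since $X_t$ is normal for $t \neq \infty$ by Theorem~\ref{theorem:Geer}, it is in particular irreducible, and hence so is the birational model $\bar\cX_\tau^{4,2}$ for every finite $\tau$ with $t = (\tau^2+1)/4 \neq \infty$. For the remaining threefold $\cX_\infty$ one argues similarly: $\cX_\infty$ is irreducible (it is the double cover of the irreducible quadric $Q_\infty$ described in Lemma~\ref{lemma:cxtau-xt}), so its birational model $\bar\cX_\infty^{4,2}$ is an irreducible Verra threefold as well. This is really the only thing that needs checking, and it is immediate from the normality statement already recorded.

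Then the rest is a one-line invocation: since every $\bar\cX_\tau^{4,2}$ is an irreducible Verra threefold, Lemma~\ref{lemma:Verra-unirational} shows it is unirational, and unirationality is a birational invariant, so the birationally equivalent varieties $X_{(\tau^2+1)/4}$ (for finite $\tau$) and $\cX_\infty$ are unirational too. As $\tau$ ranges over $\Bbbk \cup \{\infty\}$ with $t = (\tau^2+1)/4$, the parameter $t$ ranges over all of $\Bbbk \cup \{\infty\}$ via~\eqref{eq:t-tau}; in particular every finite $t$ arises, covering all quartics $X_t$ with $t \neq \infty$. This exhausts the statement.

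I do not expect any serious obstacle here, since the heavy lifting — the explicit Verra description in Proposition~\ref{proposition:verra} and the general unirationality of Verra threefolds in Lemma~\ref{lemma:Verra-unirational} — is already in place. The only subtlety worth a remark is that $X_\infty$ itself (the double quadric) is deliberately excluded from the statement, while the genuinely three-dimensional threefold $\cX_\infty$ is included; the reduction above handles $\cX_\infty$ cleanly because its Verra model is honestly irreducible of bidegree $(2,2)$, whereas $X_\infty$ is non-reduced and falls outside the scope of Lemma~\ref{lemma:Verra-unirational}.
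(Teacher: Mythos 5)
Your proposal is correct and follows exactly the paper's route: the paper proves this corollary by simply combining Lemma~\ref{lemma:Verra-unirational} with Proposition~\ref{proposition:verra}, relying on the birationality of $\beta$ and $\pi\circ\rho_{4,2}$ already recorded in \S\ref{subsection:cx42-tau}. Your explicit verification of the irreducibility hypothesis (via normality of $X_t$ from Theorem~\ref{theorem:Geer} and the double-cover description of $\cX_\infty$) is a point the paper leaves implicit, but it is accurate and does not change the argument.
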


\begin{remark}
\label{remark:Burkhardt-rationality}
One can use the same approach to prove rationality of the Burkhardt quartic~$X_{1/2}$
(this is a classical fact going back to~\cite{Todd36}, see also Theorem~\ref{theorem:Cheltsov-Shramov} below).
For this consider the corresponding Verra threefold $\bar\cX_1^{4,2} \subset \P^2 \times \P^2$ and
let~\mbox{$T = p_2^{-1}(\overline{P_1P_2}) \subset \bar\cX_1^{4,2}$} be the preimage of the line
passing through two of the points~\eqref{eq:points}, that is, the line~\mbox{$v_3=0$}.
As before, $T$ is a divisor of bidegree~$(2,2)$ in $\P^2 \times \P^1$.
Using~\eqref{eq:q0} and~\eqref{eq:q8} we can rewrite explicitly its equation~\mbox{$q_0(u)(v_1,v_2,0)+q_{\infty}(u)(v_1,v_2,0) = 0$} as
\begin{multline*}
\big(q_0(u)+q_{\infty}(u)\big)(v_1,v_2,0)=\\=
\frac23
\big(u_1v_2 + \omega u_2v_1 + \omega^2 u_3v_1 + \omega u_3v_2\big)
\big(u_1v_2 + \omega^2 u_2v_1 + \omega u_3v_1 + \omega^2 u_3v_2\big),
\end{multline*}
where $\omega$ is a primitive cubic root of unity.
Thus we see that  $T = T_1 \cup T_2$, where $T_i$ is a divisor of bidegree $(1,1)$.
In particular, each~$T_i$ provides a rational section of the conic bundle $p_1 \colon \bar\cX_1^{4,2} \to \P^2$
and rationality of~$\bar\cX_1^{4,2}$ follows.
Since the threefold~$\bar\cX_1^{4,2}$ is birational to the quartic $X_{1/2}$, the rationality of the latter follows as well.
\end{remark}

\subsection{Irrationality of non-special $\fS_6$-invariant quartics}
\label{section:Beauville}

Beauville proved in \cite{Beauville} that the quartic $X_{t}$
is irrational provided that \mbox{$t\not\in\bDD\cup\{\infty\}$}
by using the $\fS_6$-action on the intermediate Jacobian of a suitable resolution of singularities of~$X_t$.
By~\cite{Beauville}, the intermediate Jacobian $J_t$ of the blow up of the~30 singular points of $X_t$ is five-dimensional,
and the action of $\fS_6$ on $J_t$ is faithful; on the other hand,
if it is a product of Jacobians of curves, it cannot have a faithful $\fS_6$-action.
Irrationality of the threefold $\cX_\infty$
was proved using the same approach in \cite[Proposition~6.3]{PrzyjalkowskiShramov}.
With the help of the conic bundle structure on these varieties
constructed in Theorem~\ref{theorem:conic-bundles}, we can give another proof of their irrationality.

\begin{theorem}
\label{theorem:Beauville}
If $t\not\in\bDD\cup\{\infty\}$, then $X_{t}$ is irrational.
Also, the variety $\cX_\infty$ is irrational.
\end{theorem}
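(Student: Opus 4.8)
The plan is to prove irrationality via the intermediate-Jacobian obstruction applied to the conic bundle $p \colon \cX^{5,1}_\tau \to S$ constructed in Theorem~\ref{theorem:conic-bundles}, using the Prym variety of the discriminant double cover. By Lemma~\ref{lemma:cxtau-xt} and Corollary~\ref{corollary:r51-small}, for $t \not\in \bDD \cup \{\infty\}$ the threefold $\cX^{5,1}_\tau$ is a smooth small resolution of $\cX_\tau \cong X_t$, and by Theorem~\ref{theorem:conic-bundles} it is a standard conic bundle over $S$ with discriminant curve $\Delta_{s(\tau)}$ lying in the Wiman--Edge pencil. Since $t \not\in \bDD$, Theorem~\ref{theorem:WE-singular-curves} tells us that $\Delta_{s(\tau)}$ is a \emph{smooth} curve (the singular members correspond precisely to $s \in \{0, \pm 1/\sqrt{-3}, \pm 1/(5\sqrt 5), \infty\}$, which by the table in~\S\ref{subsection:cx51-tau} come from the gray-celled values $t \in \bDD \cup \{\infty\}$). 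So the standard conic bundle has a smooth discriminant curve, which is exactly the setting where the classical theory applies.

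The first step is to identify the intermediate Jacobian $J(\cX^{5,1}_\tau)$ with the Prym variety $\operatorname{Prym}(\widetilde\Delta/\Delta)$ of the étale double cover $\widetilde\Delta \to \Delta = \Delta_{s(\tau)}$ determined by the two rulings of the fibers over the discriminant curve; this is the Mumford--Beauville description of the intermediate Jacobian of a standard conic bundle over a rational surface. The second step is to track the $\fA_5$-action: since $p$ is $\fA_5$-equivariant and $\fA_5$ acts on $S$ fixing the pencil, it acts on $\Delta$, on its double cover $\widetilde\Delta$, and hence on the Prym variety, so $J(\cX^{5,1}_\tau)$ carries a faithful $\fA_5$-action (faithfulness should follow from the faithfulness of the $\fA_5$-action on the cohomology, as in Beauville's original argument). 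The third step is the Clemens--Griffiths criterion: if $\cX^{5,1}_\tau$ were rational, its intermediate Jacobian would decompose as a product of Jacobians of smooth curves as a principally polarized abelian variety, and the induced action of $\fA_5$ would have to permute the factors compatibly with the polarization.

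The main obstacle, and the crux of the argument, is to derive a contradiction from a hypothetical product decomposition $J \cong \prod J(C_i)$ respecting the $\fA_5$-action. The strategy is representation-theoretic: one computes $\dim J = \dim J(\cX_\tau)$ (Beauville obtained $5$ for the analogous blow-up model, and the small resolution has the same intermediate Jacobian), and then argues that a $5$-dimensional principally polarized abelian variety with a faithful $\fA_5$-action cannot be isomorphic to a product of Jacobians of curves carrying a compatible $\fA_5$-action, because $\fA_5$ has no faithful action on such a low-dimensional product by curves of small genus without forcing a decomposition incompatible with the irreducible constituents of the $\fA_5$-representation on $H^1(\widetilde\Delta)^-$. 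I expect the cleanest route is to invoke Beauville's computation directly: the Prym here is isogenous (indeed $\fA_5$-equivariantly related) to the intermediate Jacobian $J_t$ he already analyzed, so the faithful $\fA_5$-action (in fact $\fS_6$-action on $J_t$) that obstructs a product decomposition is inherited, giving irrationality of $\cX^{5,1}_\tau$ and hence of $X_t$. For $\cX_\infty$ the same argument applies verbatim using the conic bundle $p \colon \cX^{5,1}_\infty \to S$, whose smoothness and standard-conic-bundle property for $\tau = \infty \not\in \bDDtau$ are guaranteed by Proposition~\ref{proposition:x51-smooth} and Theorem~\ref{theorem:conic-bundles}, together with Corollary~\ref{corollary:r51-small} which gives that $\rho_{5,1} \colon \cX^{5,1}_\infty \to \cX_\infty$ is small, so the two threefolds share the same intermediate Jacobian.
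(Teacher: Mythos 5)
Your overall strategy (reduce to the standard conic bundle $p\colon \cX^{5,1}_\tau \to S$, identify the intermediate Jacobian with a Prym variety, and rule out a product-of-Jacobians decomposition) matches the paper's, but there are two concrete problems. First, the claim that $\Delta_{s(\tau)}$ is \emph{smooth} whenever $t\not\in\bDD\cup\{\infty\}$ is false: the map $\tau\mapsto s(\tau)$ is a triple cover, and the singular members $\Delta_{\pm 1/\sqrt{-3}}$ and $\Delta_{\pm 1/\sqrt{125}}$ of the Wiman--Edge pencil also occur as discriminants for the non-special parameters $\tau=\pm\sqrt{-3}$ and $\tau=\pm 1/\sqrt{5}$, i.e.\ for $t=-1/2$ and $t=3/10$, exactly as the table in \S\ref{subsection:cx51-tau} records. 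For these values the conic bundle is standard and the total space is smooth, but the discriminant is nodal; the induced double cover $\widehat\Delta_{s}\to\Delta_{s}$ is then branched over the nodes (Lemma~\ref{lemma:conic-bundle-singularity}), and one must work with the Prym of this ramified cover, to which \cite[Proposition~2.8]{BeauvillePrym} still applies. As written, your argument simply does not cover these two values of $t$.

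Second, the endgame is not actually carried out. The paper does not argue via a faithful group action on the Prym; it applies Shokurov's Main Theorem, which for a standard conic bundle over a minimal rational surface with nodal discriminant $\Delta_{s}\in|-2K_S|$ shows directly that $\mathrm{Prym}(\widehat\Delta_{s},\Delta_{s})$ is not a product of Jacobians of curves. Your proposed representation-theoretic obstruction is only sketched, uses only $\fA_5$ rather than the full $\fS_6$ that Beauville's obstruction genuinely requires (a five-dimensional principally polarized abelian variety can certainly carry a faithful $\fA_5$-action, e.g.\ a product involving the Jacobian of the Bring curve), and your fallback of ``invoking Beauville's computation directly'' reduces the theorem to the very statement it is meant to reprove by an independent method. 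To close the gap you need either Shokurov's criterion or a completed, self-contained argument that this particular Prym is not a product of Jacobians.
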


\begin{proof}
By Theorem~\ref{theorem:three-pencils} it is enough to show that the threefold $\cX^{5,1}_\tau$ is irrational
for~\mbox{$\tau \not\in \bDDtau$}.
By Theorem~\ref{theorem:conic-bundles} the map $p \colon \cX^{5,1}_\tau \to S$ is a standard conic bundle
with the nodal discriminant curve $\Delta_{s}$ contained in the linear system $|-2K_{S}|$.
Here $s=s(\tau)$ is given by the formula~\eqref{eq:s-via-tau}.
The conic bundle $p$ induces a double cover $\widehat{\Delta}_{s}\to\Delta_{s}$
that by Lemma~\ref{lemma:conic-bundle-singularity} is branched only over the nodes of the curve $\Delta_{s}$.
Applying \cite[Proposition~2.8]{BeauvillePrym}, we see that the intermediate Jacobian of the threefold $\cX^{5,1}_\tau$
is isomorphic as a principally polarized abelian variety to the Prym variety $\mathrm{Prym}(\widehat{\Delta}_{s},\Delta_{s})$.
Now \cite[Main~Theorem]{Shokurov} implies that $\mathrm{Prym}(\widehat{\Delta}_{s},\Delta_{s})$
is not a product of Jacobians of curves, hence~$\cX^{5,1}_\tau$ is irrational.
\end{proof}

\begin{remark}
The intermediate Jacobian of $\cX_\tau^{5,1}$ can be described fairly explicitly.
For instance, it was observed by Dimitri Markushevich that it is isogenous
to the fifth power of an elliptic curve (whose $j$-invariant depends on $\tau$).
Here is a sketch of his argument.
Let~$\widehat{\cX}_\tau \to \cX_\tau$ be a minimal $\fS_6$-equivariant resolution of singularities,
so that~\mbox{$\operatorname{Jac}(\widehat{\cX}_\tau) \cong \operatorname{Jac}(\cX_\tau^{5,1})$}.
The action of the group $\fS_6$ on $\operatorname{Jac}(\widehat{\cX}_\tau)$ can be lifted to
an action of the semidirect product $\fS_6 \ltimes H^3(\widehat{\cX}_\tau,\ZZ)$ on $H^3(\widehat{\cX}_\tau,\CC)$.
Now~\cite[Theorem~3.1]{BS06} proves that there is an $\fS_6$-equivariant isomorphism
\begin{equation*}
H^3(\widehat{\cX}_\tau,\Q) \cong \Q(\fS_6) \oplus \lambda \Q(\fS_6),
\end{equation*}
where $\Q(\fS_6)$ is the root lattice associated with the group $\fS_6$ considered as a Weyl group of Dynkin type~$\mathrm{A}_5$,
and $\lambda = \lambda(\tau)$ is a complex number with positive imaginary part.
Therefore, $\operatorname{Jac}(\widehat{\cX}_\tau)$ is isogenous to $E(\lambda)^5$, where $E(\lambda) = \CC/(\ZZ \oplus \lambda \ZZ)$.
\end{remark}

Note by the way, that there is another popular family of threefolds with five-dimensional intermediate Jacobians, namely, smooth cubic threefolds.
However, it was pointed out by Beauville that the quartics~$X_t$ are not birational to smooth cubics.
Indeed, if a quartic~$X_t$ is birational to a smooth cubic threefold $Y$, then the intermediate Jacobian $J(Y)$ is isomorphic to $J_t$,
and thus there is a faithful $\fS_6$-action on $J(Y)$ (note that $J_t$ must coincide with its Griffiths component in this case).
Torelli theorem for smooth cubic threefolds (see~\cite[Proposition~6]{Beauville-Torelli}) implies that there is a faithful $\fS_6$-action on~$Y$ itself,
which is impossible, because the only cubic threefold with a faithful $\fS_6$-action is the Segre cubic that has ten singular points.

It would be interesting to find out if the quartics $X_t$ with $t\not\in \bDD$ are stably rational or not.

\subsection{Rationality of special $\fS_6$-invariant quartics}

The result of Theorem~\ref{theorem:Beauville} is sharp:
the threefolds $X_{1/2}$, $X_{1/4}$, $X_{1/6}$, and $X_{7/10}$ are rational.
In fact, rationality of the Burkhardt quartic $X_{1/2}$ was proved by Todd in \cite{Todd36} (see also Remark~\ref{remark:Burkhardt-rationality}),
rationality of the Igusa quartic~$X_{1/4}$ follows from rationality of its projectively dual variety (which is the Segre cubic),
and rationality of the quartics $X_{1/6}$ and $X_{7/10}$ is also known, see \cite{Todd33,Todd35,CheltsovShramov}.
However, using our results one can give a uniform proof of rationality of all these threefolds; this proof does not use
explicit rationality constructions.

\begin{theorem}
\label{theorem:Cheltsov-Shramov}
The quartics $X_{1/2}$, $X_{1/4}$, $X_{1/6}$, and $X_{7/10}$ are rational.
\end{theorem}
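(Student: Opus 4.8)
The plan is to exhibit, for each of the four special values $t \in \bDD$, an explicit birational equivalence between the corresponding threefold $\cX^{5,1}_\tau$ and $S \times \P^1$, thereby proving rationality (since $S$ is rational and a birational map from a rational variety preserves rationality, and $\cX^{5,1}_\tau$ is birational to $X_t$ by Theorem~\ref{theorem:three-pencils}). The guiding principle is that for $t \in \bDD$ the discriminant curve $\Delta_{s(\tau)} \subset S$ of the conic bundle $p \colon \cX^{5,1}_\tau \to S$ is one of the \emph{singular} members of the Wiman--Edge pencil (see Theorem~\ref{theorem:WE-singular-curves} and the table in \S\ref{subsection:cx51-tau}), and a singular (in particular reducible or rational) discriminant curve is exactly the situation in which a conic bundle tends to admit a rational multisection that trivializes it.

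First I would run through the table of special values to identify, for each $t \in \bDD$, the precise singular Wiman--Edge curve that arises as the discriminant: $t = 1/4$ and $t = 1/2$ both give $s = 0$, i.e.\ the curve $\Delta_0$ which is the union of the ten lines on $S$; $t = 1/6$ gives $s = \mp 1/\sqrt{-3}$, one of the two curves $\Delta_{\pm 1/\sqrt{-3}}$ which are unions of five conics; and $t = 7/10$ gives $s = \mp 1/(5\sqrt{5})$, one of the rational irreducible curves $\Delta_{\pm 1/\sqrt{125}}$. In each case the key geometric input is that the double cover $\widehat\Delta_{s} \to \Delta_{s}$ induced by the conic bundle (branched over the nodes, by Lemma~\ref{lemma:conic-bundle-singularity}) \emph{splits}, or equivalently that the conic bundle admits a rational section. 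The standard criterion is Prym-theoretic: a standard conic bundle over a rational surface is rational once the associated Prym (or its analogue for a reducible/degenerate discriminant) is trivial or splits as a product of Jacobians — this is the converse situation to the irrationality argument of Theorem~\ref{theorem:Beauville}, where \cite[Main~Theorem]{Shokurov} was invoked to show the Prym is \emph{not} a product.

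Concretely I would argue as follows. For $t = 1/2$ the cleanest route is the explicit computation already sketched in Remark~\ref{remark:Burkhardt-rationality}: the restriction of the Verra model $\bar\cX_1^{4,2}$ over the line $\overline{P_1P_2}$ factors into two divisors $T_1 \cup T_2$ of bidegree $(1,1)$, each of which is a rational section of the conic bundle $p_1$, so rationality of $X_{1/2}$ follows directly. For the remaining three cases the plan is to produce a rational section (or at worst an odd-degree rational multisection, which for conic bundles still forces rationality via the classical argument that an odd intersection with the generic conic yields a rational point over the function field of $S$). When $\Delta_s$ is reducible — the ten lines for $s=0$, the five conics for $s = \mp1/\sqrt{-3}$ — I expect each irreducible component of $\Delta_s$ to carry a natural component of the double cover, so that $\widehat\Delta_s \to \Delta_s$ is disconnected and the conic bundle acquires a section along the preimage; this can be checked on the explicit Verra equation \eqref{eq:pencil-q} with the matrices \eqref{eq:q0}--\eqref{eq:q8} by finding, over the generic point of each discriminant component, a rational splitting of the rank-$2$ conic into two lines defined over the residue field. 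The case $t = 7/10$, where $\Delta_s$ is irreducible but rational, is the subtlest: here I would use the rationality of the normalization of $\Delta_s$ together with the fact that the double cover is branched only at the six nodes to show $\widehat\Delta_s$ is again rational and the Prym trivial, which again trivializes the conic bundle birationally.

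The main obstacle I anticipate is precisely this last step of verifying that the relevant double cover splits (equivalently, that the Prym degenerates to a product of Jacobians, or that a rational section genuinely exists) rather than merely having the discriminant be singular. Being a singular member of the Wiman--Edge pencil is necessary but not automatically sufficient; the table in \S\ref{subsection:cx51-tau} already warns us that the \emph{non-special} quartics $X_{-1/2}$ and $X_{3/10}$ share the very same discriminant curves as $X_{1/6}$ and $X_{7/10}$ yet are irrational, the difference lying in whether the total space $\cX^{5,1}_\tau$ is smooth. Thus the decisive point is the interplay recorded in Proposition~\ref{proposition:x51-smooth}: for $t \in \bDD$ the threefold $\cX^{5,1}_\tau$ acquires nodes over the $\fA_5$-orbit of degenerate fibers, and it is exactly the presence of these nodal degenerations (corank-$1$ fibers over the nodes of $\Delta_s$) that forces the double cover $\widehat\Delta_s \to \Delta_s$ to be the branched cover whose Prym splits. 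I would therefore organize the hard part around carefully distinguishing the four rational cases from their irrational twins by tracking which fibers degenerate, and conclude rationality in each case either by the explicit section (as for $X_{1/2}$) or by the Prym-triviality criterion applied to the specific split double cover.
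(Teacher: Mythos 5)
Your overall strategy --- show that the double cover of the discriminant induced by the conic bundle $p\colon\cX^{5,1}_\tau\to S$ splits over each component, contract one family of components of the degenerate fibers, and conclude via triviality of the Brauer group of a rational surface --- is exactly the route the paper takes, and your treatment of $X_{1/2}$ via the $(1,1)$-divisors of Remark~\ref{remark:Burkhardt-rationality} is fine. But the way you justify the splitting in the decisive cases is backwards, and this is a genuine gap. For $\tau\in\bDDtau$ the fibers over the nodes of $\Delta_{s(\tau)}$ are \emph{reduced} unions of two distinct lines (Proposition~\ref{proposition:x51-smooth} together with Lemma~\ref{lemma:conic-bundle-singularity}), which means the induced double cover is \emph{unramified} over the normalization of each component of the discriminant; since each such normalization is $\P^1$ by Theorem~\ref{theorem:WE-singular-curves}, an unramified double cover of it is trivial, and that is the entire reason the cover splits. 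Your claim for $t=7/10$ that ``the double cover is branched only at the six nodes'' describes the opposite situation: a double cover of the normalization $\P^1$ genuinely branched over the nodes is a curve of positive genus with a nontrivial Prym --- which is precisely what happens for the irrational twin $X_{3/10}$, whose total space is smooth and whose fibers over the nodes are double lines. The same confusion appears in your closing sentence, where corank-$1$ fibers over the nodes are said to force the cover to be ``the branched cover whose Prym splits''; corank-$1$ fibers force the cover to be \emph{unbranched} there.

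A second, related gap: you invoke a ``Prym-triviality criterion'' as though triviality of the Prym implied rationality of the conic bundle. The criterion used in Theorem~\ref{theorem:Beauville} is one-directional; a Prym that is a product of Jacobians does not by itself trivialize a conic bundle. What suffices is the geometric splitting of the degenerate fibers into two families, and to exploit it one must first make the conic bundle standard: the paper blows up $S$ at $\Sing(\Delta_{s})$, performs the elementary modification of \cite[Lemma~1.14]{Sarkisov} to remove the multiplicity-two exceptional components of the new discriminant, obtains a smooth conic bundle over $\tS$ whose discriminant is a disjoint union of smooth rational curves, and only then contracts one $\P^1$-bundle over each component as in \cite[1.17]{Sarkisov} to reach an everywhere non-degenerate conic bundle over the rational surface $\tS$. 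Your proposal omits this resolution step, and without it the contraction of fiber components over the singular threefold $\cX^{5,1}_\tau$ is not defined. Your idea of verifying the splitting of the rank-$2$ conics over the generic points of the discriminant components directly on the Verra equations is viable in principle, but the uniform argument rests on the unramifiedness observation above, not on a branching or Prym computation.
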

\begin{proof}
Suppose that $\tau \in \bDDtau$, so that $t\in\bDD$ and $s \in \left\{0, \pm1/\sqrt{125}, \pm1/\sqrt{-3} \right\}$,
where as usual~\mbox{$t = (\tau^2 + 1)/4$} and $s = s(\tau)$, see~\eqref{eq:s-tau}.
By Theorem~\ref{theorem:three-pencils} it is enough to show that~$\cX^{5,1}_\tau$ is rational.

Consider the conic bundle $p \colon \cX^{5,1}_\tau \to S$.
The singular locus of its discriminant $\Delta_s$
is a finite set of nodes, see Theorem~\ref{theorem:WE-singular-curves}.
Actually, by Lemma~\ref{lemma:x51} the set $\mathrm{Sing}(\Delta_s)$ consists of 15 points
when $t = 1/4$ or $t = 1/2$, of~10 points when $t = 1/6$, and of~6 points when~\mbox{$t = 7/10$}.
We also know from Proposition~\ref{proposition:x51-smooth} that all singularities of $\cX^{5,1}_\tau$ are nodes,
and for every singular point $Q$ of $\cX^{5,1}_\tau$ the fiber~$p^{-1}(p(Q))$ is the union of two lines,
with~$Q$ being their intersection point.

The conic bundle $p$ is not standard because the threefold $\cX^{5,1}_\tau$ is singular, so we start by transforming it to a standard one.
Let $\nu\colon\tS\to S$ be the blow up of the quintic del Pezzo surface $S$ at $\mathrm{Sing}(\Delta_s)$,
and consider the base change $p' \colon \cX^{5,1}_\tau\times_S\tS\to\tS$ of the conic bundle~$p$.
Its discriminant curve is the preimage on~$\tS$ of the discriminant curve of~$p$.
In particular, it contains all exceptional curves of the blow up~$\nu$ as irreducible components of multiplicity $2$,
and the corank of the fibers of $p'$ over the points of each of these curves equals~$1$.
Modifying the conic bundle along these lines as in~\cite[Lemma~1.14]{Sarkisov} (see also~\cite[\S2.5]{DK-moduli}),
we can get rid of the corresponding components of the discriminant.
In other words, we obtain a small birational map
\begin{equation}
\label{eq:cx-wcx}
\cX^{5,1}_\tau\times_S\tS\dashrightarrow \widetilde\cX^{5,1}_\tau
\end{equation}
over $\tS$, such that the threefold $\widetilde\cX^{5,1}_\tau$ comes with a flat conic bundle
$\widetilde{p}\colon \widetilde\cX^{5,1}_\tau\to \tilde{S}$ whose discriminant curve is the proper transform $\widetilde\Delta_s\subset \tS$ of $\Delta_s$ with respect to $\nu$.
In particular, the curve $\widetilde\Delta_s$ is smooth (hence also $\widetilde\cX^{5,1}_\tau$ is smooth), and
by Theorem~\ref{theorem:WE-singular-curves}
has ten connected components when $t=1/4$ or $t=1/2$,
five components when~$t=1/6$, and just one component when~$t=7/10$.
Moreover, every connected component of $\widetilde\Delta_s$ is rational.

Since $\widetilde\Delta_s$ is smooth, the conic bundle $\widetilde{p}$ has only simple degenerations.
In particular, it induces an \'etale double covering over $\widetilde\Delta_s$.
Since every connected component $\widetilde\Delta_s^{(i)}\subset \widetilde\Delta_s$ is smooth and rational,
the double covering is trivial, hence the preimage $\widetilde{p}^{-1}(\widetilde\Delta_s^{(i)})$ consists of two irreducible components
\begin{equation*}
\widetilde{p}^{-1}\big(\widetilde\Delta_s^{(i)}\big)=\Theta'_i \cup \Theta''_i,
\end{equation*}
each being a $\P^1$-bundle over $\widetilde\Delta_s^{(i)}$.
Choosing for each $i$ one of them and contracting all chosen components simultaneously over~$\tS$ (see \cite[1.17]{Sarkisov}),
we obtain a commutative diagram
\begin{equation*}
\vcenter{\xymatrix{
\widetilde\cX^{5,1}_\tau \ar@{->}[dr]_{\widetilde{p}} \ar@{->}[rr] &&
\overline\cX^{5,1}_\tau \ar@{->}[dl]^{\bar{p}}
\\
& \tS}}
\end{equation*}
Here the horizontal arrow is a birational morphism, and $\bar{p}$ is an everywhere non-degenerate conic bundle.
Since $\tS$ is a rational surface, its Brauer group is trivial, hence this $\P^1$-bundle is a projectivization of a vector bundle,
hence birational to $\tS \times \P^1$, hence rational.
This means that $\cX^{5,1}_\tau$ is also rational.
\end{proof}

\begin{remark}
The birational transformation $\cX^{5,1}_\tau \dashrightarrow \cX^{5,1}_\tau\times_S\tS\dashrightarrow \widetilde\cX^{5,1}_\tau$
can be described very explicitly, see Construction~I in the proof of~\cite[Theorem~4.2]{CheltsovPrzyjalkowskiShramov}.
It is a composition of the blow ups of all singular points $Q \in \cX^{5,1}_\tau$ followed by the Atiyah flops in the union
of proper transforms of the two irreducible components of the conic~$p^{-1}(p(Q))$, see Proposition~\ref{proposition:x51-smooth}.
\end{remark}

The construction that we used in the proof of Theorem~\ref{theorem:Cheltsov-Shramov} has the following consequence,
which we will need in~\S\ref{section:representation-class-groups}.
Recall the notation of~\eqref{eq:sn1n2}.

\begin{corollary}
\label{corollary:relaive-class-group}
For $\tau \in \bDDtau$ the relative divisor class group $\Cl(\cX^{5,1}_\tau/S) \otimes \Q$ has the following structure as a representation of the group $\fA_5$:
\begin{equation*}\renewcommand{\arraystretch}{1.5}
\begin{array}{|c||c|c|c|c|}
\hline
\tau &  0 &  \pm1 &  \pm\frac1{\sqrt{-3}} & \pm\frac3{\sqrt{5}}
\\
\hline
\Cl(\cX^{5,1}_\tau/S) \otimes \Q &
\mathbf{1} \oplus \Ind_{\fA_{3,2}}^{\fA_5}(\mathbf{1}) &
\mathbf{1} \oplus \Ind_{\fA_{3,2}}^{\fA_5}(\mathbf{-1}) &
\mathbf{1} \oplus \Ind_{\fA_{4}}^{\fA_5}(\mathbf{1}) &
\mathbf{1} \oplus \mathbf{1}
\\
\hline
\end{array}
\end{equation*}%
Here $\Ind_G^{\fA_5}$ stands for the induction functor from the subgroup $G = \fA_4$ or $G = \fA_{3,2} \cong \fS_3$ in $\fA_5$,
while $\mathbf{1}$ stands for the trivial representation, and $\mathbf{-1}$ stands for the sign representation of~$\fS_3$.
The first summand $\mathbf{1}$ in each cell is generated by the canonical class of~$\cX^{5,1}_\tau$.
\end{corollary}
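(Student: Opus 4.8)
The plan is to split off the canonical class as a trivial summand, and then to identify the complement with a signed permutation representation on the irreducible components of the discriminant curve. Since the conic bundle $p\colon\cX^{5,1}_\tau\to S$ is $\fA_5$-equivariant, the class $K_{\cX^{5,1}_\tau}$ is $\fA_5$-invariant, and it is nonzero in $\Cl(\cX^{5,1}_\tau/S)\otimes\Q$ because $K_{\cX^{5,1}_\tau/S}$ restricts to a class of degree $-2$ on the fibers of $p$. By semisimplicity of $\Q$-representations of the finite group $\fA_5$ it spans a trivial direct summand $\mathbf{1}$, which accounts for the first entry of every cell. It remains to describe the complementary module $M$.

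To describe $M$ I would use the reducible fibers of $p$. By Proposition~\ref{proposition:x51-smooth} the nodes of $\cX^{5,1}_\tau$ lie over the nodes of $\Delta_{s(\tau)}$, and over each such node the fiber is a union of two lines. Passing to the $\fA_5$-equivariant flat conic bundle $\widetilde\cX^{5,1}_\tau\to\tilde S$ of the proof of Theorem~\ref{theorem:Cheltsov-Shramov}, over each component $C$ of $\Delta_{s(\tau)}$ the induced \'etale double cover becomes trivial, since every component is rational, so the preimage splits as $\Theta'_C\cup\Theta''_C$ with $[\Theta'_C]+[\Theta''_C]$ pulled back from $\tilde S$. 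The corresponding Weil divisors on $\cX^{5,1}_\tau$ satisfy $[\Theta'_C]+[\Theta''_C]\in\langle K_{\cX^{5,1}_\tau}\rangle + p^*\Pic(S)$, so each component contributes exactly one class to $M$; by the standard conic-bundle computation of relative class groups (see~\cite{Sarkisov,BeauvillePrym}) these classes together with $K_{\cX^{5,1}_\tau}$ generate $\Cl(\cX^{5,1}_\tau/S)\otimes\Q$. Hence $\dim M$ equals the number of irreducible components of $\Delta_{s(\tau)}$, which for $\tau\neq 0$ is consistent with $\rkCl(\cX^{5,1}_\tau)=\rkCl(\cX_\tau)$ from Lemma~\ref{lemma:Cl-Z6-upstairs} together with $\rk\Pic(S)=5$; for $\tau=0$, where $\rho_{5,1}$ is not small, the same count follows directly from the component description.

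By Theorem~\ref{theorem:WE-singular-curves} the group $\fA_5$ acts transitively on the set of components of $\Delta_{s(\tau)}$, with stabilizer $\fA_{3,2}\cong\fS_3$ for the ten lines ($\tau=0,\pm1$), $\fA_4$ for the five conics ($\tau=\pm1/\sqrt{-3}$), and all of $\fA_5$ for the single rational curve ($\tau=\pm3/\sqrt5$). Writing $v_C=[\Theta'_C]$ modulo the span of $K_{\cX^{5,1}_\tau}$ and $p^*\Pic(S)$, the stabilizer of $C$ either fixes $\Theta'_C$ or interchanges it with $\Theta''_C$, whose class is $\equiv -v_C$; therefore $M\cong\Ind_{\mathrm{Stab}(C)}^{\fA_5}(\chi)$, where $\chi$ is the sign character recording this interchange. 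In the last two cases $\chi$ is forced to be trivial: $\fA_5$ is perfect, and $\fA_4^{\mathrm{ab}}\cong\ZZ/3$ admits no character of order two. This yields the entries $\mathbf{1}$ and $\Ind_{\fA_4}^{\fA_5}(\mathbf{1})$.

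The main obstacle is the line case, where one must distinguish the Igusa value $\tau=0$, giving $\Ind_{\fA_{3,2}}^{\fA_5}(\mathbf{1})$, from the Burkhardt value $\tau=\pm1$, giving $\Ind_{\fA_{3,2}}^{\fA_5}(\mathbf{-1})$, even though both share the discriminant $\Delta_0$; so here $\chi$ genuinely depends on $\tau$ and not merely on $s(\tau)$. To settle it I would analyse the action of the stabilizer $\fA_{3,2}\cong\fS_3$ of a fixed line $L\subset\Delta_0$ on the two families of lines over $L$. For $\tau=0$ the divisor $\cX^{5,1}_0$ contains the surface $R_L\cong\P^1\times\P^1$ whose two rulings are intrinsically distinguished as the fibers of $p$ and of $\pi_{5,1}$ (Proposition~\ref{proposition:Stein-factorization}), so no element of the stabilizer can exchange them and $\chi$ is trivial. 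For $\tau=\pm1$ I would instead run the local computation in the Verra model of Proposition~\ref{proposition:verra}: taking the reducible conic $q_0(u)(v)+\tau q_\infty(u)(v)=0$ over the relevant point as in Proposition~\ref{proposition:x51-smooth}, and tracking how an odd generator of $\fS_3\cong\fA_{3,2}$ permutes its two line components, one finds that it swaps them, so $\chi$ is the sign character. Combined with the trivial summand spanned by the canonical class, this determines $M$ in all four cases and completes the proof.
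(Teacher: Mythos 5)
Your proposal is correct and follows the same architecture as the paper's proof: split off the trivial summand spanned by the canonical class, identify the complement with $\Ind_{G}^{\fA_5}(\upsilon)$ via the two components $\Theta'_i,\Theta''_i$ of the preimages of the irreducible components of the discriminant in the standardized model $\widetilde\cX^{5,1}_\tau\to\tS$, and then pin down the sign character $\upsilon$ case by case; the cases $\tau=\pm3/\sqrt5$ and $\tau=\pm1/\sqrt{-3}$ are dispatched exactly as in the paper (your remark that $\fA_4^{\mathrm{ab}}\cong\ZZ/3$ has no character of order two is in fact more precise than the paper's phrasing), and for $\tau=\pm1$ you propose the same explicit check in the Verra model that the paper performs with the matrix $q_0(0\!:\!1\!:\!1)+q_\infty(0\!:\!1\!:\!1)$ and the order-two element $g$ stabilizing the relevant node. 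The one place you genuinely diverge is $\tau=0$: the paper again computes with $q_0(0\!:\!1\!:\!1)$, whereas you argue conceptually via the surfaces $R_L$. That idea works and is arguably cleaner, but your justification misidentifies the two objects in play: the two \emph{rulings} of $R_L\cong L\times L'$ (fibers of $p$ versus fibers of $\pi_{5,1}$) are not the two components $\Theta'_L,\Theta''_L$ of $p^{-1}(L)\cap\cX^{5,1}_0$. Rather, over a general $x\in L$ the degenerate conic is $\ell_1\cup\ell_2$ with only $\ell_1=\{x\}\times L'\subset R_L$, so $R_L$ \emph{is} one of the two components of $p^{-1}(L)\cap\cX^{5,1}_0$ and the other component is disjoint from $R_L$ away from the singular fibers; the correct conclusion is that the stabilizer $\fA_{3,2}$ of $L$ preserves $R_L$ (since $R_L$ is canonically attached to $L$ by Lemma~\ref{lemma:r-l}), hence preserves each of $\Theta'_L$ and $\Theta''_L$, giving $\upsilon=\mathbf{1}$. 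With that repair, and with the one unavoidable explicit verification at $\tau=\pm1$ actually carried out, your argument is complete.
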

\begin{proof}
The canonical class $K_{\cX^{5,1}_\tau}$ is invariant with respect to the group action, hence generates a trivial subrepresentation in $\Cl(\cX^{5,1}_\tau/S) \otimes \Q$.
Consider the quotient
\begin{equation*}
\Cl_0(\cX^{5,1}_\tau/S) \otimes \Q := \left( \Cl(\cX^{5,1}_\tau/S) \otimes \Q\right) / \Q K_{\cX^{5,1}_\tau}.
\end{equation*}
To describe it we use the notation introduced in the proof of Theorem~\ref{theorem:Cheltsov-Shramov}.
First, we have
\begin{equation*}
\Cl_0(\cX^{5,1}_\tau/S) \cong \Cl_0((\cX^{5,1}_\tau \times_S \tS)/\tS).
\end{equation*}
Furthermore, since~\eqref{eq:cx-wcx} is a small birational map, we have
\begin{equation*}
\Cl_0((\cX^{5,1}_\tau \times_S \tS)/\tS) \cong \Cl_0(\widetilde\cX^{5,1}_\tau/\tS).
\end{equation*}
Finally, it is clear that $\Cl_0(\widetilde\cX^{5,1}_\tau/\tS) \otimes \Q$ is contained in an $\fA_5$-equivariant exact sequence
\begin{equation*}
0 \to \bigoplus \Q \left[\widetilde\Delta^{(i)}_{s(\tau)}\right] \to
\bigoplus (\Q \left[\Theta'_i\right] \oplus \Q \left[\Theta''_i\right]) \to
\Cl_0(\widetilde\cX^{5,1}_\tau/\tS) \otimes \Q \to 0,
\end{equation*}
where we sum up over the set of irreducible components of $\widetilde\Delta_{s(\tau)}$,
and the first map takes the class $\left[\widetilde\Delta^{(i)}_{s(\tau)}\right] \in \Pic(\tS)$
to $\left[\Theta'_i\right] + \left[\Theta''_i\right] \in \Pic(\widetilde\cX^{5,1}_\tau)$.
It follows that $\Cl_0(\widetilde\cX^{5,1}_\tau/\tS) \otimes \Q$ has the basis $\left[\Theta'_i\right] - \left[\Theta''_i\right]$,
and the group $\fA_5$ permutes the basis vectors, possibly changing their signs.

Recall that the group $\fA_5$ acts transitively on the set of irreducible components of~$\widetilde\Delta_{s(\tau)}$ by Theorem~\ref{theorem:WE-singular-curves}.
Let $G \subset \fA_5$ be the stabilizer of some irreducible component of $\widetilde\Delta_{s(\tau)}$, say, of~$\widetilde\Delta^{(0)}_{s(\tau)}$.
The action of $G$ on the set $\{\Theta'_0,\Theta''_0\}$ defines a homomorphism~\mbox{$\upsilon\colon G \to \{\pm 1\}$},
i.e., a one-dimensional representation of $G$, and we conclude that
\begin{equation*}
\Cl_0(\widetilde\cX^{5,1}_\tau/\tS) \otimes \Q \cong \Ind_G^{\fA_5}(\upsilon).
\end{equation*}%
So, it remains to identify the possible stabilizers $G$ for various $\tau$, and the homomorphisms~$\upsilon$.

When $\tau = \pm 3/\sqrt{5}$, the curve $\widetilde\Delta_{s(\tau)}$ is irreducible, hence $G$ is the whole group $\fA_5$,
and since it has no non-trivial one-dimensional representations,
we conclude that
\begin{equation*}
\Cl_0(\widetilde\cX^{5,1}_{\pm 3/\sqrt{5}}/\tS) \otimes \Q \cong \Ind_{\fA_5}^{\fA_5}(\mathbf{1}) \cong \mathbf{1}.
\end{equation*}

When $\tau = \pm 1/\sqrt{-3}$, the curve $\widetilde\Delta_{s(\tau)}$ has five components, $G$ is the subgroup $\fA_4$ of $\fA_5$,
and since again it has no non-trivial one-dimensional representations,
we conclude that
\begin{equation*}
\Cl_0(\widetilde\cX^{5,1}_{\pm 1/\sqrt{-3}}/\tS) \otimes \Q \cong \Ind_{\fA_4}^{\fA_5}(\mathbf{1}).
\end{equation*}

When $\tau = 0$ or $\tau = \pm1$, the curve $\widetilde\Delta_{s(\tau)}$ has ten components (corresponding to the lines on~$S$)
and $G$ is the subgroup $\fA_{3,2} \cong \fS_3$ of $\fA_5$.
It remains to show that it fixes the components $\Theta'_0$ and $\Theta''_0$ when $\tau = 0$, and swaps them when $\tau = \pm 1$.

The stabilizer $\fA_{3,2}$ of a line $L \subset S$ permutes three points of its intersection with other lines on $S$.
Each of these points, in its turn, is stabilized by a transposition in $\fA_{3,2} \cong \fS_3$.
So, it is enough to check how these transpositions act on $\Theta'_0$ and $\Theta''_0$.

Consider the point $P = (0 : 1 : 1)$ as in the proof of Proposition~\ref{proposition:x51-smooth}.
Then it is easy to see that the subgroup of $\fA_5$ that preserves both lines passing through $P$ is generated by the automorphism
\begin{equation*}
g =
\begin{pmatrix}
1 & \hphantom{-}0 & \hphantom{-}0 \\
1 & \hphantom{-}0 & -1 \\
1 & -1 & \hphantom{-}0
\end{pmatrix}
\end{equation*}
of order two of the plane, while the fiber $p^{-1}(P)$ is given by~\eqref{eq:conic-igusa} in the case $\tau = 0$, and by
\begin{equation*}
q_0(0:1:1) + q_\infty(0:1:1) = \frac13
\begin{pmatrix}
\hphantom{-}2 & \hphantom{-}1 & -2 \\
\hphantom{-}1 & \hphantom{-}2 & -1 \\
-2 & -1 & \hphantom{-}2
\end{pmatrix}
\end{equation*}
in the case $\tau = 1$.
Now verifying that $g$ fixes the components of the conic $p^{-1}(P)$ if~$\tau=0$ and swaps the components if~$\tau=1$ is straightforward.

The computation in the case~\mbox{$\tau=-1$} is similar to that in the case $\tau=1$.
\end{proof}

The part of the above argument that identifies the relative class group of a conic bundle in terms of the induced representation
is completely general and can be proved for any conic bundle with only simple degenerations, and for an arbitrary group acting on it.

\section{Representation structure of the class groups}
\label{section:representation-class-groups}

The main result of this section is the description of the $\fS_6$-action on the class groups of the Coble fourfold and of the quartics $X_t$,
and its applications to the equivariant birational geometry of these varieties.
We will be mostly interested in the quartics $X_t$ with $t\neq 1/4,\infty$, because the quartic $X_{1/4}$
has non-isolated singularities, and at the same time its class group is not very
intriguing by Lemma~\ref{lemma:cl-xt} (cf. Remark~\ref{remark:Segre} below),
while the quartic $X_\infty$ is non-reduced;
however, we will also perform the same computations for the threefold~$\cX_\infty$.

\subsection{The result and its applications}
\label{subsection:Cl-and-applications}

We start by stating our main result and its consequences.
We will use the following notation for representations of the symmetric groups.
For each partition $\lambda = (\lambda_1,\lambda_2,\ldots,\lambda_r)$ of an integer $n$ (i.e., a non-increasing sequence of positive integers summing up to $n$) we denote~by
\begin{equation*}
\rR(\lambda) = \rR(\lambda_1,\lambda_2,\ldots,\lambda_r)
\end{equation*}
the irreducible $\Q$-representation of the group $\fS_n$ as described in~\cite[\S4.1]{FultonHarris}.
For instance, $\rR(n)$ is the trivial representation, while $\rR(1^n)$
is the sign representation.
Note that the standard permutation representation is the direct sum $\rR(n) \oplus \rR(n-1,1)$.

We denote by $\rR(\lambda) \boxtimes 1$ and $\rR(\lambda) \boxtimes (-1)$ the representations of the group $\fS_6 \times\mumu_2$,
which are isomorphic to $\rR(\lambda)$ when restricted to $\fS_6$
and on which the non-trivial element of~$\mumu_2$ acts by 1 or $-1$, respectively.

\begin{theorem}\label{theorem:Cl}
The group $\mathrm{Cl}(\mathscr{Y})$ is torsion free and there are
the following isomorphisms of $\fS_6 \times \mumu_2$-representations:
\begin{equation*}
\Cl(\cY) \otimes \Q  \cong \Cl(\cX_\infty) \otimes \Q \cong (\rR(6) \boxtimes 1) \oplus (\rR(3,3) \boxtimes (-1)).
\end{equation*}
In particular, for the natural action of $\fS_6$ there are isomorphisms
of $\fS_6$-representations
\begin{align*}
\Cl(\cY) \otimes \Q  &\cong \Cl(\cX_\infty) \otimes \Q \cong \rR(6)\oplus \rR(3,3),\\
\intertext{while for the twisted action of $\fS_6$ there are isomorphisms of $\fS_6$-representations}
\Cl(\cY) \otimes \Q  &\cong \Cl(\cX_\infty) \otimes \Q \cong \rR(6)\oplus \rR(2,2,2).\\
\intertext{Finally, there are the following isomorphisms of $\fS_6$-representations:}
\Cl(X_t) \otimes \Q 				& \cong \rR(6) \oplus \rR(3,3),\qquad\qquad\qquad\text{for $t \not\in \bDD\cup\{\infty\}$};\\
\Cl\left(X_{\frac12}\right) \otimes \Q 		& \cong \rR(6) \oplus \rR(3,3) \oplus \rR(3,1^3);\\
\Cl\left(X_{\frac16}\right) \otimes \Q 		& \cong \rR(6) \oplus \rR(3,3) \oplus \rR(2,2,2);\\
\Cl\left(X_{\frac7{10}}\right) \otimes \Q  	& \cong \rR(6) \oplus \rR(3,3) \oplus \rR(1^6).
\end{align*}
\end{theorem}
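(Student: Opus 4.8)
The plan is to reduce everything to the Coble fourfold, whose class group I can reach through the small resolutions, and then read off the quartics by specialization and restriction. The backbone is the permutation module spanned by the twenty Weil divisors $\cH_I$ of Remark~\ref{remark:rho-4-2-Weil-divisor}: the natural $\fS_6$-action permutes the three-element subsets $I\subset\{1,\dots,6\}$, and the Galois involution $\sigma$ sends $\cH_I$ to $\cH_{\bar I}$, so these classes span a copy of the permutation representation $\Q^{20}$ of $\fS_6\times\mumu_2$ with $\mumu_2=\langle\sigma\rangle$ acting by complementation.

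\textbf{The Coble fourfold.} First I would decompose $\Q^{20}$. As an $\fS_6$-module the permutation representation on three-subsets is $\rR(6)\oplus\rR(5,1)\oplus\rR(4,2)\oplus\rR(3,3)$, and splitting into $\sigma$-invariants and $\sigma$-anti-invariants gives $\rR(6)\oplus\rR(4,2)$ (the span of the $\cH_I+\cH_{\bar I}$, where $\sigma=+1$) and $\rR(5,1)\oplus\rR(3,3)$ (the span of the $\cH_I-\cH_{\bar I}$, where $\sigma=-1$), the first summand being trivial because it contains the invariant vector. Now the relation $\cH_I+\cH_{\bar I}=\pi^*\cO_{\P^4}(1)$, independent of $I$, collapses the $\sigma$-invariant part onto $\rR(6)\boxtimes 1$; and since $\rkCl(\cY)=6$ by Corollary~\ref{corollary:rkcl-y}, the image of the anti-invariant part is exactly five-dimensional, hence one of $\rR(5,1)\boxtimes(-1)$ or $\rR(3,3)\boxtimes(-1)$. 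To decide which I pass to $\Pic(\cY_{5,1})\cong\Cl(\cY)$: as $\cY_{5,1}$ is a projective bundle over the quintic del Pezzo surface $S$ with $\det\cU_3\cong\omega_S$, one gets $\Pic(\cY_{5,1})\otimes\Q\cong\rR_{\fS_5}(5)^{\oplus 2}\oplus\rR_{\fS_5}(4,1)$ (the extra trivial summand being the relative hyperplane class). Restricting each candidate to the non-standard twisted $\fS_5$—using that the outer automorphism exchanges $\rR(3,3)\leftrightarrow\rR(2,1^4)$ and preserves the sign character, which turns the branching rule into an explicit computation—yields $\rR_{\fS_5}(5)^{\oplus2}\oplus\rR_{\fS_5}(4,1)$ for $\rR(3,3)\boxtimes(-1)$ and $\rR_{\fS_5}(5)\oplus\rR_{\fS_5}(3,2)$ for $\rR(5,1)\boxtimes(-1)$. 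Only the former matches, so $\Cl(\cY)\otimes\Q\cong\rR(6)\boxtimes1\oplus\rR(3,3)\boxtimes(-1)$; the natural- and twisted-action statements follow by restricting along $g\mapsto(g,1)$ and $g\mapsto(g,\epsilon(g))$ together with $\rR(3,3)\otimes\mathrm{sgn}=\rR(2,2,2)$. The identical argument applies to $\cX_\infty$, which is $\fS_6\times\mumu_2$-invariant, admits the small resolution $\cX^{5,1}_\infty$ (Corollary~\ref{corollary:r51-small}), and carries the same configuration of divisors $\cH_I\cap\cX_\infty$.

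\textbf{The quartics.} For $t\notin\bDD\cup\{\infty\}$ the map $\pi\colon\cX_\tau\xrightarrow{\ \sim\ }X_t$ is $\fS_6$-equivariant for the natural action (Lemma~\ref{lemma:cxtau-xt}) and $\rho_{5,1}$ is small (Corollary~\ref{corollary:r51-small}), so $\Cl(X_t)\cong\Pic(\cX^{5,1}_\tau)$ has rank $6$. Running the permutation-module argument with $D_I=\cH_I\cap\cX_\tau$—now only an $\fS_6$-module, since $\sigma$ moves $\cX_\tau$ to $\cX_{-\tau}$—and the relation $D_I+D_{\bar I}=\text{hyperplane class}$ again kills $\rR(4,2)$ and leaves $\rR(6)$ plus one five-dimensional irreducible; restricting to the non-standard $\fA_5$ via $\cX^{5,1}_\tau$, where $\Pic(\cX^{5,1}_\tau)\otimes\Q|_{\fA_5}=\mathbf1\oplus\mathbf1\oplus\mathbf4$, singles out $\rR(3,3)$ because $\rR(3,3)|_{\fA_5}=\mathbf1\oplus\mathbf4$ while $\rR(5,1)|_{\fA_5}=\mathbf5$. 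This gives $\Cl(X_t)\otimes\Q\cong\rR(6)\oplus\rR(3,3)$.

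\textbf{The special quartics and the main obstacle.} For $t\in\{1/2,1/6,7/10\}$ I would write $\Cl(X_t)\otimes\Q$ as the sum of the ``ambient'' part $\rR(6)\oplus\rR(3,3)$, which persists under specialization, and a ``defect'' of dimension $10$, $5$, $1$ matching the ranks of Lemma~\ref{lemma:cl-xt}. The defect is governed by the conic bundle $\cX^{5,1}_\tau\to S$: via $\Cl(X_t)\cong\Pic(\cX^{5,1}_\tau)$ and the extension $0\to p^*\Pic(S)\to\Cl(\cX^{5,1}_\tau)\to\Cl(\cX^{5,1}_\tau/S)\to0$, Corollary~\ref{corollary:relaive-class-group} computes it as an $\fA_5$-representation ($\Ind_{\fA_{3,2}}^{\fA_5}(\mathbf{-1})$, $\Ind_{\fA_4}^{\fA_5}(\mathbf1)$, and $\mathbf1$ respectively). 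Matching dimensions and $\fA_5$-characters then singles out $\rR(3,1^3)$, $\rR(2,2,2)$, and one of $\{\rR(6),\rR(1^6)\}$. I expect the genuine difficulty to be precisely this last ambiguity and its analogues: for these values of $\tau$ only $\fA_5$—no odd permutation of $\fS_6$—acts on the resolution, so the restriction cannot by itself distinguish a trivial from a sign twist. Resolving it requires an explicit $\fS_6$-equivariant computation of how an odd permutation acts on the two rulings $\Theta',\Theta''$ over a component of the discriminant curve, in the spirit of the matrix computation with \eqref{eq:conic-igusa} in Corollary~\ref{corollary:relaive-class-group}; this pins down $\rR(1^6)$ for $X_{7/10}$ and confirms $\rR(2,2,2)$ and $\rR(3,1^3)$ in the remaining cases.
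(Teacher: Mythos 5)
Your treatment of the Coble fourfold, of $\cX_\infty$, and of the non-special quartics is essentially correct and ends at the same place as the paper's (comparison with $\Pic(\cY_{5,1})=\ZZ H\oplus p^*\Pic(S)$ through a non-standard $\fS_5$, respectively $\fA_5$, and the branching table of Lemma~\ref{lemma:restriction-s6-s5}); the permutation module on the twenty divisors $\cH_I$ is an extra layer the paper does not use, but a legitimate one. Two points you leave implicit must be checked. First, that the $\cH_I$ actually generate $\Cl(\cY)\otimes\Q$ --- otherwise ``the image of the anti-invariant part is exactly five-dimensional'' does not follow from $\rkCl(\cY)=6$; this is true by Remark~\ref{remark:rho-4-2-Weil-divisor}, since the $\cH^{4,2}_I$ include the four exceptional divisors of $\beta$ together with proper transforms of the divisors $\overline{P_iP_j}\times\P^2$ and $\P^2\times\overline{P_iP_j}$, and these span $\Pic(\cY_{4,2})\otimes\Q$. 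Second, for the quartics you need $D_I\ne D_{\bar I}$ in $\Cl(X_t)\otimes\Q$: if all $D_I-D_{\bar I}$ vanished, the span of the $D_I$ would be just $\rR(6)$ and the permutation module could no longer exclude $\rR(2,2,2)$, which has the same restriction $R_1\oplus R_4$ to a non-standard $\fA_5$ as $\rR(3,3)$. This too holds (by Remark~\ref{remark:rho-5-1-Weil-divisor} one of $\cH^{5,1}_I,\cH^{5,1}_{\bar I}$ is a $p$-pullback and the other a relative hyperplane section, so their restrictions to $\cX^{5,1}_\tau$ differ on the fibres of $p$), but it has to be said. The paper bypasses both issues by proving in Lemma~\ref{lemma:morphism-cly-clcx} that $\Cl(\cY)\to\Cl(X_t)$ is injective, and an isomorphism for $t\notin\bDD\cup\{\infty\}$ because irrationality forbids a rational section of the conic bundle.

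The genuine gap is the one you flag yourself: the trivial-versus-sign ambiguity for the special quartics. Your proposed resolution --- compute how an odd permutation acts on the rulings $\Theta'_i,\Theta''_i$ --- is not carried out, and as formulated it cannot be carried out on the model where those rulings live: for $\tau\ne0,\infty$ only $\fA_5$ acts regularly on $\cX^{5,1}_\tau$; an odd permutation sends $\cX^{5,1}_\tau$ to $\cX^{5,1}_{-\tau}$ under the twisted action, and under the natural action induces only a birational self-map through the flop $\rho_{5,1}^{-1}\circ\sigma\circ\rho_{5,1}$. Tracking a transposition through that flop is far more delicate than the even-element matrix check with~\eqref{eq:conic-igusa}. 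The paper settles the ambiguity by invariant-rank constraints instead: Lemma~\ref{lemma:Cl-standard-S4} shows $\rkCl(X_t)^{\fS_4}=1$ for a \emph{standard} $\fS_4$, by exhibiting $X_t/\fS_4\cong\P(1,1,2,3)$; by Pieri's rule this excludes any candidate whose restriction to a standard $\fS_4$ contains a trivial summand, which kills $\rR(4,1^2)$ for $t=1/2$ and $\rR(6)$ for $t=7/10$. For $t=1/6$ neither candidate $\rR(2,2,2)$ nor $\rR(3,3)$ contains a trivial $\fS_4$-summand, so a further input is required: Lemma~\ref{lemma:x16-s5} computes $\rkCl(X_{1/6})^{\overline{\fS_5}}=2$ from the double-five model of $X_{1/6}$, and only $\rR(2,2,2)$ restricts to a non-standard $\fS_5$ with a trivial summand. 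Without arguments of this kind your last three isomorphisms remain unproved.
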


The proof of Theorem~\ref{theorem:Cl} takes the next subsection, and now we discuss its applications to equivariant birational geometry.

Recall that an $n$-dimensional variety $X$ with an action of a group $G$ is {\sf $G$-rational} if
there exists a $G$-equivariant birational map between $X$ and $\P^n$ for some action of~$G$
on~$\P^n$. Also recall that a $G$-equivariant morphism $\phi\colon X\to S$ of normal
varieties acted on by a finite group
$G$ is called a {\sf $G$-Mori fiber space},
if $X$ has terminal singularities, one has~\mbox{$\rkPic(X)^G=\rkCl(X)^G$},
the fibers of $\phi$ are connected and of positive dimension, the anticanonical divisor~$-K_X$
is $\phi$-ample,
and the relative $G$-invariant Picard rank~\mbox{$\rkPic(X/S)^G$} equals~$1$.

The first application of Theorem~\ref{theorem:Cl} is due to the following expectation, which is proved in several particular cases,
cf.~\cite{Mella}, \cite{Shramov2008}, \mbox{\cite[Proof of Theorem~1.1]{CheltsovPrzyjalkowskiShramov-BarBur}}.

\begin{conjecture}
\label{conjecture:Mella}
Let $X$ be either a nodal quartic threefold, or a nodal double covering of a smooth three-dimensional quadric branched over its intersection with a quartic.
Let $G$ be a finite subgroup in $\Aut(X)$ such that
\begin{equation*}
\rkCl(X)^G = 1.
\end{equation*}
If there is a $G$-equivariant birational map $X\dasharrow X'$, where~\mbox{$X'\to S'$} is a $G$-Mori fibre space, then $X\cong X'$. In particular, $X$ is not $G$-rational.
\end{conjecture}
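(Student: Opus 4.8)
The plan is to deduce the statement from the $G$-equivariant Sarkisov program together with a Noether--Fano analysis of maximal singularities. First I would observe that the hypothesis $\rkCl(X)^G = 1$ makes $X$ itself a $G$-Mori fibre space over a point: the variety $X$ has only nodal, hence terminal Gorenstein, singularities and $-K_X$ is ample; moreover $\rkPic(X) = 1$ by the Lefschetz hyperplane theorem (for the quartic in $\P^4$, or for the double quadric viewed as a hypersurface in $\P(2,1^5)$ exactly as in Corollary~\ref{corollary:aut-y}), so $\rkPic(X)^G = 1$, and combined with the hypothesis this yields the $G$-$\Q$-factoriality condition $\rkPic(X)^G = \rkCl(X)^G = 1$. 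Hence both $X$ and the given $X'$ are $G$-Mori fibre spaces, and any $G$-equivariant birational map between them decomposes, by the equivariant Sarkisov program, into a chain of elementary $G$-links.

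Second, I would set up the Noether--Fano inequality. Let $\mathcal{M}' \subset |-n K_{X'} + (\phi')^* A|$ be a very ample $G$-invariant mobile linear system on $X'$, and let $\mathcal{M} = \chi^{-1}_* \mathcal{M}'$ be its proper transform on $X$, where $\chi \colon X \dashrightarrow X'$ is the $G$-birational map. Since $\rkCl(X)^G = 1$, we have $\mathcal{M} \subset |-n K_X|$ for some positive rational $n$. The equivariant Noether--Fano theorem then asserts that if $\chi$ is not an isomorphism of fibre spaces, the pair $\left(X, \tfrac{1}{n}\mathcal{M}\right)$ is not canonical, its non-canonical locus being a $G$-invariant union of centres. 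To prove the conjecture one must establish the converse: that $\left(X, \tfrac{1}{n}\mathcal{M}\right)$ is canonical for every such $\mathcal{M}$. The resulting untwisting forces $\chi$ to be a $G$-isomorphism $X \cong X'$, and the non-$G$-rationality follows immediately by taking $X'$ to be a $G$-equivariant model of $\P^3$, which cannot be isomorphic to the singular threefold $X$.

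Third comes the exclusion of maximal centres, carried out $G$-equivariantly. Maximal centres that are surfaces or curves are ruled out by the standard intersection and degree estimates on a Fano threefold of $G$-Picard rank one. A maximal centre at a smooth point is excluded by the smooth-point technique (the $4n^2$-inequality of Corti--Pukhlikov type), using that a $G$-invariant system concentrated at a smooth point would carry a whole $G$-orbit of such points and thereby violate the degree bound. The remaining case is a maximal centre at a node of $X$, and here the $G$-action is essential: the nodes fall into $G$-orbits, the multiplicity of $\mathcal{M}$ is constant along each orbit, and summing the local contributions over the orbit yields an inequality that bounds $\mathrm{mult}_P \mathcal{M}$ strictly below the threshold required for a maximal singularity.

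The hard part will be precisely this last step in full generality. In the special cases already treated (see~\cite{Mella}, \cite{Shramov2008}, \mbox{\cite[Proof of Theorem~1.1]{CheltsovPrzyjalkowskiShramov-BarBur}}) the orbit sizes and the local configuration of the nodes are known explicitly, so the node-exclusion inequality can be verified directly; but a uniform argument valid for \emph{every} nodal quartic threefold (or nodal double quadric) and \emph{every} admissible subgroup $G$ would require controlling the local intersection theory at arbitrarily configured nodal orbits. It is exactly this control that is presently lacking, which is why the assertion is recorded as a conjecture rather than a theorem.
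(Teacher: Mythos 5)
You have not proved the statement, and indeed the paper does not either: this is recorded as Conjecture~\ref{conjecture:Mella} precisely because no general proof is known, with only the special cases of \cite{Mella}, \cite{Shramov2008}, and \cite[Proof of Theorem~1.1]{CheltsovPrzyjalkowskiShramov-BarBur} established. Your outline reconstructs the expected strategy correctly --- $X$ is a $G$-Mori fibre space over a point since $\rkPic(X)=1$ forces $\rkPic(X)^G=\rkCl(X)^G=1$, one runs the $G$-equivariant Sarkisov program, and the Noether--Fano inequality reduces everything to excluding $G$-invariant maximal centres --- and you honestly identify the genuine gap: there is no uniform exclusion of maximal centres over the nodes valid for an arbitrary nodal quartic (or double quadric) and an arbitrary admissible $G$. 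So the proposal is a plausible programme, not a proof, and your own final paragraph concedes exactly this.

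Two further cautions about steps you treat as routine. First, the exclusion of curve and surface centres is not merely ``standard'' here: the known arguments (e.g.\ Mella's for quartic threefolds) assume $\Q$-factoriality, i.e.\ $\rkCl(X)=1$, whereas under the hypothesis $\rkCl(X)^G=1$ the variety $X$ itself may be far from $\Q$-factorial --- the paper's own examples have $\rkCl(X_t)$ up to $16$ (Lemma~\ref{lemma:cl-xt}) --- so curves through nodes need not be cut out $\Q$-Cartier-wise and the intersection-theoretic estimates must be redone on a non-$\Q$-factorial threefold, or on a small $G$-equivariant modification that may fail to exist. Second, the node case is harder than an orbit-summed multiplicity bound suggests: over a node the valuations realized by the two small resolutions are exchanged or not by $G$ depending on the local action, and infinitely near maximal centres over nodes must also be excluded; this is exactly the local control that is lacking in general and that the cited special cases verify by hand using the explicit configuration of singular points. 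None of this invalidates your reduction, but it means the conjecture remains open after your argument, exactly as the paper states.
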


Of course, this applies to each of the $\fS_6$-invariant quartics $X_t$ with $t \ne 1/4,\infty$, and to the threefold $\cX_\infty$ as well.
For each subgroup $G \subset \fS_6$ the rank of the invariant class group~$\Cl(X_t)^G$ can be easily computed from the result of Theorem~\ref{theorem:Cl}
by restricting the representation and computing the multiplicity of the trivial summand.
We used the~GAP package~\cite{GAP} to perform this computation, see~\cite{Code} for the source code.
To state our result in a precise form we first introduce our notation for the (conjugacy classes of) subgroups of~$\fS_6$
that will be used until the end of~\S\ref{subsection:Cl-and-applications}.
We will also use notation~\eqref{eq:sn1n2}.

\begin{notation}\label{notation:groups}
Given a subgroup $G \subset \fS_6$ we denote by $\overline{G} \subset \fS_6$
the image of $G$ under an outer automorphism of $\fS_6$ (it is well-defined up to conjugation).
Furthermore, if~\mbox{$G_1\subset \fS_{n_1},\ldots, G_r\subset \fS_{n_r}$} are subgroups and $n_1+\ldots+n_r\le 6$,
then by~\mbox{$G_1\times\ldots\times G_r$} we denote the corresponding subgroup in
\begin{equation*}
\fS_{n_1}\times\ldots\times\fS_{n_r} \cong \fS_{n_1,\ldots,n_r} \subset \fS_{n_1 + \ldots + n_r} \subset\fS_6.
\end{equation*}

Next, we use the notation $\mumu_d[c_1,\ldots,c_r]$ for a cyclic subgroup of order $d$ generated by a permutation of cycle type $[c_1,\ldots,c_r]$.
We abbreviate $\mumu_5[5]$ to just $\mumu_5$.

By $\VV_4$ we denote the Klein four-group, i.e., the unique subgroup of order 4 in $\fA_4 \subset  \fS_4$.
By $\VV_{4,2}$ we denote a subgroup of $\fS_{4,2} \subset \fS_6$ whose projection to the first factor $\fS_4$ gives an isomorphism
with $\VV_4$, while the projection to the second factor $\fS_2$ is surjective.

By $\DD_{2n}$ we denote the dihedral group of order $2n$.
It is naturally embedded into the group $\fS_n$, so for $n\le 6$ it is a subgroup of $\fS_6$;
note that $\DD_{12} = \overline{\fS_{3,2}}$.

There are four conjugacy classes of subgroups isomorphic to $\DD_8$ in $\fS_6$. They can be described as follows.
The first class contains subgroups of (the standard) $\fS_4$ in $\fS_6$; according to the above conventions,
we will refer to subgroups from this conjugacy class simply as $\DD_8$.
There are three non-trivial homomorphisms
\begin{equation*}
\upsilon_\circ \colon \DD_8 \to \mumu_2,\quad
\upsilon_+ \colon \DD_8 \to \mumu_2,\quad
\upsilon_\times \colon \DD_8 \to \mumu_2,
\end{equation*}
determined by their kernels
\begin{equation*}
\Ker(\upsilon_\circ) = \mumu_4[4],\qquad
\Ker(\upsilon_+) = \VV_4,\qquad
\Ker(\upsilon_\times) = \fS_{2,2}.
\end{equation*}
Thinking of these as of subgroups of symmetries of a square, the first is generated by rotations,
the second by reflections with respect to the lines passing through the middle points of its opposite sides,
and the third by reflections with respect to the diagonals; this is the mnemonics for the notation $\circ$, $+$, and $\times$.
We denote by $\DD_8^\circ$, $\DD_8^+$, and $\DD_8^\times$ the images of the map
\begin{equation*}
\DD_8 \xrightarrow{\ (\mathrm{id},\upsilon)\ } \fS_4 \times \mumu_2  \cong \fS_{4,2} \subset \fS_6
\end{equation*}
for $\upsilon = \upsilon_\circ$, $\upsilon_+$, and $\upsilon_\times$, respectively.
Note that $\DD_8^\circ = \overline{\DD_8}$.
\end{notation}

The intersection $\fS_5 \cap \overline{\fS_5}$ of a standard and a non-standard subgroups $\fS_5$ is a subgroup of order $20$ isomorphic to $\mumu_5\rtimes\mumu_4$, and such groups form a unique conjugacy class of subgroups
of order $20$ in $\fS_6$. Also, the subgroups $\mumu_4\times\mumu_2$, $\mumu_3\times\mumu_3$, $\DD_{10}$, \mbox{$\DD_8\times\fS_2$},
$\CCZ$, $\CCZz$, and $\SSZ$ of $\fS_6$ are unique up to conjugation.

Finally, recall the definitions~\eqref{eq:action-standard} of the natural and~\eqref{eq:action-twisted} of the twisted actions of $\fS_6$ on the Coble
fourfold $\cY$ and on the threefold $\cX_\infty\subset\cY$.
Theorem~\ref{theorem:Cl} implies:

\begin{corollary}
\label{corollary:rk-1}
The following table contains a complete list \textup{(}ordered by cardinality\textup{)} of subgroups $G \subset \fS_6$ such that~\mbox{$\rkCl(X)^G = 1$},
where $X$ is either $X_t$, or $\cX_\infty$, or $\cY$.
\begin{equation*}\renewcommand{\arraystretch}{1.3}
\begin{array}{|c|c|}
\hline
X,\ \text{action of}\ \fS_6 & G \\
\hline
\parbox[t]{4cm}{$X_t$,\hfill $t \not\in \bDD \cup \{ \infty \}$;\\
$\cX_\infty$,\hfill natural action;\\
$\cY$,\hfill natural action\hphantom{;}\vstrut}
&
\parbox[t]{11cm}{$\fS_6$, $\fA_6$, $\fS_5$, $\overline{\fS_5}$, $\SSZ$, $\fA_5$, $\fS_{4,2}$, $\overline{\fS_{4,2}}$,
$\CCZz$, $\overline{\fS_{3,3}}$, $\fS_4$, $\overline{\fS_4}$, $\fA_{4,2}$, $\fA_4\times\fS_2$,
$\Karz$, $\overline{\fS_3\times\mumu_3}$, $\DD_8\times\fS_2$, $\fA_4$, $\overline{\fS_{3,2}}$,
$\mumu_4\times\mumu_2$, $\VV_4\times\mumu_2$, $\overline{\DD_8}$, $\DD_8^{\times}$, $\overline{\fS_3}$, $\VV_{4,2}$\vstrut}
\\
\hline
\parbox[t]{4cm}{$\cX_\infty$,\hfill twisted action;\\
$\cY$,\hfill twisted action\hphantom{;}\vstrut}
& \parbox[t]{11cm}{$\fS_6$, $\fA_6$, $\fS_5$, $\SSZ$, $\fA_5$,
$\fS_{4,2}$, $\CCZz$, $\fS_{3,3}$,
$\fS_4$, $\fA_{4,2}$, $\fA_4\times\fS_2$,
$\fS_3\times\mumu_3$, $\fS_{3,2}$, $\fA_4$, $\fS_3$, $\mumu_6[3,2]$\vstrut}
\\
\hline
X_{1/2} & \parbox[t]{11cm}{$\fS_6$, $\fA_6$, $\fS_5$, $\overline{\fS_5}$,
$\SSZ$, $\fA_5$, $\fS_{4,2}$, $\overline{\fS_{4,2}}$, $\CCZz$, $\overline{\fS_{3,3}}$, $\fS_4$, $\fA_{4,2}$,
$\Karz$, $\DD_8\times\fS_2$\vstrut}
\\
\hline
X_{1/6} & \parbox[t]{11cm}{$\fS_6$, $\fA_6$, $\fS_5$,
$\SSZ$, $\fA_5$, $\fS_{4,2}$, $\CCZz$,
$\fS_4$, $\fA_{4,2}$, $\fA_4\times\fS_2$, $\fA_4$\vstrut}
\\
\hline
X_{7/10} & \parbox[t]{11cm}{$\fS_6$, $\fS_5$, $\overline{\fS_5}$,
$\SSZ$, $\fS_{4,2}$, $\overline{\fS_{4,2}}$, $\overline{\fS_{3,3}}$,
$\fS_4$, $\overline{\fS_4}$, ${\fA_4\times\fS_2}$, $\Karz$,
$\overline{\fS_3\times\mumu_3}$, $\DD_8\times\fS_2$, $\overline{\fS_{3,2}}$,
$\mumu_4\times\mumu_2$, $\overline{\DD_8}$, $\DD_8^{\times}$, $\VV_4\times\mumu_2$,
$\overline{\fS_3}$, $\VV_{4,2}$\vstrut}
\\
\hline
\end{array}
\end{equation*}
If $X$ is either $\cX_\infty$ or $\cY$, and $G$ is any subgroup of $\fS_6\times\mumu_2$ that contains the second factor,
then one also has $\rkCl(X)^G=1$.
\end{corollary}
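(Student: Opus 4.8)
The plan is to convert the statement $\rkCl(X)^G=1$ into a statement about invariants of the representations computed in Theorem~\ref{theorem:Cl}, and then to carry out the resulting finite verification over all conjugacy classes of subgroups of $\fS_6$. First I would use that for a finite group $G$ one has $\rkCl(X)^G=\dim_\Q(\Cl(X)\otimes\Q)^G$, since taking $G$-invariants commutes with $-\otimes\Q$. By Theorem~\ref{theorem:Cl} each class group in question splits as $\Cl(X)\otimes\Q\cong\rR(6)\oplus V$, where $\rR(6)$ is the trivial summand and $V$ is the sum of the remaining irreducibles. As $\rR(6)$ contributes a one-dimensional invariant subspace for every $G$, we get
\begin{equation*}
\rkCl(X)^G=1+\dim_\Q V^G,\qquad\text{so}\qquad \rkCl(X)^G=1\iff\langle\chi_V\vert_G,\mathbf{1}_G\rangle=0 .
\end{equation*}
Thus the problem reduces to deciding, for each $G$, whether the restriction of a fixed representation $V$ has a nonzero invariant vector.

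Next I would record $V$ in the five cases of Theorem~\ref{theorem:Cl}: $V=\rR(3,3)$ for the non-special quartics $X_t$ and for the natural $\fS_6$-action on $\cX_\infty$ and $\cY$; $V=\rR(2,2,2)$ for the twisted action on $\cX_\infty$ and $\cY$; $V=\rR(3,3)\oplus\rR(3,1^3)$ for $X_{1/2}$; $V=\rR(3,3)\oplus\rR(2,2,2)$ for $X_{1/6}$; and $V=\rR(3,3)\oplus\rR(1^6)$ for $X_{7/10}$. For each of these, $\dim_\Q V^G=\tfrac{1}{|G|}\sum_{g\in G}\chi_V(g)$ is computed from the character table of $\fS_6$ together with the fusion data describing how the elements of $G$ distribute among the conjugacy classes of $\fS_6$. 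Since $\fS_6$ has only finitely many conjugacy classes of subgroups, I would enumerate them all and evaluate this inner product; this is exactly the GAP computation recorded in \cite{GAP},~\cite{Code}, and its vanishing pattern is the content of the table.

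The conceptual point that shapes the answer, and the main bookkeeping obstacle, is the behaviour under the outer automorphism $\alpha$ of $\fS_6$. A Murnaghan--Nakayama computation gives $\chi_{\rR(3,3)}$ the value $-3$ on the class of triple transpositions, whence $\rR(3,3)\circ\alpha\cong\rR(2,1^4)$ and, twisting by the sign, $\rR(5,1)\circ\alpha\cong\rR(2,2,2)$. Consequently $\dim_\Q V^{\overline{G}}=\dim_\Q(V\circ\alpha)^G$, and since none of $\rR(3,3)$, $\rR(2,2,2)$, $\rR(3,1^3)$ is $\alpha$-invariant, the invariants of $V$ at $G$ and at its non-standard image $\overline{G}$ genuinely differ; this is why barred and unbarred subgroups (in the sense of Notation~\ref{notation:groups}) occur asymmetrically in the table. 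For example, $\dim_\Q\rR(3,3)^{\fS_5}=0$ by the branching rule $\rR(3,3)\vert_{\fS_5}\cong\rR(3,2)$, and $\dim_\Q\rR(3,3)^{\overline{\fS_5}}=\dim_\Q\rR(2,1^4)^{\fS_5}=0$ since $\rR(2,1^4)=\rR(5,1)\otimes\mathrm{sign}$ has no trivial constituent on $\fS_5$, so both $\fS_5$ and $\overline{\fS_5}$ appear for the non-special $X_t$; whereas for $X_{1/6}$ one has $V\circ\alpha\cong\rR(2,1^4)\oplus\rR(5,1)$ with $\dim_\Q\rR(5,1)^{\fS_5}=1$, so $\overline{\fS_5}$ drops out. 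I would cross-check a handful of cases by Frobenius reciprocity and Young's rule, e.g.\ $\dim_\Q\rR(3,3)^{\fS_{3,3}}=K_{(3,3),(3,3)}=1$ while $\dim_\Q\rR(3,3)^{\overline{\fS_{3,3}}}=K_{(2,1^4),(3,3)}=0$. The hard part is thus not any one inner product but the complete, correctly labelled enumeration of the conjugacy classes of subgroups and their fusion patterns, which is delegated to GAP.

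Finally, for the supplementary statement I would argue directly from the $\fS_6\times\mumu_2$-decomposition of Theorem~\ref{theorem:Cl}. For $X=\cY$ or $X=\cX_\infty$ one has $\Cl(X)\otimes\Q\cong(\rR(6)\boxtimes 1)\oplus(\rR(3,3)\boxtimes(-1))$, and the second factor $\mumu_2\subset\fS_6\times\mumu_2$ acts by $-1$ on the non-trivial summand. Hence if $G\subset\fS_6\times\mumu_2$ contains this $\mumu_2$, then $(\rR(3,3)\boxtimes(-1))^G\subseteq(\rR(3,3)\boxtimes(-1))^{\mumu_2}=0$, while $(\rR(6)\boxtimes 1)^G$ is one-dimensional, giving $\rkCl(X)^G=1$.
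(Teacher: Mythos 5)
Your proposal is correct and is essentially the paper's own argument: the authors likewise deduce the table from Theorem~\ref{theorem:Cl} by restricting each representation to $G$, computing the multiplicity of the trivial summand over all conjugacy classes of subgroups of $\fS_6$ via GAP (\cite{GAP}, \cite{Code}), and the final statement about subgroups of $\fS_6\times\mumu_2$ containing the second factor follows, exactly as you argue, from the $\mumu_2$ acting by $-1$ on the summand $\rR(3,3)\boxtimes(-1)$. Your additional hand checks (the outer-automorphism swaps $\rR(3,3)\leftrightarrow\rR(2,1^4)$, $\rR(5,1)\leftrightarrow\rR(2,2,2)$, consistent with Lemma~\ref{lemma:restriction-s6-s5}, and the branching/Kostka verifications) are accurate refinements of the same computation rather than a different route.
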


In particular, Conjecture~\ref{conjecture:Mella} suggests that the varieties
listed in Corollary~\ref{corollary:rk-1} are not $G$-rational with respect to the corresponding groups.

Another interesting case of $G$-equivariant behavior arises when $\rkCl(X)^G=2$.
The following result is well known to experts.

\begin{proposition}[{cf.~\cite{Corti}, \cite{HaconMcKernan}}]
\label{proposition:Corti}
Let $X$ be a terminal Fano variety \textup{(}so that, in particular, the canonical class $K_X$ is a $\Q$-Cartier
divisor\textup{)}. Let $G$ be a finite subgroup in $\Aut(X)$ such that~\mbox{$\rkCl(X)^G=2$} and~\mbox{$\rkPic(X)^G = 1$}.
Then there exists a unique $G$-equivariant diagram
\begin{equation}\label{eq:G-Sarkisov}
\vcenter{\xymatrix{
& X_+' \ar@{->}[dl]_{p_+}
&& X_+\ar@{->}[dr]_{f_+}
\ar@{-->}[ll]_{\psi_+} \ar@{-->}[rr]^{\iota} &&
X_-\ar@{->}[dl]^{f_-}
\ar@{-->}[rr]^{\psi_-} && X_-'\ar@{->}[dr]^{p_-} & \\
Z_+ &&&& X &&&& Z_-
}}
\end{equation}
Here $X_{\pm}$ are varieties with terminal singularities such that
\begin{equation*}
\rkPic(X_\pm)^G = \rkCl(X_\pm)^G = 2,
\qquad
\rkPic(X_\pm/X)^G=1,
\end{equation*}
the maps~$f_{\pm}$ are small birational morphisms,
the map~$\iota$ is a non-trivial $G$-flop,
the~maps~$\psi_{\pm}$ are small and birational \textup{(}and possibly are just isomorphisms\textup{)},
the varieties~$X_{\pm}'$ have terminal singularities,
\begin{equation*}
\rkPic(X_\pm')^G = \rkCl(X_\pm')^G = 2,
\end{equation*}
and each of the maps~$p_\pm$, is either a $K_{X_\pm'}$-negative divisorial contraction onto a terminal Fano variety $Z_\pm$ with
$\rkCl(Z_\pm)^G=1$, or a $G$-Mori fibration.
\end{proposition}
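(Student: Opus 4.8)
The plan is to deduce the diagram~\eqref{eq:G-Sarkisov} from the $G$-equivariant Minimal Model Program, exploiting the fact that a terminal Fano variety is a Mori dream space. Since $-K_X$ is ample and $X$ has terminal (in particular, klt) singularities, the variety $X$ is of Fano type, hence a Mori dream space by Birkar--Cascini--Hacon--McKernan; because $G$ is finite, every contraction, flip and flop occurring below can be chosen $G$-equivariantly. The hypotheses $\rkCl(X)^G=2$ and $\rkPic(X)^G=1$ say precisely that $X$ is \emph{not} $G$-$\Q$-factorial, with $G$-invariant defect equal to one.

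First I would construct a $G$-equivariant small $\Q$-factorialization $f_+\colon X_+\to X$, whose existence follows from the Mori dream space property. It is crepant because $X$ is terminal and $f_+$ is small, so $X_+$ is terminal as well. As $f_+$ is small it induces an isomorphism $\Cl(X_+)\cong\Cl(X)$, and since $X_+$ is $\Q$-factorial we obtain $\rkPic(X_+)^G=\rkCl(X_+)^G=\rkCl(X)^G=2$; combined with $\rkPic(X)^G=1$ this yields $\rkPic(X_+/X)^G=1$.

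Next, the relative $G$-invariant Néron--Severi space $N^1(X_+/X)^G$ is one-dimensional, so the relative $G$-cone $\overline{\mathrm{NE}}(X_+/X)^G$ is a single ray, and being the contracted ray of a small crepant morphism it is a flopping ray. I would then pass to the corresponding $G$-flop $\iota\colon X_+\dashrightarrow X_-$ and set $f_-\colon X_-\to X$. Because $N^1(X_+/X)^G\cong\mathbb{R}$ has exactly two chambers, these are the only two $G$-$\Q$-factorializations of $X$, and $\iota$ is non-trivial precisely because $X$ is not $G$-$\Q$-factorial; this accounts for the central, flopping part of the diagram and for its uniqueness.

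Finally I would run the $G$-equivariant $K$-MMP on $X_+$, and symmetrically on $X_-$, along the \emph{other} extremal ray of the rank-two cone $\overline{\mathrm{NE}}(X_+)^G$. That ray is $K_{X_+}$-negative: since $-K_{X_+}=f_+^{*}(-K_X)$ is big and nef and vanishes only on curves contracted by $f_+$, it is strictly positive on the second ray. Because $X_+$ is a Mori dream space, the program terminates after finitely many $K$-negative $G$-flips $\psi_+$, followed by a final $K$-negative contraction $p_+$ that is either divisorial or of fiber type, producing $X_+'$, $Z_+$ and $p_+$ with the stated properties; terminality is preserved at each step, the $G$-Picard rank stays equal to two through the flips, and when $p_+$ is divisorial the target $Z_+$ is $\Q$-factorial with $\rkPic(Z_+)^G=\rkCl(Z_+)^G=1$ and is $G$-Fano, being rationally connected with $G$-invariant Picard rank one. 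The main obstacle is entirely the equivariance and termination bookkeeping: one must verify that each extremal contraction, flip and flop is $G$-equivariant and that the two-ray game is governed solely by the rank-two cone $\overline{\mathrm{NE}}(X_+)^G$. Both facts follow from finiteness of $G$ together with the Mori dream space structure, exactly as in the (non-equivariant) Sarkisov program of Corti and in Hacon--McKernan, and the uniqueness of the full diagram then follows from uniqueness of the two $\Q$-factorializations and of the extremal contractions at each stage.
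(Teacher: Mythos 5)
The paper does not actually prove this proposition: it is stated as ``well known to experts'' and attributed to the literature (Corti's factorization of birational maps and Hacon--McKernan's treatment of the Sarkisov program), so there is no in-paper argument to compare yours against. Your sketch is the standard two-ray game and is essentially the argument the experts have in mind: pass to a small modification with $G$-invariant Picard rank $2$, observe that $-K$ pulls back to a nef and big class vanishing exactly on the contracted ray, flop across the $-K$-wall to get the second modification, and run the $G$-MMP outward on each side until a divisorial or fiber-type contraction is reached.

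Two points in your write-up deserve tightening. First, you assert the existence of a $G$-equivariant \emph{small $\Q$-factorialization}; in general a $G$-invariant chamber of the full Mori chamber decomposition need not exist, and what one actually constructs (as $\mathrm{Proj}\bigoplus_m H^0(X,\cO_X(mD))$ for a suitable $G$-invariant Weil divisor $D$) is a $G\Q$-factorialization, i.e.\ a small modification $X_+$ with $\rkPic(X_+)^G=\rkCl(X_+)^G$ but not necessarily $\Q$-factorial. Since the proposition only claims the former equality, this is exactly what is needed, but your deduction ``since $X_+$ is $\Q$-factorial\dots'' should be rephrased accordingly. Second, your argument that the second ray is $K$-negative is given only for the first step; after a flip the class $-K_{X_+}$ is no longer nef, so one should justify that each subsequent contracted ray in the chain $\psi_+$ is still $K$-negative. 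This follows from the rank-two picture: $-K$ spans the wall between $\mathrm{Nef}(X_+)^G$ and $\mathrm{Nef}(X_-)^G$ inside the two-dimensional cone $\mathrm{Mov}(X_+)^G$, and every ray contracted while walking away from that wall pairs positively with every class lying on the near side of the wall being crossed, in particular with $-K$. With these two adjustments your proof is complete and matches the intended (uncited-in-detail) argument.
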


The diagram~\eqref{eq:G-Sarkisov} is a special case of a so-called {\sf $G$-Sarkisov link}
(that is a $G$-equivariant version of a usual Sarkisov link, see e.g.~\cite[Definition~3.4]{Corti}
or~\cite[Theorem~1.6.14]{Cheltsov-UMN} for notation).
One sometimes says that
the link~\eqref{eq:G-Sarkisov} is {\sf centered at $X$}.

Theorem~\ref{theorem:Cl} allows us to write down a complete
list of subgroups $G \subset \fS_6$ for which Proposition~\ref{proposition:Corti} can be used
(as before, we obtained it with the help of the GAP package~\cite{GAP}, see~\cite{Code} for the source code).

\begin{corollary}\label{corollary:rk-2}
The following table contains a complete list \textup{(}ordered by cardinality\textup{)} of subgroups $G \subset \fS_6$ such that~\mbox{$\rkCl(X)^G = 2$},
where $X$ is either $X_t$, or $\cX_\infty$, or $\cY$.
\begin{equation*}\renewcommand{\arraystretch}{1.3}
\begin{array}{|c|c|}
\hline
X,\ \text{action of}\ \fS_6 & G \\
\hline
\parbox[t]{4cm}{$X_t$,\hfill $t \not\in \bDD \cup \{ \infty \}$;\\
$\cX_\infty$,\hfill natural action;\\
$\cY$,\hfill natural action\hphantom{;}\vstrut}
&
\parbox[t]{11cm}{$\overline{\fA_5}$, $\fS_{3,3}$,
$\overline{\fA_{4,2}}$, $\overline{\fA_4\times\fS_2}$, $\fS_3\times\mumu_3$,
$\CCZ$, $\fS_{3,2}$, $\DD_{10}$, $\mumu_3\times\mumu_3$, $\DD_8$, $\DD_8^{+}$,
$\mumu_2\times\mumu_2\times\mumu_2$, $\fS_3$, $\fA_{3,2}$, $\mumu_6[6]$, $\mumu_6[3,2]$,
$\mumu_5$, $\mumu_4[4]$, $\mumu_4[4,2]$, $\mumu_2[2,2]\times\mumu_2[2]$, $\mumu_3[3]$,
$\mumu_2[2,2,2]$\vstrut}
\\
\hline
\parbox[t]{4cm}{$\cX_\infty$,\hfill twisted action;\\
$\cY$,\hfill twisted action\hphantom{;}\vstrut}
&
\parbox[t]{11cm}{$\overline{\fS_5}$, $\overline{\fA_5}$, $\overline{\fS_{4,2}}$, $\overline{\fS_{3,3}}$,
$\overline{\fA_{4,2}}$, $\overline{\fA_4\times\fS_2}$, $\Karz$, $\overline{\fS_3\times\mumu_3}$, $\CCZ$,
$\DD_8\times\fS_2$, $\DD_{10}$,
$\mumu_3\times\mumu_3$,  $\DD_8$, $\DD_8^{\times}$, $\DD_8^{+}$,
$\mumu_4\times\mumu_2$, $\mumu_2\times\mumu_2\times\mumu_2$,
$\fA_{3,2}$, $\mumu_5$, $\mumu_4[4,2]$,
$\mumu_2[2]\times\mumu_2[2]$, $\mumu_3[3]$\vstrut}
\\
\hline
X_{1/2} &
\parbox[t]{11cm}{$\overline{\fA_5}$, $\fS_{3,3}$, $\overline{\fS_4}$,
$\overline{\fA_{4,2}}$, $\fA_4\times\fS_2$, $\overline{\fA_4\times\fS_2}$,
$\overline{\fS_3\times\mumu_3}$, $\CCZ$, $\fS_{3,2}$, $\overline{\fS_{3,2}}$,
$\fA_4$, $\DD_{10}$, $\DD_8$, $\overline{\DD_8}$, $\DD_8^{\times}$,
$\DD_8^{+}$, $\mumu_4\times\mumu_2$, $\mumu_2\times\mumu_2\times\mumu_2$,
$\VV_4\times\mumu_2$\vstrut}
\\
\hline
X_{1/6} &
\parbox[t]{11cm}{$\overline{\fS_5}$, $\overline{\fS_{4,2}}$, $\fS_{3,3}$, $\overline{\fS_{3,3}}$,
$\Karz$, $\fS_3\times\mumu_3$, $\overline{\fS_3\times\mumu_3}$, $\DD_8\times\mumu_2$, $\fS_{3,2}$,
 $\DD_8^{\times}$, $\mumu_4\times\mumu_2$, $\fS_3$, $\mumu_6[3,2]$\vstrut}
\\
\hline
X_{7/10} &
\parbox[t]{11cm}{$\fA_6$, $\fA_5$, $\fS_{3,3}$, $\CCZz$, $\fA_{4,2}$, $\overline{\fA_4\times\fS_2}$, $\fS_3\times\mumu_3$,
$\fS_{3,2}$, $\fA_4$, $\DD_8$, $\mumu_2\times\mumu_2\times\mumu_2$, $\fS_3$, $\mumu_6[6]$, $\mumu_6[3,2]$,
$\mumu_4[4]$, ${\mumu_2[2,2]\times\mumu_2[2]}$, $\mumu_2[2,2,2]$\vstrut}
\\
\hline
\end{array}
\end{equation*}
In particular, for each of these varieties there is a $G$-Sarkisov link~\eqref{eq:G-Sarkisov} centered at $X$ with respect to the corresponding groups.
\end{corollary}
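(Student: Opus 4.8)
The plan is to reduce the statement to a finite character computation fed by Theorem~\ref{theorem:Cl}, and then to apply Proposition~\ref{proposition:Corti} for the existence of the links. The starting point is the elementary observation that for a finite group $G$ acting on the finite-dimensional $\Q$-vector space $\Cl(X)\otimes\Q$ one has
\[
\rkCl(X)^G=\dim_\Q\big(\Cl(X)\otimes\Q\big)^G=\frac{1}{|G|}\sum_{g\in G}\chi(g),
\]
where $\chi$ is the character of the representation $\Cl(X)\otimes\Q$; the right-hand side is the trace of the averaging projector onto the invariant subspace, and taking invariants commutes with $\otimes\,\Q$ since $G$ is finite. Thus the whole problem becomes representation-theoretic once $\chi$ is known, and Theorem~\ref{theorem:Cl} supplies $\chi$ in each case as an explicit sum of irreducible characters: of $\fS_6$ for the quartics $X_t$, and of $\fS_6\times\mumu_2$ for $\cY$ and $\cX_\infty$ (which share the same class group representation, so their rows coincide). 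For the natural action one restricts the $\fS_6\times\mumu_2$-representation along $g\mapsto(g,1)$, obtaining $\rR(6)\oplus\rR(3,3)$, and for the twisted action along $g\mapsto(g,\epsilon(g))$, which replaces the summand $\rR(3,3)$ by its sign twist $\rR(2,2,2)$.

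Next I would run over all conjugacy classes of subgroups $G\subset\fS_6$ and evaluate the displayed averaging formula for each of the representations listed in Theorem~\ref{theorem:Cl}, collecting exactly those $G$ for which the value equals $2$. This is the bulk of the work, but it is entirely mechanical; it was performed with the GAP package~\cite{GAP}, whose source code is available in~\cite{Code}, and its output is the content of the four rows of the table, expressed in Notation~\ref{notation:groups}.

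Finally, for the $G$-Sarkisov assertion I would verify the hypotheses of Proposition~\ref{proposition:Corti} for each $X$ in the list. Every $X_t$ with $t\neq\tfrac14,\infty$ has only nodal singularities by Theorem~\ref{theorem:Geer}, hence is a terminal Fano threefold; likewise $\cX_\infty$ has $30$ nodes by Remark~\ref{remark:double-quadric-Sing} and is terminal Fano; and $\cY$ admits a smooth small, hence crepant, resolution by Theorem~\ref{theorem:coble}, so that any divisor over $\cY$ has the same (positive) discrepancy as computed on the smooth variety $\cY_{5,1}$, showing that $\cY$ is a terminal Fano fourfold with Cartier $K_\cY$. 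In each case $\rkPic(X)=1$ with the ample hyperplane class as an $\fS_6$-invariant generator, so $\rkPic(X)^G=1$; combined with $\rkCl(X)^G=2$ from the table, Proposition~\ref{proposition:Corti} produces the $G$-Sarkisov link~\eqref{eq:G-Sarkisov}. The only point requiring genuine care is the terminality of $\cY$, handled above via crepancy of the small resolution; everything else is bookkeeping over the subgroup lattice of $\fS_6$.
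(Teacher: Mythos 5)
Your proposal is correct and follows essentially the same route as the paper: Theorem~\ref{theorem:Cl} gives the class groups as explicit $\fS_6$- (or $\fS_6\times\mumu_2$-) representations, the invariant ranks are then computed mechanically over all conjugacy classes of subgroups with GAP, and the link statement is read off from Proposition~\ref{proposition:Corti}. Your explicit check that $\cY$ is terminal with Cartier canonical class via the crepant small resolution, and that $\rkPic(X)^G=1$ from the invariant ample generator, is exactly the bookkeeping the paper leaves implicit.
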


\begin{example}
If $t\not\in\bDD\cup\{\infty\}$ and $G = \overline{\fA_5}$, the $G$-Sarkisov link~\eqref{eq:G-Sarkisov}
is obtained by restricting the diagram~\eqref{eq:diagram-two-factorializations-Y}:
\begin{equation*}
\vcenter{\xymatrix{
& \cX^{5,1}_\tau \ar@{->}[dl]_{p} \ar@{->}[drr]^{\pi \circ \rho_{5,1}} \ar@{-->}[rrrr]^{\iota} &&&&
\cX^{5,1}_{-\tau} \ar@{->}[dr]^{p} \ar@{->}[dll]_{\pi \circ \rho_{5,1}}
\\
S &&& X_{t} &&& S
}}
\end{equation*}
Here $t = (\tau^2+1)/4$, $\iota$ is the restriction of the map $\rho_{5,1}^{-1} \circ \sigma \circ \rho_{5,1}$ to~$\cX^{5,1}_\tau$
(it is a composition of $30$ Atiyah flops), and $\psi_\pm$ are the identity maps.
The map $\iota$ can be also defined as the map induced by an action of an odd permutation
in the subgroup $\overline{\fS_5} \subset \fS_6$ containing~$\overline{\fA_5}$.
\end{example}

\begin{example}
If $G=\overline{\fS_5}$,
then the $G$-Sarkisov link~\eqref{eq:G-Sarkisov} for $\cX_{\infty}$ comes
from a restriction of the commutative
diagram~\eqref{eq:diagram-two-factorializations-Y}
to $\cX_\infty$ (recall that $\cX_\infty$ is
$\Aut(\cY)$-invariant).
\end{example}

\subsection{Class group computation}

In this section we prove Theorem~\ref{theorem:Cl}.
We start with a description of the $\fS_5$-action on the Picard group of the quintic del Pezzo surface.

\begin{lemma}\label{lemma:Pic-S-representation}
There is an isomorphism of $\fS_5$-representations
\begin{equation*}
\Pic(S) \otimes \Q \cong \rR(5) \oplus \rR(4,1).
\end{equation*}
\end{lemma}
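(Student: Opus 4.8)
The plan is to use the classical lattice structure of $\Pic(S)$ together with the identification of $\Aut(S)\cong\fS_5$ with the Weyl group of the root system $\mathrm{A}_4$.

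First I would recall that $\Pic(S)$ is a free abelian group of rank $5$ carrying the intersection pairing, and that for one of the five blow-up representations $\varphi\colon S\to\P^2$, with exceptional classes $e_0,e_1,e_2,e_3$ and pulled-back line class $\ell$, one has $K_S=-3\ell+e_0+e_1+e_2+e_3$. Every automorphism of $S$ acts on $\Pic(S)$ by an isometry of the intersection form and fixes the canonical class $K_S$. Hence $\Q K_S\subset\Pic(S)\otimes\Q$ is an $\fS_5$-subrepresentation, and since the form is non-degenerate we obtain an $\fS_5$-equivariant orthogonal decomposition
\begin{equation*}
\Pic(S)\otimes\Q\cong\Q K_S\,\oplus\,\bigl(K_S^{\perp}\otimes\Q\bigr).
\end{equation*}
The first summand is one-dimensional with trivial action, so it is isomorphic to $\rR(5)$.

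It remains to identify the four-dimensional summand $K_S^{\perp}\otimes\Q$. The sublattice $K_S^{\perp}\subset\Pic(S)$ is the root lattice of type $\mathrm{A}_4$: its $20$ elements of self-intersection $-2$ (namely the classes $e_i-e_j$ and $\ell-e_i-e_j-e_k$) form an $\mathrm{A}_4$ root system, and the reflections in these roots generate $W(\mathrm{A}_4)\cong\fS_5$. Because $S$ has degree $5$, the action of $\Aut(S)\cong\fS_5$ on $\Pic(S)$ coincides with this Weyl-group action on the root lattice (see e.g.~\cite[\S8.5]{Dolgachev-CAG}). Therefore $K_S^{\perp}\otimes\Q$ is the reflection representation of $W(\mathrm{A}_4)\cong\fS_5$, which is the standard irreducible representation $\rR(4,1)$. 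Assembling the two summands yields the stated isomorphism.

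The delicate point is the identification of the $\Aut(S)$-action on $\Pic(S)$ with the full Weyl-group action on the $\mathrm{A}_4$ root lattice: this holds precisely because $\deg S=5$ (for smaller degree $\Aut(S)$ is only a proper subgroup of the Weyl group), so the hypothesis is essential and must be invoked rather than derived. Should one prefer to avoid the root-lattice language, an equivalent route is to use that $\fS_5$ permutes the ten $(-1)$-curves on $S$ as the ten two-element subsets of a five-element set. The corresponding permutation module satisfies $\Q^{10}\cong\rR(5)\oplus\rR(4,1)\oplus\rR(3,2)$, the line classes span $\Pic(S)\otimes\Q$, and the resulting surjection $\Q^{10}\twoheadrightarrow\Pic(S)\otimes\Q$ has a five-dimensional kernel; since the image contains the invariant class $-K_S$ (a positive multiple of the sum of all ten lines), the kernel must be the summand $\rR(3,2)$, leaving $\rR(5)\oplus\rR(4,1)$ as the image. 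I would present the lattice argument as the main line and mention this alternative only as a check.
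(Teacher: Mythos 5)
Your argument is correct, but it takes a genuinely different route from the paper. The paper restricts the action to the subgroup $\fS_4\subset\fS_5$ stabilizing one blow-down structure $\varphi\colon S\to\P^2$, observes that $(\Pic(S)\otimes\Q)\vert_{\fS_4}\cong\rR(4)\oplus\rR(4)\oplus\rR(3,1)$ (the line class plus the permutation module on the four exceptional divisors), and then recovers the $\fS_5$-structure by Pieri's branching rule, noting that $\rR(5)$ and $\rR(4,1)$ are the only irreducibles of $\fS_5$ whose restrictions involve only $\rR(4)$ and $\rR(3,1)$. You instead split off $\Q K_S$ orthogonally and identify $K_S^{\perp}\otimes\Q$ with the reflection representation of $W(\mathrm{A}_4)\cong\fS_5$ on the $\mathrm{A}_4$ root lattice. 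Both are sound. The paper's route is more elementary: it needs only the $\fS_4$-equivariance of a single contraction and a branching rule, and it sets up the restriction technique that the paper reuses later (Lemmas 6.3 and 6.5) to pass between $\fS_6$, $\fS_5$, and $\fA_5$. Your route is conceptually cleaner and identifies the nontrivial summand intrinsically, but it leans on the nontrivial input that $\Aut(S)\to W(\mathrm{A}_4)$ is an isomorphism onto the full Weyl group -- which, as you rightly flag, is exactly where the hypothesis $\deg S=5$ enters and must be quoted from \cite[\S8.5]{Dolgachev-CAG} rather than derived. Your fallback argument via the permutation module on the ten $(-1)$-curves is also correct (the image of $\Q^{10}\twoheadrightarrow\Pic(S)\otimes\Q$ is a five-dimensional quotient of $\rR(5)\oplus\rR(4,1)\oplus\rR(3,2)$ containing a trivial summand, hence equals $\rR(5)\oplus\rR(4,1)$) and is arguably the closest in spirit to the paper's permutation-theoretic reasoning.
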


\begin{proof}
The surface $S$ can be obtained as a blow up of $\P^2$ in four points, and this blow up
is $\fS_4$-invariant. Therefore, one has
\begin{equation*}
(\Pic(S) \otimes \Q)\vert_{\fS_4}\cong \rR(4)\oplus \rR(4)\oplus \rR(3,1).
\end{equation*}
Here the first summand is the pullback of the line class, and the last two form the permutation representation spanned by the classes of the exceptional divisors of the blow up.
Now the assertion easily follows, since
\begin{equation}
\label{eq:s5-s4}
\rR(5)\vert_{\fS_4} \cong \rR(4),\qquad
\rR(4,1)\vert_{\fS_4} \cong \rR(4) \oplus \rR(3,1),
\end{equation}
and moreover, by Pieri's rule (see~\cite[Exercise~4.44]{FultonHarris}) the irreducible $\fS_5$-representations~$\rR(5)$ and~$\rR(4,1)$ are
the only ones that restrict to~$\fS_4$ as sums of~$\rR(4)$'s and~$\rR(3,1)$'s.
\end{proof}

Further on we will use a similar argument to describe an $\fS_6$-representation from its restriction to a non-standard subgroup $\fS_5$.
For this the following calculation is quite useful.

\begin{lemma}
\label{lemma:restriction-s6-s5}
The following table contains all irreducible representations $V$ of $\fS_6$, their images $\overline{V}$ under an outer automorphism of $\fS_6$,
and the restrictions of $V$ and $\overline{V}$ to a standard subgroup~$\fS_5$.
\begin{equation*}\renewcommand{\arraystretch}{1.3}
\begin{array}{|c|c|c|c|c|}
\hline
\dim V & V & \overline{V} & V\vert_{\fS_5} & \overline{V}\vert_{\fS_5} \\
\hline
1 & \multicolumn{2}{c|}{\rR(6)} & \multicolumn{2}{c|}{\rR(5)} \\
\hline
1 & \multicolumn{2}{c|}{\rR(1^6)} & \multicolumn{2}{c|}{\rR(1^5)} \\
\hline
5 & \rR(5,1) & \rR(2^3) & \rR(5) \oplus \rR(4,1) & \rR(2^2,1) \\
\hline
5 & \rR(2,1^4) & \rR(3^2) & \rR(2,1^3) \oplus \rR(1^5) & \rR(3,2) \\
\hline
9 & \multicolumn{2}{c|}{\rR(4,2)} & \multicolumn{2}{c|}{\rR(4,1) \oplus \rR(3,2)} \\
\hline
9 & \multicolumn{2}{c|}{\rR(2^2,1^2)} & \multicolumn{2}{c|}{\rR(2^2,1) \oplus \rR(2,1^3)} \\
\hline
10 & \rR(4,1^2) & \rR(3,1^3) & \rR(4,1) \oplus \rR(3,1^2) & \rR(3,1^2) \oplus \rR(2,1^3) \\
\hline
16 & \multicolumn{2}{c|}{\rR(3,2,1)} & \multicolumn{2}{c|}{\rR(3,2) \oplus \rR(3,1^2) \oplus \rR(2^2,1)} \\
\hline
\end{array}
\end{equation*}
\end{lemma}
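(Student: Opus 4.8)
The plan is to separate the two kinds of data recorded in the table. The third and fourth columns (the restrictions to a standard $\fS_5$) follow mechanically from the branching rule once $V$ and $\overline V$ are written as explicit partitions, so the only genuine content lies in the second column, namely the action of an outer automorphism $\phi$ of $\fS_6$ on irreducible representations. Recall that for $\fS_n \supset \fS_{n-1}$ one has $\rR(\lambda)\vert_{\fS_{n-1}} \cong \bigoplus_\mu \rR(\mu)$, the sum being over all partitions $\mu$ of $n-1$ obtained from $\lambda$ by deleting a single box; this is the classical branching rule, the special case of Pieri's rule already used in the proof of Lemma~\ref{lemma:Pic-S-representation}. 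Applying it to each of the eleven partitions of $6$, and to the image partitions appearing in the $\overline V$ column, produces the last two columns at once: for instance $\rR(3,2,1)$ loses a box in three ways, giving $\rR(3,2)\oplus\rR(3,1^2)\oplus\rR(2^2,1)$, while $\rR(2,2,2)$ loses a box in only one way, giving $\rR(2^2,1)$.

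To fill in the $\overline V$ column I would first pin down the permutation that $\phi$ induces on the conjugacy classes of $\fS_6$. Any automorphism preserves element orders and class cardinalities, and fixes the sign character (the unique surjection $\fS_6 \to \mumu_2$). Together with the defining property that $\phi$ carries a transposition, of class $[2,1^4]$, to an element of cycle type $[2,2,2]$, these constraints already force most of the answer: the classes $[1^6]$, $[2^2,1^2]$ and $[5,1]$ are the unique classes of their cardinality and hence fixed, whereas $[4,1^2]$ and $[4,2]$, although both of cardinality $90$, have opposite parities and so cannot be interchanged and are fixed as well. This leaves only the two pairs $\{[3,1^3],[3^2]\}$ and $\{[3,2,1],[6]\}$ undetermined (within each pair the two classes share order, sign and cardinality). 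I would resolve this last ambiguity by a single character evaluation, or by appeal to the explicit $\PGL_2(\mathbf{F}_5)$-model of the outer automorphism, cf.~\cite{HowardMillsonSnowdenVakil}; the outcome is that $\phi$ interchanges both pairs, so altogether $\phi$ swaps $[2,1^4]\leftrightarrow[2,2,2]$, $[3,1^3]\leftrightarrow[3^2]$, $[3,2,1]\leftrightarrow[6]$ and fixes the remaining five classes.

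With this class permutation recorded, the image $\overline V$ of each irreducible is computed from its character via $\chi_{\overline V} = \chi_V\circ\phi^{-1}$, read off the character table of $\fS_6$, and matched against the unique irreducible of the same dimension and character. The trivial and sign representations are fixed; a short verification that $\chi_{\rR(4,2)}$, $\chi_{\rR(2^2,1^2)}$ and $\chi_{\rR(3,2,1)}$ are constant on each swapped pair shows these three are $\phi$-invariant; and the remaining three genuine swaps are $\rR(5,1)\leftrightarrow\rR(2,2,2)$, $\rR(2,1^4)\leftrightarrow\rR(3,3)$ and $\rR(4,1^2)\leftrightarrow\rR(3,1^3)$. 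A convenient bookkeeping shortcut is that $\phi$ commutes with tensoring by the sign representation (because the sign is $\phi$-invariant), and tensoring by the sign conjugates the partition; hence it suffices to determine the swap on one member of each transpose pair, the other following for free.

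The step I expect to be the only real obstacle is the disambiguation among representations of equal dimension: there are four irreducibles of dimension $5$ and two each of dimensions $9$ and $10$, so dimension alone never identifies $\overline V$, and the identification depends crucially on having the order-$3$ and order-$6$ class swaps correct. Concretely, $\chi_{\overline{\rR(5,1)}}([3,1^3]) = \chi_{\rR(5,1)}([3^2]) = -1$, which agrees with $\chi_{\rR(2,2,2)}([3,1^3])$ and thereby forces $\overline{\rR(5,1)} = \rR(2,2,2)$ rather than $\rR(3,3)$; had the order-$3$ classes been left fixed one would instead obtain the value $2$ and misidentify the image. Once these few decisive character values are checked, all remaining entries of the table reduce to routine character-table lookups and box-removals.
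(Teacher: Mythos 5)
Your proposal is correct and follows essentially the same route as the paper: determine the permutation of conjugacy classes induced by the outer automorphism (swapping $[2,1^4]\leftrightarrow[2^3]$, $[3,1^3]\leftrightarrow[3^2]$, $[6]\leftrightarrow[3,2,1]$ and fixing the rest), read off the induced permutation of irreducible characters from the character table of $\fS_6$, and compute the restrictions to $\fS_5$ by the branching (Pieri) rule. Your extra bookkeeping — the cardinality/parity constraints pinning down the class permutation and the compatibility with the sign twist — only elaborates on steps the paper asserts without proof, and all the decisive character values you quote check out.
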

\begin{proof}
The restrictions to~$\fS_5$ are computed by Pieri's rule, so we only need to explain the action of an outer automorphism.
For this note that an outer automorphism acts on the conjugacy classes of $\fS_6$ by swapping the following cycle types
\begin{equation*}
[2] \leftrightarrow [2,2,2],
\qquad
[3] \leftrightarrow [3,3],
\qquad
[6] \leftrightarrow [3,2],
\end{equation*}
and fixing the other types.
By using the character table of $\fS_6$ (see for instance~\mbox{\cite[Example~19.17]{JamesLiebeck}}) it is then straightforward to check that an outer automorphism swaps
\begin{equation*}
\rR(5,1) \leftrightarrow \rR(2,2,2),
\qquad
\rR(2,1^4) \leftrightarrow \rR(3,3),
\qquad
\rR(4,1^2) \leftrightarrow \rR(3,1^3),
\qquad
\end{equation*}
and fixes the other irreducible representations.
\end{proof}

Now we are ready to prove the part of Theorem~\ref{theorem:Cl} concerning the Coble fourfold.

\begin{proposition}
\label{proposition:cl-cy}
The group $\Cl(\cY)$ is torsion free, and there is an isomorphism
\begin{equation*}
\Cl(\cY) \otimes \Q \cong (\rR(6) \boxtimes 1) \oplus (\rR(3,3) \boxtimes (-1))
\end{equation*}
of representations of the group $\Aut(\cY) \cong \fS_6 \times \mumu_2$.
\end{proposition}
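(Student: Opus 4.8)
The plan is to transport the class group of $\cY$ onto the Picard group of a small resolution, where it becomes computable, and then to pin down the full $\fS_6\times\mumu_2$-module structure by combining the representation-theoretic constraint coming from $\rho_{5,1}$ with the geometry of the Galois involution $\sigma$.

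First I would settle torsion-freeness. Since $\rho_{5,1}\colon\cY_{5,1}\to\cY$ is a small birational morphism and $\cY_{5,1}$ is smooth, taking proper transforms of Weil divisors gives an isomorphism $\Cl(\cY)\cong\Cl(\cY_{5,1})=\Pic(\cY_{5,1})$. As $\cY_{5,1}=\P_S(\cU_3)$ is a projective bundle over the quintic del Pezzo surface $S$, one has $\Pic(\cY_{5,1})\cong p^*\Pic(S)\oplus\ZZ\xi$, which is free of rank $6$ (consistent with Corollary~\ref{corollary:rkcl-y}); hence $\Cl(\cY)$ is torsion free.

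The core step is the representation structure. Because $\rho_{5,1}$ is equivariant for the \emph{twisted} action of a \emph{non-standard} $\fS_5$ by Theorem~\ref{theorem:coble}(ii), the isomorphism above is $\fS_5$-equivariant, so I would compute its right-hand side: the relative hyperplane class $\xi$ spans a trivial summand, while $p^*\Pic(S)\otimes\Q\cong\rR(5)\oplus\rR(4,1)$ by Lemma~\ref{lemma:Pic-S-representation}. Thus, as an $\fS_5$-module, $\Cl(\cY)\otimes\Q\cong\rR(5)^{\oplus2}\oplus\rR(4,1)$. I would then show that a $6$-dimensional $\fS_6$-representation restricting to the non-standard $\fS_5$ in this way is unique. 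Using the identity $V|_{\fS_5^{\mathrm{nonstd}}}\cong\overline{V}|_{\fS_5^{\mathrm{std}}}$ together with Lemma~\ref{lemma:restriction-s6-s5}, one checks that among the irreducibles of dimension $\le 6$ the only one whose non-standard restriction contains $\rR(4,1)$ is $\rR(2,2,2)$, with $\rR(2,2,2)|_{\fS_5^{\mathrm{nonstd}}}\cong\rR(5)\oplus\rR(4,1)$. Since $\rR(4,1)$ occurs exactly once, precisely one copy of $\rR(2,2,2)$ appears; the complementary $1$-dimensional piece restricts to the trivial $\fS_5$-module, which forces it to be $\rR(6)$ rather than $\rR(1^6)$. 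Hence, for the twisted action, $\Cl(\cY)\otimes\Q\cong\rR(6)\oplus\rR(2,2,2)$.

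Finally I would incorporate $\sigma$. It commutes with $\fS_6$ and, the two summands being non-isomorphic irreducibles, acts on each by a scalar $\pm1$ (Schur's lemma). The class $\pi^*\cO_{\P(\WW_5)}(1)$ is fixed by all of $\Aut(\cY)$ and spans the trivial summand $\rR(6)$, on which $\sigma$ therefore acts by $+1$. On the other hand $\sigma$ acts nontrivially on $\Cl(\cY)$, since $\rho_{5,1}^{-1}\circ\sigma\circ\rho_{5,1}$ is a nontrivial flop (equivalently, $\sigma$ exchanges the two distinct Weil-divisor classes whose blow ups give $\rho_{5,1}$ and $\sigma\circ\rho_{5,1}$, see Remark~\ref{remark:rho-5-1-Weil-final}); hence $\sigma$ acts by $-1$ on the remaining summand. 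Translating from the twisted to the natural $\fS_6$-labels—on the $(-1)$-eigenspace the natural action is the twisted one tensored by the sign character, and $\rR(2,2,2)\otimes\rR(1^6)\cong\rR(3,3)$—this yields $\Cl(\cY)\otimes\Q\cong(\rR(6)\boxtimes1)\oplus(\rR(3,3)\boxtimes(-1))$, as claimed. The main obstacle I anticipate is exactly this last point: the representation theory alone cannot distinguish the two sign choices for $\sigma$ on the nontrivial summand, so one genuinely needs the geometric input that $\sigma$ acts nontrivially on $\Cl(\cY)$ (through the flop, or through the explicit exchange of Weil divisors).
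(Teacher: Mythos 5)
Your proof is correct and follows essentially the same route as the paper: identify $\Cl(\cY)$ with $\Pic(\cY_{5,1})$ via the small resolution (giving torsion-freeness), decompose it as an $\fS_5$-module using the $\P^2$-bundle structure over $S$ and Lemma~\ref{lemma:Pic-S-representation}, lift to $\fS_6$ via Lemma~\ref{lemma:restriction-s6-s5} for the non-standard embedding, and finally determine the sign of $\sigma$ on each irreducible summand. The only (minor and equally valid) divergence is at the last step: where you invoke the nontriviality of the flop $\rho_{5,1}^{-1}\circ\sigma\circ\rho_{5,1}$, respectively the exchange of the Weil divisor classes $\cH_I\leftrightarrow\cH_{\bar I}$, to see that $\sigma$ acts by $-1$ on the five-dimensional summand, the paper simply observes that $\cY/\sigma\cong\P^4$ has class group of rank $1$, so the $\sigma$-invariant part of $\Cl(\cY)\otimes\Q$ must be one-dimensional.
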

\begin{proof}
Since $\cY_{5,1} \to \cY$ is a small $\fS_5$-equivariant resolution, we have an $\fS_5$-equivariant isomorphism  $\Cl(\cY) \cong \Pic(\cY_{5,1})$
with respect to the twisted action of a non-standard subgroup $\fS_5$.
Since $\cY_{5,1}$ is a $\P^2$-bundle over the quintic del Pezzo surface $S$, we have an $\fS_5$-equivariant direct sum decomposition
\begin{equation*}
\Pic(\cY_{5,1}) = \ZZ H \oplus p^*(\Pic(S)).
\end{equation*}
Here the first summand is generated by the pullback of the hyperplane class of $\P^4$ under the map $\pi \circ \rho_{5,1}$,
and so is $\fS_5$-invariant.
This proves that $\Cl(\cY)$ is torsion free.

Furthermore, it follows from Lemma~\ref{lemma:Pic-S-representation} that there is an isomorphism of $\fS_5$-representations
\begin{equation*}
(\Cl(\cY) \otimes \Q)\vert_{\fS_5} \cong \rR(5) \oplus \rR(5) \oplus \rR(4,1).
\end{equation*}
Since the embedding of $\fS_5 \hookrightarrow \fS_6$ is non-standard,
it follows from Lemma~\ref{lemma:restriction-s6-s5} that
\begin{equation*}
(\Cl(\cY) \otimes \Q)\vert_{\fS_6} \cong \rR(6) \oplus \rR(2,2,2);
\end{equation*}
we emphasize the fact that this isomorphism holds for the \emph{twisted} action of $\fS_6$ on $\cY$.
The first summand $\rR(6)$ is generated by the class $H$, hence lifts to $\rR(6) \boxtimes 1$ as a representation of $\fS_6 \times \mumu_2$.
Since the quotient of $\cY$ by the Galois involution $\sigma$ is $\P^4$ and its class group is of rank 1, it follows that the action of $\mumu_2$
on the second summand $\rR(2,2,2)$ is non-trivial.
Hence the \emph{natural} action of $\fS_6$ on the second summand is obtained from $\rR(2,2,2)$ by the sign twist,
i.e., the corresponding representation is $\rR(3,3)$
(recall that the sign twist modifies an irreducible representation by a transposition of its partition),
and the assertion of the proposition follows.
\end{proof}

Below we will also need to describe certain $\fS_5$-representations from their restrictions to~$\fA_5$.
For this the following calculation is useful.
Denote by $R_1$, $R'_3$, $R''_3$, $R_4$, and~$R_5$
the irreducible representations of the group $\fA_5$ of dimensions $1,3,3,4$, and~$5$, respectively,
see for instance~\cite[Exercise~3.5]{FultonHarris}.

\begin{lemma}
\label{lemma:restriction-s5-a5}
The following table contains all irreducible representation of $\fS_5$ and their restrictions to $\fA_5$.
\begin{equation*}\renewcommand{\arraystretch}{1.3}
\begin{array}{|c|c|c|c|c|c|c|c|}
\hline
\rR(\lambda) & \rR(5) & \rR(1^5) & \rR(4,1) & \rR(2,1^3) & \rR(3,2) & \rR(2^2,1) & \rR(3,1^2) \\
\hline
\rR(\lambda)\vert_{\fA_5} & \multicolumn{2}{|c|}{R_1} & \multicolumn{2}{c|}{R_4} & \multicolumn{2}{c|}{R_5} & R'_3 \oplus R''_3\\
\hline
\end{array}
\end{equation*}
\end{lemma}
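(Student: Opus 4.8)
The plan is to avoid computing characters by hand and instead invoke the standard theory of restriction of irreducible representations from $\fS_n$ to the index-two subgroup $\fA_n$, specialized to $n=5$. Two ingredients suffice, both found in~\cite{FultonHarris}: first, for any partition $\lambda$ of $n$ one has $\rR(\lambda) \otimes \rR(1^n) \cong \rR(\lambda')$, where $\lambda'$ is the conjugate (transposed) partition; second, by Clifford theory for the inclusion $\fA_n \subset \fS_n$, the restriction $\rR(\lambda)\vert_{\fA_n}$ is irreducible when $\rR(\lambda) \otimes \rR(1^n) \not\cong \rR(\lambda)$, and splits as a direct sum of two non-isomorphic irreducible $\fA_n$-representations of equal dimension when $\rR(\lambda) \otimes \rR(1^n) \cong \rR(\lambda)$. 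Combining these, $\rR(\lambda)\vert_{\fA_n}$ splits precisely when $\lambda$ is self-conjugate, and two conjugate partitions $\lambda, \lambda'$ always restrict to the same $\fA_n$-representation.

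First I would sort the partitions of $5$ by conjugation: the pairs $\{(5),(1^5)\}$, $\{(4,1),(2,1^3)\}$, and $\{(3,2),(2^2,1)\}$ are each interchanged by conjugation, while $(3,1^2)$ is the unique self-conjugate partition. Thus the seven irreducible $\fS_5$-representations fall into three conjugate pairs, each restricting to a single irreducible $\fA_5$-representation, together with $\rR(3,1^2)$, which restricts to a sum of two irreducibles of equal dimension.

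Next I would match dimensions. The irreducible $\fA_5$-representations have dimensions $1, 3, 3, 4, 5$, while $\rR(5)$, $\rR(4,1)$, $\rR(3,2)$ have dimensions $1, 4, 5$; hence the three conjugate pairs restrict respectively to $R_1$, $R_4$, and $R_5$. The self-conjugate $\rR(3,1^2)$, of dimension $6$, must then restrict to two non-isomorphic three-dimensional pieces, which are necessarily $R'_3 \oplus R''_3$, since these are the only irreducible $\fA_5$-representations of dimension $3$. This fills in every entry of the table.

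The only point requiring care — the "main obstacle," such as it is — is to confirm that the two summands of $\rR(3,1^2)\vert_{\fA_5}$ really are the two inequivalent three-dimensional representations $R'_3$ and $R''_3$ (the pair exchanged by conjugation by an odd permutation), rather than something accidental. This is immediate from the Clifford-theoretic statement: the two summands are non-isomorphic and are conjugate under $\fS_5/\fA_5$, and $R'_3, R''_3$ are the unique such pair among the irreducibles of $\fA_5$. No genuine computation is needed beyond recalling the hook-length dimensions of the $\fS_5$-representations and the character table of $\fA_5$.
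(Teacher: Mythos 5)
Your proof is correct, but it follows a genuinely different route from the paper's. You invoke Clifford theory for the index-two inclusion $\fA_5 \subset \fS_5$ together with the identity $\rR(\lambda)\otimes\rR(1^n)\cong\rR(\lambda')$, so that the behaviour of each restriction is governed entirely by whether $\lambda$ is self-conjugate; the only self-conjugate partition of $5$ being $(3,1^2)$, everything else follows by matching dimensions, and the splitting of $\rR(3,1^2)\vert_{\fA_5}$ into the two \emph{inequivalent} three-dimensional representations comes for free from the Clifford-theoretic statement that the two summands are non-isomorphic and swapped by an odd permutation. The paper instead argues via Frobenius reciprocity that $\Ind_{\fA_5}^{\fS_5}(R_1) \cong \rR(5)\oplus\rR(1^5)$, so only the trivial and sign representations of $\fS_5$ can contain $R_1$ upon restriction; a dimension count then forces every entry of the table except that one must still rule out $\rR(3,1^2)\vert_{\fA_5}\cong 2R_3'$ or $2R_3''$, which the paper does by noting that $\rR(3,1^2)$ is defined over $\Q$ while neither $R_3'$ nor $R_3''$ is. Your argument is the more systematic one and generalizes verbatim to any $\fS_n$; the paper's is slightly more hands-on but needs only Frobenius reciprocity and the rationality observation. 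Either is a complete proof.
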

\begin{proof}
It is enough to know that a restriction of an $\fS_5$-representation $\rR(\lambda)$ to $\fA_5$
contains the trivial subrepresentation $R_1$ if and only if
$\rR(\lambda)$ is trivial or is the sign representation, i.e., if $\lambda = (5)$ or $\lambda = (1^5)$.
This follows from Frobenius reciprocity, because
\begin{equation*}
\Ind_{\fA_5}^{\fS_5}(R_1) \cong \rR(5) \oplus \rR(1^5).
\end{equation*}
With this in mind, there is only one way to represent the dimensions of $\rR(\lambda)$ as sums of dimensions of irreducible $\fA_5$-representations.
It remains to notice that the $\fS_5$-representation $\rR(3,1^2)$ is defined over
$\Q$, while both three-dimensional $\fA_5$-representations~$R_3'$ and~$R_3''$ are not, so the restriction
of $\rR(3,1^2)$ to $\fA_5$ splits as $R_3'\oplus R_3''$.
\end{proof}

Now we are almost ready to attack the class groups of the quartics $X_t$.
For each $\tau$ we have a natural composition
\begin{multline}
\label{eq:Cl-long-composition}
\Cl(\cY) \cong
\Cl(\cY \setminus \CR) \cong
\Pic(\cY \setminus \CR) \xrightarrow{\ \operatorname{\mathsf{res}}\ } \\
\xrightarrow{\ \operatorname{\mathsf{res}}\ }
\Pic(\cX_\tau \setminus \CR) \hookrightarrow
\Cl(\cX_\tau \setminus \CR) \cong \Cl(\cX_\tau).
\end{multline}
Here $\operatorname{\mathsf{res}}$ denotes the restriction map.
The first and the last isomorphisms take place since the Cremona--Richmond configuration $\CR=\Sing(\cY)$
has codimension greater than 1 both in $\cY$ and $\cX_\tau$,
and the second isomorphism follows from smoothness of $\cY \setminus \CR$.

\begin{lemma}
\label{lemma:morphism-cly-clcx}
For all $\tau \ne 0$ the composition $\Cl(\cY) \to \Cl(\cX_\tau)$ of the maps in~\eqref{eq:Cl-long-composition}
is an~$\fS_6$-equivariant embedding with respect to the natural action of $\fS_6$.
For $\tau = \infty$ it is an~$\fS_6 \times \mumu_2$-equivariant embedding.
Moreover, for $\tau \not\in \bDDtau$ it is an isomorphism.
\end{lemma}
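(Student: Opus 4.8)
The plan is to dispose of equivariance by pure naturality, then to identify the composition with a restriction map between the Picard groups of the small resolutions, where the $\P^2$-bundle and conic bundle structures make everything transparent.

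First I would observe that each arrow in~\eqref{eq:Cl-long-composition} is manifestly natural. The isomorphisms $\Cl(\cY)\cong\Cl(\cY\setminus\CR)$ and $\Cl(\cX_\tau)\cong\Cl(\cX_\tau\setminus\CR)$ hold because $\CR$ has codimension $\ge 2$ in the normal varieties $\cY$ and $\cX_\tau$: for $\tau\ne\infty$ one has $\cX_\tau\cong X_{(\tau^2+1)/4}$, which is normal by Theorem~\ref{theorem:Geer}, while $\cX_\infty$ is normal with zero-dimensional singular locus by Remark~\ref{remark:double-quadric-Sing}. The restriction map is functorial, so the whole composition is a homomorphism of class groups that is natural in the variety. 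Since $\cX_\tau$ is invariant under the natural action of $\fS_6$, and is even $\Aut(\cY)=\fS_6\times\mumu_2$-invariant when $\tau=0,\infty$, all these maps are equivariant for the corresponding group, which gives the asserted equivariance statements.

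Second, for $\tau\ne0$ I would pass to the resolution $\cY_{5,1}=\P_S(\cU_3)$. By Theorem~\ref{theorem:coble} the map $\rho_{5,1}\colon\cY_{5,1}\to\cY$ is small, and by Corollary~\ref{corollary:r51-small} so is its restriction $\cX^{5,1}_\tau\to\cX_\tau$; both are isomorphisms over $\cY\setminus\CR$. Hence $\Cl(\cY)\cong\Pic(\cY_{5,1})$ and $\Cl(\cX_\tau)\cong\Pic(\cX^{5,1}_\tau)$, and under these identifications the composition~\eqref{eq:Cl-long-composition} becomes the restriction homomorphism $\Pic(\cY_{5,1})\to\Pic(\cX^{5,1}_\tau)$ along the divisor inclusion $\cX^{5,1}_\tau\hookrightarrow\cY_{5,1}$. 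Writing $H=\pi_{5,1}^*\cO_{\P(\WW_5)}(1)$ for the relative hyperplane class, the $\P^2$-bundle structure gives $\Pic(\cY_{5,1})=\ZZ H\oplus p^*\Pic(S)$. Injectivity then follows by restricting a class $aH+p^*D$ to a general fibre of the conic bundle $p\colon\cX^{5,1}_\tau\to S$: a smooth fibre is a conic isomorphic to $\P^1$ on which $H$ has degree $2$, forcing $a=0$, after which $p^*D|_{\cX^{5,1}_\tau}=(p|_{\cX^{5,1}_\tau})^*D=0$ gives $D=0$ because $p_*\cO_{\cX^{5,1}_\tau}=\cO_S$. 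This proves the embedding statement for all $\tau\ne0$.

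Finally, for $\tau\notin\bDDtau$ I would upgrade injectivity to an isomorphism. By Theorem~\ref{theorem:conic-bundles} the conic bundle $p$ is standard, so $\rkPic(\cX^{5,1}_\tau/S)=1$ and there is a split exact sequence $0\to p^*\Pic(S)\to\Pic(\cX^{5,1}_\tau)\to\Pic(\cX^{5,1}_\tau/S)\to0$ with $\Pic(\cX^{5,1}_\tau/S)\cong\ZZ$ embedding into the Picard group of the generic fibre, a smooth conic over the function field $\Bbbk(S)$. The crux is to identify the relative generator: a priori $H|_{\cX^{5,1}_\tau}$, which has relative degree $2$, could be twice a class of relative degree $1$, in which case the restriction would have index $2$. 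This cannot happen, because a relative generator of degree $1$ amounts to a $\Bbbk(S)$-point of the generic fibre, hence to a rational section of $p$, which would make $\cX^{5,1}_\tau$ birational to $S\times\P^1$ and therefore rational, contradicting the irrationality established in Theorem~\ref{theorem:Beauville}. Thus $\Pic(\cX^{5,1}_\tau/S)=\ZZ\cdot[H|_{\cX^{5,1}_\tau}]$, the restriction is surjective, and together with injectivity it is an isomorphism. The main obstacle is exactly this last integral point: over $\Q$ the equality of ranks (both equal to $6$ by Corollary~\ref{corollary:rkcl-y} and Lemma~\ref{lemma:Cl-Z6-upstairs}) together with injectivity already yields an isomorphism, but ruling out the index-$2$ phenomenon genuinely requires the no-section input furnished by irrationality.
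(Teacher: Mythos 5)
Your proof is correct and follows essentially the same route as the paper's: equivariance by naturality of all maps in the composition, injectivity via the identification with the restriction map $\Pic(\cY_{5,1})\to\Pic(\cX^{5,1}_\tau)$ using that both $\rho_{5,1}\colon\cY_{5,1}\to\cY$ and $\rho_{5,1}\colon\cX^{5,1}_\tau\to\cX_\tau$ are small for $\tau\ne 0$, and surjectivity for $\tau\notin\bDDtau$ by observing that the image has index $1$ or $2$ according to whether the standard conic bundle has a rational section, the latter being excluded by the irrationality of Theorem~\ref{theorem:Beauville}. The only difference is that you spell out the injectivity of the restriction map (via relative degree on a general conic fibre and the projection formula), which the paper dismisses as obvious.
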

\begin{proof}
All the maps in~\eqref{eq:Cl-long-composition} are equivariant with respect to the natural action of~$\fS_6$ (or of the whole group $\fS_6 \times \mumu_2$ in case $\tau = \infty$),
hence so is the composition, and it remains to prove injectivity.
For this we forget about the $\fS_6$-action and consider the diagram
\begin{equation}
\label{diagram:pic-y-cx}
\vcenter{\xymatrix@C=4em{
\Pic(\cY_{5,1}) \ar[r]^{\operatorname{\mathsf{res}}} \ar[d]_{(\rho_{5,1})_*} &
\Pic(\cX^{5,1}_\tau) \ar[d]^{(\rho_{5,1})_*}
\\
\Cl(\cY) \ar[r] &
\Cl(\cX_\tau)
}}
\end{equation}
which is easily seen to be commutative.
The vertical arrows are isomorphisms, since the birational maps $\rho_{5,1} \colon \cY_{5,1} \to \cY$ and~$\rho_{5,1} \colon \cX^{5,1}_\tau \to \cX_\tau$ for $\tau \ne 0$
are small by Theorems~\ref{theorem:coble} and~\ref{theorem:three-pencils}.
So, it is enough to check that the morphism $\operatorname{\mathsf{res}}$ is injective, which is obvious,
since $\cY_{5,1}$ is a $\P^2$-bundle over~$S$ and~$\cX^{5,1}_\tau$ is a (flat) conic bundle inside $\cY_{5,1}$.

Moreover, for $\tau \not\in \bDDtau$ the conic bundle is standard,
hence the image of the top arrow is a sublattice of index~$2$ or~$1$,
depending on whether the conic bundle has a rational section or not.
Since we also know from \cite{Beauville} or Theorem~\ref{theorem:Beauville} that for~\mbox{$\tau \not\in \bDDtau$}
the threefold $\cX_{\tau}^{5,1}$ is not rational,
we conclude that the conic bundle~\mbox{$p\colon \cX_{\tau}^{5,1}\to S$} has no rational sections,
and thus $\operatorname{\mathsf{res}}$ is an isomorphism.
\end{proof}

\begin{remark}
\label{remark:clxt-nonspecial}
Recall that by Lemma~\ref{lemma:cxtau-xt} for $\tau \ne 0, \infty$ one has an isomorphism $\cX_\tau \cong X_t$ for $t = (\tau^2 + 1)/4$.
Thus Proposition~\ref{proposition:cl-cy} and Lemma~\ref{lemma:morphism-cly-clcx} provide a description
of $\Cl(X_t)$ for all $t \not\in\bDD\cup\{\infty\}$.
\end{remark}

It remains to analyze the class groups of the special quartics $X_t$.

We can think of the map~\eqref{eq:Cl-long-composition} as of a map~\mbox{$\Cl(\cY) \to \Cl(X_t)$};
this map is $\fS_6$-equivariant, where the action of $\fS_6$ on $\cY$ is natural.
We denote the cokernel of this map by
\begin{equation*}
\VCl(X_t) := \Cl(X_t)/\Cl(\cY),
\end{equation*}
and refer to this group as the {\sf excess class group} of $X_t$.
To prove Theorem~\ref{theorem:Cl} we need to compute the latter group for $t = 1/2$, $1/6$, and $7/10$ as an $\fS_6$-representation.
For this we need a couple of observations.

\begin{lemma}
\label{lemma:Cl-standard-S4}
For a standard subgroup $\fS_4 \subset \fS_6$ we have $\rkCl(X_t)^{\fS_4} = 1$ for any $t \ne \infty$.
In particular, we have $\rk\VCl(X_t)^{\fS_4} = 0$ for any $t \ne \infty$.
\end{lemma}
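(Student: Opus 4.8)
The plan is to combine the representation structure of $\Cl(\cY)$ with the classical description of the class group of a nodal quartic threefold in terms of conditions imposed on cubics, thereby reducing the whole statement to a finite linear-algebra verification. The inequality $\rkCl(X_t)^{\fS_4}\ge 1$ is clear, since the hyperplane class $H$ is $\fS_4$-invariant, so the content is the opposite bound. First I would dispose of $t=1/4$: by Lemma~\ref{lemma:cl-xt} one has $\rkCl(X_{1/4})=1$, so a fortiori the invariant part has rank $1$. From now on I would assume $t\neq 1/4,\infty$, so that by Theorem~\ref{theorem:Geer} the singular locus $N_t:=\Sing(X_t)$ consists entirely of nodes.

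For such $t$ the plan is to invoke the defect computation of Beauville and Cynk already used in the proof of Lemma~\ref{lemma:cl-xt}: for a nodal quartic threefold in $\P^4$ the governing linear system is that of cubics (the value $2\cdot4-4-1=3$), and one obtains an $\fS_6$-equivariant isomorphism $\Cl(X_t)\otimes\Q\cong \Q H\oplus D_t$, where $D_t=\ker\bigl(\Q^{N_t}\xrightarrow{\ \mathrm{ev}\ }H^0(\P^4,\cO_{\P^4}(3))^\vee\bigr)$ and $\mathrm{ev}$ sends a node to evaluation of cubic forms at that point. Taking $\fS_4$-invariants, which is exact in characteristic zero, gives $\rkCl(X_t)^{\fS_4}=1+\dim D_t^{\fS_4}$, and $D_t^{\fS_4}=\ker(\mathrm{ev}|_{(\Q^{N_t})^{\fS_4}})$ because $\mathrm{ev}$ is $\fS_4$-equivariant. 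The invariant subspace $(\Q^{N_t})^{\fS_4}$ has as a basis the orbit sums $\mathbf 1_O=\sum_{p\in O}\delta_p$, one for each $\fS_4$-orbit $O\subset N_t$; replacing an arbitrary cubic by its $\fS_4$-average one sees that $D_t^{\fS_4}=0$ if and only if the $\fS_4$-invariant cubic forms impose linearly independent conditions on a set of representatives of the $\fS_4$-orbits in $N_t$.

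This last statement is what I would verify by a direct computation. One first counts the $\fS_4$-orbits in $N_t$: the orbit $\Sigma_{30}$ (present for every such $t$) together with, for $t\in\{1/2,1/6,7/10\}$, the additional orbits $\Sigma_{15}$, $\Sigma_{10}$, or $\Sigma_{6}$, each of which breaks into a small number of orbits under the standard $\fS_4$ fixing the indices $5,6$. One then exhibits enough explicit $\fS_4$-invariant cubics --- built from the elementary symmetric functions in $x_1,\dots,x_4$ together with arbitrary monomials in $x_5,x_6$, reduced modulo $\sum x_i$ --- and checks that the resulting evaluation matrix on the chosen representatives has maximal rank equal to the number of orbits. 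Since the space of invariant cubics has dimension well in excess of the number of orbits, and the orbit representatives have pairwise distinct ``coordinate patterns'', this separation is straightforward to arrange; the verification is elementary but somewhat tedious, and is the main (and essentially only) obstacle.

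Finally, the in-particular clause follows formally. By Proposition~\ref{proposition:cl-cy} and the restriction rules of Lemma~\ref{lemma:restriction-s6-s5} (via Pieri's rule) the representation $\Cl(\cY)\otimes\Q\cong\rR(6)\oplus\rR(3,3)$ restricts to the standard $\fS_4$ with the trivial representation appearing with multiplicity one, so $\rkCl(\cY)^{\fS_4}=1$; since for $t\neq 1/4$ the map $\Cl(\cY)\to\Cl(X_t)$ of~\eqref{eq:Cl-long-composition} is an $\fS_4$-equivariant injection by Lemma~\ref{lemma:morphism-cly-clcx}, exactness of invariants gives $\rk\VCl(X_t)^{\fS_4}=\rkCl(X_t)^{\fS_4}-\rkCl(\cY)^{\fS_4}=0$, while for $t=1/4$ the group $\VCl(X_{1/4})$ is rationally trivial by Lemma~\ref{lemma:cl-xt}. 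I would stress that the small resolutions $\cY_{5,1}$ and $\cY_{4,2}$ cannot be used as a shortcut, because a standard $\fS_4$ lies in no non-standard $\fS_5$ and in no non-standard $\fS_{4,2}$ (an outer automorphism carries transpositions to permutations of cycle type $[2,2,2]$), so neither resolution is equivariant for it; this is precisely why the direct cubic-conditions argument on $X_t$ is needed.
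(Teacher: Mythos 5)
Your route is genuinely different from the paper's, which sidesteps all of this: the paper observes that the quotient $X_t/\fS_4$ is the weighted projective space $\P(1,1,2,3)$ (the $\fS_4$-invariants $x_5,x_6,\bp_2,\bp_3$, with $\bp_i=\sum x_j^i$, serve as coordinates once the equations $\bp_1=0$ and $\bp_4=t\bp_2^2$ are used to eliminate two generators of $\P^5/\fS_4\cong\P(1,1,1,2,3,4)$), whence $\rkCl(X_t)^{\fS_4}=\rkCl(X_t/\fS_4)=1$ by \cite[1.7.5]{Fulton}. That argument is uniform in $t\ne\infty$, needs no case analysis on the singular locus, and requires no matrix computation.

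Your argument has two genuine gaps. First, the asserted ``$\fS_6$-equivariant isomorphism $\Cl(X_t)\otimes\Q\cong\Q H\oplus D_t$'' with $D_t=\ker\bigl(\Q^{N_t}\to H^0(\P^4,\cO_{\P^4}(3))^\vee\bigr)$ carrying the plain permutation action on nodes is not justified: \cite[Theorem~2]{Cynk} and \cite[Lemma~2]{Beauville} give only the dimension count. The equivariant statement is delicate, because the action of a group element on the defect part of $\Cl(X_t)$ records whether it preserves or swaps the two rulings of the exceptional quadric over each node, so the correct model is the permutation representation on $N_t$ twisted by these signs (this is why the paper's closing remark routes the representation-theoretic statement through \cite[Proposition~1.3]{Schoen} rather than through the bare defect formula). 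Such swaps genuinely occur here: Corollary~\ref{corollary:relaive-class-group} shows that for $\tau=\pm1$, i.e.\ $t=1/2$, the stabilizer of a discriminant component interchanges $\Theta'_i$ and $\Theta''_i$, which is exactly the sign your untwisted model ignores, and a sign twist changes the dimension of the invariant subspace. Second, even granting your identification, the entire content of the lemma is the claim that the evaluation matrix of $\fS_4$-invariant cubics on the orbit representatives has zero kernel, and you do not carry this out; ``straightforward to arrange'' is not a proof, especially for $t=1/2$ where the $45$ nodes break into several $\fS_4$-orbits and the assertion is that the kernel is exactly zero. (Your closing aside is correct that a standard $\fS_4$ lies in no non-standard $\fS_5$ or $\fS_{4,2}$, so the small resolutions are unavailable; but the conclusion that one is therefore forced into the cubic-conditions computation is not, since the quotient argument is a third way.)
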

\begin{proof}
We may assume that $\fS_4$ preserves the homogeneous coordinates $x_5$ and $x_6$ on $\P^5$.
Denote $\bp_i := x_1^i+\ldots+x_6^i$.
Consider the quotients $\P^5/\fS_4$ and $X_t/\fS_4$.
Then
\begin{equation*}
\P^5/\fS_4 \cong \P(1,1,1,2,3,4),
\end{equation*}
where the weighted homogeneous coordinates of weights $1,1,1,2,3$, and $4$ correspond to
the $\fS_4$-invariants $x_5$, $x_6$, $\bp_1$, $\bp_2$, $\bp_3$, and $\bp_4$, respectively.
The quotient variety $X_t/\fS_4$ is given in $\P(1,1,1,2,3,4)$ by the equations
\begin{equation*}
\bp_1 = \bp_4 - t\bp_2^2 = 0,
\end{equation*}
so that $X_t/\fS_4 \cong \P(1,1,2,3)$.
Therefore, we have $\rkCl(X_t)^{\fS_4} = \rkCl(X_t/\fS_4) = 1$, see for instance \cite[1.7.5]{Fulton}.
Since also $\rkCl(\cY)^{\fS_4} = 1$ (see Corollary~\ref{corollary:rk-1}), it follows that~\mbox{$\rk\VCl(X_t)^{\fS_4} = 0$}.
\end{proof}

\begin{remark}[{cf.~\cite[Remark~2.11]{CheltsovPrzyjalkowskiShramov-BarBur}}]
\label{remark:guilt-complex}
An argument similar to the proof of Lemma~\ref{lemma:Cl-standard-S4} was (incorrectly!)
used in the proof of~\cite[Theorem~1.20]{CheltsovShramov-A6} for the standard subgroup~\mbox{$\fA_{4,2}\cong\fS_4$} in~$\fS_6$
to deduce that~\mbox{$\rkCl(X_{1/2})^{\fA_6}=1$}.
However, the assertion is correct: it
was later obtained in~\cite[Corollary~2.10]{CheltsovPrzyjalkowskiShramov-BarBur} by a different method.
Using Theorem~\ref{theorem:Cl} we can find these ranks as well: 
indeed, one has~\mbox{$\rkCl(X_{1/2})^{\fA_6} = \rkCl(X_{1/2})^{\fA_{4,2}} = 1$} by Corollary~\ref{corollary:rk-1}.
\end{remark}

\begin{lemma}
\label{lemma:x16-s5}
For a non-standard subgroup $\fS_5 \subset \fS_6$ we have $\rkCl(X_{1/6})^{\fS_5} = 2$.
In particular, we have $\rk\VCl(X_{1/6})^{\fS_5} = 1$.
\end{lemma}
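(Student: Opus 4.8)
The plan is to isolate the contribution of the Coble fourfold to $\Cl(X_{1/6})$ and reduce everything to a single sign. Fix $\tau=1/\sqrt{-3}$, so that $t=(\tau^2+1)/4=1/6$ and, by Lemma~\ref{lemma:cxtau-xt}, $X_{1/6}\cong\cX_\tau$. The $\fS_6$-equivariant inclusion $\Cl(\cY)\hookrightarrow\Cl(X_{1/6})$ of~\eqref{eq:Cl-long-composition} is split over $\Q$ by Lemma~\ref{lemma:morphism-cly-clcx}, with cokernel $\VCl(X_{1/6})$. For the natural action I first record that
\[
\rkCl(\cY)^{\overline{\fS_5}}=1,
\]
which follows from $\Cl(\cY)\otimes\Q\cong\rR(6)\oplus\rR(3,3)$ (Proposition~\ref{proposition:cl-cy}) together with Lemma~\ref{lemma:restriction-s6-s5}: indeed $\rR(6)$ restricts to the trivial representation of $\overline{\fS_5}$, while $\rR(3,3)\vert_{\overline{\fS_5}}\cong\rR(2,1^3)\oplus\rR(1^5)$ has no trivial summand. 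Hence $\rkCl(X_{1/6})^{\overline{\fS_5}}=1+\rk\VCl(X_{1/6})^{\overline{\fS_5}}$, and the problem reduces to proving $\rk\VCl(X_{1/6})^{\overline{\fS_5}}=1$, which is precisely the ``in particular'' clause.

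Next I would identify $\VCl(X_{1/6})$ as far as representation theory allows. Working with the $\overline{\fA_5}$-equivariant flat conic bundle $p\colon\cX^{5,1}_\tau\to S$, whose discriminant $\Delta_{s(\tau)}$ is the union of five conics (Theorem~\ref{theorem:WE-singular-curves}, since $s(\tau)=-1/\sqrt{-3}$), Corollary~\ref{corollary:relaive-class-group} gives
\[
\Cl(\cX^{5,1}_\tau/S)\otimes\Q\cong\mathbf{1}\oplus\Ind_{\fA_4}^{\fA_5}(\mathbf{1}).
\]
Since the image of $\Cl(\cY)$ accounts exactly for $p^*\Pic(S)$ and for the summand $\mathbf{1}$ generated by the canonical class, passing to the quotient yields $\VCl(X_{1/6})\otimes\Q\cong\Ind_{\fA_4}^{\fA_5}(\mathbf{1})$ as a representation of $\overline{\fA_5}$, so $\rk\VCl(X_{1/6})^{\overline{\fA_5}}=1$. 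Comparing with Lemma~\ref{lemma:restriction-s6-s5}, the only five-dimensional irreducible $\fS_6$-representations whose restriction to $\overline{\fA_5}$ contains the trivial representation are $\rR(2,2,2)$ and $\rR(3,3)$: for these $\rk\VCl(X_{1/6})^{\overline{\fS_5}}$ equals $1$ and $0$ respectively. Thus it remains to decide which occurs, i.e. to determine the sign by which an odd element $\iota\in\overline{\fS_5}$ acts on the one-dimensional $\overline{\fA_5}$-invariant subspace of $\VCl(X_{1/6})\otimes\Q$.

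The final and hardest step is this sign computation. The invariant line is spanned by the $\overline{\fA_5}$-invariant defect class of the ten nodes $\Sigma_{10}=\pi_{5,1}(\Sing(\cX^{5,1}_\tau))$ of Proposition~\ref{proposition:x51-smooth}; in the notation of the proof of Corollary~\ref{corollary:relaive-class-group}, over the five conics the fibre of $p$ breaks into components $\Theta'_i,\Theta''_i$ and the generator is $\sum_i\bigl([\Theta'_i]-[\Theta''_i]\bigr)$. I would determine the action of $\iota$ exactly as in that proof: pick a node, write the corresponding reducible conic from the matrix $q_0(u)+\tau q_\infty(u)$ of Proposition~\ref{proposition:verra} evaluated at the relevant point, and check whether $\iota$ preserves or exchanges its two components. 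The main obstacle is that, unlike the even elements used in Corollary~\ref{corollary:relaive-class-group}, an odd $\iota$ does \emph{not} preserve $\cX^{5,1}_\tau$: by Remark~\ref{remark:rho-equivariance} the twisted action carries $\cX^{5,1}_\tau$ to $\cX^{5,1}_{-\tau}$, so the computation must either be performed directly on the quartic $X_{1/6}$, where the full natural $\overline{\fS_5}$ acts, or must incorporate the flop $\rho_{5,1}^{-1}\circ\sigma\circ\rho_{5,1}$ of~\eqref{eq:diagram-two-factorializations-Y} identifying $\cX^{5,1}_{-\tau}$ with $\cX^{5,1}_\tau$. Carrying this out shows that $\iota$ fixes the invariant class, whence $\VCl(X_{1/6})\cong\rR(2,2,2)$, $\rk\VCl(X_{1/6})^{\overline{\fS_5}}=1$, and therefore $\rkCl(X_{1/6})^{\overline{\fS_5}}=2$, as claimed.
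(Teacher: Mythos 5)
Your reduction is correct as far as it goes: the splitting of $\Cl(\cY)\otimes\Q\hookrightarrow\Cl(X_{1/6})\otimes\Q$, the computation $\rkCl(\cY)^{\overline{\fS_5}}=1$, the identification of $(\VCl(X_{1/6})\otimes\Q)\vert_{\overline{\fA_5}}$ with $\Ind_{\fA_4}^{\fA_5}(\mathbf{1})$ via Corollary~\ref{corollary:relaive-class-group}, and the observation that the only remaining question is whether $\VCl(X_{1/6})\otimes\Q$ is $\rR(2,2,2)$ or $\rR(3,3)$, i.e.\ a single sign. But at that point you write ``carrying this out shows that $\iota$ fixes the invariant class'' without carrying anything out, and that sign \emph{is} the entire content of the lemma: it is exactly the ambiguity that Lemma~\ref{lemma:x16-s5} is invoked to resolve in the proof of Proposition~\ref{proposition:cl-x-special} (the standard-$\fS_4$ test of Lemma~\ref{lemma:Cl-standard-S4}, which settles the cases $t=1/2$ and $t=7/10$, does not distinguish $\rR(2,2,2)$ from $\rR(3,3)$). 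Worse, you correctly identify why the computation is delicate --- an odd element of $\overline{\fS_5}$ sends $\cX^{5,1}_\tau$ to $\cX^{5,1}_{-\tau}$, so one must either work on the singular quartic, where the conic-bundle components $\Theta'_i,\Theta''_i$ are not directly visible, or track the divisor classes through the flop $\rho_{5,1}^{-1}\circ\sigma\circ\rho_{5,1}$ --- and then simply assert the outcome. As written this is a genuine gap, not a routine omission.

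The paper avoids this computation entirely by a different route: by \cite[\S6]{CheltsovShramov} the quartic $X_{1/6}$ is $\fS_5$-equivariantly isomorphic in codimension $1$ to the blow up $\widehat{X}_{1/6}$ of $\P^3$ along the ten lines of a double-five configuration, so $\Cl(X_{1/6})\cong\Cl(\widehat{X}_{1/6})$ as $\fS_5$-modules; since $\fS_5$ permutes the ten lines transitively, the invariant part is spanned by the hyperplane class and the sum of the exceptional divisors, giving $\rkCl(X_{1/6})^{\fS_5}=2$ at once, and the ``in particular'' clause then follows from $\rkCl(\cY)^{\fS_5}=1$. If you want to keep your approach, you must actually produce the sign: for instance, exhibit an explicit odd element of $\overline{\fS_5}$, a node of $X_{1/6}$ in $\Sigma_{10}$ fixed (or tracked) by it, and verify on equations that the two planes of the corresponding reducible fiber are preserved rather than exchanged --- or substitute an external input such as the double-five model above.
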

\begin{proof}
By~\cite[\S6]{CheltsovShramov} the quartic $X_{1/6}$ is $\fS_5$-equivariantly isomorphic away from codimension~2 to the blow up $\widehat{X}_{1/6}$ of ten lines in $\P^3$,
that form a so-called double-five configuration.
Therefore we have $\Cl(X_{1/6}) \cong \Cl(\widehat{X}_{1/6})$ as $\fS_5$-representations.
Furthermore, the group~$\fS_5$ acts transitively on this configuration of lines, hence~\mbox{$\rkCl(\widehat{X}_{1/6})^{\fS_5} = 2$}.
Since also $\rkCl(\cY)^{\fS_5} = 1$ (see Corollary~\ref{corollary:rk-1},
and keep in mind that according to Notation~\ref{notation:groups} the non-standard subgroup~\mbox{$\fS_5 \subset \fS_6$}
is denoted by~$\overline{\fS_{5}}$), it follows that~\mbox{$\rk\VCl(X_{1/6})^{\fS_5} = 1$}.
\end{proof}

Now we are ready to describe the excess class groups for the special quartics.

\begin{proposition}
\label{proposition:cl-x-special}
There are the following isomorphisms of $\fS_6$-representations:
\begin{equation*}
\VCl(X_{\frac12}) \otimes \Q \cong \rR(3,1^3),\quad
\VCl(X_{\frac16}) \otimes \Q \cong \rR(2,2,2),\quad
\VCl(X_{\frac7{10}}) \otimes \Q \cong \rR(1^6).
\end{equation*}
\end{proposition}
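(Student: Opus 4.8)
The plan is to compute the excess class group $\VCl(X_t)=\Cl(X_t)/\Cl(\cY)$ first as a representation of a non-standard alternating subgroup $\overline{\fA_5}\subset\fS_6$, and then to upgrade this to an $\fS_6$-description. By Lemma~\ref{lemma:morphism-cly-clcx} the map $\Cl(\cY)\to\Cl(X_t)$ is an $\fS_6$-equivariant embedding for all three relevant values (i.e. for $\tau=\pm1,\pm\tfrac1{\sqrt{-3}},\pm\tfrac3{\sqrt5}$, all nonzero), so $\VCl(X_t)\otimes\Q$ is a genuine $\fS_6$-representation for the natural action. Combining $\rkCl(\cY)=6$ (Proposition~\ref{proposition:cl-cy}) with Lemma~\ref{lemma:cl-xt} shows that $\VCl(X_t)\otimes\Q$ has dimension $16-6=10$, $11-6=5$, and $7-6=1$ for $t=\tfrac12,\tfrac16,\tfrac7{10}$; these match the dimensions of $\rR(3,1^3)$, $\rR(2,2,2)$, and $\rR(1^6)$, so only the identification of the representations remains.

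For the first step I would use the conic bundle $p\colon\cX^{5,1}_\tau\to S$. Since $\rho_{5,1}$ is small for $\tau\neq0$ and $\cX_\tau\cong X_t$ by Lemma~\ref{lemma:cxtau-xt}, there is an $\overline{\fA_5}$-equivariant isomorphism $\Cl(X_t)\cong\Cl(\cX^{5,1}_\tau)$, and over $\Q$ the latter splits as $p^*\Pic(S)\oplus\Cl(\cX^{5,1}_\tau/S)$. Under the identification $\Cl(\cY)\cong\Pic(\cY_{5,1})=\ZZ H\oplus p^*\Pic(S)$, restriction to $\cX^{5,1}_\tau$ carries $p^*\Pic(S)$ isomorphically onto $p^*\Pic(S)$ and sends $H$ into the line spanned by $K_{\cX^{5,1}_\tau}$ modulo $p^*\Pic(S)$; hence the image of $\Cl(\cY)\otimes\Q$ is $p^*\Pic(S)\otimes\Q\oplus\Q K_{\cX^{5,1}_\tau}$, and the quotient is exactly $\Cl_0(\cX^{5,1}_\tau/S)\otimes\Q$ in the notation of Corollary~\ref{corollary:relaive-class-group}. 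Invoking that corollary, together with the elementary identities $\Ind_{\fA_4}^{\fA_5}(\mathbf1)\cong R_1\oplus R_4$ and $\Ind_{\fA_{3,2}}^{\fA_5}(\mathbf{-1})\cong R_4\oplus R'_3\oplus R''_3$ (Frobenius reciprocity plus a dimension count), this yields the $\overline{\fA_5}$-restrictions $\VCl(X_{1/2})\cong R_4\oplus R'_3\oplus R''_3$, $\VCl(X_{1/6})\cong R_1\oplus R_4$, and $\VCl(X_{7/10})\cong R_1$.

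Next I would promote these to $\fS_6$. The $\overline{\fA_5}$-restriction of any $\fS_6$-representation is computed via $V|_{\overline{\fS_5}}\cong\overline V|_{\fS_5}$ (Lemma~\ref{lemma:restriction-s6-s5}) followed by Lemma~\ref{lemma:restriction-s5-a5}. Because $\fS_6$ has no irreducible of dimension $3$ or $4$ and its one-dimensional representations restrict to the trivial representation $R_1$, a short comparison of $\overline{\fA_5}$-types shows that none of the three restrictions above can be assembled from several $\fS_6$-irreducibles; thus each $\VCl(X_t)\otimes\Q$ is necessarily irreducible, of dimension $10$, $5$, $1$. Matching the $\overline{\fA_5}$-types then narrows the answer in each case to a single pair differing by the sign twist: $\{\rR(4,1^2),\rR(3,1^3)\}$ for $t=\tfrac12$, $\{\rR(2,2,2),\rR(3,3)\}$ for $t=\tfrac16$, and $\{\rR(6),\rR(1^6)\}$ for $t=\tfrac7{10}$.

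The main obstacle is precisely this final sign-twist ambiguity, since a sign twist becomes trivial after restriction to $\overline{\fA_5}$. For $t=\tfrac12$ and $t=\tfrac7{10}$ I would break the tie using the invariants under a \emph{standard} $\fS_4\subset\fS_6$: Lemma~\ref{lemma:Cl-standard-S4} gives $\VCl(X_t)^{\fS_4}=0$, while a one-line branching computation (Pieri's rule) shows $\dim\rR(4,1^2)^{\fS_4}=\dim\rR(6)^{\fS_4}=1$ but $\dim\rR(3,1^3)^{\fS_4}=\dim\rR(1^6)^{\fS_4}=0$, selecting $\rR(3,1^3)$ and $\rR(1^6)$. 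For $t=\tfrac16$ the standard-$\fS_4$ invariants of both $\rR(2,2,2)$ and $\rR(3,3)$ vanish, so $\fS_4$ is insufficient; here I would instead invoke Lemma~\ref{lemma:x16-s5}, which gives $\dim\VCl(X_{1/6})^{\overline{\fS_5}}=1$, and note via Lemma~\ref{lemma:restriction-s6-s5} that $\rR(2,2,2)|_{\overline{\fS_5}}$ contains a one-dimensional trivial summand while $\rR(3,3)|_{\overline{\fS_5}}$ contains none. This pins down $\rR(2,2,2)$ and completes the proof.
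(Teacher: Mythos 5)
Your proposal is correct and follows essentially the same route as the paper: both identify $(\VCl(X_t)\otimes\Q)\vert_{\fA_5}$ with $\Cl_0(\cX^{5,1}_\tau/S)\otimes\Q$ and read it off from Corollary~\ref{corollary:relaive-class-group}, then resolve the sign-twist ambiguity using Lemma~\ref{lemma:Cl-standard-S4} for $t=\tfrac12,\tfrac7{10}$ and Lemma~\ref{lemma:x16-s5} for $t=\tfrac16$. The only (cosmetic) difference is that you deduce irreducibility of $\VCl(X_t)\otimes\Q$ directly from its $\overline{\fA_5}$-type, whereas the paper passes through the intermediate restriction to a non-standard $\fS_5$ via Lemmas~\ref{lemma:restriction-s5-a5} and~\ref{lemma:restriction-s6-s5}; both versions are sound.
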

\begin{proof}
We replace the quartics $X_{1/2}$, $X_{1/6}$, and $X_{7/10}$ by their partial resolutions of singularities $\cX_1^{5,1}$, $\cX_{1/\sqrt{-3}}^{5,1}$, and $\cX_{3/\sqrt{5}}^{5,1}$, respectively.
Similarly to the proof of Lemma~\ref{lemma:morphism-cly-clcx}, we obtain isomorphisms of $\fA_5$-representations
\begin{equation}\label{eq:Cl-over-S}
\begin{aligned}
\Cl(\cX^{5,1}_\tau/S) \otimes \Q
&\cong \left(\Cl(\cY^{5,1}/S)\oplus \left(\Cl(\cX_\tau^{5,1})/\Cl(\cY^{5,1})\right)\right)\otimes\Q\cong
\\
& \cong R_1 \oplus (\VCl(X_t) \otimes \Q)\vert_{\fA_5},
\end{aligned}
\end{equation}
with the summand $R_1$ on the right generated by the canonical class.
Next we use the computation of Corollary~\ref{corollary:relaive-class-group}
to describe the left hand side of~\eqref{eq:Cl-over-S}.
Namely, by Corollary~\ref{corollary:relaive-class-group} the left hand side is isomorphic to
$R_1 \oplus \Ind_G^{\fA_5}(\upsilon)$ for a certain subgroup $G \subset \fA_5$ and its one-dimensional representation~$\upsilon$.
Canceling the $R_1$ summands, we obtain an isomorphism
\begin{equation*}
(\VCl(X_t) \otimes \Q)\vert_{\fA_5} \cong \Ind_G^{\fA_5}(\upsilon).
\end{equation*}
It only remains to use the description of the subgroup $G$ and its representation $\upsilon$ also provided by Corollary~\ref{corollary:relaive-class-group}.

In the case $t = 1/2$, so that $\tau = 1$, it gives
\begin{equation*}
(\VCl(X_{\frac{1}{2}}) \otimes \Q)\vert_{\fA_5} \cong \Ind_{\fA_{3,2}}^{\fA_5}(-\mathbf{1}) \cong R'_3 \oplus R''_3 \oplus R_4.
\end{equation*}
Therefore, by Lemma~\ref{lemma:restriction-s5-a5} we deduce that
$(\VCl(X_{1/2}) \otimes \Q)\vert_{\fS_5}$ is isomorphic either to~\mbox{$\rR(3,1^2) \oplus \rR(4,1)$} or to $\rR(3,1^2) \oplus \rR(2,1^3)$,
hence by Lemma~\ref{lemma:restriction-s6-s5} we have either~\mbox{$\VCl(X_{1/2}) \otimes \Q \cong \rR(4,1^2)$} or~$\VCl(X_{1/2}) \otimes \Q \cong \rR(3,1^3)$.
The first case is impossible by Lemma~\ref{lemma:Cl-standard-S4}, because by Pieri's rule the restriction
of the $\fS_6$-representation $\rR(4,1^2)$ to a standard subgroup $\fS_4$ contains a trivial subrepresentation, hence the required result.

Similarly, in the case $t = 1/6$, so that $\tau = 1/\sqrt{-3}$,
we have
\begin{equation*}
(\VCl(X_{\frac{1}{6}}) \otimes \Q)\vert_{\fA_5} \cong \Ind_{\fA_{4}}^{\fA_5}(\mathbf{1}) \cong R_1 \oplus R_4.
\end{equation*}
Therefore, by Lemma~\ref{lemma:restriction-s5-a5} we deduce that $(\VCl(X_{1/6}) \otimes \Q)\vert_{\fS_5}$ is isomorphic to the sum of
one of the representations $\rR(5)$ and $\rR(1^5)$, and one of the representations~\mbox{$\rR(4,1)$} and~\mbox{$\rR(2,1^3)$}.
On the other hand, $(\VCl(X_{1/6}) \otimes \Q)\vert_{\fS_5}$ should contain $\rR(5)$ by Lemma~\ref{lemma:x16-s5}, so it follows that
$(\VCl(X_{1/6}) \otimes \Q)\vert_{\fS_5}$ is either $\rR(5) \oplus \rR(4,1)$, or $\rR(5) \oplus \rR(2,1^3)$.
By Lemma~\ref{lemma:restriction-s6-s5} only the the first of them can be obtained as a restriction of a representation of~$\fS_6$
with respect to a non-standard embedding of $\fS_5$, and the corresponding representation of $\fS_6$ is $\rR(2,2,2)$.
Thus, we have $\VCl(X_{1/6}) \otimes \Q \cong \rR(2,2,2)$.

Finally, in the case $t = 7/10$, so that $\tau = 3/\sqrt{5}$,
we have
\begin{equation*}
(\VCl(X_{7/10}) \otimes \Q)\vert_{\fA_5} \cong R_1,
\end{equation*}
hence $\VCl(X_{7/10}) \otimes \Q$ is either $\rR(6)$ or~$\rR(1^6)$.
Again, the first case is impossible by Lemma~\ref{lemma:Cl-standard-S4}, hence the required result.
\end{proof}

Now we are ready to prove the main result of this section.

\begin{proof}[Proof of Theorem~\textup{\ref{theorem:Cl}}]
The description of $\Cl(\cY)\otimes\Q$ is given by Proposition~\ref{proposition:cl-cy}, and the descriptions of
$\Cl(\cX_\infty)\otimes\Q$ and $\Cl(X_t)\otimes\Q$ for $t\not\in\bDD\cup\{\infty\}$ follow from a combination of
Proposition~\ref{proposition:cl-cy} with Lemma~\ref{lemma:morphism-cly-clcx}.
The last three isomorphisms follow from Proposition~\ref{proposition:cl-x-special} in view of the definition of the excess class group.
\end{proof}

\begin{remark}
\label{remark:Segre}
To study $G$-equivariant  birational maps of the remaining $\fS_6$-invariant quartic $X_{1/4}$ to $G$-Mori fibre spaces,
one can replace $X_{1/4}$ by its projective dual, which is the Segre cubic~$Z$.
This may be simpler because $Z$ has terminal singularities.
The corresponding problem for $Z$ was partially solved in \cite[Theorem~1.3]{Avilov}.
In particular, if~$G$ is a standard subgroup~$\fA_5$ in $\fS_6$, then $\rkCl(Z)^G=1$ by \cite[Proposition~3.1]{Avilov},
and we expect that $Z$, and thus also $X_{1/4}$, is not $G$-rational.
In this case the induced action of $G$ on~$Z$ is also given by a standard embedding $\fA_5\cong G\hookrightarrow\mathrm{Aut}(Z)\cong\fS_6$,
see e.g.~\mbox{\cite[\S2.2]{HowardMillsonSnowdenVakil}}.
On the contrary, if $G$ is a non-standard subgroup $\fA_5$ in $\fS_6$, then $Z$ is known to be $G$-rational, see~\mbox{\cite[3.16]{Prokhorov-FieldsOfInv}}.
\end{remark}

\begin{remark}
One of the geometric interpretations of the non-trivial summands
of~\mbox{$\Cl(X_t)\otimes\Q$} that appear in Theorem~\ref{theorem:Cl} is as follows.
Suppose that $t\neq 1/4, \infty$, so that the singularities of $X_t$ are nodes by Theorem~\ref{theorem:Geer}.
Let $\nu\colon \widetilde{X}_t\to X_t$ be the blow up of all singular points of $X_t$, and
let $D_1,\ldots,D_r$ be the exceptional divisors of $\nu$.
Then~$\widetilde{X}_t$ is smooth, and $D_i \cong \P^1\times\P^1$.
Let $M_i^+$ and~$M_i^-$ be the rulings from two different families on $D_i$.
One can check that there is a natural
perfect pairing between the vector subspace in~\mbox{$H^4(\tilde{X}_t,\CC)$}
spanned by the one-cycles~\mbox{$M_i^+ - M_i^-$}
and the space~\mbox{$\big(\Cl(X_t)/\Pic(X_t)\big) \otimes \CC$.}
Note also that the structure of this subspace of~\mbox{$H^4(\tilde{X}_t,\CC)$}
as an $\fS_6$-representation can be independently deduced from~\cite[Proposition~1.3]{Schoen} and~\cite[Lemma~1]{Beauville}.
\end{remark}

\appendix

\section{Cremona--Richmond configuration}
\label{section:CR}

The {\sf Cremona--Richmond configuration} is the configuration $\CR$ of 15 lines with 15 triple intersection points in $\P^4$ formed by the singular locus of the Igusa quartic.
By a small abuse of terminology, we will sometimes say that the singular locus is the configuration~$\CR$ itself.
We refer the reader to \cite{Cremona1877}, \cite{Richmond}, and~\cite[\S9]{Dolgachev-Abstract} for  basic properties.

Explicitly, the configuration $\CR$ can be described  as follows.
Consider $\P^4$ as the hyperplane given by equation~\eqref{eq:hyperplane} in $\P^5$ with the usual $\fS_6$-action.
For each pairs-splitting
\begin{equation*}
\{1,\ldots,6\} = I_1 \sqcup I_2 \sqcup I_3,
\end{equation*}
where $|I_1| = |I_2| = |I_3| = 2$, let
$\mathrm{L}_{(I_1\,|\,I_2\,|\,I_3)}$ be the line in $\P^4$ given by equations
\begin{equation*}
\text{$x_i = x_j$ if $\{i,j\} = I_p$ for some $p \in \{1,2,3\}$}.
\end{equation*}
This gives 15 lines in $\P^4$; for instance, $\mathrm{L}_{(1,2\,|\,3,4\,|\,5,6)}$
is the line given by equations
\begin{equation}
\label{eq:cr-lines}
x_1 = x_2,\ x_3 = x_4,\ x_5 = x_6,
\end{equation}
and the other lines are obtained from this by the $\fS_6$-action.

Similarly, for every two-element subset $I \subset \{1,\ldots,6\}$ let $\mathrm{P}_{I}$ be the point in $\P^4$ given by equations
\begin{equation*}
\text{$x_i = x_j$ if either $i,j \in I$ or $i,j \in \bar{I}$},
\end{equation*}
where $\bar{I}$ is the complement of $I$ in $\{1,\ldots,6\}$.
This gives 15 points in $\P^4$; for instance,
\begin{equation}
\label{eq:cr-points}
\mathrm{P}_{1,2} = (2:2:-1:-1:-1:-1),
\end{equation}
and the other points are obtained from this by the $\fS_6$-action (so, this is the set $\Upsilon_{15}$ defined in~\S\ref{subsection:xt}).

It is easy to see that $\mathrm{P}_{I}$ lies on $\mathrm{L}_{(I_1\,|\,I_2\,|\,I_3)}$ if and only if $I = I_p$ for some $p \in \{1,2,3\}$, i.e.,
if~$I$ is one of the pairs in the pairs-splitting, or, equivalently, the pairs-splitting extends the pair $I$.
In particular, there are three lines through each of the points (corresponding to three pairs-splittings of $\bar{I}$),
and there are three points on each line (corresponding to three pairs in a pairs-splitting).
Moreover, the points $\mathrm{P}_{I}$ are the only intersection points of the lines~$\mathrm{L}_{(I_1\,|\,I_2\,|\,I_3)}$.
Because of this $\CR$ is often referred to as a {\sf $(15_3)$-configuration}.

In this section we discuss some properties of $\CR$.
In particular, in Theorem~\ref{theorem:CR-unique} we show that $\CR$ is determined uniquely up to a projective transformation of $\P^4$
by its combinatorial structure (under a mild non-degeneracy assumption),
and that the Igusa quartic is the only quartic whose singular locus contains $\CR$.

We start by a discussion of combinatorics of $\CR$.

\begin{lemma}\label{lemma:cr-selfdual}
The configuration $\CR$ is combinatorially self-dual: an outer automorphism of $\fS_6$ induces a bijection
between the set of points $\mathrm{P}_I$ and the set of lines $\mathrm{L}_{(I_1\,|\,I_2\,|\,I_3)}$ that preserves the incidence correspondence.
\end{lemma}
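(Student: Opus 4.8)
The plan is to translate the entire combinatorics of $\CR$ into the language of $\fS_6$, so that the outer automorphism can be applied directly. First I would set up a dictionary. Each point $\mathrm{P}_I$ with $I=\{i,j\}$ I associate with the transposition $(ij)$ (equivalently, with the duad $\{i,j\}$), and each line $\mathrm{L}_{(I_1\,|\,I_2\,|\,I_3)}$ I associate with the permutation $\epsilon$ of cycle type $[2,2,2]$ given by the product of the three disjoint transpositions supported on $I_1$, $I_2$, $I_3$ (equivalently, with the syntheme $\{I_1,I_2,I_3\}$). Both assignments are $\fS_6$-equivariant bijections, the first onto the $15$ transpositions and the second onto the $15$ elements of cycle type $[2,2,2]$, these being the two conjugacy classes interchanged by an outer automorphism (as recalled in the introduction).

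The key step, and the only place where I expect any genuine computation, is to characterize incidence group-theoretically. I claim that $\mathrm{P}_I$ lies on $\mathrm{L}_{(I_1\,|\,I_2\,|\,I_3)}$, i.e.\ that $I$ is one of the three pairs $I_p$, \emph{if and only if} the transposition $\tau$ attached to $\mathrm{P}_I$ commutes with the $[2,2,2]$-element $\epsilon$ attached to the line. Here one exploits that $\epsilon$ is a product of three disjoint transpositions exhausting $\{1,\dots,6\}$, so a transposition $\tau=(ij)$ necessarily falls into exactly one of two cases: either $\{i,j\}$ equals some pair $I_p$, in which case $\tau$ is one of the factors of $\epsilon$ and the two clearly commute; or $i$ and $j$ lie in two different pairs, in which case a one-line conjugation check gives $\tau\epsilon\tau^{-1}\neq\epsilon$. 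This dichotomy is the technical heart of the argument; everything downstream is formal.

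With the incidence relation reformulated as commutation, I would conclude as follows. Let $\alpha$ be an outer automorphism of $\fS_6$. By its defining property it carries the class of transpositions bijectively onto the class of $[2,2,2]$-elements and conversely, so composing with the two dictionaries it induces a bijection $\Phi_1$ from the points of $\CR$ to its lines and a bijection $\Phi_2$ from the lines to the points. Since $\alpha$ is a group automorphism it preserves commutation; hence, for every point $\mathrm{P}_I$ and every line $\mathrm{L}$, applying $\alpha$ to the characterization of the previous paragraph yields
\[
\mathrm{P}_I \in \mathrm{L} \quad\Longleftrightarrow\quad \Phi_2(\mathrm{L}) \in \Phi_1(\mathrm{P}_I),
\]
where $\Phi_1(\mathrm{P}_I)$ is a line and $\Phi_2(\mathrm{L})$ is a point. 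This is precisely the statement that $(\Phi_1,\Phi_2)$ is a self-duality of $\CR$ induced by $\alpha$, which proves the lemma. I would add the remark that the bijection depends on the representative $\alpha$ chosen within its outer coset, but any choice works, which is all that is asserted.
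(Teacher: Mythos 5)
Your proposal is correct and follows essentially the same route as the paper: both encode points as transpositions and lines as $[2,2,2]$-elements, characterize incidence as commutation in $\fS_6$, and then apply the fact that an outer automorphism swaps these two conjugacy classes while preserving commutation. The only difference is that you spell out the two-case verification of the commutation criterion, which the paper leaves implicit.
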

\begin{proof}
There is a natural bijection between subsets of cardinality two in the set $\{1,\ldots,6\}$, and transpositions in the group $\fS_6$.
Similarly, there is a natural bijection between pairs-splittings of the set $\{1,\ldots,6\}$, and elements of cycle type $[2,2,2]$ in $\fS_6$.
Let us denote the transposition corresponding to a subset $I\subset \{1,\ldots,6\}$ by $w(I)$,
and the element of cycle type $[2,2,2]$ corresponding to a pairs-splitting $(I_1,I_2,I_3)$ of $\{1,\ldots,6\}$ by $w(I_1,I_2,I_3)$.
The incidence relation of lines and points of $\CR$ can be reformulated in group-theoretic terms:
the line $\mathrm{L}_{(I_1\,|\,I_2\,|\,I_3)}$ is incident to the point $\mathrm{P}_{I}$ if and only if
the permutations $w(I)$ and~$w(I_1,I_2,I_3)$ commute (or, which is the same, the composition $w(I)\circ w(I_1,I_2,I_3)$ has cycle type $[2,2]$).

Choose an outer automorphism $\alpha$ of the group $\fS_6$.
The automorphism $\alpha$ interchanges transpositions with elements of cycle type $[2,2,2]$.
Thus $\alpha$ defines a map from the set of points of $\CR$ to the set of lines of $\CR$,
and a map from the set of lines of $\CR$ to the set
of points of $\CR$.
Moreover, this map preserves the incidence relation.
\end{proof}

Lemma~\ref{lemma:cr-selfdual} implies the following result that we used in the main part of the paper.

\begin{corollary}
\label{corollary:a5-action-upsilon15}
Every standard subgroup $\fA_5 \subset \fS_6$ acts transitively on the set of lines of~$\CR$,
and every non-standard subgroup $\fA_5 \subset \fS_6$ acts transitively on the set of points of~$\CR$.
\end{corollary}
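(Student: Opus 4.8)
The plan is to deduce Corollary~\ref{corollary:a5-action-upsilon15} from Lemma~\ref{lemma:cr-selfdual} together with a direct transitivity check on one of the two incidence sets. First I would recall the group-theoretic reformulation established in the proof of Lemma~\ref{lemma:cr-selfdual}: points of $\CR$ correspond to transpositions $w(I)$ in $\fS_6$, and lines of $\CR$ correspond to elements of cycle type $[2,2,2]$, the two sets being interchanged by an outer automorphism $\alpha$ in an incidence-preserving way. Because $\alpha$ carries a standard $\fA_5$ to a non-standard $\fA_5$ and vice versa, it suffices to prove just one of the two transitivity statements; the other follows by applying $\alpha$, since $\alpha$ intertwines the actions and swaps the roles of points and lines.

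I would therefore concentrate on showing that a standard subgroup $\fA_5 \subset \fS_6$ acts transitively on the $15$ lines of $\CR$, equivalently on the $15$ elements of cycle type $[2,2,2]$ in $\fS_6$. The $\fA_5$ here is, say, the alternating group on $\{1,\dots,5\}$, stabilizing the index $6$. The natural approach is the orbit-stabilizer theorem: the full group $\fS_6$ acts transitively on the $15$ elements of cycle type $[2,2,2]$ by conjugation (they form a single conjugacy class), so I only need to check that $\fA_5$ already acts transitively, i.e. that the $\fA_5$-orbit of one fixed element $g$ of cycle type $[2,2,2]$ has size $15$. By orbit-stabilizer this amounts to computing the centralizer $C_{\fA_5}(g)$ and verifying that $|{\fA_5}|/|C_{\fA_5}(g)| = 60/4 = 15$, i.e. that $|C_{\fA_5}(g)| = 4$.

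Concretely I would take $g = (1\,2)(3\,4)(5\,6)$ and note that its centralizer in $\fS_6$ has order $48$; intersecting with the point stabilizer $\fS_5$ of the index $6$ and then with $\fA_5$, one finds a group of order $4$ (generated, e.g., by $(1\,2)(3\,4)$ and $(1\,3)(2\,4)$, which indeed lie in $\fA_5$ and commute with $g$). This gives orbit size exactly $15$, so the action is transitive. The main obstacle, such as it is, is purely bookkeeping: one must make sure the chosen representative $g$ has its three transpositions distributed so that a genuine four-element subgroup of $\fA_5$ centralizes it, and that no larger subgroup does; a careless choice of $g$ (for instance one whose $[2,2,2]$ pattern is not compatible with fixing index $6$) would lead to a wrong stabilizer count. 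Once transitivity on lines for the standard $\fA_5$ is in hand, applying the outer automorphism $\alpha$ of Lemma~\ref{lemma:cr-selfdual} immediately yields transitivity of the non-standard $\fA_5$ on the points $\mathrm{P}_I$, completing the proof.
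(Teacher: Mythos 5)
Your proposal is correct and follows the same route as the paper: the paper's proof simply states that transitivity of a standard $\fA_5$ on the lines is ``evident from combinatorics'' and then deduces transitivity of a non-standard $\fA_5$ on the points via the self-duality bijection of Lemma~\ref{lemma:cr-selfdual}. Your orbit--stabilizer computation (the centralizer of $(1\,2)(3\,4)(5\,6)$ in the standard $\fA_5$ is the Klein four-group, giving orbit size $60/4=15$) is just an explicit verification of the step the paper leaves implicit, and your use of the outer automorphism to transfer the statement is exactly the paper's second step.
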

\begin{proof}
The first assertion is evident from combinatorics, and the second assertion follows from
the first one in view of the bijection of Lemma~\ref{lemma:cr-selfdual}.
\end{proof}

The following description of $\CR$ is very useful.
Choose a triples-splitting
\begin{equation*}
\{ 1, \ldots, 6 \} = K_0 \sqcup K_1,
\qquad
|K_0| = |K_1| = 3.
\end{equation*}
For each bijection $g \colon K_0 \xrightarrow{\,\scriptstyle\sim\,} K_1$ let $\Gamma(g)$ be the pairs-splitting
formed by all pairs~\mbox{$\{k_0,g(k_0)\}$}, where $k_0$ runs through $K_0$ (and hence $g(k_0)$ runs through $K_1$).
The~6 lines and 9 points
\begin{equation*}
\left\{ \mathrm{L}_{\Gamma(g)} \right\}_{g \in \Iso(K_0,K_1)}
\qquad\text{and}\qquad
\left\{ \mathrm{P}_{k_0,k_1} \right\}_{(k_0,k_1) \in K_0 \times K_1}
\end{equation*}
form a subconfiguration $\CR'_{K_0,K_1} \subset \CR$ of the Cremona--Richmond configuration,
see Fig.~\ref{figure:jail}.
Because of its characteristic shape we call it a {\sf jail configuration}.
Note that $\CR'_{K_0,K_1}$ is contained in the hyperplane
\begin{equation}
\label{eq:jail-hyperplane}
H_{K_0} := \left\{ \sum_{k \in K_0} x_k = 0 \right\} = \left\{ \sum_{k \in K_1} x_k = 0 \right\} =: H_{K_1}.
\end{equation}
We call it the {\sf jail hyperplane}.
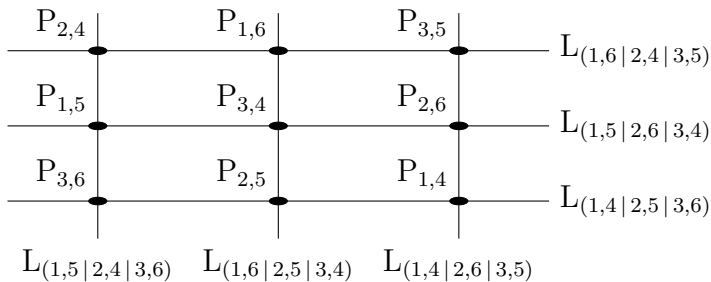
\begin{figure}[h]
\begin{tikzpicture}[xscale = 1.2, yscale = .5]
\draw ((1,0) node [below] {$\mathrm{L}_{(1,5\,|\,2,4\,|\,3,6)}$} -- (1,1) node [above left] {$\mathrm{P}_{3,6}$} -- (1,3) node [above left] {$\mathrm{P}_{1,5}$} -- (1,5) node [above left] {$\mathrm{P}_{2,4}$} -- (1,6);
\draw ((3,0) node [below] {$\mathrm{L}_{(1,6\,|\,2,5\,|\,3,4)}$} -- (3,1) node [above left] {$\mathrm{P}_{2,5}$} -- (3,3) node [above left] {$\mathrm{P}_{3,4}$} -- (3,5) node [above left] {$\mathrm{P}_{1,6}$} -- (3,6);
\draw ((5,0) node [below] {$\mathrm{L}_{(1,4\,|\,2,6\,|\,3,5)}$} -- (5,1) node [above left] {$\mathrm{P}_{1,4}$} -- (5,3) node [above left] {$\mathrm{P}_{2,6}$} -- (5,5) node [above left] {$\mathrm{P}_{3,5}$} -- (5,6);
\draw ((0,1) -- (6,1) node [right] {$\mathrm{L}_{(1,4\,|\,2,5\,|\,3,6)}$};
\draw ((0,3) -- (6,3) node [right] {$\mathrm{L}_{(1,5\,|\,2,6\,|\,3,4)}$};
\draw ((0,5) -- (6,5) node [right] {$\mathrm{L}_{(1,6\,|\,2,4\,|\,3,5)}$};
\draw[fill] (1,1) circle [radius=.1];
\draw[fill] (1,3) circle [radius=.1];
\draw[fill] (1,5) circle [radius=.1];
\draw[fill] (3,1) circle [radius=.1];
\draw[fill] (3,3) circle [radius=.1];
\draw[fill] (3,5) circle [radius=.1];
\draw[fill] (5,1) circle [radius=.1];
\draw[fill] (5,3) circle [radius=.1];
\draw[fill] (5,5) circle [radius=.1];
\end{tikzpicture}
\caption{The jail subconfiguration $\CR'_{\{1,2,3\},\{4,5,6\}}$ in the Cremona--Richmond configuration $\CR$\label{figure:jail}}
\end{figure}

The remaining 9 lines and 6 points
\begin{equation*}
\left\{ \mathrm{L}_{(k_0,k_1\,|\,K_0 \setminus k_0\,|\,K_1 \setminus k_1)} \right\}_{(k_0,k_1) \in K_0 \times K_1}
\qquad\text{and}\qquad
\left\{ \mathrm{P}_I \right\}_{I \subset K_0\ \text{or}\ I \subset K_1}
\end{equation*}
form a complete bipartite graph, see Fig.~\ref{figure:bipartite}; we call it a {\sf bipartite configuration}.
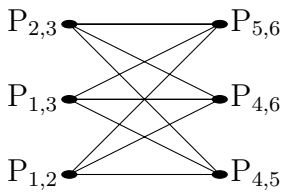
\begin{figure}[h]
\begin{tikzpicture}[xscale = 1, yscale = .5]
\draw (0,0) node [left] {$\mathrm{P}_{1,2}$} -- (2,0) node [right] {$\mathrm{P}_{4,5}$} -- (0,2) node [left] {$\mathrm{P}_{1,3}$} -- (2,2) node [right] {$\mathrm{P}_{4,6}$} -- (0,4) node [left] {$\mathrm{P}_{2,3}$} -- (2,4) node [right] {$\mathrm{P}_{5,6}$} -- (0,0) -- (2,2) -- (0,2) -- (2,4) -- (0,4) -- (2,0);
\draw[fill] (0,0) circle [radius=.1];
\draw[fill] (0,2) circle [radius=.1];
\draw[fill] (0,4) circle [radius=.1];
\draw[fill] (2,0) circle [radius=.1];
\draw[fill] (2,2) circle [radius=.1];
\draw[fill] (2,4) circle [radius=.1];
\end{tikzpicture}
\caption{The bipartite subconfiguration $\CR''_{\{1,2,3\},\{4,5,6\}}$ in the Cremona--Richmond configuration $\CR$\label{figure:bipartite}}
\end{figure}

For any decomposition
\begin{equation*}
\CR = \CR'_{K_0,K_1} \cup \CR''_{K_0,K_1}
\end{equation*}
into a jail and a bipartite subconfigurations its components interact quite weakly:
every line~\mbox{$\mathrm{L}_{(k_0,k_1\,|\,K_0 \setminus k_0\,|\,K_1 \setminus k_1)}$} from the bipartite component
passes through a single point $\mathrm{P}_{k_0,k_1}$ in the jail component.
This gives a bijection between bipartite lines and jail points (compatible with the natural bijection of both sets with $K_0 \times K_1$).

\begin{lemma}\label{lemma:cr-hyperplane}
Let $\rC$ be a configuration of $15$ lines with $15$ intersection points in $\P^4$ which is not contained in $\P^3$
and is combinatorially isomorphic to the Cremona--Richmond configuration.
If $\rC = \rC' \cup \rC''$ is a jail--bipartite decomposition then the jail component $\rC'$ spans a hyperplane,
and the bipartite component $\rC''$ spans $\P^4$.
\end{lemma}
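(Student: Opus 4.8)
The plan is to treat the two components separately, handling the jail component $\rC'$ first and deducing the claim about the bipartite component $\rC''$ from it. Throughout I use the standing non-degeneracy built into the hypothesis: the $15$ marked intersection points are distinct, and two of the $15$ lines meet only if they share one of these points, so combinatorial non-incidence means genuine geometric disjointness.

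First I would record the combinatorial shape of the jail component, namely that it is a $3\times 3$ grid. Fixing one reference bijection identifies $\Iso(K_0,K_1)$ with the symmetric group $\fS_3$, and two jail lines $\mathrm{L}_{\Gamma(g)}$, $\mathrm{L}_{\Gamma(g')}$ share a marked point precisely when $g$ and $g'$ agree at some element of $K_0$, i.e. when the corresponding element of $\fS_3$ has a fixed point. The two cosets $\fA_3$ and $\fS_3\setminus\fA_3$ therefore split the six lines into two families of three; within each family any two lines differ by a $3$-cycle and so are disjoint, whereas a line of one family meets each line of the other in exactly one of the nine points $\mathrm{P}_{k_0,k_1}$. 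I denote the two families by $a_1,a_2,a_3$ and $b_1,b_2,b_3$.

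Next I would show that $\rC'$ spans a hyperplane. Since $a_1\neq a_2$ are disjoint, they are skew and span a $\P^3$, call it $W$. Each $b_j$ meets $a_1$ and $a_2$ in two distinct marked points, both lying in $W$, hence $b_j\subset W$; then $a_3$ meets $b_1$ and $b_2$ in two distinct points of $W$, hence $a_3\subset W$ as well. Thus all six jail lines, and therefore all nine jail points, lie in $W$; as $a_1,a_2$ already span $W$, the jail component spans exactly the hyperplane $W\cong\P^3$. I would also note, for use below, that the nine jail points span $W$.

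Finally, for the bipartite component I would argue by contradiction, and this interaction step is the real crux. Suppose $\rC''$ were contained in a hyperplane $H'$. By the incidence recorded just before the lemma, every bipartite line passes through a (unique) jail point, so all nine jail points lie in $H'$. Since these points span $W$ and both $W$ and $H'$ are hyperplanes, this forces $W=H'$; but then $\rC=\rC'\cup\rC''\subset W\cong\P^3$, contradicting the hypothesis that $\rC$ is not contained in a $\P^3$. Hence $\rC''$ spans all of $\P^4$. The only genuine subtlety to keep an eye on is the passage from combinatorial non-incidence to geometric skewness, together with the distinctness of the nine jail points; both are guaranteed by the configuration having exactly $15$ intersection points, and it is precisely this that legitimizes the assertions ``disjoint distinct lines span a $\P^3$'' and ``the nine jail points span $W$''.
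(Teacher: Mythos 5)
Your proof is correct and follows essentially the same route as the paper's: you span a hyperplane by two disjoint jail lines and propagate containment through the grid incidences, then for the bipartite component you argue by contradiction using the fact that every jail point lies on a bipartite line, forcing the whole configuration into a $\P^3$. The extra group-theoretic description of the $3\times 3$ grid and the explicit remark that combinatorial disjointness implies geometric disjointness are just more detailed versions of what the paper reads off from its figure.
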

\begin{proof}
The jail component $\rC'$ has the shape shown in Fig.~\ref{figure:jail}.
Two vertical lines do not intersect, hence they span a hyperplane $H'\subset \P^4$.
Three horizontal lines intersect each of them, hence they are contained in $H'$.
The last vertical line intersects the horizontal lines, hence it is also contained in $H'$.

The bipartite component $\rC''$ has the shape shown in
Fig.~\ref{figure:bipartite}.
Assume it is contained in a hyperplane~$H'' \subset \P^4$.
Then every line of the bipartite component is contained in $H''$.
Since every point of the jail component lies on a line of the bipartite component, it follows that the jail component  is also contained in $H''$.
Thus $\rC \subset H''$, which contradicts the assumptions of the lemma.
\end{proof}

\begin{remark}\label{remark:jb-decompositions}
The set $\{1,2,3,4,5,6\}$ has 10 distinct triples-splittings, giving rise to
10 distinct jail-bipartite decompositions of the Cremona--Richmond configuration.
The 10 hyperplanes supporting the jail components of $\CR$ appeared in Remark~\ref{remark:rho-4-2-Weil-divisor}.
\end{remark}

\begin{theorem}\label{theorem:CR-unique}
Let $\rC$ be a configuration of $15$ lines with $15$ intersection points in $\P^4$ which is not contained in $\P^3$ and is combinatorially isomorphic to the Cremona--Richmond configuration.
Then it is projectively isomorphic to the Cremona--Richmond configuration.
\end{theorem}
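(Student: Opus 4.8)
The plan is to show that the combinatorial data of $\rC$ rigidly determines its projective realization, using the jail--bipartite decomposition from Lemma~\ref{lemma:cr-hyperplane} as the main coordinatizing tool. Fix a triples-splitting $\{1,\dots,6\} = K_0 \sqcup K_1$ with $K_0 = \{1,2,3\}$, $K_1 = \{4,5,6\}$, and consider the associated decomposition $\rC = \rC' \cup \rC''$. By Lemma~\ref{lemma:cr-hyperplane} the bipartite component $\rC''$ spans $\P^4$, so I would first exploit its combinatorics: $\rC''$ is a complete bipartite graph on the six points $\{\mathrm{P}_I : I \subset K_0 \text{ or } I \subset K_1\}$, three on each side, with its nine lines joining the two sides. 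The three points $\mathrm{P}_{1,2}, \mathrm{P}_{1,3}, \mathrm{P}_{2,3}$ (indexed by pairs inside $K_0$) and the three points $\mathrm{P}_{4,5}, \mathrm{P}_{4,6}, \mathrm{P}_{5,6}$ (indexed by pairs inside $K_1$) are the vertices, and each of the nine lines passes through exactly one point of each triple.

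First I would normalize coordinates. Since $\rC''$ spans $\P^4$, its six vertices span $\P^4$, hence some five of them form a projective frame; after a projective transformation I may send these five points to a standard frame. The key rigidity input is that the nine bipartite lines form a grid: line $\mathrm{L}_{(k_0,k_1\,|\,K_0\setminus k_0\,|\,K_1\setminus k_1)}$ joins $\mathrm{P}_{K_0\setminus\{k_0\}}$ (a vertex on the $K_0$ side, since its index is a pair in $K_0$) to the corresponding vertex $\mathrm{P}_{K_1\setminus\{k_1\}}$ on the $K_1$ side. I would check that the incidence pattern of these nine lines through the six points forces the remaining frame coordinates, up to finitely many choices, and in fact pins down the realization uniquely. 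Concretely, once five of the six vertices are fixed, the collinearity constraints coming from the grid structure (each pair of the three $K_0$-side vertices lies with each of the three $K_1$-side vertices on a distinct configuration line, but these lines must also meet the jail component correctly) determine the sixth vertex and all nine lines.

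Next I would bring in the jail component $\rC'$, which by Lemma~\ref{lemma:cr-hyperplane} spans a hyperplane $H' \cong \P^3$. The crucial linking datum, emphasized in the text just before Lemma~\ref{lemma:cr-hyperplane}, is that each bipartite line $\mathrm{L}_{(k_0,k_1\,|\,\dots)}$ passes through exactly one jail point $\mathrm{P}_{k_0,k_1}$, giving a bijection between the nine bipartite lines and the nine jail points, compatible with the identification of both with $K_0 \times K_1$. Since the nine bipartite lines are already determined in $\P^4$, this bijection determines the nine jail points $\{\mathrm{P}_{k_0,k_1}\}$ as specific points on those lines — namely as the intersection of each bipartite line with the hyperplane $H'$. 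The hyperplane $H'$ itself is then recovered as the unique hyperplane containing these nine points (which I must verify span $H' \cong \P^3$). Finally the six jail lines $\mathrm{L}_{\Gamma(g)}$ are determined, since each is the unique line through the appropriate triple of the nine already-fixed jail points according to the incidence pattern of Fig.~\ref{figure:jail}. This exhausts all $15$ lines and $15$ points, so $\rC$ is projectively equivalent to the standard Cremona--Richmond configuration.

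I expect the main obstacle to be the bookkeeping in the first reduction: verifying that the grid incidences of the bipartite component, together with the normalization to a frame, genuinely eliminate all projective moduli and do not leave a residual free parameter or a spurious degenerate solution. In particular I must rule out degenerations where some of the six vertices become collinear or coplanar in ways not forbidden by the abstract combinatorics but incompatible with a faithful realization — this is precisely where the hypothesis that $\rC \not\subset \P^3$ (i.e.\ $\rC''$ spans $\P^4$) is used. I would handle this by choosing coordinates adapted to the standard realization coming from the $\fS_6$-description in~\eqref{eq:cr-lines} and~\eqref{eq:cr-points}, checking that the standard configuration already satisfies all the incidences, and then arguing that the incidence equations have a unique solution modulo $\PGL_5$; the self-duality of Lemma~\ref{lemma:cr-selfdual} and the transitivity statements in Corollary~\ref{corollary:a5-action-upsilon15} can be used to reduce the number of independent cases that must be examined.
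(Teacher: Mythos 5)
Your overall strategy (fix a jail--bipartite decomposition, coordinatize via the bipartite component, then recover the jail component) is the same as the paper's, but there is a genuine gap in the middle that breaks the chain of determinations. The bipartite component $\rC''$ by itself imposes \emph{no} projective constraints: given any two triples of points spanning $\P^4$, the nine joining lines automatically realize the abstract complete bipartite graph. So after you send five of the six vertices to the coordinate points, the incidences internal to $\rC''$ do not ``pin down'' the sixth vertex, and the nine bipartite lines are not yet determined. Your recovery of the jail component is then circular: you define the nine jail points as the intersections of the bipartite lines with the jail hyperplane $H'$, and you define $H'$ as the span of those nine points. Nothing in your argument fixes $H'$, and this is precisely the residual modulus: the stabilizer in $\PGL_5(\Bbbk)$ of the five coordinate points is a four-dimensional torus, which moves $H'$ freely among hyperplanes missing those points, so a four-parameter family of a priori inequivalent realizations survives your normalization.

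The missing step, which is how the paper proceeds, is to spend that residual torus on putting $H'$ into a fixed standard position (for this one must first check that $H'$ contains none of the five chosen bipartite vertices --- a short argument you also omit: a bipartite line through such a vertex would meet a jail point and hence lie in $H'$, and propagating this drags all of $\rC$ into $H'$, contradicting the spanning hypothesis). Once the five frame points \emph{and} $H'$ are both normalized, the determinations go through in the correct order: the six bipartite lines joining the five frame points across the bipartition are known, their intersections with $H'$ give six of the nine jail points, each of the six jail lines is spanned by two of these, their extra pairwise intersections give the remaining three jail points, and only at the very end is the sixth bipartite vertex obtained as the common intersection of the three remaining bipartite lines. (An alternative repair would be to show that no five of the six bipartite vertices lie in a hyperplane, so that all six form a projective frame of $\P^4$ and can be normalized simultaneously; but that nondegeneracy also requires proof and does not follow from Lemma~\ref{lemma:cr-hyperplane} alone.)
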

\begin{proof}
Choose a jail-bipartite decomposition $\rC = \rC' \cup \rC''$.
Choose five points $P_1,\ldots,P_5$ in the bipartite component $\rC''$ that are not contained in a hyperplane (this is possible by Lemma~\ref{lemma:cr-hyperplane}),
and let $H'$ be the hyperplane containing the jail component~$\rC'$.
Note that~$P_i \not\in H'$ for all $i$.
Indeed, if $P_i \in H'$ then every line of the bipartite component passing through $P_i$ would be contained in~$H'$ (since it also contains a point of the jail component),
hence the three points of~$\rC''$ that are connected to $P_i$ by lines in $\rC''$ will be also contained in~$H'$.
Applying the same argument to one of these points, we would deduce that the whole bipartite component is contained in~$H'$, hence $\rC \subset H'$, which contradicts our assumptions.

Assume that the points $P_1$, $P_3$, and $P_5$ are not connected to each other by lines in~$\rC''$;
that is, they are contained in one part of the bipartite component, and $P_2$, $P_4$ are contained in the other.
Since the points $P_i$ do not lie on a hyperplane, they can be taken to points
\begin{equation}
\label{eq:Pi}
\begin{array}{c}
P_1 = (1:0:0:0:0),\quad
P_3 = (0:0:1:0:0),\quad
P_5 = (0:0:0:0:1),\\
P_2 = (0:1:0:0:0),\quad
P_4 = (0:0:0:1:0),
\end{array}
\end{equation}
of $\P^4$ by a projective transformation.
Since the hyperplane $H'$ does not pass through the points $P_i$, it can be simultaneously taken to the hyperplane
defined by the equation
\begin{equation*}
x_1 - x_2 + x_3 - x_4 + x_5 = 0.
\end{equation*}
Now for each odd $i$ and even $j$ consider the line passing through $P_i$ and $P_j$.
By assumption it belongs to the bipartite component $\rC''$.
The intersection points of these lines with $H'$ are the following six points
\begin{equation}
\label{eq:Pij}
\begin{array}{c}
P_{12} = (1:1:0:0:0),\quad
P_{32} = (0:1:1:0:0),\quad
P_{52} = (0:1:0:0:1),\\
P_{14} = (1:0:0:1:0),\quad
P_{34} = (0:0:1:1:0),\quad
P_{54} = (0:0:0:1:1).
\end{array}
\end{equation}
It follows that $P_{ij}$ are points of the jail component~$\rC'$.
Consequently, the following six lines belong to the jail component $\rC'$:
\begin{equation*}
\begin{array}{c}
\langle P_{12},P_{34} \rangle = \{ x_1 - x_2 = x_3 - x_4 = x_5 = 0 \},\quad
\langle P_{12},P_{54} \rangle = \{ x_1 - x_2 = x_5 - x_4 = x_3 = 0 \},\\
\langle P_{32},P_{14} \rangle = \{ x_3 - x_2 = x_1 - x_4 = x_5 = 0 \},\quad
\langle P_{32},P_{54} \rangle = \{ x_3 - x_2 = x_5 - x_4 = x_1 = 0 \},\\
\langle P_{52},P_{14} \rangle = \{ x_5 - x_2 = x_1 - x_4 = x_3 = 0 \},\quad
\langle P_{52},P_{34} \rangle = \{ x_5 - x_2 = x_3 - x_4 = x_1 = 0 \},\\
\end{array}
\end{equation*}
and their three extra intersection points
\begin{equation}
\label{eq:Pijkl}
P_{1234} = (1:1:1:1:0),\quad
P_{1245} = (1:1:0:1:1),\quad
P_{2345} = (0:1:1:1:1)
\end{equation}
also belong to $\rC'$.
Finally, the last point $P_0$ of the bipartite component is the point
\begin{equation}
\label{eq:P0}
P_0 = \langle P_1, P_{2345} \rangle  \cap \langle P_3, P_{1245} \rangle \cap \langle P_5, P_{1234} \rangle  = (1:1:1:1:1).
\end{equation}
This proves that such configuration is unique up to a projective transformation.
The explicit transformation from $\P^4$ to $\P^5$ that takes the points~\eqref{eq:Pi}, \eqref{eq:Pij}, \eqref{eq:Pijkl}, and~\eqref{eq:P0}
to the points~$\mathrm{P}_{i,j}$ that were defined in~\eqref{eq:cr-points} is given by the matrix
\begin{equation*}
\begin{pmatrix}
\hphantom{-}1 & \hphantom{-}1 & -2 & \hphantom{-}1 & -2 \\
-2 & \hphantom{-}1 & \hphantom{-}1 & \hphantom{-}1 & -2 \\
-2 & \hphantom{-}1 & -2 & \hphantom{-}1 & \hphantom{-}1 \\
\hphantom{-}1 & -2 & \hphantom{-}1 &  \hphantom{-}1 & \hphantom{-}1 \\
\hphantom{-}1 & -2 & \hphantom{-}1 & -2 & \hphantom{-}1 \\
\hphantom{-}1 &  \hphantom{-}1 & \hphantom{-}1 & -2 & \hphantom{-}1 \\
\end{pmatrix};
\end{equation*}
in particular, the point $P_5$ is mapped to the point~$\mathrm{P}_{1,2}$ in~\eqref{eq:cr-points}.
This completes the proof of Theorem~\ref{theorem:CR-unique}.
\end{proof}

\begin{remark}
Let $\rC$ be a configuration combinatorially isomorphic to $\CR$. Then one can always project $\rC$ isomorphically to $\P^3$.
In particular, the assumption of Theorem~\ref{theorem:CR-unique} requiring that the configuration is not contained
in $\P^3$ is necessary.
\end{remark}

\begin{corollary}\label{corollary:Igusa-unique}
Let $\rC$ be a configuration of $15$ lines with $15$ intersection points in $\P^4$ which is not contained in~$\P^3$ and is combinatorially isomorphic to the Cremona--Richmond configuration.
Suppose that $X$ is a quartic threefold that contains~$\rC$ in its singular locus.
Then it is projectively isomorphic to the Igusa quartic.
\end{corollary}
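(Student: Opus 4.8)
The plan is to first reduce to the standard configuration and then show that the linear space of quartics singular along $\CR$ is one-dimensional. By Theorem~\ref{theorem:CR-unique} any $\rC$ as in the statement is projectively equivalent to the standard Cremona--Richmond configuration $\CR$, so after a projective transformation of $\P^4$ we may assume that $X$ is a quartic threefold singular along $\CR$. Let $W \subset H^0(\P^4,\cO(4))$ be the space of quartic forms vanishing to order $\ge 2$ along each of the $15$ lines of $\CR$. Since the Igusa quartic $X_{1/4}$ lies in $W$, it suffices to prove $\dim W = 1$: then $X$ agrees with $X_{1/4}$ up to scalar, hence is projectively isomorphic to it.

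The heart of the argument is a local computation on the ten jail hyperplanes $H_{K_0}$ of Remark~\ref{remark:rho-4-2-Weil-divisor}. Each $H_{K_0}\cong\P^3$ contains the six lines of the associated jail subconfiguration, and these six lines lie on a smooth quadric surface $Q_{K_0}\subset H_{K_0}$, three lines in each of its two rulings; this is a property of $\CR$ itself, visible from the fact recorded in Remark~\ref{remark:Weil-divisors-threefolds} that $X_{1/4}|_{H_{K_0}}$ is a double quadric $2Q_{K_0}$ with $Q_{K_0}$ smooth, and confirmed by matching the nine jail points with the $3\times 3$ ruling incidences on $Q_{K_0}\cong\P^1\times\P^1$. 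I claim that any quartic surface $G\subset H_{K_0}$ singular along these six lines equals $Q_{K_0}^2$ up to scalar. Indeed, if $G|_{Q_{K_0}}\ne 0$ it is a divisor of bidegree $(4,4)$ on $Q_{K_0}$; being singular along each of the six lines, it would contain every one of them with multiplicity $2$, giving a divisor of bidegree $(6,6)$, which is absurd. Hence $Q_{K_0}$ divides $G$, say $G = Q_{K_0}H$ with $H$ a quadric, and the singularity of $G$ along each reduced line of $Q_{K_0}$ forces $H$ to contain all six lines; since the quadric through three pairwise skew lines is unique, $H=Q_{K_0}$ up to scalar.

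Now I globalize. For $F\in W$ the restriction $F|_{H_{K_0}}$ is singular along the six jail lines, so by the local claim $F|_{H_{K_0}} = c_{K_0}Q_{K_0}^2$ for some scalar $c_{K_0}$. Normalize $F_0 := X_{1/4}$ so that $F_0|_{H_{K_0}} = Q_{K_0}^2$ for every $K_0$. On a pairwise intersection $H_{K_0}\cap H_{K_0'}\cong\P^2$ both $F|_{\P^2}$ and $F_0|_{\P^2}$ are proportional to $(Q_{K_0}|_{\P^2})^2$, which is nonzero because a smooth quadric surface contains no plane; comparing gives $c_{K_0}=c_{K_0'}$. As any two of the ten hyperplanes in $\P^4$ meet, all the $c_{K_0}$ share a common value $c$. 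Consequently $F-cF_0$ vanishes identically on all ten jail hyperplanes, hence is divisible by the product of their ten distinct defining linear forms $\sum_{k\in K_0}x_k$; as this product has degree $10>4$, we conclude $F=cF_0$, so $\dim W = 1$.

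The main obstacle is the local claim on a jail hyperplane, and specifically the geometric input that the six jail lines of $\CR$ lie on a smooth quadric with three lines in each ruling; once this is secured, the bidegree count and the global gluing are routine. One could instead proceed representation-theoretically, decomposing $\Sym^4$ of the standard five-dimensional $\fS_6$-representation underlying $\P^4$ and checking that the singularity conditions isolate the trivial summand spanned by $X_{1/4}$, but the hyperplane-restriction approach is more transparent and self-contained.
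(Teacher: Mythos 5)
Your proof is correct, and it reaches the conclusion by a genuinely different mechanism than the paper's. The paper argues by contradiction with a Bezout count: if $X'$ were a second quartic singular along $\rC$, then $Z=X_{1/4}\cap X'$ is a surface of degree $16$; for each of the ten jail subconfigurations, the unique smooth quadric $T$ through its six lines is swept out by transversal lines meeting three singular lines of each quartic, so each such transversal meets each quartic with multiplicity at least $6>4$ and hence lies in both, giving $T\subset Z$; ten distinct quadrics of total degree $20>16$ yield the contradiction. You instead compute the linear system $W$ of quartics singular along $\CR$ directly, by restricting to the ten jail hyperplanes, pinning down each restriction as $c_{K_0}Q_{K_0}^2$ via the bidegree count on $Q_{K_0}\cong\P^1\times\P^1$ and the uniqueness of the quadric through three skew lines, matching the constants on pairwise intersections, and concluding by divisibility of a degree-$4$ form by a product of ten distinct linear forms. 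Both arguments rest on the same geometric input --- the ten jail quadrics, whose existence and smoothness come from the fact that the three lines in one part of a jail subconfiguration are pairwise skew (they share none of the $15$ configuration points, which are the only intersections); your appeal to Remark~\ref{remark:Weil-divisors-threefolds} for this is more roundabout than necessary but not circular. The paper's route is shorter; yours is more self-contained in that it computes $\dim W=1$ outright rather than merely excluding a second member, at the modest cost of the normalization and gluing bookkeeping across the ten hyperplanes, all of which you handle correctly.
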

\begin{proof}
By Theorem~\ref{theorem:CR-unique} it is enough to show that the Igusa quartic $X$ is the unique quartic singular along~$\rC$.
Suppose that $X'$ is another quartic with this property.
Since $X$ is irreducible, the intersection $Z=X\cap X'$ is two-dimensional, and $\deg Z = 16$.
Let $\rC'$ be one of the jail subconfigurations of~$\rC$.
Then $\rC'$ is contained in a unique two-dimensional smooth quadric~$T$;
this quadric is swept out by lines that meet three of the lines in $\rC'$.
The lines of $\rC'$ are singular both on $X$ and $X'$, so we conclude that $T$ is contained in~$Z$.
It remains to notice that $\rC$ contains $10$ jail subconfigurations, all of them giving rise to different two-dimensional quadrics contained in $Z$.
The degree of the union of these quadrics is $20$; this is greater than $\deg Z$, which gives a contradiction.
\end{proof}


\newcommand{\etalchar}[1]{$^{#1}$}

\end{document}